\newtheorem{theorem}{Theorem}[section]
\newtheorem{proposition}[theorem]{Proposition}
\newtheorem{lemma}[theorem]{Lemma}
\newtheorem{corollary}[theorem]{Corollary}
\newtheorem{definition}[theorem]{Definition}
\newtheorem{remark}[theorem]{Remark}
\newtheorem{notation}[theorem]{Notation}
\numberwithin{equation}{section}
\newcommand{\R}{\mathbb{R}}
\newcommand{\Pn}{\mathbb{P}}
\newcommand{\Aff}{\mathbb{A}\mathrm{ff}}
\newcommand{\el}{\mathscr{L}}
\newcommand{\q}{\mathfrak{q}}
\newcommand{\J}{\mathfrak{J}}
\newcommand{\Conf}{\mathrm{Conf}}
\newcommand{\dS}{\mathrm{dS}}
\newcommand{\AdS}{\mathrm{AdS}}
\newcommand{\Iso}{\mathrm{Iso}}
\newcommand{\PSL}{\mathrm{PSL}}
\newcommand{\SL}{\mathrm{SL}}
\newcommand{\Ein}{\mathbb{E}\mathrm{in}}
\newcommand{\Sn}{\mathbb{S}}
\newcommand{\En}{\mathbb{E}}
\newcommand{\Hn}{\mathbb{H}}
\newcommand{\g}{\mathfrak{g}}
\newcommand{\h}{\mathfrak{h}}
\newcommand{\ki}{\mathfrak{k}}
\newcommand{\s}{\mathfrak{sl}}
\newcommand{\aff}{\mathfrak{aff}}
\newcommand{\so}{\mathfrak{so}}
\newcommand{\Y}{\mathcal{Y}}
\newcommand{\Nu}{\mathfrak{N}}
\newcommand{\gi}{\mathsf{g}}
\begin{document}

\title[Cohomogeneity one actions on the three-dimensional Einstein universe]
      {Cohomogeneity one actions on the three-dimensional Einstein universe}
      
\author{M. Hassani}
\author{P. Ahmadi}

\thanks{}

\keywords{Cohomogeneity one, Einstein Universe}

\subjclass[2010]{57S25, 37C85}

\date{\today}

 \address{
M. Hassani\\
Departmental of mathematics\\
University of Zanjan\\
University blvd.\\
Zanjan\\
Iran}
 \email{masoud.hasani@znu.ac.ir}
 
  \address{
  And\\
Universit\'e d'Avignon, Campus Jean-Henri Fabre\\
301, rue Baruch de Spinoza,  BP 21239 F-84 916\\
AVIGNON Cedex 9.\\
France}
 \email{masoud.hassani@alumni.univ-avignon.fr }

\address{
P. Ahmadi\\
Departmental of mathematics\\
University of Zanjan\\
University blvd.\\
Zanjan\\
Iran}
 \email{p.ahmadi@znu.ac.ir}

 \begin{abstract}
 The aim of this paper is to classify the cohomogeneity one conformal actions on the $3$-dimensional Einstein universe $\Ein^{1,2}$, up to orbit equivalence. In a recent paper \cite{Has}, we studied the unique (up to conjugacy) irreducible action of $\PSL(2,\R)$ on $\Ein^{1,2}$ and we showed that the action is of cohomogeneity one. In the present paper, we   determine all the subgroups of $\Conf (\Ein^{1,2})$, up to conjugacy, acting reducibly and with cohomogeneity one on $\Ein^{1,2}$. We show that any cohomogeneity one reducible action on $\Ein^{1,2}$ induces a fixed point in the $4$-dimensional projective space $\R\Pn^4$. Also, we describe all the codimension one induced orbits by these actions.  
 \end{abstract}

 \maketitle
\tableofcontents
 \medskip
 \medskip

 \thispagestyle{empty}

%%%%%%%%%%%%%%%%%%%%%%%%%%%%%%%%%%%%%%%%
%%%%%%%%%%%%%%%%%%%%%%%%%%%%%%%%%%%%%%%%
%\tableofcontents

\section*{Introduction}
Felix Klein is known for his work on the connections between geometry and group theory. By his 1872 Erlangen Program, geometries are classified by their underlying symmetry groups. According to his approach, a geometry is a $G$-space $M$ , where $G$ is a group of transformations of $M$. This makes a link between geometry and algebra. The most natural case occurs when the group $G$ acts on $M$ transitively. In this case $M$ is called a \textit{homogeneous} $G$-space. For instance Euclidean, affine and projective geometries are homogeneous spaces. 

One special case of non-transitive actions of transformation groups on manifolds is when the action has an orbit of codimension one, the so called \textit{cohomogeneity one} action. The concept of a cohomogeneity one action on a manifold $M$ was introduced by P.S. Mostert in his 1956 paper \cite{Mo}. The key hypothesis was the compactness of the acting Lie group in the paper. He assumed that the acting Lie group $G$ is compact and determined the orbit space up to homeomorphism. More precisely, he proved that by the cohomogeneity one action of a compact Lie group $G$ on a manifold $M$ the orbit space $M/G$ is homeomorphic to $\R$, $\Sn^1$, $[0,1]$, or $[0,1)$. In the general case, in \cite{B} B. Bergery showed that if a Lie group acts on a manifold properly and with cohomogeneity one, then the orbit space $M/G$ is homeomorphic to one of the above spaces. 

 A result by D. Alekseevsky in \cite{Alek} says that, for an arbitrary Lie group $G$, there is a complete $G$-invariant Riemannian metric on $M$ if and only if the action of $G$ on $M$ is proper. This theorem provides a link between proper actions and Riemannian $G$-manifolds.

Cohomogeneity one Riemannian manifolds have been studied by many mathematicians (see, e.g., \cite{AA, B, GWZ, GZ1, GZ2, MK, Mo, N, P, PS, S, V1, V2}). The subject is still an active one. The common hypothesis in the theory is that the acting group is closed in the full isometry group of the Riemannian manifold and the action is isometrically. When the metric is indefinite, this assumption in general does not imply that the action is proper, so the study becomes much more complicated. Also, some of the results and  techniques of definite metrics fail for indefinite metrics. 

The natural way to study a cohomogeneity one semi-Riemannian manifold $M$ is to determine the acting group in the full isometry group $\Iso(M)$, up to conjugacy, since the actions of two conjugate subgroups in $\Iso(M)$ admit almost the same orbits in $M$. This has been done for space forms in some special cases (see \cite{A1}, \cite{A2} and \cite{AhA}). This way is the one that we pursue in this paper.
%There is another approximation of acting groups on a manifold: the actions of two Lie subgroups $G,H\subset \Iso(M)$ are called \textit{orbitally equivalent} to each other if there exists an element $g\in \Iso(M)$ such that for all $p\in M$, the orbits $G(p)$ and $H(g(p))$ coincide. It is obvious that, the action of two subgroup of $\Iso(M)$ one conjugate to the other are orbitally equivalent, but the converse is not nessarely true (for example, the action of additive group $\R^{1,n}$ on the Minkowski space $\En^{1,n}$ by translations is orbitally equivalent to the action of $\Iso(\Ein^{1,n})$, but evidently they are not conjugate to each other).  

Let $\R^{2,n+1}$ be the $(n+3)$-dimensional real vector space endowed with a quadratic form $\mathfrak{q}$ of signature $(2,n+1)$. The nullcone $\Nu^{2,n+1}$ of $\R^{2,n+1}$ is the set of non-zero vectors $v\in \R^{2,n+1}$ with $\mathfrak{q}(v)=0$. The nullcone $\Nu^{2,n+1}$ is a degenerate hypersurface of $\R^{2,n+1}$. The $(n+1)$-dimensional Einstein universe $\Ein^{1,n}$ is the projectivization of the nullcone $\Nu^{2,n+1}$ via $\Pn:\R^{2,n+1}\setminus \{0\}\rightarrow \R\Pn^{n+2}$. It is a compact submanifold of $\R\Pn^{n+2}$, and the degenerate metric on $\Nu^{2,n+1}$ induces a Lorentzian conformal structure on $\Ein^{1,n}$. The group $O(2,n+1)$, group of the linear isometries of $\R^{2,n+1}$, acts on Einstein universe conformally. Indeed, the full conformal group $\Conf(\Ein^{1,n})$ is $PO(2,n+1)$. There is a Lorentzian version of Liouville's Theorem..

%In contrast with Riemannian geometry and the sphere $\Sn^n$, the $(n+1)$-dimensional \textit{Einstein universe} $\Ein^{1,n}$ ($n\geq 1$) is the Lorentzian analogue. It is a compact manifold equipped with a Lorentzian conformal structure. Indeed, up to a double cover $\widehat{\Ein}^{1,n}$ Einstein universe is conformally equivalent to the direct product $(\Sn^1\times \Sn^n,-dt^2+ds^2)$ where $dt^2$ and $ds^2$ are the usual round metrics on the spheres $\Sn^1$ and $\Sn^n$ of radius one, respectively. 

\begin{theorem}(Liouville's Theorem \cite[Theorem 4.4]{Fr})\label{thm.lio}
Let $U,V\subset \Ein^{1,n}$ be non-empty connected open subsets, and $f:U\rightarrow V$ be a conformal map. Then $f$ extends to a unique global conformal map on $\Ein^{1,n}$.
\end{theorem}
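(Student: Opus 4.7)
The plan is to reduce the statement to the classical (Lorentzian) Liouville theorem on Minkowski space and then patch the local extensions using connectedness of $U$ together with the rigidity of the M\"obius group action on $\Ein^{1,n}$.

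First I would exploit that $\Ein^{1,n}$ admits a conformal atlas modelled on Minkowski space $\R^{1,n}$: removing the lightcone of any point produces a dense open subset conformally diffeomorphic to $\R^{1,n}$, and since $\Conf(\Ein^{1,n})=PO(2,n+1)$ acts transitively on points, such a Minkowski chart can be centred at any prescribed point. Fix $p_{0}\in U$ and choose Minkowski charts $\varphi\colon\Omega_{1}\to\R^{1,n}$ around $p_{0}$ and $\psi\colon\Omega_{2}\to\R^{1,n}$ around $f(p_{0})$, shrunk so that $f(\Omega_{1}\cap U)\subset\Omega_{2}$. The composition $\psi\circ f\circ\varphi^{-1}$ is then a conformal map between open subsets of $\R^{1,n}$, and for $n\geq 2$ the classical Lorentzian Liouville theorem asserts that any such local conformal map is the restriction of an element of $PO(2,n+1)$. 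Pulling back via $\varphi$ and $\psi$ yields an element $F_{p_{0}}\in\Conf(\Ein^{1,n})$ that agrees with $f$ on an open neighbourhood $W_{p_{0}}\subset U$ of $p_{0}$.

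Next I would globalise this local agreement by a connectedness argument. For every $q\in U$ the same local reduction gives an $F_{q}\in\Conf(\Ein^{1,n})$ coinciding with $f$ on an open neighbourhood $W_{q}\subset U$. Whenever $W_{q}\cap W_{q'}\neq\emptyset$, the two elements $F_{q},F_{q'}\in PO(2,n+1)$ agree on this open overlap; and since $PO(2,n+1)$ acts by real-analytic diffeomorphisms on $\Ein^{1,n}$, any two of its elements that agree on a non-empty open set are equal. Chaining such overlaps along paths in the connected set $U$ shows that all the $F_{q}$ coincide with a single element $F\in\Conf(\Ein^{1,n})$, and by construction $F|_{U}=f$. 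Uniqueness is the same rigidity: two global conformal extensions $F_{1},F_{2}$ of $f$ satisfy $F_{2}^{-1}\circ F_{1}|_{U}=\mathrm{id}_{U}$, so $F_{2}^{-1}\circ F_{1}$ is an element of $PO(2,n+1)$ fixing a non-empty open set pointwise and must be the identity, giving $F_{1}=F_{2}$.

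The main obstacle is the classical Lorentzian Liouville step on $\R^{1,n}$ itself, which is the substantive input. One natural route is to show by a direct jet computation that for $n+1\geq 3$ the conformal Killing equation on $\R^{1,n}$ admits only polynomial solutions of degree at most two, spanning precisely the Lie algebra $\so(2,n+1)$, and then to integrate this infinitesimal rigidity to conclude that every local conformal diffeomorphism of Minkowski space is the restriction of a M\"obius transformation in $PO(2,n+1)$. Unlike the Riemannian case, one cannot invoke ellipticity of the conformal Killing operator, so care is needed to rule out non-analytic deformations along the characteristic (null) directions; the two-dimensional case $n=1$ is genuinely exceptional and this rigidity fails there.
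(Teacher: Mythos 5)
The paper does not actually prove this statement: it is imported verbatim from Frances's thesis \cite[Theorem 4.4]{Fr}, so there is no in-paper argument to compare yours with. Your reduction scheme is the standard route to this result, and the second half of your argument is sound as written: once each point $q\in U$ has a neighbourhood on which $f$ agrees with some $F_q\in \Conf(\Ein^{1,n})=PO(2,n+1)$, the rigidity you invoke is correct (two elements of $PO(2,n+1)$ agreeing on a non-empty open set coincide, either by analyticity of the action on the connected manifold $\Ein^{1,n}$, or directly: a linear isometry of $\R^{2,n+1}$ whose projectivization fixes an open piece of the nullcone pointwise is scalar, since such a piece spans $\R^{2,n+1}$ and the eigenvalue is locally constant), and then $q\mapsto F_q$ is locally constant on the connected set $U$, which gives existence, and the same rigidity gives uniqueness. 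Note also that, as you observe, the statement implicitly requires $n\geq 2$; for $\Ein^{1,1}$ it is false.

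The weak point is your sketch of the substantive input, the local Lorentzian Liouville theorem on $\R^{1,n}$. Classifying conformal Killing fields of Minkowski space as the polynomial fields of degree at most two spanning $\so(2,n+1)$ is an infinitesimal statement, and it does not simply ``integrate'' to the statement you need: a conformal diffeomorphism between two small open subsets of $\R^{1,n}$ is not a priori embedded in the flow of any conformal Killing field, so the passage from the Lie-algebra classification to the classification of local conformal maps is exactly the content of Liouville's theorem and is missing from your outline. The standard proofs handle this differently --- by showing that a conformal map satisfies a finite-type (second-order, prolonged) PDE system so that it is determined by a finite jet which must agree with that of a M\"obius transformation, or by geometric arguments (conformal maps preserve lightcones and send null geodesics to null geodesics, forcing the photon structure onto the model), or by the Liouville-type theorem for the flat conformal Cartan geometry. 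If instead you treat the local Lorentzian Liouville theorem as a citable classical result --- which is legitimate, and is in effect what the paper does by citing \cite{Fr} --- then your patching argument completes the proof; but as a self-contained derivation the infinitesimal-to-local step would fail as stated.
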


Therefore, every locally conformally flat Lorentzian manifold of dimension $n+1\geq 3$ admits a $(\Conf(\Ein^{1,n}),\Ein^{1,n})$-structure. In particular, the Lorentzian model spaces of constant sectional curvatures $c$, namely, the Minkowski space $\En^{1,n}$ for $c=0$, the Anti de-Sitter space $\AdS^{1,n}$ for $c=-1$, and the de-Sitter space $\dS^{1,n}$ for $c=1$, all are conformally equivalent to some specific open dense subsets of $\Ein^{1,n}$. 

In this paper, we propose to study conformal actions of cohomogeneity one on the three-dimensional Einstein universe $\Ein^{1,2}$. Our strategy  is to determine the representation of the acting group in the Lie group $\Conf(\Ein^{1,2})$, up to conjugacy. Also, we describe the causal character of the codimension one orbits induced by cohomogeneity one actions on $\Ein^{1,2}$. For more details on the topology and causal character of all the orbits, we refer to \cite{Has0}.

The group of conformal transformations of $\Ein^{1,2}$ is isomorphic to $O(2,3)$. The identity component of $O(2,3)$ is $SO_\circ(2,3)$ which acts on $\Ein^{1,2}$ transitively. Hence, a cohomogeneity one action on $\Ein^{1,2}$ comes from a proper subgroup of  $\Conf(\Ein^{1,2})$. 

The proper actions are significant and they deserve to be studied first. Let $G$ be a connected Lie subgroup of $\Conf(\Ein^{1,2})$. In Section \ref{proper}, we show that up to conjugacy there are exactly two Lie groups acting properly and with cohomogeneity one on $\Ein^{1,2}$, namely, $SO(3)$ and $SO(2)\times SO(2)$. The action of $SO(3)$ on Einstein universe $\Ein^{1,2}$ admits a codimension one foliation on which each leaf is a \textit{spacelike hypersphere}. On the other hand, $SO(2)\times SO(2)$ admits a unique $1$-dimensional orbit, and on the complement admits a codimension one foliation on which every leaf is conformally equivalent to the $2$-dimensional Einstein universe $\Ein^{1,1}$.

Rolling out the proper case already discussed above, in the non-proper case, we deal with subgroups of some known geometric groups such as: $(\R^*\times O(1,2))\ltimes \R^{1,2}$ group of conformal transformations on the $3$-dimensional Minkowski space $\En^{1,2}$, $O(2,2)$ group of isometries of $3$-dimensional Anti de-Sitter space $\AdS^{1,2}$, and $O(1,3)$ group of isometries of the $3$-dimensional de-Sitter space $\dS^{1,2}$. In particular, we consider various representations of \textit{M}$\ddot{o}$\textit{bius group} $\PSL(2,\R)$ and its subgroups in $O(2,3)$. In most of the cases, we determine the Lie subgroups acting with cohomogeneity one on $\Ein^{1,2}$ considering their corresponding subalgebras in the Lie algebra $\so(2,3)=Lie(O(2,3))$. 

In order to study the non-proper cohomogeneity one actions we have the following definition.
\begin{definition}\label{def}
The action of a Lie subgroup $G\subset \Conf(\Ein^{1,2})$ is called \textit{irreducible} if $G$ preserves no non-trivial linear subspace of $\R^{2,3}$, and it is called \textit{reducible} if it is not irreducible.
\end{definition}
\begin{theorem}\label{1}\cite{Di}
Let $G$ be a connected Lie subgroup of $SO_\circ(2,n)$ which acts on $\R^{2,n}$ irreducibly. Then $G$ is conjugate to one of the following Lie groups:
\begin{itemize}
\item[i:] for arbitrary $n\geq 1$: $SO_\circ(2,n)$.
\item[ii:] for $n=2p$ even: $U(1,p)$, $SU(1,p)$ or $S^1.SO_\circ(1,p)$ if $p>1$,
\item[iii:] for $n=3$: $SO_\circ(1,2)\subset SO_\circ(2,3)$.
\end{itemize}
\end{theorem}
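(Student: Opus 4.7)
The plan is to exploit Schur's lemma for real irreducible representations together with the classifications of Dynkin and Karpelevich of irreducible subgroups of classical Lie groups. First I observe that $G$ must be reductive: the preservation of the non-degenerate form $\q$ together with irreducibility forces the unipotent radical of $G$ to be trivial, because any proper invariant subspace for $R_u(G)$ would, via $\q$-orthogonal complementation, contradict irreducibility. Hence $G = Z^\circ(G) \cdot [G,G]$ with $[G,G]$ semisimple, and the real commutant $C_G := \mathrm{End}_G(\R^{2,n})$ is a finite-dimensional real division algebra, so $C_G \in \{\R, \C, \H\}$.

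Next I treat the three cases. If $C_G = \C$, then a $G$-invariant complex structure $J$ exists on $\R^{2,n}$; compatibility with $\q$ forces $n = 2p$ and the pair $(\q, J)$ produces a $G$-invariant Hermitian form of signature $(1,p)$, so $G \subset U(1,p)$. Enumerating the connected Lie subgroups of $U(1,p)$ whose action on the underlying real space remains irreducible yields exactly $U(1,p)$, $SU(1,p)$, and, for $p > 1$, the subgroup $S^1 \cdot SO_\circ(1,p)$ arising from the real form $\R^{1,p} \subset \C^{1,p}$; this gives case (ii). If $C_G = \H$, then $\R^{2,n}$ carries a $G$-invariant quaternionic structure, and the compatibility of $\q$ with this structure forces a real signature of the form $(4a, 4b)$ (quaternionic-Hermitian) or a decomposition incompatible with signature $(2,n)$; in either subcase signature $(2,n)$ is ruled out for dimensional reasons, so this case is empty. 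If $C_G = \R$ (absolute irreducibility), the complexified representation is irreducible in $O(n+2,\C)$ and Dynkin's tables of irreducible complex orthogonal embeddings, combined with Karpelevich's list of real forms, leave only two possibilities admitting an invariant real symmetric form of signature $(2,n)$: the tautological embedding $G = SO_\circ(2,n)$ (case (i)), and, for $n = 3$, the five-dimensional representation $\mathrm{Sym}^4(\R^2)$ of $\SL(2,\R)$, which factors through $\PSL(2,\R) \cong SO_\circ(1,2)$ and whose invariant symmetric form has signature $(2,3)$ by a direct eigenvalue computation in the monomial basis; this gives case (iii).

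The principal obstacle is the Dynkin/Karpelevich step: one must traverse the classification of irreducible representations of real semisimple Lie algebras of target dimension $n+2$ and, for each candidate, compute the signature of the invariant symmetric bilinear form. The exceptional low-dimensional isomorphism $\so(1,2) \cong \s(2,\R)$ (together with $\so(2,3) \cong \mathfrak{sp}(4,\R)/\{\pm I\}$) is what forces the outlying case (iii) to appear, and ruling out further accidental embeddings for $n \geq 4$ requires a case-by-case inspection of fundamental representations of the classical and exceptional real Lie algebras. Once these checks are carried out, together with the signature computations in the complex and quaternionic cases, the three-item list follows.
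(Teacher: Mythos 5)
First, a point of reference: the paper does not prove Theorem \ref{1} at all — it is imported verbatim from \cite{Di} — so there is no internal proof to compare against; your sketch has to stand on its own as a proof of the Di Scala--Leistner classification. As such, the frame you set up (irreducibility forces reductivity, Schur's lemma gives commutant $\R$, $\C$ or $\mathbb{H}$, then classify in each case) is a legitimate route, but the two places where the theorem actually lives are asserted rather than proved. In the complex case, the sentence ``enumerating the connected Lie subgroups of $U(1,p)$ whose action on the underlying real space remains irreducible yields exactly $U(1,p)$, $SU(1,p)$ and $S^1\cdot SO_\circ(1,p)$'' is not an enumeration; it is precisely the hard half of the statement. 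One must rule out every proper connected semisimple subgroup of $SU(1,p)$ acting $\C$-irreducibly on $\C^{1,p}$ whose representation is of complex or quaternionic type (so that real irreducibility survives), and the only thing that kills such candidates is the very special Hermitian signature $(1,p)$. Making that constraint effective is where \cite{Di}, like the companion result \cite{Di2} quoted as Theorem \ref{thm.4.2.2}, brings in genuinely geometric input (totally geodesic orbits in the associated real/complex hyperbolic spaces in the spirit of Karpelevich--Mostow, which also handles the fact that the subgroup need not be closed); nothing in your sketch substitutes for this step.

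The same criticism applies to the absolutely irreducible case: ``Dynkin's tables \dots leave only two possibilities'' is the crux, not a quotable fact. Dynkin's lists contain many irreducible orthogonal subgroups (even symmetric powers of $\SL(2,\C)$, tensor products, adjoint and spin representations), and what eliminates all of them except the tautological one and, for $n=3$, the $\mathrm{Sym}^4$ representation of $\SL(2,\R)$, is a case-by-case signature computation for the invariant form of the relevant real form; you explicitly defer exactly this (``the principal obstacle''), so what remains is a plan rather than a proof. Two smaller gaps: the inference ``commutant $\C$ implies $G\subset U(1,p)$'' needs the isometry/anti-isometry dichotomy for the invariant complex structure $J$ — if $\q(J\cdot,J\cdot)=-\q$ the form is the real part of a $\C$-bilinear form, the signature is split, and the case $(2,2)$ must be excluded by hand (the group then sits in $SO(2,\C)$ and is reducible over $\R$); likewise the quaternionic case has a split subcase with signature $(2m,2m)$, i.e. $(2,2)$, which your ``dimensional reasons'' do not cover and which needs its own (easy, but missing) argument. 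Also, the reductivity argument should be run through the fixed space of the unipotent radical (or at the Lie-algebra level: a faithful irreducible representation forces reductivity), not through an orthogonal-complement argument as you phrase it. None of this says the strategy is wrong, but as written the decisive classification steps are names of theorems rather than arguments.
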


The following result on irreducible cohomogeneity one actions on $\Ein^{1,2}$ is proved in \cite{Has}.
\begin{theorem}\label{thm10}
Let $G\subset \Conf(\Ein^{1,2})$ be a connected Lie subgroup which acts on $\Ein^{1,2}$ irreducibly and with cohomogeneity one. Then $G$ is conjugate to $SO_\circ(1,2)\simeq\PSL(2,\R)$.
\end{theorem}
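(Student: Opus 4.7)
The plan is to apply Di Scala's classification (Theorem \ref{1}) directly. First I would note that, since $G$ is connected, it must lie in the identity component of $\Conf(\Ein^{1,2})=PO(2,3)$. Because $-I\in O(2,3)$ has determinant $(-1)^5=-1$, the kernel $\{\pm I\}$ of $O(2,3)\to PO(2,3)$ meets $SO(2,3)$ trivially, so $PO(2,3)_\circ\cong SO_\circ(2,3)$. Therefore $G$ may be regarded as a connected Lie subgroup of $SO_\circ(2,3)$ acting irreducibly on $\R^{2,3}$ in the sense of Definition \ref{def}.

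Next I would invoke Theorem \ref{1} with $n=3$. Since $n=3$ is odd, case (ii) of that theorem, which requires $n=2p$ to be even, is vacuous here. The only remaining possibilities are case (i), namely $G$ conjugate to the full group $SO_\circ(2,3)$, and case (iii), namely $G$ conjugate to the irreducibly embedded $SO_\circ(1,2)\subset SO_\circ(2,3)$.

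To conclude, I would eliminate the first possibility by the standard observation that $SO_\circ(2,3)$ acts transitively on $\Ein^{1,2}$: by Witt's theorem for the quadratic form $\q$ of signature $(2,3)$, the group $O(2,3)$ is transitive on null directions of $\R^{2,3}$, and connectedness of the space of null directions (already noted in the introduction, where $\Ein^{1,2}$ is described as a compact submanifold of $\R\Pn^4$) propagates transitivity to $SO_\circ(2,3)$. Hence the cohomogeneity of $SO_\circ(2,3)$ on $\Ein^{1,2}$ is zero, contradicting the hypothesis. The only remaining candidate is $G$ conjugate to $SO_\circ(1,2)\simeq \PSL(2,\R)$.

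The remaining point is to verify that this candidate actually realizes cohomogeneity one, i.e.\ that the irreducible embedding $SO_\circ(1,2)\hookrightarrow SO_\circ(2,3)$ produces an action on $\Ein^{1,2}$ whose principal orbits are hypersurfaces. This existence statement is the only step that is not a formal consequence of the two ingredients quoted in the excerpt, and I would cite \cite{Has} for it, as indicated in the paragraph preceding the theorem; that paper supplies an explicit description of the orbits of this irreducible copy of $\PSL(2,\R)$ and verifies that the maximal orbit dimension is two. Thus the main obstacle is not the elimination step but rather the explicit realizability, which is handled externally.
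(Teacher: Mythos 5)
Your proposal is correct and takes exactly the route the paper intends: restrict to the identity component $PO(2,3)_\circ\simeq SO_\circ(2,3)$, apply Theorem \ref{1} with $n=3$ (case (ii) being vacuous), and discard $SO_\circ(2,3)$ because its transitive action on $\Ein^{1,2}$ has no codimension-one orbit; the paper itself supplies no in-text argument and simply cites \cite{Has}, where this elimination together with the orbit analysis of the irreducible $SO_\circ(1,2)$ is carried out. Note only that the final realizability step you delegate to \cite{Has} is not logically required for the stated implication (it only shows the theorem is non-vacuous), so your elimination argument already proves the statement.
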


%In Chapter \ref{chap.irr}, we prove Theorem \ref{thm10}. Indeed, this action comes from the known irreducible action of $\PSL(2,\R)$ on the vector space $\R_4[X,Y]$ of homogeneous polynomials of degree $4$ in two variables $X$ and $Y$. We show that, this action admits in $\Ein^{1,2}$ three orbits: a $1$-dimensional lightlike orbit homeomorphic to $\R\Pn^1$ which is not a photon, a $2$-dimensional degenerate orbit homeomorphic to $\R^2$, and an open orbit on which $\PSL(2,\R)$ acts freely. Also, we describe the orbits induced by this action on the complement of Einstein universe $\Ein^{1,2}$ in $\R\Pn^4$.

According to Theorem \ref{1}, one way to classify the cohomogeneity one reducible actions is to consider the stabilizer of various (in dimension and signature) linear subspaces of $\R^{2,3}$ by the action of $SO_\circ(2,3)$. The following theorem shows that considering the actions preserving a $1$-dimensional linear subspace of $\R^{2,3}$ is enough (even the proper actions are included in the following).

\begin{theorem}\label{thm.1}
Let $G$ be a connected Lie subgroup of $\Conf(\Ein^{1,2})$ which acts on $\Ein^{1,2}$ reducibly and with cohomogeneity one. Then $G$ fixes a point in the projective space $\R\Pn^4$.
\end{theorem}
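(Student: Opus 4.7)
The plan is to work with a $G$-invariant subspace $W \subset \R^{2,3}$ of minimal positive dimension; one exists by reducibility. Since $G$ preserves $\q$ up to sign, the orthogonal complement $W^\perp$ is also $G$-invariant, so replacing $W$ by $W^\perp$ if necessary we may assume $\dim W \leq 2$. The case $\dim W = 1$ yields the desired fixed point $[W]\in\R\Pn^4$ immediately, so the whole content lies in $\dim W = 2$. The strategy is to first restrict the signature of $W$, then classify the possible connected images of $G$ in $O(W)$ and $O(W^\perp)$, and finally rule out the remaining configurations that admit no fixed line in $\R^{2,3}$ by showing that the induced action on $\Ein^{1,2}$ is either transitive or has an open orbit, both contradicting cohomogeneity one.

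For $\dim W = 2$, minimality forbids any $G$-invariant line inside $W$. In particular the radical $W \cap W^\perp$ vanishes, so $W$ is non-degenerate; and if $W$ had Lorentzian signature $(1,1)$, the connected image of $G$ in $O(W) \cong O(1,1)$ would lie in $SO_\circ(1,1)$, which preserves each of the two isotropic lines of $W$, contradicting minimality. Thus $W$ has signature $(2,0)$ or $(0,2)$.

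If $W$ is positive definite, then $W^\perp$ is negative definite and the $G$-representation on $\R^{2,3}$ factors through the compact group $O(W)\times O(W^\perp) \cong O(2)\times O(3)$; so $G$ is compact and acts properly on $\Ein^{1,2}$. The classification of proper cohomogeneity one actions established in Section \ref{proper} then forces $G$ to be conjugate to $SO(3)$ or $SO(2)\times SO(2)$, and in both models $G$ acts trivially on a line of $\R^{2,3}$, yielding the desired fixed point in $\R\Pn^4$.

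If $W$ has signature $(0,2)$, then $W^\perp$ has signature $(2,1)$ and the connected image of $G$ in $O(W^\perp)$ is a connected Lie subgroup of $SO_\circ(2,1) \cong \PSL(2,\R)$. By the classification of connected subgroups of $\PSL(2,\R)$ up to conjugacy (trivial, elliptic/hyperbolic/parabolic one-parameter subgroups, the two-dimensional affine subgroup, or the whole group), every proper such subgroup fixes a timelike, spacelike, or null line of $W^\perp$, giving the desired fixed point. The only remaining case is $G|_{W^\perp} = SO_\circ(2,1)$; since $\PSL(2,\R)$ is simple and admits no non-trivial homomorphism to the compact abelian group $SO(2)$, either $G$ acts trivially on $W$ (and fixes every line in $W$) or $G = SO(2)\times SO_\circ(2,1)$. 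In the latter case, decomposing the null cone of $\R^{2,3}$ along $W \oplus W^\perp$ shows that the sub-Einstein circle $\Pn(\Nu^{2,3}\cap W^\perp) \subset \Ein^{1,2}$ is a single $1$-dimensional $G$-orbit, while its complement is a single open $3$-dimensional $G$-orbit; the action therefore has cohomogeneity zero, a contradiction. This last orbit-dimension computation is the main obstacle, since it is exactly where the cohomogeneity one hypothesis is genuinely used to discard the only configuration produced by representation theory that admits no $G$-fixed point in $\R\Pn^4$.
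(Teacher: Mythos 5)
There is a genuine gap at the very first step of your $\dim W=2$ analysis. From ``minimality forbids any $G$-invariant line inside $W$'' you conclude that the radical $W\cap W^\perp$ vanishes, but the radical is a $G$-invariant subspace of $W$ that can have dimension $0$, $1$ \emph{or} $2$: minimality only excludes the $1$-dimensional case. When $W\cap W^\perp=W$, i.e.\ $W$ is a totally isotropic $2$-plane of signature $(0,0,2)$, $W$ is degenerate and $\Pn(W)$ is a photon $\phi\subset\Ein^{1,2}$; nothing in your setup rules this out, since a group can preserve a photon while acting irreducibly on the corresponding isotropic plane and having no obviously invariant line. This is precisely the case the paper treats separately in Proposition \ref{prop.all}(V) and then in Section \ref{photon}: Theorem \ref{thm.photon}, whose proof occupies that entire section (analysis of $Stab(\phi)\simeq(\R^*\times\SL(2,\R))\ltimes H(3)$, of the kernel of the action on $\phi$, and of the lightlike/spacelike/parabolic elements of the Heisenberg factor), is exactly what shows that a cohomogeneity one action preserving a photon still fixes a point of $\R\Pn^4$. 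Your proposal silently skips this, and it cannot be recovered by minimality alone: that the minimal invariant subspace of a cohomogeneity one reducible group is never a photon plane is a consequence of Theorem \ref{thm.photon}, not an input to it.

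The rest of your argument is essentially sound and parallels the paper: $\dim W=1$ is immediate; signature $(1,1)$ is excluded as you say; signature $(2,0)$ reduces to the proper case of Section \ref{proper}; and your signature $(0,2)$ discussion is a clean direct version of the paper's Lemma \ref{lemma.1.50}, including the correct elimination of $SO(2)\times SO_\circ(2,1)$ via the orbit decomposition (timelike circle plus open complement). Two smaller points: ``$G$ embeds in $O(2)\times O(3)$, so $G$ is compact'' is not automatic for a connected Lie subgroup (it need not be closed); one should note, as in Lemma \ref{lem.2.0.5}, that the connected subgroups of $SO(2)\times SO(3)$ of dimension $\geq 2$ are closed. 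Also, an open orbit does not by itself contradict cohomogeneity one in the paper's sense (many groups in Tables \ref{table1}--\ref{table8} have open orbits); what you actually use, correctly, is that the full orbit decomposition in that case has no $2$-dimensional orbit.
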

\begin{proof}
It follows from Proposition \ref{prop.all}, Proposition \ref{cor.proper}, and Theorem \ref{thm.photon}.
\end{proof}
By Theorem \ref{thm.1}, every reducible cohomogeneity one action of $G$ on $\Ein^{1,2}$ preserves a line $\ell\leq \R^{2,3}$. The line $\ell$ can be lightlike, spacelike or timelike. 

The case when $G$ preserves a lightlike line $\ell$ is the richest one, i.e., actions fixing a point in Einstein universe, which are fully studied in Section \ref{fix}. By the action of $O(2,3)$, the stabilizer of a point in Einstein universe is isomorphic to the group of conformal transformations of the Minkowski space. More precisely, if $G$ fixes a point $p\in \Ein^{1,2}$, then it preserves the \textit{lightcone} $L(p)$ and its corresponding \textit{Minkowski patch}. Hence, the action of $G$ on the Minkowski patch is equivalent to the action of a Lie subgroup of $\Conf(\En^{1,2})=(\R^*\times O(1,2))\ltimes \R^{1,2}$ on Minkowski space $\En^{1,2}$. We apply the adjoint action of $\Conf(\En^{1,2})$ on its Lie algebra $(\R\oplus\so(1,2))\oplus_\theta \R^{1,2}$, and then, we determine all the Lie subalgebras $\g$ of $(\R\oplus\so(1,2))\oplus_\theta \R^{1,2}$ with $\dim \g\geq 2$, up to conjugacy. This leads to the classification of the connected Lie subgroups of $\Conf(\En^{1,2})$ with dimension greater than or equal to $2$, up to conjugacy (Theorem \ref{thm.3.2.1}). Actually, there are infinitely many subgroups of $\Conf(\En^{1,2})$ with $\dim \geq 2$, up to conjugacy. All those subgroups act on Einstein universe $\Ein^{1,2}$ with cohomogeneity one $\R^{1,2}$ and $\R^*_+\ltimes \R^{1,2}$ (Theorem \ref{thm.fix}).

In Section \ref{de}, we study the actions preserving a non-degenerate line $\ell\leq \R^{2,3}$. Indeed, in this chapter we only consider the cohomogeneity one actions on $\Ein^{1,2}$ with no fixed point in $\Ein^{1,2}$. 
\begin{itemize}
\item If $G$ preserves a spacelike line $\ell$, then it preserves an \textit{Einstein hypersphere} $\Ein^{1,1}\subset \Ein^{1,2}$, and its complement $\Ein^{1,2}\setminus \Ein^{1,1}$ which is conformally equivalent to the \textit{anti de-Sitter space} $\AdS^{1,2}$.  In this case, the action of $G$ on $\Ein^{1,2}$ is conformally equivalent to the natural action of a Lie subgroup of the group of isometries of $\AdS^{1,2}$, namely, $O(2,2)$. We show that, up to orbit equivalency there are exactly seven connected Lie subgroups which preserve an Einstein hypersphere $\Ein^{1,1}$ in $\Ein^{1,2}$, act on $\Ein^{1,2}$ non-properly, with cohomogeneity one, and with no fixed point in $\Ein^{1,2}$.

\item If $G$ preserves a timelike line, then it preserves a spacelike hypersphere $\Sn^2\subset \Ein^{1,2}$, and its complement $\Ein^{1,2}\setminus \Sn^2$ which is conformally equivalent to the \textit{de-Sitter space} $\dS^{1,2}$. The action of $G$ is conformally equivalent to the action of a Lie subgroup of the group of isometries of $\dS^{1,2}$, namely, $O(1,3)$. It is shown that, up to conjugacy, there is only one connected Lie subgroup which preserves a spacelike hypersphere in $\Ein^{1,2}$, acts non-properly and with cohomogeneity one, and fixes no point in $\Ein^{1,2}$.
\end{itemize}

Actions preserving a photon in $\Ein^{1,2}$ are very interesting. In Section \ref{photon}, we prove a theorem which states that a cohomogeneity one action on $\Ein^{1,2}$ preserving a photon, fixes a point in $\R\Pn^4$. This theorem will play a key role in proof of Theorem \ref{thm.1} .

\section{Preliminary}
We fix the following notation and definitions for the rest of this work. 

Let $\R^{m+n}$ be the $(m+n)$-dimensional real vector space. Assume that $\mathfrak q_{m,n}$ is a quadratic form on $\R^{m+n}$ of signature $(m,n)$, i.e., in a suitable basis for $\R^{m+n}$ we have: 
\[\q_{m,n}(v)=-\underset{i=1}{\overset{m}{\sum}} v_i^2+\underset{j=m+1}{\overset{m+n}{\sum}} v_j^2,\;\;\;\;\;\;\;\;\;v=(v_1,\cdots,v_{m+n})\in \R^{m+n}.\]
%$$(\underset{\text{m-times}}{\underbrace{-,\cdots,-}},\underset{\text{n-times}}{\underbrace{+,\cdots,+}}).$$
 We denote by $\R^{m,n}$ the vector space $\R^{m+n}$ equipped with $\mathfrak q_{m,n}$. Also, we denote by $O(m,n)$ the Lie group of linear isometries of $\R^{m,n}$, and by $SO_\circ(m,n)$ its identity component.
 \begin{definition}
 A linear subspace $V\leq \R^{m,n}$ is said to be of signature $(p,q,r)$ if the restriction of $\mathfrak{q}_{m,n}$ on $V$ is of signature $(p,q,r)$, meaning that the maximal totally isotropic subspace has dimension $r$, and that the maximal negative and positive definite subspaces have dimensions $p$ and $q$, respectively. If $V$ is nondegenerate (i.e., $r=0$), we omit $r$ and simply denote its signature by $(p,q)$. Also, we call $V$
 \begin{itemize}
 \item spacelike, if $q\neq 0$ and $p=r=0$.
 \item timelike, if $p=1$ and $q=r=0$.
 \item lightlike, if $r=1$ and $p=q=0$.
 \item Lorentzian, if $p=1$, $q\neq 0$, and $r=0$.
 \item degenerate, if $p+q,r\neq 0$.
 \end{itemize}
 \end{definition}
 A non-zero vector $v\in \R^{m,n}$  is called spacelike (resp. timelike, lightlike) if the value $\mathfrak{q}_{m,n}(v)$ is positive (resp. negative, zero).
 \begin{definition}
 Let $M$ be a smooth manifold equipped with a semi-Riemannian metric $\gi$ (resp. conformal structure $[\gi]$) and $S\subset M$ an immersed submanifold. Then, $S$ is said to be of signature $(p,q,r)$ if for all $x\in S$ the restriction of the ambient metric $\gi_x$ (resp. the conformal structure $[\gi]_x$) on the tangent space $T_xS$ is of signature $(p,q,r)$.
 \end{definition}
%\begin{definition}
%Let $V$ be a vector space. A set $\Omega$ is called an affine space with underlying vector space $V$, if $V$ acts on $\Omega$ freely and transitively.
%\end{definition}
%For an affine space $\Omega$ with underlying vector space $V$,  we denote the action of $\V$ on $\Omega$ by $p+v$ where $p\in \Omega$ and $v\in V$. Observe that for an arbitrary point $p\in \Omega$ the orbit map $V\rightarrow \Omega$ with $v\mapsto p+v$ is a bijection. Hence, one can move all the geometric structures on $V$ to $\Omega$. For instance, if $V$ is a scalar product space, then $\Omega$ admits a structure of semi-Riemannian manifold on which each orbit map is a isometry and the tangent space at every point of $\Omega$ is naturally isometric to $V$. An affine $d$-plane in $\Omega$ is the orbit induced by a $d$-dimensional linear subspace $W$ of $V$ at a point $p\in \Omega$ and we denote it by $p+W$. If $d=1$ (resp. $d=2$) we call $p+W$ an affine line (resp. plane).

 The $(n+1)$-dimensional \textit{Minkowski space} $\En^{1,n}$ is an affine space whose underlying vector space is the Lorentzian scalar product space $\R^{1,n}$. It is characterized up to isometry as a simply connected, geodesically complete, flat Lorentzian manifold. The isometry group of the Minkowski space $\En^{1,n}$ is isomorphic to the semi-direct product $O(1,n)\ltimes \R^{1,n}$ where the Lorentz group $O(1,n)$ acts naturally on $\R^{1,n}$ by $(A,v)\mapsto A(v)$.

 Geodesics in the Minkowski space $\En^{1,n}$ are affine lines, i.e., they have the form 
\[\gamma:\R\rightarrow \En^{1,n},\;\;\;\;\;\;t\mapsto p+tv,\]
where $p\in \En^{1,n}$ is a point and $v\in \R^{1,n}$ is a vector. 

A homothety on $\En^{1,n}$ (centered at $x_0\in \En^{1,n}$) is any map conjugate by a translation to a scalar multiplication:
\[\En^{1,n}\rightarrow \En^{1,n},\;\;\;\;\;\; x\mapsto x_0+r(x-x_0).\]
A conformal map on the Minkowski space $\En^{1,n}$ is a composition of an isometry of $\En^{1,n}$ with a homothety:
\[f:x\rightarrow rA(x)+v.\]
where $A\in O(1,n)$, $r\neq 0$, and $v\in \R^{1,n}$. We denote the group of conformal transformations of $\En^{1,n}$ by $\Conf(\En^{1,n})$. It is evidently isomorphic to the semi-direct product $(\R^*\times O(1,n))\ltimes \R^{1,n}$.

\begin{definition}
The de-Sitter hypersphere in the Minkowski space $\En^{1,n}$ of radius $r$ centered at $p\in \En^{1,n}$ is 
\[S_r(p)=\{p+v\in \En^{1,n}:\q_{1,n}(v)=r^2\}.\]
\end{definition}
\subsection{Projective special linear group}\label{proj}
The projective special linear group $\PSL(2,\R)$ is the quotient  of $\SL(2,\R)$ by $\mathbb{Z}_2=\{\pm Id\}$. The group $\PSL(2,\R)$ acts on the Poincaré half-plane model of hyperbolic plane with conformal boundary in infinity $\overline{\Hn}^2=\Hn^2\cup \partial\Hn^2$ by \textit{m}$\ddot{o}$\textit{bius transformation}
\[x\mapsto \dfrac{ax+b}{cx+d}, \;\;\;\;a,b,c,d\in \R,\;\;ad-bc=1.\]
This action preserves the hyperbolic plane $\Hn^2$ and its boundary $\partial\Hn^2\simeq \R\Pn^1$. Indeed, $\PSL(2,\R)$ is the group of orientation-preserving isometries of hyperbolic plane, and acts on it transitively. Furthermore it acts on the conformal boundary of hyperbolic plane conformaly and transitively. We refer to \cite{Ra} for more details.
 \begin{definition}
 Let $A\in \PSL(2,\R)$ be a non-trivial element. We call $A$
 \begin{itemize}
 \item an elliptic element, if it fixes a point in the hyperbolic plane $\Hn^2$.
 \item a parabolic element, if it fixes a unique point on the boundary $\partial\Hn^2$.
 \item a hyperbolic element, if it fixes exactly two distinct points on the boundary $\partial\Hn^2$.
 \end{itemize}
 \end{definition}
Every non-trivial element in $\PSL(2,\R)$ is either elliptic, parabolic, or hyperbolic. Actually, all the non-trivial elements in a $1$-parameter subgroup $G\subset \PSL(2,\R)$ are elliptic (resp. parabolic, hyperbolic) if $G$ contains an elliptic (resp. parabolic, hyperbolic) element. So, we can use the terminology elliptic, parabolic and hyperbolic for the $1$-parameter subgroups of $\PSL(2,\R)$. It is well-known that $\PSL(2,\R)$ has three distinct $1$-parameter subgroup, up to conjugacy. Also, it has a unique (up to conjugacy) $2$-dimensional connected Lie subgroup, which is solvable and non-abelian.
\begin{notation}
By the action of $\PSL(2,\R)$ on $\overline{\Hn}^2=\Hn^2\cup \R\Pn^1$, we denote by $Y_E$ the stabilizer of the point $i\in \Hn^2$ which is a $1$-parameter elliptic subgroup. Also, we denote by $\Aff$ the stabilizer of $\infty\in \R\Pn^1\approx\partial\Hn^2$, which is a $2$-dimensional connected Lie subgroup. Furthermore, we denote by $Y_P$ the derived subgroup of $[\Aff,\Aff]$ which is a $1$-parameter parabolic subgroup. Finally, we denote by $Y_H$ the $1$-parameter subgroup of $\Aff$ fixing $\{0,\infty\}\subset \R\Pn^1\approx\partial\Hn^2$.
\end{notation}
%The projective spacial linear group $\PSL(2,\R)$ is the group of mobius transformations of the projective real line preserving the orientation. It is isomorphic to the quotient of $\SL(2,\R)$ -the group of $2\times 2$ matrices with determinant $1$- by $\{\pm Id\}$.
%
%We denote by $SO_\circ(1,2)$ the group of linear isometries of the $3$-dimensional Lorentzian vector space $\R^{1,2}$ preserving both time and space orientations. Indeed, it is the identity component of the Lorentz group $O(1,2)$, the group of linear isometries on $\R^{1,2}$. The group $SO_\circ(1,2)$ acts on the hyperbolic model of hyperbolic plane $\Hn^2\subset \R^{1,2}$ isometrically and transitively.

% The killing form $B$ on $\s(2,\R)$ is a symmetric bilinear non-degenerate form of signature $(1,2)$. The adjoint action of $\PSL(2,\R)$ on $\s(2,\R)$ preserves $B$. In fact this action is faithful. This implies that $\PSL(2,\R)$ is isomorphic to $SO_\circ(1,2)$ the identity component of the group of isometries of the hyperbolic plane $\Hn^2$. Moreover, it is a $3$-dimensional simple Lie group.
%
%There is another quadratic form on $\s(2,\R)$ which will be useful in sequel.
The Lie algebra of $\PSL(2,\R)$ is isomorphic to the simple Lie algebra $\s(2,\R)$, the Lie algebra of $2\times 2$ traceless matrices. The determinant function $\det:\s(2,\R)\rightarrow \R$ is a quadratic form of signature $(2,1)$. Thus, $(\s(2,\R),-\det)$ is a model for the $3$-dimensional Minkowski space. The group $\SL(2,\R)$ acts on $(\s(2,\R),-\det)$ linearly and isometrically by 
\[(A,X)\mapsto AXA^{-1}.\]
The kernel of this action is $\{\pm Id\}$. This implies that $\PSL(2,\R)$ is isomorphic to $SO_\circ(1,2)$.

 The action of $\Conf_\circ(\s(2,\R),-\det)=(\R_+^*\times \PSL(2,\R))\ltimes \s(2,\R)$ on the Minkowski space $(\s(2,\R),-\det)$ is as follows:
\begin{align*}
(r,[A],V).X:=rAXA^{-1}+V,\;\;\;\;\;(r,[A],V)\in (\R_+^*\times \PSL(2,\R))\ltimes \s(2,\R),\;\;\;X\in \s(2,\R),
\end{align*}
where $A$ is an arbitrary representative of $[A]\in \PSL(2,\R)=\SL(2,\R)/\{\pm Id\}$.

The set of following matrices is a basis for $\s(2,\R)$ as a vector space,
\begin{align*}
\Y_E=\begin{bmatrix}
0 & 1\\
-1 & 0
\end{bmatrix},\;\;\;\;\;
\Y_H=\begin{bmatrix}
1 & 0\\
0 & -1
\end{bmatrix},\;\;\;\; \Y_P=\begin{bmatrix}
0 & 1\\
0 & 0
\end{bmatrix},
\end{align*}
and we have $[\Y_E,\Y_H]=2\Y_E-4\Y_P$, $[\Y_E,\Y_P]=\Y_H$ and $[\Y_H,\Y_P]=2\Y_P$. In fact, $\Y_E$ (resp. $\Y_P$, $\Y_H$) is the generator of the Lie algebra of $Y_E$ (resp. $Y_P$, $Y_H$). Also, $\{\Y_P,\Y_H\}$ is a basis for the affine algebra $\aff$, the Lie algebra of the affine group $\Aff=Y_H\ltimes Y_P$.

The following two lemmas are needed in the sequel, which may be obtained easily (see \cite{Has0}).
\begin{lemma}\label{lem.1.4.10}
The identity component of the group of automorphisms of the affine group $\Aff$,  and that of the group of automorphisms of affine algebra $\aff$ are isomorphic to $\Aff$.
 \end{lemma}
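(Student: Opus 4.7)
The plan is to compute the automorphism group of the affine algebra $\aff$ directly, and then transfer the result to the Lie group $\Aff$ via the Lie functor. Since $\Aff = Y_H \ltimes Y_P$ is a connected, simply connected solvable Lie group (diffeomorphic to $\R^{2}$), the derivative at the identity gives an isomorphism $\mathrm{Aut}(\Aff)\xrightarrow{\sim}\mathrm{Aut}(\aff)$, so it suffices to treat the Lie algebra case.

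First I would observe that the derived subalgebra $[\aff,\aff]=\R\Y_P$ (from the relation $[\Y_H,\Y_P]=2\Y_P$) is the unique $1$-dimensional ideal, hence it is preserved by every automorphism. Writing an arbitrary $\phi\in\mathrm{Aut}(\aff)$ in the basis $\{\Y_P,\Y_H\}$ therefore has the form
\[
\phi(\Y_P)=a\Y_P,\qquad \phi(\Y_H)=b\Y_P+c\Y_H,\qquad a,c\in\R^{*},\ b\in\R.
\]
Imposing $\phi([\Y_H,\Y_P])=[\phi(\Y_H),\phi(\Y_P)]$ gives $2a\Y_P=2ac\Y_P$, so $c=1$. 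Conversely, every such $\phi$ is clearly a Lie algebra automorphism.

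Thus $\mathrm{Aut}(\aff)$ is parametrised by $(a,b)\in\R^{*}\times\R$ with composition law $(a_1,b_1)\cdot(a_2,b_2)=(a_1a_2,\,a_1b_2+b_1)$, as a short computation of $\phi_1\circ\phi_2$ shows. This is exactly the multiplication of the affine group $\{x\mapsto ax+b\}$. The identity component corresponds to $a>0$, $b\in\R$, which is precisely $\Aff$. Transporting this through the isomorphism $\mathrm{Aut}(\Aff)\cong\mathrm{Aut}(\aff)$ coming from simple connectedness of $\Aff$ yields $\mathrm{Aut}_{0}(\Aff)\cong\Aff$ as well.

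The only point requiring some care, rather than a genuine obstacle, is justifying the passage from $\mathrm{Aut}(\aff)$ to $\mathrm{Aut}(\Aff)$: this uses the standard fact that for a connected simply connected Lie group, every Lie algebra automorphism integrates to a unique Lie group automorphism and this correspondence is a homeomorphism for the natural topologies, so the identity components match.
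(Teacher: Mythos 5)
Your argument is correct: the derived subalgebra $\R\Y_P$ is characteristic, the bracket relation forces the $\Y_H$-coefficient to be $1$, and the resulting parametrization $(a,b)\in\R^*\times\R$ with law $(a_1,b_1)(a_2,b_2)=(a_1a_2,a_1b_2+b_1)$ identifies $\mathrm{Aut}(\aff)$ with the affine group of the line, whose identity component is $\Aff$; since $\Aff$ is connected and simply connected, differentiation at the identity transfers this to $\mathrm{Aut}(\Aff)$. The paper does not prove this lemma at all (it only cites the thesis \cite{Has0} with the remark that it ``may be obtained easily''), so there is no argument to compare against; your direct computation is exactly the expected one and fills that gap.
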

% \begin{proof}
%First assume that $f$ is an automorphism on the Lie algebra $\aff$. There are constants $a,b,c,d\in \R$ such that $f(\Y_H)=a\Y_H+b\Y_P$ and $f(\Y_P)=c\Y_H+d\Y_P$. Since $f$ preserves the Lie bracket, we have $c=0$, $a=1$, and $d\neq 0$. This shows that $Aut(\aff)$ is a $2$-dimensional Lie group. On the other hand, the adjoint action of $\Aff_\circ(1,\R)$ on $\aff$ induces a faithful representation from $\Aff_\circ(1,\R)$ to $Aut(\aff)$. This proves the lemma.
% \end{proof}
 
 \begin{lemma}\label{lem.1.4.7}
Let $G$ be a Lie group with $1\leq \dim G\leq 2$, and $\varphi:\PSL(2,\R) \rightarrow G$ be a Lie group morphism. Then $\varphi$ is the trivial map.
\end{lemma}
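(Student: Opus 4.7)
The plan is to pass to the Lie algebra level and exploit the simplicity of $\mathfrak{sl}(2,\R)$. Let $\g$ denote the Lie algebra of $G$, so $\dim \g = \dim G \in \{1,2\}$. The morphism $\varphi$ induces its differential at the identity
\[
d\varphi_e : \s(2,\R) \longrightarrow \g,
\]
which is a Lie algebra homomorphism.

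Recall that $\s(2,\R)$ is a simple Lie algebra of dimension $3$: its only ideals are $\{0\}$ and $\s(2,\R)$ itself. Since $\ker d\varphi_e$ is an ideal of $\s(2,\R)$, there are only two possibilities. If $\ker d\varphi_e = \{0\}$, then $d\varphi_e$ is injective, which would force $\dim \g \geq 3$, contradicting $\dim G \leq 2$. Therefore $\ker d\varphi_e = \s(2,\R)$, i.e.\ $d\varphi_e \equiv 0$.

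Finally, since $\PSL(2,\R)$ is connected and a Lie group morphism from a connected Lie group is uniquely determined by its differential at the identity, the vanishing of $d\varphi_e$ implies that $\varphi$ coincides with the trivial morphism on all of $\PSL(2,\R)$. There is no real obstacle here; the argument is a direct application of the simplicity of $\s(2,\R)$ together with the standard correspondence between Lie group morphisms out of a connected group and Lie algebra morphisms.
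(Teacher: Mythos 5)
Your argument is correct and complete: the kernel of $d\varphi_e$ is an ideal of the simple algebra $\s(2,\R)$, dimension forces it to be all of $\s(2,\R)$, and connectedness of $\PSL(2,\R)$ then kills $\varphi$. The paper itself gives no proof of this lemma (it defers to \cite{Has0} as an easy fact), and your simplicity argument is exactly the standard route one would expect there.
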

%\begin{proof}
%$\ker \varphi$ is a normal subgroup of $\PSL(2,\R)$. Since $\dim \PSL(2,\R)> \dim G$, $\ker\varphi\neq \{Id\}$. On the other hand, $\PSL(2,\R)$ is a simple Lie group. Thus, $\ker\varphi=\PSL(2,\R)$, and so, $\varphi$ is the trivial map.
%\end{proof}
\begin{remark}\label{rem.SL}
The map $\SL(2,\R)\rightarrow \PSL(2,\R)$ sending $A$ to $[A]$ is a Lie group double covering. We call an element $A\in \SL(2,\R)$ elliptic, parabolic, or hyperbolic if the corresponding element $[A]\in \PSL(2,\R)$ is elliptic, parabolic, or hyperbolic, respectively. Also, one can see that the group $\SL(2,\R)$ has three distinct $1$-parameter subgroup up to conjugacy, and it has a unique $2$-dimensional connected Lie subgroup isomorphic to $\Aff$ up to conjugacy. Hence, we may use the same terminology and notations for the elements and subgroup of $\SL(2,\R)$ as we used for those of $\PSL(2,\R)$ when there is no ambiguity. We do this for the Lie groups $SO_\circ(1,2)$ and $SO_\circ(2,1)$ as well, since they are isomorphic to $\PSL(2,\R)$.
\end{remark}

%\begin{lemma}\label{lem.1.4.8}
%Let $\varphi:\PSL(2,\R)\rightarrow \PSL(2,\R)$ be a Lie morphism. Then either $\varphi$ is the trivial map or it is a conjugation.
%\end{lemma}
%\begin{proof}
%If $\varphi$ is not surjective, then by Lemma (\ref{lem.1.4.7}), it is the trivial map. If $\varphi$ is surjective, then by the fact that $\PSL(2,\R)$ is semi-simple, $\varphi$ should be a conjugation (see \cite[Theorem 5.3]{Hu}).
%\end{proof}

%\begin{remark}
%Since $SO_\circ(1,2)$ and $SO_\circ(2,1)$ -the identity component of the isometry group of $2$-dimensional Anti de-Sitter space $\AdS^{1,1}$- are isomorphic to $\PSL(2,\R)$, we will use the same terminology and symbols for the elements and subgroups of $SO_\circ(1,2)$ and $SO_\circ(2,1)$ and their Lie algebras as we used for $\PSL(2,\R)$, when there is no ambiguity. 
%\end{remark}
Here, we remind some facts about the elements, connected subgroups, and the Lie algebra of $SO_\circ(1,2)$. It is easy to see that $1$ is the common eigenvalue of all the elements in $SO_\circ(1,2)$.
\begin{definition}
A non-trivial element $A\in SO_\circ(1,2)$ is called
\begin{itemize}
\item elliptic, if it has two distinct non real eigenvalues.
\item parabolic, if $1$ is its only eigenvalue.
\item hyperbolic, if it has two distinct real eigenvalues.
\end{itemize}
\end{definition}
It can be easily seen that the eigenspace of an elliptic element in $SO_\circ(1,2)$ is a timelike line. Also, the eigenspace of a parabolic element is a lightlike line. Furthermore, the eigenspaces of a hyperbolic element are two distinct lightlike lines and a spacelike line. A $1$-parameter subgroup of $SO_\circ(1,2)$ is called elliptic (res. parabolic, hyperbolic), if it contains an elliptic (resp. parabolic, hyperbolic) element.

Let $\{e_1,e_2,e_3\}$ be an orthonormal basis for $\R^{1,2}$ on which $e_1$ is a timelike vector. The set of the following matrices is a basis for the Lie algebra $\so(1,2)=Lie (SO_\circ(1,2))$ as a vector space
\begin{align*}
\Y_E=\begin{bmatrix}
0 & 0 & 0\\
0 & 0 & 1\\
0 & -1 &0
\end{bmatrix},\;\;\;\;\Y_H=\begin{bmatrix}
0 & 1 & 0\\
1 & 0 & 0\\
0 & 0 & 0
\end{bmatrix},\;\;\;\;
\Y_P=\begin{bmatrix}
0 & 0 & 1\\
0 & 0 & 1\\
1 & -1 & 0
\end{bmatrix},
\end{align*}
and we have, $[\Y_E,\Y_H]=\Y_E-\Y_P$, $[\Y_E,\Y_P]=\Y_H$ and $[\Y_H,\Y_P]=\Y_P$. The affine subalgebra $\aff$ is generated by $\{\Y_P,\Y_H\}$. We denote by $Y_E$, $Y_P$, and $Y_H$ the $1$-parameter subgroups of $SO_\circ(1,2)$ generated by the vector $\Y_E,\Y_P,\Y_H\subset\so(1,2)$, respectively.
%The $1$-parameter subgroups generated by vectors $\Y_E$, $\Y_P$, and $\Y_H$ are:
%\begin{align*}
%Y_E=\exp(\R\Y_E)= \left\{ \begin{bmatrix}
%1 & 0 & 0\\
%0 &\cos t & \sin t\\
%0& -\sin t & \cos t 
%\end{bmatrix}: t\in \R\right\},
%\end{align*}
%\begin{align*}
%Y_H=\exp(\R\Y_H)=\left\{ \begin{bmatrix}
%\cosh t & \sinh t & 0\\
%\sinh t& \cosh t & 0\\
%0 & 0 & 1
%\end{bmatrix}: t\in \R \right\},
%\end{align*}
%\begin{align*}
%Y_P=\exp(\R\Y_P)=\left\{ \begin{bmatrix}
%1+\frac{1}{2}t^2 & -\frac{1}{2}t^2 & t\\
%\frac{1}{2}t^2 & 1-\frac{1}{2}t^2 & t\\
%t & -t & 1
%\end{bmatrix}: t\in \R\right\}.
%\end{align*}
The group $SO_\circ(1,2)$ has a unique $2$-dimensional connected Lie subgroup up to conjugacy. It is the Lie group associate to the Lie subalgebra $\aff\leq \so(1,2)$ and we denote it by $\Aff$.
\subsection{Einstein universe}\label{sec.Ein}
In this section we give the definition and some properties of Einstein universe $\Ein^{1,n}$. For more details, we refer to \cite{Fr}, and \cite{Bar}.

Consider the scalar product space $\R^{2,n+1}=(\R^{n+3},\q_{2,n+1})$. The nullcone of $\R^{2,n+1}$ is the set of all non-zero null vectors in $\R^{2,n+1}$, and we denote it by $\Nu^{2,n+1}$.
\[\Nu^{2,n+1}=\{v\in \R^{2,n+1}\setminus \{0\}: \q_{2,n+1}(v)=0\}.\]
The nullcone is a degenerate hypersurface in $\R^{2,n+1}$ invariant by $O(2,n+1)$.
\begin{definition}\label{def.1.7.1}
The $(n+1)$-dimensional Einstein universe $\Ein^{1,n}$ is the image of $\Nu^{2,n+1}$
under the projectivization:
\[\Pn:\R^{2,n+1}\setminus\{0\}\longrightarrow \R\Pn^{n+2}.\]
\end{definition}
 In the sequel, for notational convenience, we will denote $\Pn$ as a map from $\R^{2,n+1}$
implicitly assuming that the origin $0$ is removed from any subset of $\R^{2,n+1}$ on which we apply $\Pn$.

The degenerate metric on the nullcone $\mathfrak{N}^{2,n+1}$ induces an $O(2,n+1)$-invariant conformal Lorentzian structure on $\Ein^{1,n}$. Indeed, the group of conformal transformations of Einstein universe $\Ein^{1,n}$ is $PO(2,n+1)$ (see \cite{Fr}).
\begin{definition}\label{def.1.7.2}
The double covering space $\widehat{\Ein}^{1,n}$ of Einstein universe is the quotient space of the null-cone $\Nu^{2,n+1}$ by the action of positive scalar multiplication.
\end{definition}
For many purposes the double covering may be more useful than $\Ein^{1,n}$, itself. The double covering space $\widehat{\Ein}^{1,n}$ is conformally equivalent to $(\Sn^1\times \Sn^n,-dt^2+ds^2)$, where $dt^2$ and $ds^2$ are the usual round metrics on the spheres $\Sn^1$ and $\Sn^n$ of radius one (see \cite{Bar}).

 \begin{remark}
It is remarkable that in a conformal structure $(M,[\gi])$, if $\gamma$ is a geodesic respect to a metric $\gi\in [\gi]$, it is not a geodesic respect to all the metrics in $[\gi]$, necessarily. However, if $\gi$ is not a Riemannian metric (i.e. it is not positive or negative definite), then any lightlike geodesic respect to $\gi$ is a (unparameterized) lightlike geodesic respect to all the metrics in class $[\gi]$.
\end{remark}
 
 \begin{definition}\label{def.1.7.3}
  A photon in Einstein universe $\Ein^{1,n}$ is the projectivization of a totally isotropic $2$-plane in $\R^{2,n+2}$. We denote the set of all photons by $\mathrm{Pho}^{1,n}$.
 \end{definition}
 In fact, a photon is a lightlike geodesic in the Einstein universe and  it is naturally homeomorphic to the real projective line $\R\Pn^1$.
  \begin{definition}\label{def.1.7.4}
Given any point $[p]\in \Ein^{1,n}$ the lightcone $L([p])$ with vertex $[p]$ is the union of all photons containing $[p]$: 
 \[L([p]):=\{\phi\in \mathrm{Pho^{1,n}:[p]\in \phi}\}.\]
 \end{definition}
 A lightcone $L([p])$ can be equivalently defined as the projectivization of the orthogonal complement $p^\perp\cap \Nu^{2,n+1}$. Furthermore, $L([p])$ is a singular hypersurface with  the only singular point $[p]$. We denote the lightcone with removed vertex by $L([\hat{p}])$ and call it a \textit{vertex-less lightcone}. It is easy to see that a vertex-less lightcone is  homeomorphic to $\Sn^{n-1}\times \R$. We also denote a photon $\phi\subset L([p])$ with removed the vertex $[p]$ by $\hat{\phi}$ and call it a \textit{vertex-less photon}.
 
%\subsubsection{Conformal compactification of Minkowski space}\label{subsec.1.7.1}
%Consider the scalar product space $\R^{2,n+1}$. Here it will be convenient to use the following scalar product
%\begin{align*}
%\langle v|w\rangle:=-\frac{1}{2}v_1w_2-\frac{1}{2}v_2w_1-v_3w_3+v_4w_4+\dots+v_{n+3}w_{n+3}.
%\end{align*} 
%Let $\langle.,.\rangle$ denote the bilinear form defined by $\mathfrak{q}_{1,n}$ on $\R^{1,n}$. Now, consider the following embedding of the Minkowski space $\R^{1,n}$ into $\Nu^{2,n+1}$.
%\begin{align}\label{eq.1.7.1}
%\mathcal{E}:\R^{1,n}\longrightarrow \Nu^{2,n+1}\subset \R^{2,n+1},\;\;\;\;\;\;x\mapsto\begin{bmatrix}
%1\\
%\langle x,x\rangle\\
%x
%\end{bmatrix}.
%\end{align}
%The composition $\Phi:=\Pn\circ\mathcal{E}$, is a conformal embedding, and its image is a dense open subset of $\Ein^{1,n}$.
 \begin{definition}\label{def.1.7.5}
 Given any point $[p]\in \Ein^{1,n}$, the Minkowski patch $Mink([p])$ is the complement of the lightcone $L([p])$ in $\Ein^{1,n}$. 
 \end{definition}
 \begin{proposition}\label{prop.1.7.6}\cite[Lemma 4.11]{Fr}
 For an arbitrary point $[p]\in \Ein^{1,n}$, the Minkowski patch $Mink([p])$ is conformally equivalent to the Minkowski space $\En^{1,n}$.
 \end{proposition}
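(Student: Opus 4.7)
The strategy is to exhibit an explicit conformal diffeomorphism $\varphi:\En^{1,n}\to Mink([p])$ by producing a carefully chosen affine section of the nullcone above $Mink([p])$ and reading off the pulled-back metric.

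First, I would set up the linear-algebraic dictionary. Fix a null vector $p\in \Nu^{2,n+1}$ representing the point $[p]\in \Ein^{1,n}$. Because $\q_{2,n+1}$ has signature $(2,n+1)$ with $n+1\geq 1$, I can find a null vector $q\in \Nu^{2,n+1}$ with $\q_{2,n+1}(p,q)=1$ (pick any vector not in $p^\perp$, rescale so the pairing is $1$, then adjust by a multiple of $p$ to make it null). Set $E:=(\R p\oplus\R q)^\perp$; the orthogonal decomposition $\R^{2,n+1}=(\R p\oplus\R q)\oplus E$ together with the signature of the hyperbolic plane $\R p\oplus\R q$ being $(1,1)$ forces $E$ to be a nondegenerate subspace of signature $(1,n)$, so $(E,\q_{2,n+1}|_E)$ is a model of $\R^{1,n}$.

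Second, I would define the candidate map. For $w\in E$, set
\[
s(w):=q+w-\tfrac{1}{2}\q_{2,n+1}(w)\,p, \qquad \varphi(w):=[s(w)]\in \Ein^{1,n}.
\]
A direct computation using $\q_{2,n+1}(p)=\q_{2,n+1}(q)=0$, $\q_{2,n+1}(p,q)=1$, and $p,q\perp E$ shows $\q_{2,n+1}(s(w))=0$, so $s(w)\in \Nu^{2,n+1}$. Moreover $\q_{2,n+1}(s(w),p)=1\neq 0$, hence $[s(w)]\notin L([p])$, i.e., $\varphi(w)\in Mink([p])$. Conversely, if $[v]\in Mink([p])$ then $\q_{2,n+1}(v,p)\neq 0$, so one can rescale $v$ to obtain the unique representative $v'$ with $\q_{2,n+1}(v',p)=1$; decomposing $v'=aq+bp+w$ with $w\in E$, the conditions $\q_{2,n+1}(v',p)=1$ and $\q_{2,n+1}(v')=0$ force $a=1$ and $b=-\tfrac{1}{2}\q_{2,n+1}(w)$, giving $v'=s(w)$. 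Hence $\varphi:E\to Mink([p])$ is a bijection, and smoothness is evident.

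Third, I would verify the conformal character. The conformal structure on $\Ein^{1,n}$ is, by definition, the class induced from the degenerate metric on $\Nu^{2,n+1}$ modulo the direction of radial scaling: a local smooth section $\sigma$ of $\Pn:\Nu^{2,n+1}\to \Ein^{1,n}$ produces a representative Lorentzian metric $\sigma^{*}\q_{2,n+1}$ on its image, and any two sections give metrics differing by a positive conformal factor. Using the section $s$, for $w\in E$ and tangent vectors $u_1,u_2\in E\cong T_w E$, one has $ds_w(u_i)=u_i-\q_{2,n+1}(w,u_i)\,p$, and since $p$ is null and orthogonal to $E$,
\[
(s^{*}\q_{2,n+1})_w(u_1,u_2)=\q_{2,n+1}(u_1,u_2).
\]
Thus $s^{*}\q_{2,n+1}$ is precisely the flat Minkowski metric $\q_{2,n+1}|_E$, and $\varphi$ pulls a representative of the conformal class on $Mink([p])$ back to the Minkowski metric. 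This identifies $Mink([p])$ conformally with $\En^{1,n}$.

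The only real subtlety, and the one I would take care with, is the interface between the degenerate metric on $\Nu^{2,n+1}$ and the well-defined conformal class on $\Ein^{1,n}$: namely, justifying that pull-back by any smooth section gives a representative of $[\gi]$ on $\Ein^{1,n}$, and that changing the section scales this representative by a smooth positive function. Once this is granted, the computation above is immediate and the proposition follows.
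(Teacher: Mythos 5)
Your construction is correct, and it is essentially the standard proof of this fact: the paper itself gives no argument, quoting the statement directly from \cite[Lemma 4.11]{Fr}, and the proof there (as in the standard references on $\Ein^{1,n}$) is exactly your affine-section computation, namely choosing a null $q$ with $\langle p,q\rangle=1$, splitting $\R^{2,n+1}=(\R p\oplus\R q)\oplus E$ with $E\simeq\R^{1,n}$, and parametrizing $Mink([p])$ by the section $w\mapsto q+w-\tfrac12\q_{2,n+1}(w)p$, whose pulled-back metric is flat. Your closing remark about why a section of $\Nu^{2,n+1}$ yields a representative of the conformal class, with different sections differing by a positive factor, is precisely the definitional point one must invoke, so the argument is complete.
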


\begin{definition}\label{def.1.7.7}
Let $[p]\in \Ein^{1,n}$ and $\gamma(t)=x+tv$ be a lightlike geodesic in the Minkowski patch $Mink([p])\approx\En^{1,n}$. We call a point $[q]\in L([p])$ the limit point of $\gamma$ in $L([p])$, if $ \lim_{t\rightarrow \pm\infty}\gamma(t)=[q]$. Similarly, for an affine degenerate hyperplane $\Pi\subset Mink([p])$, we call a point $[q]\in L([p])$ the limit point of $\Pi$ in $L([p])$ if for all lightlike geodesics $\gamma(t)\subset \Pi$, $\lim_{t\rightarrow \pm\infty}\gamma(t)=[q]$.
\end{definition}

\begin{proposition}\cite[Lemma 4.12]{Fr}\label{prop.1.7.10}
Let $[q]\in L([\hat{p}])$ be an arbitrary point. Then there exists a unique affine degenerate hyperplane $\Pi$ in $Mink([p])$, such that $[q]$ is the limit point of $\Pi$. Furthermore, two points $[q_1],[q_2]\in L([\hat{p}])$ lie in the same photon in $L([p])$ if and only if their corresponding degenerate affine hyperplanes in $Mink([p])$ are parallel. 
\end{proposition}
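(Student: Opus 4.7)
The plan is to work in an explicit null basis for $\R^{2,n+1}$ and reduce both claims to a short linear-algebra calculation on the projective nullcone.

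Fix a hyperbolic pair $(e_-,e_+)$ of isotropic vectors with $\lan e_-,e_+\ran=1$, and complete to a basis by an orthonormal family $(e_1,\dots,e_{n+1})$ spanning a $(1,n)$-plane $E\simeq\R^{1,n}$ orthogonal to $\mathrm{span}(e_-,e_+)$. Take $p=e_+$, so that $p^\perp=\mathrm{span}(e_+)\oplus E$ and $L([p])=\Pn(p^\perp\cap\Nu^{2,n+1})$. Then the Minkowski patch $Mink([p])$ is parametrized by
\[x\in E\;\longmapsto\;\bigl[\,e_-+x-\tfrac12\q(x)\,e_+\,\bigr]\in \Ein^{1,n},\]
and a vertex-less point of $L([p])$ takes the form $[q]=[\beta e_++u]$ with $u\in E$ lightlike and nonzero, with $(\beta,u)$ well-defined up to a common rescaling.

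The first step is to compute the limit point of a lightlike geodesic $\gamma(t)=x_0+tv$ (with $v\in E$ lightlike, $v\neq0$). Substituting into the parametrization and using $\q(v)=0$ gives
\[\gamma(t)=\bigl[\,e_-+x_0+tv-\tfrac12\q(x_0)e_+-t\lan x_0,v\ran e_+\,\bigr],\]
and dividing the homogeneous coordinates by $t$ and letting $t\to\pm\infty$ yields the projective limit
\[[q_\gamma]=\bigl[\,v-\lan x_0,v\ran e_+\,\bigr]\in L([\hat p]).\]
Now let $\Pi=x_0+v^\perp$ be a degenerate affine hyperplane in $Mink([p])$; here the normal direction $v$ is lightlike, so $v\in v^\perp$, and because $v^\perp$ has signature $(0,n-1,1)$ with radical $\mathrm{span}(v)$, the only lightlike directions in $\Pi$ are parallel to $v$. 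Thus every lightlike geodesic in $\Pi$ has the form $(x_0+h)+tv$ with $h\in v^\perp$, and its limit point is $[\,v-\lan x_0+h,v\ran e_+\,]=[\,v-\lan x_0,v\ran e_+\,]$, independent of $h$. So $\Pi$ has a well-defined limit point in $L([p])$.

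For the existence part of the first assertion, given $[q]=[\beta e_++u]$ with $u\in E$ lightlike, I would take $v:=u$ and choose any $x_0\in E$ with $\lan x_0,u\ran=-\beta$; such $x_0$ exists because $u\neq0$. Then the limit point of $\Pi=x_0+u^\perp$ is exactly $[u-\lan x_0,u\ran e_+]=[\beta e_++u]=[q]$. For uniqueness, note that a degenerate affine hyperplane is determined by its direction $u^\perp$ (hence by $\mathrm{span}(u)$) and by the scalar $\lan x_0,u\ran$; rescaling the representative $q\mapsto\lambda q$ replaces $(\beta,u)$ by $(\lambda\beta,\lambda u)$ and leaves both invariants unchanged, so $\Pi$ depends only on $[q]$.

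For the parallelism statement, a photon in $L([p])$ is the projectivization of a totally isotropic $2$-plane containing $e_+$; hence the photon through $[p]$ and $[q_i]=[\beta_i e_++u_i]$ is $\Pn(\mathrm{span}(e_+,q_i))=\Pn(\mathrm{span}(e_+,u_i))$. Two points lie on a common photon of $L([p])$ exactly when $\mathrm{span}(u_1)=\mathrm{span}(u_2)$, i.e.\ when their associated normal directions coincide, which is precisely the condition $u_1^\perp=u_2^\perp$, i.e.\ $\Pi_1\parallel\Pi_2$.

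The only mildly delicate point is verifying that a degenerate hyperplane of $\R^{1,n}$ contains no lightlike directions transverse to its radical, which is what guarantees that \emph{every} lightlike geodesic of $\Pi$ converges to the same point of $L([p])$; everything else is a direct computation in the chosen null basis.
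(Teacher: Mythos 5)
Your argument is correct, but note that the paper itself offers no proof of this statement: it is quoted verbatim from Frances' thesis (\cite[Lemma 4.12]{Fr}), so what you have produced is a self-contained replacement for a cited result rather than an alternative to an argument in the text. Your route — fixing a hyperbolic pair $(e_-,e_+)$ with $p=e_+$, using the standard isometric section $x\mapsto e_-+x-\tfrac12\q(x)e_+$ of the nullcone over $E\simeq\R^{1,n}$, and computing the projective limit $[\,v-\lan x_0,v\ran e_+\,]$ of $x_0+tv$ — is the expected computation, and all the key points are in place: the fact that a degenerate hyperplane of $\R^{1,n}$ contains no null direction transverse to its radical (so the limit point of $\Pi$ is well defined), the existence of $x_0$ with $\lan x_0,u\ran=-\beta$, and the identification of photons of $L([p])$ with $\Pn(\mathrm{span}(e_+,u))$, which reduces the second assertion to $\mathrm{span}(u_1)=\mathrm{span}(u_2)$, i.e.\ parallelism. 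One small expository slip: your "uniqueness" sentence actually proves that the map $[q]\mapsto\Pi$ is independent of the representative of $[q]$ (well-definedness), whereas uniqueness is injectivity of $\Pi\mapsto$ limit point; that direction is, however, immediate from the formula you already derived, since for $\Pi'=x_0'+(v')^\perp$ the limit point $[\,v'-\lan x_0',v'\ran e_+\,]$ recovers $\mathrm{span}(v')$ from its $E$-component and then the scalar $\lan x_0',v'\ran$ for the normalized representative, hence recovers $\Pi'$. With that one sentence added, the proof is complete; implicitly you also use that your section is conformal onto $Mink([p])$ (its pullback metric is exactly $\q_{1,n}$, a one-line check), which is what lets you write lightlike geodesics of the patch as affine lightlike lines, consistent with Definition \ref{def.1.7.7}.
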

%\begin{proof}
%By Lemma (\ref{lem.1.7.8}), there exists a lightlike geodesic $\gamma(t)=x+tv$, such that $[q]$ is the limit point of $\gamma$. We show that $x+v^\perp$ is the desired affine hyperplane. Let $\beta(t)=y+tw$, be a lightlike geodesic in $x+v^\perp$. There exist $\lambda\in \R^*$ and $u\in v^\perp$ such that $\beta(t)=x+u+t\lambda v$. Consider the limit point of $\beta$ in $L([p])$.
%\[\lim_{t\rightarrow\pm\infty}\beta(t)=[0:2\lambda\langle u+x,v\rangle:\lambda v].\] 
%Since $\langle x+u,v\rangle-\langle x,v\rangle=0$, Lemma (\ref{lem.1.7.8}) implies that $\lim_{t\rightarrow\pm\infty}\beta(t)=[q]$.
%
%For uniqueness, let $\Pi'$ be an affine lightlike hyperplane with limit point $[q]$. Let $\alpha(t)=z+tw$ be a lightlike geodesic in $\Pi'$. Then 
%\[\lim_{t\rightarrow\pm\infty}\alpha(t)=[0:2\langle z,w\rangle: w]=[q]=[0:2\langle x,v\rangle: v].\]
%Using Lemma (\ref{lem.1.7.8}) again we have, $w=\lambda v$ and $\langle x-\lambda z,v\rangle=0$. Hence $x-\lambda z\in v^\perp$ and so, $z\in x+v^\perp$. Thus $\alpha(t)\subset x+v^\perp$. This completes the proof.
%\end{proof}
\begin{lemma}\cite[p.p. 56]{Fr}
By the action of $O(2,n)$ on Einstein universe $\Ein^{1,n}$, the stabilizer of a point in $\Ein^{1,n}$ is isomorphic to $\Conf(\En^{1,n})\simeq (\R^*\times O(1,n))\ltimes \R^{1,n}$.
\end{lemma}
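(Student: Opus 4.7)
The plan is to identify the stabilizer by restricting its action to the Minkowski patch of $[p]$, using Proposition \ref{prop.1.7.6} together with the Lorentzian Liouville theorem (Theorem \ref{thm.lio}).

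First, I would observe that any $g$ in the stabilizer fixes the isotropic line $\R p$, hence preserves its orthogonal complement $p^\perp$, and therefore preserves the lightcone $L([p])=\Pn(p^\perp\cap \Nu^{2,n+1})$ together with its complement $Mink([p])$. Since the action of $O(2,n+1)$ on $\Ein^{1,n}$ is conformal, restricting $g$ to $Mink([p])$ and composing with the conformal equivalence $Mink([p])\approx \En^{1,n}$ from Proposition \ref{prop.1.7.6} yields a homomorphism
\[
\rho : \mathrm{Stab}_{O(2,n+1)}([p]) \longrightarrow \Conf(\En^{1,n}).
\]

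Next, I would establish that $\rho$ is injective modulo the central element $-I\in O(2,n+1)$, which scales every isotropic line and hence acts trivially on $\Ein^{1,n}$. Indeed, if $\rho(g)$ is the identity then $g$ fixes the open dense subset $Mink([p])$ of $\Ein^{1,n}$ pointwise, so by continuity $g$ acts trivially on $\Ein^{1,n}$ and lies in $\{\pm I\}$. For surjectivity I would invoke Liouville's theorem: an arbitrary $f\in \Conf(\En^{1,n})$ corresponds under the identification $Mink([p])\approx \En^{1,n}$ to a conformal self-map of the open set $Mink([p])\subset \Ein^{1,n}$, which by Theorem \ref{thm.lio} extends uniquely to a global conformal transformation $\tilde f$ of $\Ein^{1,n}$. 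The extension $\tilde f$ must send the complement $L([p])$ of $Mink([p])$ to itself, and since $[p]$ is the unique singular point of $L([p])$ it is necessarily fixed, so $\tilde f\in \mathrm{Stab}([p])$ with $\rho(\tilde f)=f$.

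Combining these two steps gives $\mathrm{Stab}([p])\simeq \Conf(\En^{1,n})$, and the identification $\Conf(\En^{1,n})\simeq (\R^*\times O(1,n))\ltimes \R^{1,n}$ recalled in Section 2 completes the proof. I do not expect a serious obstacle; the only point requiring care is the bookkeeping of the kernel $\{\pm I\}$ of $O(2,n+1)\to \mathrm{PO}(2,n+1)=\Conf(\Ein^{1,n})$, which is absorbed into the $\R^*$ factor on the conformal side and causes no loss of content.
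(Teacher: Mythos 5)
The paper does not prove this lemma at all; it is quoted from [Fr, p.~56]. Your argument is the standard one that the citation stands for: restrict the stabilizer to $Mink([p])$, identify it conformally with $\En^{1,n}$ via Proposition \ref{prop.1.7.6}, and use Theorem \ref{thm.lio} to extend any $f\in\Conf(\En^{1,n})$ to a global conformal map which must preserve $L([p])$ and hence fix its unique singular point $[p]$. Both the injectivity-mod-$\{\pm I\}$ step and the surjectivity step are sound (for surjectivity one lifts the extension $\tilde f\in PO(2,n+1)$ to either of its two preimages in $O(2,n+1)$, both of which preserve the line $\R p$); you also silently correct the statement's $O(2,n)$ to $O(2,n+1)$, which is clearly what is meant.

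The only place you are too quick is the closing bookkeeping of $\{\pm I\}$. What your two steps literally prove is $\mathrm{Stab}_{O(2,n+1)}([p])/\{\pm I\}\simeq\Conf(\En^{1,n})$, i.e.\ that the stabilizer of $[p]$ inside $PO(2,n+1)=\Conf(\Ein^{1,n})$ is the genuine conformal group of Minkowski space, whose linear parts form $\R^*\cdot O(1,n)\cong\R_+^*\times O(1,n)$ (note that $(r,A)$ and $(-r,-A)$ give the same map $x\mapsto rAx+v$). The stabilizer in $O(2,n+1)$ itself is the full parabolic, which really is the semidirect product $(\R^*\times O(1,n))\ltimes\R^{1,n}$ and is a twofold central extension of the former; the two groups are not abstractly isomorphic (eight versus four connected components), so "$\{\pm I\}$ is absorbed into the $\R^*$ factor" is an assertion, not an argument. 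If one wants the literal form of the stabilizer in $O(2,n+1)$, the clean route is linear-algebraic rather than dynamical: choose an isotropic $q$ with $\langle p,q\rangle=1$, set $W=p^\perp\cap q^\perp\cong\R^{1,n}$, and decompose any $g$ with $gp=\lambda p$ uniquely as a Levi element (acting by $\lambda$ on $p$, $\lambda^{-1}$ on $q$, $A\in O(1,n)$ on $W$) composed with a unipotent "translation" parametrized by $v\in W$. That said, the paper commits exactly the same $\mathbb{Z}_2$ imprecision when it declares $\Conf(\En^{1,n})\simeq(\R^*\times O(1,n))\ltimes\R^{1,n}$, and only identity components are used downstream (Eq.\ \ref{fix.eq.1}), so your proof does establish the lemma in the sense in which the paper uses it.
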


Let $\phi$ be a photon in Einstein universe $\Ein^{1,n}$. The complement of $\phi$ in $\Ein^{1,n}$ is an open dense subset and we denote it by $\Ein^{1,n}_\phi$. It is diffeomorphic to $\Sn^1\times \R^n$. There is a natural codimension $1$ foliation $\mathcal{F}_\phi$ on $\Ein^{1,n}_\phi$ on which the leaves are degenerate hypersurfaces diffeomorphic to $\R^n$. Fixing a point $[p]\in \phi$, the complement of $\phi$ in the lightcone $L([p])$ is a leaf of $\mathcal{F}_{\phi}$. The other leaves are the degenerate affine hyperplanes in $Mink([p])$ with limit point in $\phi$. The group of conformal transformations of $\Ein^{1,n}_\phi$ acts on $\Ein^{1,n}_\phi$ transitively and the action preserves the foliation $\mathcal{F}_\phi$. In fact, $\Conf(\Ein^{1,n}_\phi)$ is the stabilizer of $\phi$ by the action of $PO(2,n)$ (see \cite{Fr}).
\begin{lemma}\cite[Lemma 4.15]{Fr}\label{lem.Hiz}
By the action of $PO(2,n)$ on Einstein universe $\Ein^{1,n}$, the stabilizer of a photon is isomorphic to $(\R^*\times \SL(2,\R)\times O(n-2))\ltimes H(2n-3)$, where $H(2n-3)$ is the $(2n-3)$-dimensional Heisenberg group.
\end{lemma}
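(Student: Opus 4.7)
The plan is to reduce the question to a direct computation inside $O(2,n+1)$ and then pass to the projective quotient. By Definition~\ref{def.1.7.3}, a photon $\phi\subset \Ein^{1,n}$ is the projectivization of a totally isotropic $2$-plane $P\leq \R^{2,n+1}$, so the stabilizer of $\phi$ in the projective conformal group is the image of $Q:=\mathrm{Stab}_{O(2,n+1)}(P)$ under the quotient by the center $\{\pm\mathrm{Id}\}$. The subgroup $Q$ is a maximal parabolic subgroup of $O(2,n+1)$, so I expect a Levi decomposition $Q=L\ltimes U$ in which $L$ is a reductive product of a $GL_{2}$-factor and an orthogonal factor, and $U$ is a Heisenberg-type unipotent radical; the lemma essentially names these two factors.

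To make this explicit, I would pick a Witt basis adapted to $P$. Choose a totally isotropic $2$-plane $P^{\ast}$ dually paired with $P$ under $\q_{2,n+1}$ and set $W:=(P\oplus P^{\ast})^{\perp}$, which is positive definite. With respect to the decomposition $\R^{2,n+1}=P\oplus W\oplus P^{\ast}$, every element of $Q$ has block upper-triangular form
\[
g=\begin{pmatrix} A & B & C\\ 0 & D & E\\ 0 & 0 & (A^{t})^{-1}\end{pmatrix},
\]
and the equation $g^{t}Jg=J$, with $J$ the Gram matrix of the Witt basis, forces $D\in O(W)$, expresses $E$ as a function of $A$, $D$, $B$, and imposes a symmetry constraint on $C$ coupled quadratically to $B$.

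The Levi factor is obtained by setting $B=C=E=0$: one finds $\{\mathrm{diag}(A,D,(A^{t})^{-1}):A\in GL(2,\R),\;D\in O(W)\}\simeq GL(2,\R)\times O(W)$. The unipotent radical $U$ is obtained by setting $A=\mathrm{Id}$ and $D=\mathrm{Id}$: the parameters $(B,C)$ subject to the orthogonality constraint form a two-step nilpotent Lie group whose derived subgroup coincides with its center and is one-dimensional (corresponding to the skew-symmetric part of $C$). These properties characterize the Heisenberg group of the appropriate odd dimension. Finally, quotienting $Q$ by $\{\pm\mathrm{Id}\}$ decomposes the $GL(2,\R)$ factor as $\R^{\ast}\times \SL(2,\R)$, with the central sign absorbed into the orthogonal factor, yielding the semidirect product announced in the lemma.

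The main technical obstacle is the second step: doing the bookkeeping of signs and transposes when writing $g^{t}Jg=J$ block by block. A careless computation can collapse $U$ to an abelian group of the same dimension, missing the crucial nontrivial commutator that produces the one-dimensional center of the Heisenberg group. The safest route is to fix $J$ as an explicit anti-diagonal matrix coming from the chosen Witt basis, impose orthogonality one block at a time, and isolate the skew-symmetric piece of $C$ whose variation generates $[U,U]$.
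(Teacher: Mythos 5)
The paper does not prove this lemma at all: it is imported verbatim from Frances's thesis \cite[Lemma 4.15]{Fr}, so there is no internal argument to compare yours with. Your route is the standard one and its core is sound: since a photon $\phi$ determines the totally isotropic $2$-plane $P$ as the linear span of the cone over $\phi$, the stabilizer of $\phi$ in the projective group is exactly $Q/\{\pm\mathrm{Id}\}$ with $Q=\mathrm{Stab}_{O}(P)$; $Q$ is a maximal parabolic, a Witt basis adapted to $P\oplus W\oplus P^{\ast}$ puts its elements in block triangular form, the Levi factor is $GL(2,\R)\times O(W)$, and the unipotent radical is two-step nilpotent with one-dimensional centre $\Lambda^{2}P$. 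Your warning that a careless block computation can lose the commutator is well placed; to finish that part you should also check that the pairing between $B$ and the skew part of $C$ induced by $\q$ is nondegenerate, which is what makes $U$ a Heisenberg group rather than just two-step nilpotent.

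Two points need repair before this proves the statement as written. First, the bookkeeping of indices: you place $P$ inside $\R^{2,n+1}$, so $W=(P\oplus P^{\ast})^{\perp}$ is positive definite of dimension $n-1$ and your computation yields $O(n-1)$ and a Heisenberg group of dimension $2(n-1)+1=2n-1$, whereas the constants $O(n-2)$ and $H(2n-3)$ in the lemma belong to the convention of its own wording, in which the acting group is $PO(2,n)$ (for $\Ein^{1,2}$ one takes $n=3$ there, giving $H(3)$); the paper's indexing is internally inconsistent, so you must fix one convention and state precisely which version you prove. Second, the final step is asserted rather than proved: the map $\R^{\ast}\times\SL(2,\R)\to GL(2,\R)$, $(\lambda,A)\mapsto\lambda A$, is a two-to-one covering of $GL^{+}(2,\R)$ only, so the claim that after quotienting by $\{\pm\mathrm{Id}\}$ the $GL(2,\R)$ factor simply becomes $\R^{\ast}\times\SL(2,\R)$ is false as stated. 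Since $-\mathrm{Id}$ lies in the Levi factor (acting as $-\mathrm{Id}$ on $P$, $W$ and $P^{\ast}$ simultaneously), identifying $(GL(2,\R)\times O(W))/\{\pm\mathrm{Id}\}$ with $\R^{\ast}\times\SL(2,\R)\times O(n-2)$ requires an explicit isomorphism that tracks connected components and shows how the missing half of $GL(2,\R)$ and the central sign are exchanged with the orthogonal factor; this is exactly the step you describe as absorbing the sign but do not carry out, and it is where the precise shape of the statement is decided.
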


%\subsubsection{de Sitter and Anti-de Sitter components}\label{subsec.1.7.2}
\begin{definition}\label{def.1.7.14}
A spacelike hypersphere in $\Ein^{1,n}$ is the one-point compactification of a spacelike affine hyperplane of a Minkowski patch in $\Ein^{1,n}$.
\end{definition}
 Equivalently, spacelike hyperspheres are projectivizations of $v^\perp\cap \Nu^{2,n+1}$ for timelike vectors $v\in \R^{2,n+1}$. It is easy to see that a spacelike hypersphere is conformally equivalent to the round sphere $\Sn^n$.
 \begin{definition}\label{def.1.7.17}
An Einstein hypersphere in $\Ein^{1,n}$ is the closure of a de-Sitter hypersphere $S_r([q])\subset Mink([p])$ in $\Ein^{1,n}$, for some $[p]\in \Ein^{1,n}$ and $[q]\in Mink([p])$.
\end{definition}
The Einstein hyperspheres are projectivizations of $v^\perp\cap\Nu^{2,n+1}$ for spacelike vectors $v\in \R^{2,n+1}$. For $n\geq 2$, every Einstein hypersphere in $\Ein^{1,n}$ has a natural structure of $\Ein^{1,n-1}$.
\begin{remark}\label{rem.ein}
For an Einstein hypersphere $\Ein^{1,n-1}\subset \Ein^{1,n}$ and an arbitrary point $x_0\in \Ein^{1,n-1}$, the intersection of $\Ein^{1,n-1}$ with the Minkowski patch $Mink(x_0)\subset \Ein^{1,n}$ (the Minkowski patch of $x_0$ as a point in $\Ein^{1,n}$) is an affine Lorentzian hyperplane. Thus, an Einstein hypersphere in $\Ein^{1,n}$ is the conformal compactification of a Lorentzian affine hyperplane in a Minkowski patch (it is the Lorentzian analog of Definition (\ref{def.1.7.14})). 
\end{remark}
 \begin{definition}\label{def.1.7.15}
A de-Sitter component in $\Ein^{1,n}$ is the complement of a spacelike hypersphere $\Sn^{n}\subset\Ein^{1,n}$. It is homeomorphic to $\Sn^n \times \R$ and evidently, its conformal boundary is $\Sn^n$.
\end{definition}
\begin{definition}\label{def.1.7.18}
For $n\geq 2$, an Anti de-Sitter component in $\Ein^{1,n}$ is the complement of an Einstein hypersphere $\Ein^{1,n-1}\subset  \Ein^{1,n}$. It is homeomorphic to $\Sn^1 \times \R^n$ and evidently, its boundary is $\Ein^{1,n-1}$.
\end{definition}
\begin{lemma}\label{lem.1.7.16}\cite[p.p. 58]{Fr}
A de-Sitter (resp. Anti de-Sitter) component in $\Ein^{1,n}$ is conformally equivalent to the model de-Sitter space $dS^{1,n}=\q_{1,n+1}^{-1}(1)\subset \R^{1,n+1}$ (resp. the model Anti de-Sitter space $AdS^{1,n}=\q_{2,n}^{-1}(-1)\subset \R^{2,n}$).
\end{lemma}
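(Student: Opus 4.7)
The plan is to exhibit explicit conformal diffeomorphisms using the projective model of $\Ein^{1,n}$, by constructing a global affine section of the null cone $\Nu^{2,n+1}$ over the component in question and identifying its image with the hyperboloid model.

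For the de-Sitter component, I would fix a timelike unit vector $v\in \R^{2,n+1}$ with $\q_{2,n+1}(v) = -1$. Using the equivalent description of the spacelike hypersphere (stated just after Definition \ref{def.1.7.14}), one has $\Sn^n = \Pn(v^\perp \cap \Nu^{2,n+1})$, so the de-Sitter component consists of those $[p]\in \Ein^{1,n}$ with $\lan p,v\ran \neq 0$. The orthogonal splitting $\R^{2,n+1} = \R v\oplus v^\perp$ endows $v^\perp$ with a form of signature $(1,n+1)$, and I define
\[
\Phi: \dS^{1,n}\longrightarrow \Ein^{1,n}\setminus \Sn^n, \qquad w\longmapsto [\,v+w\,],
\]
where $\dS^{1,n}=\{w\in v^\perp : \q_{1,n+1}(w)=1\}$. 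The computation $\q_{2,n+1}(v+w) = -1+1 = 0$ places $v+w$ in $\Nu^{2,n+1}$, while $\lan v+w,v\ran = -1$ places the image in the complement of $\Sn^n$. Conversely, every $[p]$ in the de-Sitter component admits a unique representative normalized by $\lan p,v\ran = -1$, which forces $p = v + w$ with $w\in v^\perp$ and $\q_{1,n+1}(w)=1$; this supplies a two-sided inverse. The alternative projective sign $v+w_1 = -(v+w_2)$ is excluded by pairing with $v$, since it would force $\lan 2v,v\ran = 0$.

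To verify conformality, recall that the conformal class on $\Ein^{1,n}$ descends from the degenerate metric on $\Nu^{2,n+1}$, and any smooth section transverse to the rays of the projectivization yields a representative Lorentzian metric in that class. The section $s:w\mapsto v+w$ is exactly of this form over the de-Sitter component, and since $ds_w(u) = u$ for every $u\in T_w \dS^{1,n}\subset v^\perp$, the pulled-back metric is $\q_{2,n+1}(u) = \q_{1,n+1}(u)$, which is the standard de-Sitter metric on the hyperboloid. Thus $\Phi$ is a conformal diffeomorphism. The anti de-Sitter case is parallel: choose $v$ spacelike with $\q_{2,n+1}(v)=1$, so that $v^\perp$ carries a form of signature $(2,n)$, identify the Einstein hypersphere as $\Pn(v^\perp\cap \Nu^{2,n+1})$, and apply the map $w\mapsto [v+w]$ to $\AdS^{1,n} = \{w\in v^\perp : \q_{2,n}(w) = -1\}$; the same transverse-section argument shows it is a conformal diffeomorphism onto the anti de-Sitter component. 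The main obstacle is nothing more than bookkeeping: fixing the normalization $\lan p,v\ran = \pm 1$ and tracking the projective sign ambiguity. Once those conventions are fixed, the argument is linear algebra on $\R^{2,n+1}$ together with the general principle that a transverse section of the projectivization of a lightlike cone realizes an explicit metric in the induced conformal class.
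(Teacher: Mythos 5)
Your construction is correct and complete; note that the paper itself gives no proof of this lemma (it simply cites \cite[p.p. 58]{Fr}), and your argument is the standard one, realizing the de-Sitter (resp.\ anti de-Sitter) component as the image of the affine section $w\mapsto v+w$ of the nullcone over the complement of $\Pn(v^\perp\cap\Nu^{2,n+1})$, so that the induced representative of the conformal class is exactly the quadric metric $\q_{1,n+1}$ (resp.\ $\q_{2,n}$) restricted to the hyperboloid. The only point worth making explicit is the transversality of this section to the rays of the projectivization, which is immediate because the tangent space of the image lies in $v^\perp$ while the radial direction $v+w$ has a nonzero component along $v$.
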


\subsubsection{\underline{Two dimensional Einstein universe}}\label{subsec.1.7.3}
The $2$-dimensional Einstein universe $\Ein^{1,1}$ is diffeomorphic to a $2$-torus. 
%Each lightcone $L([p])$ in $\Ein^{1,1}$ consists of two photons which intersects at $[p]$. The conformal group of $\Ein^{1,1}$ is isomorphic to $P O(2,2)$.

Here is a useful model. Let $(M(2,\R),-\det)$ denote the vector space consisting the $2\times 2$ real matrices endowed with the $(2,2)$-bilinear form associated to the negative determinant. In this model, the nullcone $\Nu^{2,2}$ is the set of nonzero singular matrices and the $2$-dimensional Einstein universe $\Ein^{1,1}$ is the quotient of the nullcone by the action of nonzero scalar multiplication. Observe that the direct product group $\SL(2,\R)\times \SL(2,\R)$ acts on $(M(2,\R),-\det)$ by 
\[((A,B),X)\mapsto AXB^{-1},\;\;\;\;\;\;\;X\in M(2,\R),\;\;A,B\in \SL(2,\R).\]
Obviously, this action is linear and  preserves the quadratic form $-\det$. In particular, its kernel is $\{\pm(Id,Id)\}\simeq \mathbb{Z}_2$. This implies that $SO_\circ(2,2)$ is isomorphic to $(\SL(2,\R)\times \SL(2,\R))/\mathbb{Z}_2$. Moreover, by the action of $\SL(2,\R)\times \SL(2,\R)$ on $2$-dimensional Einstein universe $\Ein^{1,1}=\Pn(\Nu^{2,2})$, the kernel is $\{(\pm Id,\pm Id)\}$. Hence, $\Conf_\circ(\Ein^{1,1})$ is isomorphic to $\PSL(2,\R)\times \PSL(2,\R)$.

Every nonzero singular element $X\in M(2,\R)$ determines two lines in $\R^2$: its kernel and its image. Let $(A,B)\in \PSL(2,\R)\times \PSL(2,\R)$ be an arbitrary element. Then $A$ preserves $\ker X$ and $B$ preserves $\mathrm{Im}\; X$. Therefore, there is a canonical $\PSL(2,\R)\times \PSL(2,\R)$-invariant identification of $\Ein^{1,1}$ with $\R\Pn^1\times\R\Pn^1$ by
 \[\Ein^{1,1}\rightarrow \R\Pn^1\times \R\Pn^1,\;\;\;\;\;\; [X]\mapsto ( [\ker X],[\mathrm{Im}\; X]).\]
By this identification, the left factor of $\PSL(2,\R)\times \PSL(2,\R)$ acts trivially on the left factor $\R\Pn^1$, and the right factor acts trivially on the right factor $ \R\Pn^1$.

The following corollary will be useful in sequel.
\begin{corollary}\label{cor.Anti}
The $3$-dimensional Anti de-Sitter space $\AdS^{1,2}$ is isometric to $\SL(2,\R)$ endowed with the metric induced from $(M(2,\R),-\det)$.
\end{corollary}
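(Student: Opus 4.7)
The plan is to reduce the statement to the observation that $(M(2,\R),-\det)$ is a concrete realization of the pseudo-Euclidean space $\R^{2,2}$, under which $\SL(2,\R)$ becomes exactly the level set appearing in the definition of $\AdS^{1,2}$ in Lemma \ref{lem.1.7.16}. Once this identification is set up, the corollary is essentially tautological: both metrics in question are restrictions of the same ambient bilinear form to corresponding hypersurfaces.

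First I would introduce global linear coordinates $(a,b,c,d)\in\R^4$ on $M(2,\R)$ via the parametrization
\[
X(a,b,c,d)=\begin{bmatrix} a+c & b+d\\ -b+d & a-c\end{bmatrix},
\]
and check the single determinant identity
\[
-\det X(a,b,c,d)=-(a^2-c^2)+(d^2-b^2)=-a^2-b^2+c^2+d^2=\q_{2,2}(a,b,c,d).
\]
This exhibits the linear map $\phi\colon M(2,\R)\to\R^{2,2}$ defined by $X(a,b,c,d)\mapsto (a,b,c,d)$ as an isometry of quadratic spaces; in particular, it confirms that $-\det$ has signature $(2,2)$ on $M(2,\R)$, consistent with what was asserted just before the corollary.

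Next, under $\phi$ the condition $X\in\SL(2,\R)$ reads $\det X=1$, equivalently $-\det X=-1$, equivalently $\q_{2,2}(\phi(X))=-1$. Hence $\phi$ restricts to a diffeomorphism $\SL(2,\R)\xrightarrow{\;\sim\;}\q_{2,2}^{-1}(-1)\subset \R^{2,2}$, and by Lemma \ref{lem.1.7.16} the target hypersurface equipped with the induced metric is the model Anti de-Sitter space $\AdS^{1,2}$.

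Finally, the metric on $\SL(2,\R)$ in the statement is by definition the restriction of $-\det$ to the tangent spaces of $\SL(2,\R)\subset M(2,\R)$, while the metric on $\AdS^{1,2}$ is the restriction of $\q_{2,2}$ to the tangent spaces of $\q_{2,2}^{-1}(-1)\subset \R^{2,2}$. Since $\phi$ is an isometry of the ambient spaces and carries one hypersurface onto the other, it carries one induced metric onto the other, giving the required isometry. There is no genuine obstacle here; the entire argument hinges on the single determinant computation above, and the only point that could cause trouble is a sign convention, but with the paper's conventions ($\AdS^{1,2}=\q_{2,2}^{-1}(-1)$ and the ambient form $-\det$) the two level sets match without any rescaling.
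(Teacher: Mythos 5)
Your proposal is correct and follows the same (essentially definitional) route the paper relies on: identify $(M(2,\R),-\det)$ with $\R^{2,2}$ by an explicit linear isometry, observe that $\SL(2,\R)=(-\det)^{-1}(-1)$ corresponds to the model level set $\q_{2,2}^{-1}(-1)=\AdS^{1,2}$, and restrict the ambient form. The paper leaves this verification implicit after its discussion of the $(M(2,\R),-\det)$ model, while you spell out the coordinate computation; no gap.
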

\subsubsection{Three dimensional Einstein universe}\label{subsec.1.7.4}
%The $3$-dimensional Einstein universe $\Ein^{1,2}$ is conformally equivalent $(\Sn^1\times\Sn^2)/\{\pm Id\}$ on which $\Sn^1\times \Sn^2$ is equipped with the metric tensor $-dt^2+ds^2$, where $dt^2$ and $ds^2$ are the usual round metrics on the spheres $\Sn^1$ and $\Sn^2$ of radius one. The group of conformal transformations on $\Ein^{1,2}$ is isomorphic to $O(2,3)$. We denote its identity component by $SO_\circ(2,3)$, which is the set of linear isometries on $\R^{2,3}$ preserving both space and time orientations.

 \begin{definition}\label{def.1.7.20}
 Let $[p],[q]\in \Ein^{1.2}$ be two distinct points and they do not lie on a common photon. Then, the intersection of the lightcones $L([p])$ and $L([q])$ is called an ideal circle.
 \end{definition}
 \begin{lemma}\label{lem.1.7.21}
 An ideal circle is the projectivized nullcone of a linear subspace of $\R^{2,3}$ of signature $(1,2)$.
 \end{lemma}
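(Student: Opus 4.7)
The plan is to identify the ideal circle as $\Pn(p^\perp \cap q^\perp \cap \Nu^{2,3})$ and then show that the $3$-dimensional subspace $W := p^\perp \cap q^\perp$ is non-degenerate of signature $(1,2)$, so that $W\cap \Nu^{2,3}$ coincides with the nullcone of $W$ equipped with its induced Lorentzian inner product.

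First, I would unpack the definitions. By Definition \ref{def.1.7.4}, the lightcone $L([p])$ equals $\Pn(p^\perp \cap \Nu^{2,3})$, and similarly for $L([q])$. Hence
\[
L([p])\cap L([q]) \;=\; \Pn\bigl(p^\perp\cap q^\perp\cap \Nu^{2,3}\bigr).
\]
Pick representatives $p,q\in \Nu^{2,3}$ and set $V := \mathrm{span}(p,q)$. Since $[p]\neq[q]$, the vectors $p,q$ are linearly independent, so $\dim V=2$. The hypothesis that $[p],[q]$ do not lie on a common photon translates (via Definition \ref{def.1.7.3}) to the statement that $V$ is \emph{not} totally isotropic.

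Second, I would determine the signature of $V$. Since $\q_{2,3}(p)=\q_{2,3}(q)=0$, the Gram matrix of $V$ in the basis $\{p,q\}$ is
\[
\begin{pmatrix} 0 & a \\ a & 0\end{pmatrix},\qquad a:=\langle p,q\rangle.
\]
The non-isotropy hypothesis forces $a\neq 0$, so $V$ is non-degenerate with $\det=-a^2<0$, i.e.\ of signature $(1,1)$. Consequently $W=V^\perp=p^\perp\cap q^\perp$ is a non-degenerate complement of $V$ in $\R^{2,3}$, and subtracting signatures yields $W$ of signature $(2-1,\,3-1)=(1,2)$.

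Finally, the induced quadratic form on $W$ is just the restriction of $\q_{2,3}$; therefore the nullcone $\Nu_W$ of $(W,\q_{2,3}|_W)\simeq \R^{1,2}$ coincides with $W\cap \Nu^{2,3}$. Applying $\Pn$ gives
\[
L([p])\cap L([q]) \;=\; \Pn(\Nu_W),
\]
which expresses the ideal circle as the projectivized nullcone of a linear subspace of $\R^{2,3}$ of signature $(1,2)$, as required. I do not anticipate a serious obstacle here; the only subtle point is recognizing that the ``not on a common photon'' hypothesis is exactly what is needed to ensure $\langle p,q\rangle\neq 0$, and hence the non-degeneracy of $W$.
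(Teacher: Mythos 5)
Your proposal is correct and follows essentially the same route as the paper: the paper's proof simply observes that $p^\perp\cap q^\perp$ has signature $(1,2)$ and says the rest follows, while you spell out the Gram-matrix computation and make explicit that the ``not on a common photon'' hypothesis is exactly what forces $\langle p,q\rangle\neq 0$ and hence the non-degeneracy. No gaps; your write-up just fills in the details the paper leaves to the reader.
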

 \begin{proof}
 Observe that, the intersection of two degenerate hyperplanes $p^\perp,q^\perp\leq \R^{2,3}$ is a linear subspace of signature $(1,2)$. Now, the lemma follows easily.
 \end{proof}
\begin{definition}\label{def.1.7.22}
A timelike circle in $\Ein^{1,2}$ is the projectivized nullcone of a linear subspace of $\R^{2,3}$ of signature $(2,1)$.
\end{definition}
\begin{lemma}\label{lem.1.7.23}
The complement of a timelike circle in $\Ein^{1,2}$ is conformally equivalent (up to double cover) to $(\AdS^{1,1}\times \Sn^1,d\sigma^2+d\theta^2)$ where $d\sigma^2$ (resp. $d\theta^2$) is the usual Lorentzian metric on $\AdS^{1,1}$ of constant sectional curvature $-1$ (resp. positive definite metric on $\Sn^1$ of radius $1$). Furthermore, by the action of $O(2,3)$, the identity component of the stabilizer of a timelike circle is isomorphic to the direct product $SO_\circ(2,1)\times SO(2)$.
\end{lemma}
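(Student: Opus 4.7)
\textbf{Proof plan for Lemma \ref{lem.1.7.23}.}

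By Definition \ref{def.1.7.22}, a timelike circle $C\subset\Ein^{1,2}$ is the projectivized nullcone of a linear subspace $V\leq \R^{2,3}$ of signature $(2,1)$. Its orthogonal complement $V^\perp$ then has signature $(0,2)$, so $V^\perp$ is Euclidean $\R^2$, and one has the $\q_{2,3}$-orthogonal decomposition $\R^{2,3}=V\oplus V^\perp$. The first step is to set up this decomposition and to identify the relevant pseudo-spheres inside each factor, namely
\[
\AdS^{1,1}:=\{u\in V : \q(u)=-1\}\subset V\cong\R^{2,1},\qquad \Sn^1:=\{v\in V^\perp : \q(v)=1\}\subset V^\perp\cong\R^2.
\]

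The core of the argument is then to construct the conformal equivalence by exhibiting an explicit section of the nullcone. The plan is to define
\[
\Phi:\AdS^{1,1}\times\Sn^1\longrightarrow\Nu^{2,3},\qquad (u,v)\longmapsto u+v,
\]
noting that $\q(u+v)=\q(u)+\q(v)=-1+1=0$. Composing with the quotient $\Nu^{2,3}\to\widehat{\Ein}^{1,2}$ by the positive scalars, I would verify that the resulting map is a diffeomorphism from $\AdS^{1,1}\times\Sn^1$ onto $\widehat{\Ein}^{1,2}\setminus\widehat C$, where $\widehat C$ is the double-cover preimage of $C$. Injectivity follows from the orthogonal directness of the splitting: if $u+v=\lambda(u'+v')$ for $\lambda>0$, then $u=\lambda u'$ and $v=\lambda v'$, and $\q(u)=-1$ forces $\lambda=1$. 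Surjectivity onto the complement is immediate by orthogonal decomposition of any null vector $w\notin V$ followed by the unique positive rescaling making the $V^\perp$-component have $\q=1$.

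Next, I would check that $\Phi$ recovers the stated product metric as a representative of the conformal class. Since the image of $\Phi$ is a smooth section of the $\R_+$-principal bundle $\Nu^{2,3}\to\widehat{\Ein}^{1,2}$, the pullback of the degenerate ambient form $\q_{2,3}$ descends to a representative of the conformal Lorentzian structure on $\widehat{\Ein}^{1,2}\setminus\widehat C$. Because the two factors of $\Phi$ lie in the mutually orthogonal subspaces $V$ and $V^\perp$, the restriction of $\q_{2,3}$ to $T(\AdS^{1,1}\times\Sn^1)$ splits orthogonally as the sum of the induced metrics on each factor. The induced metric on $\AdS^{1,1}=\q_{2,1}^{-1}(-1)\subset V$ is the standard anti-de Sitter metric $d\sigma^2$ of constant sectional curvature $-1$, and the induced metric on the unit circle $\Sn^1\subset V^\perp$ is the round metric $d\theta^2$ of radius $1$, so $\Phi^*\gi=d\sigma^2+d\theta^2$.

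For the stabilizer claim, the key observation is that the nullcone $\Nu(V)$ of a $3$-dimensional Lorentzian space spans $V$. Hence any $A\in O(2,3)$ preserving $C=\Pn(\Nu(V))$ must preserve the linear span $V$, and by $\q_{2,3}$-orthogonality it must preserve $V^\perp$ as well; conversely any block-diagonal element in the decomposition $\R^{2,3}=V\oplus V^\perp$ sits inside the stabilizer of $C$. This identifies the stabilizer of $C$ in $O(2,3)$ with $O(V)\times O(V^\perp)\cong O(2,1)\times O(2)$, whose identity component is $SO_\circ(2,1)\times SO(2)$. The only mildly delicate point in the whole argument is the careful bookkeeping between $\Ein^{1,2}$ and its double cover $\widehat{\Ein}^{1,2}$, which is precisely why the conformal equivalence is stated up to a double cover: the antipodal map $(u,v)\mapsto(-u,-v)$ on $\AdS^{1,1}\times\Sn^1$ corresponds to the nontrivial deck transformation of $\widehat{\Ein}^{1,2}\to\Ein^{1,2}$.
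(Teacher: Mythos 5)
Your proposal is correct and follows essentially the same route as the paper: split $\R^{2,3}=V\oplus V^\perp$ with $V$ of signature $(2,1)$, rescale null vectors off $V$ so their components lie on $\q^{-1}(-1)\subset V$ and the unit circle in $V^\perp$ (your section $\Phi$ is just the paper's normalization by $\sqrt{p_4^2+p_5^2}$ phrased as a map), and read off the block stabilizer $SO_\circ(2,1)\times SO(2)$. Your extra verifications of injectivity, surjectivity, the metric pullback, and why preserving the circle forces preserving $V$ merely flesh out steps the paper treats as immediate.
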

\begin{proof}
Let $\mathfrak{C}\subset \Ein^{1,2}$ be a timelike circle and $V\leq \R^{2,3}$ the linear subspace corresponding to $\mathfrak{C}$. The orthogonal complement $V^\perp$ is of signature $(0,2)$. Choose an orthonormal basis $\{e_1,e_2,e_3\}$ for $V$ where $e_3$ is spacelike, and an orthonormal basis $\{e_4,e_5\}$ for $V^\perp$. Then $B=\{e_1,\cdots,e_5\}$ is an orthonormal basis of $\R^{2,3}$. Suppose that $[p]\in\Ein^{1,2}\setminus \mathfrak{C}$ is an arbitrary point, and $p=(p_1,\cdots,p_5)$ is the representative of $p$ with respect to $B$. We have 
\[-p_1^2-p_2^2+p_3^2=-p_4^2-p_5^2.\]
Observe that $(p_4,p_5)\in V^\perp$ is a non-zero vector. Therefore, dividing $p$ by the positive number $\sqrt{p_4^2+p_5^2}$, we may assume
\[-p_1^2-p_2^2+p_3^2=-p_4^2-p_5^2=-1,\]
which describes the product $\AdS^{1,1}\times \Sn^1\subset \widehat{\Ein}^{1,2}$. 

 Obviously, the identity component of the stabilizer of $V$ is isomorphic to the direct product $SO_\circ(2,1)\times SO(2)$. This completes the proof.
\end{proof}
%\begin{lemma}\label{lem.1.7.23}
%The complement of a timelike circle in $\Ein^{1,2}$ is conformally equivalent to $\AdS^{1,1}\times \Sn^1$. Furthermore, by the action of $O(2,3)$, the identity component of the stabilizer of a timelike circle is isomorphic to the direct product $SO_\circ(2,1)\times SO(2)$.
%\end{lemma}
%\begin{proof}
%Let $\mathfrak{C}\subset \Ein^{1,2}$ be a timelike circle and $V\leq \R^{2,3}$ the linear subspace correspond to $\mathfrak{C}$. The orthogonal complement $V^\perp$ is of signature $(0,2)$. Choose an orthonormal basis $\{e_1,e_2,e_3\}$ for $V$ where $e_3$ is spacelike, and an orthonormal basis $\{e_4,e_5\}$ for $V^\perp$. Then $B=\{e_1,\cdots,e_5\}$ is an orthonormal basis of $\R^{2,3}$. Suppose that $[p]=\in\Ein^{1,2}\setminus \mathfrak{C}$ is an arbitrary point and denote by $(p_1,\cdots,p_5)$ the coordinate of $p$ respect to $B$. We have 
%\[p_1^2+p_2^2-p_3^2=p_4^2+p_5^2.\]
%Observe that $(p_4,p_5)\in V^\perp$ is a non-zero vector. Therefore, dividing $p$ by $\sqrt{p_4^2+p_5^2}$, we may assume
%\[p_1^2+p_2^2-p_3^2=1,\]
%which describes the product $\AdS^{1,1}\times \Sn^1$. 
%
%It is obvious that the identity component of $Stab_{O(2,3)}(V)$ is isomorphic to $SO_\circ(2,1)\times SO(2)$.
%\end{proof}
\begin{remark}
In the rest of the paper, we denote a point $[p]\in \Ein^{1,n}$ simply by $p$ when there is no ambiguity between the point $[p]$ and a representative $p\in[p]$ which is a lightlike vector $p\in \R^{2,n}$.
\end{remark}
\subsection{Cohomogeneity one actions}\label{sec.1.8}

\begin{definition}\label{def.1.8.1}
Let $G$ be a Lie group acting on a manifold $M$ smoothly. The action of $G$ is called of cohomogeneity one if it admits a codimension one orbit in $M$.
\end{definition}

\begin{definition}\label{def.1.3.3}
Let $(M,[\gi])$ be a semi-Riemmanian manifold and $G,H\subset\Conf(M)$ be Lie subgroups. Then the actions of $G$ and $H$ on $M$ are said to be orbitally-equivalent if there exists a conformal map $\varphi$ on $M$ such that for all $p\in M$, $\varphi(G(p))=H(\varphi(p))$.
\end{definition}
 The following lemma gives a powerful tool to distinguish the orbitally-equivalent actions (see \cite{Has0}).
\begin{lemma}\label{lem.1.3.4}
Let $G$ and $H$ be connected Lie subgroups of $\Conf(M)$ and $\varphi$ be a conformal map on $M$. Then the following statements are equivalent:
\begin{itemize}
\item[(i)] $G$ and $H$ are orbitally-equivalent via $\varphi$.
\item[(ii)] for all $p\in M$,
\begin{align*}
d\varphi_p(T_pG(p))=T_{\varphi(p)}H(\varphi(p)).
\end{align*}
\end{itemize}
\end{lemma}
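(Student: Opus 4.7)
The direction (i) $\Rightarrow$ (ii) is the easy one. Since $\varphi$ is a diffeomorphism of $M$ and each orbit $G(p)$ is an initial immersed submanifold, the equality of sets $\varphi(G(p)) = H(\varphi(p))$ forces $\varphi$ to restrict to a diffeomorphism between these two immersed submanifolds. Differentiating this restriction at the point $p$ gives precisely the equality of tangent spaces in (ii).

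For (ii) $\Rightarrow$ (i), the plan is to use the fact that the orbits of a connected Lie group action form a (generally singular) smooth foliation: at every $q \in M$ one has $T_q G(q) = \{X^{*}_q : X \in \mathfrak{g}\}$, where $X^{*}$ is the fundamental vector field induced by $X \in \mathrm{Lie}(G)$, and the key structural property is that \emph{any smooth curve in $M$ whose velocity at each time lies in the tangent space to the orbit through the base point remains inside a single orbit}. This is the content of the Stefan--Sussmann integrability theorem applied to the smooth distribution generated by the fundamental vector fields of the $G$-action.

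Granting this, fix $p \in M$ and let $q \in G(p)$. Pick a smooth path $\alpha:[0,1] \to G$ from $e$ to some element $g$ with $g \cdot p = q$, and let $\gamma(t) := \alpha(t) \cdot p$, which is a smooth curve in the orbit $G(p)$ joining $p$ to $q$. The composition $\varphi \circ \gamma$ has velocity
\[
(\varphi \circ \gamma)\dot{}\,(t) \;=\; d\varphi_{\gamma(t)}\bigl(\dot\gamma(t)\bigr) \;\in\; d\varphi_{\gamma(t)}\bigl(T_{\gamma(t)} G(\gamma(t))\bigr) \;=\; T_{\varphi(\gamma(t))} H(\varphi(\gamma(t))),
\]
the last equality being assumption (ii). Thus $\varphi \circ \gamma$ is a smooth curve everywhere tangent to the $H$-orbit foliation, hence lies in a single $H$-orbit; evaluating at $t=0$ identifies that orbit as $H(\varphi(p))$, so $\varphi(q) \in H(\varphi(p))$. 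This proves $\varphi(G(p)) \subseteq H(\varphi(p))$. Applying (ii) at the point $\varphi^{-1}(r)$ gives $d(\varphi^{-1})_r\bigl(T_r H(r)\bigr) = T_{\varphi^{-1}(r)} G(\varphi^{-1}(r))$, so the same argument with $\varphi^{-1}$ in place of $\varphi$ and $H$ in place of $G$ yields the reverse inclusion $\varphi^{-1}(H(\varphi(p))) \subseteq G(p)$, and (i) follows.

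The main obstacle I anticipate is the justification that a curve tangent at each point to the orbit distribution must remain inside one orbit. This is non-trivial because the distribution $q \mapsto T_q G(q)$ may change rank (orbits of different dimensions can coexist), so a straightforward Frobenius argument does not apply; one must appeal to Stefan--Sussmann, or equivalently to the explicit description of orbits as images of $G$ under the orbit map combined with the local triviality of the action near a given point. Once this is in place, the rest of the argument is a formal tracking of paths through the identification provided by $\varphi$.
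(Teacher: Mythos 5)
Your direction (i) $\Rightarrow$ (ii) is fine (initiality of orbits makes the restriction of $\varphi$ a diffeomorphism between the two orbits). The gap is in (ii) $\Rightarrow$ (i): the ``key structural property'' you grant yourself --- that any smooth curve whose velocity at each time lies in the tangent space of the orbit through the current point must stay inside a single orbit --- is false for singular orbit distributions, and the Stefan--Sussmann theorem does not deliver it. Concretely, let $H=\R$ act on $M=\R$ by the flow of a complete vector field $f(x)\partial_x$ with $f>0$ on $(-\infty,0)$ and $f\equiv 0$ on $[0,\infty)$. The orbits are $(-\infty,0)$ and the singletons $\{x\}$, $x\geq 0$, so the orbit distribution is $\R$ at $x<0$ and $\{0\}$ at $x\geq 0$. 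The smooth curve $\gamma(t)=-e^{1/t}$ for $t<0$, $\gamma(t)=0$ for $t\geq 0$, is everywhere tangent to this distribution, yet it passes from the $1$-dimensional orbit $(-\infty,0)$ into the fixed point $\{0\}$. Rank-jumping of exactly this kind occurs in the situation of the paper (cohomogeneity one actions have orbits of several dimensions), so this step cannot be waved through; your fallback via ``local triviality of the action near a given point'' is also unavailable, since the actions of interest are not proper and no slice theorem applies.

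The statement that is true, and that suffices, concerns integral curves of globally defined vector fields rather than arbitrary tangent curves: if $Z$ is a smooth vector field on $M$ with $Z_q\in T_qH(q)$ for all $q$, then its flow preserves every $H$-orbit. (Since orbits are initial submanifolds, $Z$ restricts to a smooth vector field on each orbit; uniqueness of integral curves identifies the integral curve in $M$ with the one computed inside the orbit for small time, and the set of times at which the maximal integral curve lies in a fixed orbit is open and closed in its interval of definition --- closedness is obtained by running the same local argument in the orbit of a limit point and using that orbits partition $M$.) So instead of pushing arbitrary paths $t\mapsto\alpha(t)\cdot p$ through $\varphi$, push the fundamental vector fields: for $X\in\mathrm{Lie}(G)$, hypothesis (ii) says precisely that $\varphi_*X^*$ is a smooth section of the $H$-orbit distribution, hence its flow preserves $H$-orbits; writing any $g$ in the connected group $G$ as a finite product of exponentials then gives $\varphi(g\cdot p)\in H(\varphi(p))$, and the reverse inclusion follows by the same argument applied to $\varphi^{-1}$. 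With this replacement your argument closes; note that the paper itself gives no proof of this lemma (it refers to \cite{Has0}), so there is nothing in-paper to compare against.
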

%\begin{proof}
%(i)$\Rightarrow$ (ii) is obvious.
%
%(ii)$\Rightarrow$ (i).  Assume that $\varphi$ satisfies (\ref{eq.1.3.1}). For an arbitrary point $p\in M$, define the following set
%$$A_p=\{q\in G(p):\varphi(q)\in H(\varphi(p))\}.$$
%Evidently, $A_p$ is nonempty. Observe that (\ref{eq.1.3.1}) implies, for $q\in A_p$, there exist two neighborhoods $U\subset G(p)$ and $V\subset H(\varphi(p))$ containing $q$ and $\varphi(q)$, respectively, such that $\varphi|_{U}:U\rightarrow V$ is a diffeomorphism. This shows that $A_p$ is open in $G(p)$. By the same argument, it can be easily seen that the complement of $A_p$ in $G(p)$ is open too. Hence, by connectedness $G(p)=A_p$, and so $\varphi(G(p))\subset H(\varphi(p))$. Using the same argument, one can show $\varphi^{-1}(H(\varphi(p)))\subset G(p)$. This completes the proof. 
%\end{proof}
%\begin{remark}
%In the rest of the paper, whenever we say that a subgroup $G\subset \Conf(\Ein^{1,2})\simeq O(2,3)$ acts on the $3$-dimensional Einstein universe, we mean that the action is conformal.
%\end{remark}
\begin{definition}
Let $G$ be a Lie group and $H,K$ be Lie subgroups. Then, $H$ and $K$ are called transversal to each other if the dimension of $HK$, the Lie subgroup of  $G$ generated by the elements in $H$ and $K$, is strictly greater than $\max\{\dim H,\dim K\}$.
\end{definition}
\begin{lemma}\label{lemma.1.50}
Let $G\subset \Conf(\Ein^{1,2})$ preserves a linear subspace $V\leq \R^{2,3}$ of signature $(0,2)$ and acts on $\Ein^{1,2}$ with cohomogeneity one. Then $G$ preserves a $1$-dimensional linear subspace of $\R^{2,3}$.
\end{lemma}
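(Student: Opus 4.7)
The plan is to analyze which connected Lie subgroups $G$ of the stabilizer of $V$ satisfy both hypotheses of the lemma. Since $V$ is nondegenerate of signature $(0,2)$, its orthogonal complement $V^\perp$ has signature $(2,1)$ and is also $G$-invariant. Hence $G$ is contained in the identity component of the stabilizer of $V$ in $SO_\circ(2,3)$, which is $SO(V)\times SO_\circ(V^\perp)\cong SO(2)\times SO_\circ(2,1)$. Let $\pi_2$ denote the projection onto the second factor and set $G_2:=\pi_2(G)$; I split the argument according to whether $G_2$ is proper in $SO_\circ(2,1)$.

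If $G_2$ is a proper connected subgroup of $SO_\circ(2,1)\cong\PSL(2,\R)$, then by the classification recalled in Section \ref{proj} it is conjugate to a subgroup of one of $Y_E$, $Y_P$, $Y_H$, or $\Aff$. Each of these preserves a $1$-dimensional subspace of $V^\perp$ (timelike for $Y_E$, lightlike for $Y_P$ and $\Aff$, spacelike for $Y_H$). Since $G$ acts on $V^\perp$ through $G_2$, this line is $G$-invariant in $\R^{2,3}$, and the conclusion holds.

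Assume from now on that $G_2=SO_\circ(2,1)$, and let $K:=G\cap(SO(2)\times\{e\})$, a connected subgroup of $SO(2)$. If $K$ is trivial, then $\pi_2|_G$ is a Lie isomorphism onto $SO_\circ(2,1)$, so $G$ is the graph of a morphism $\varphi:SO_\circ(2,1)\to SO(2)$. By Lemma \ref{lem.1.4.7} (applied through the identification $SO_\circ(2,1)\cong\PSL(2,\R)$ of Remark \ref{rem.SL}), $\varphi$ is trivial, so $G=\{e\}\times SO_\circ(2,1)$ fixes $V$ pointwise, and every $1$-dimensional subspace of $V$ is $G$-invariant.

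The remaining case $K=SO(2)$, namely $G=SO(2)\times SO_\circ(2,1)$, is the main obstacle. I plan to rule it out by contradiction with the cohomogeneity one hypothesis, via an orbit dimension count on $\Ein^{1,2}$. For $p\in\Nu^{2,3}$ decomposed as $p=v+w$ with $v\in V$, $w\in V^\perp$ and $\q(v)+\q(w)=0$: if $v=0$, then $w$ is a nonzero lightlike vector of $V^\perp$, the $SO(2)$-factor fixes $[p]$, and $SO_\circ(2,1)$ acts transitively on the lightlike directions of $V^\perp$, so the orbit of $[p]$ is the $1$-dimensional circle of such directions; if $v\neq 0$, then $w$ is timelike, and a short projective stabilizer computation, using that $SO(2)$ acts freely on $V\setminus\{0\}$ and that $SO_\circ(2,1)$ preserves the time orientation of $w$, yields a $1$-dimensional stabilizer and hence a $3$-dimensional, open orbit. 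Thus every $G$-orbit on $\Ein^{1,2}$ has dimension $1$ or $3$, so no codimension one orbit exists, contradicting the hypothesis. The delicate point is really just this last count, and once it is carried out the lemma follows.
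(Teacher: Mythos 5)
Your proposal is correct and follows essentially the same route as the paper: reduce to $G\subset SO(2)\times SO_\circ(2,1)$, run through the classification of connected subgroups of $SO_\circ(2,1)\simeq \PSL(2,\R)$, rule out the full product because its orbits are the $1$-dimensional timelike circle and its open $3$-dimensional complement (so no codimension-one orbit), and exhibit the invariant line in $V^\perp$ (or in $V$ when the action on $V$ is trivial). The only blemishes are cosmetic and harmless: since $V^\perp$ has signature $(2,1)$, the fixed line of $Y_E$ is spacelike and that of $Y_H$ is timelike (your labels use the $(1,2)$ convention), and vectors of negative norm in $\R^{2,1}$ form a connected set so no ``time orientation'' is preserved --- but the projective stabilizer is still $1$-dimensional and the orbit $3$-dimensional, which is all the argument needs.
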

\begin{proof}
By Lemma \ref{lem.1.7.23}, $G$ is a subgroup of $SO_\circ(2,1)\times SO(2)$ up to conjugacy. The following natural projections are group morphisms
\begin{align*}
P_1:SO_\circ(2,1)\times SO(2)\longrightarrow SO_\circ(2,1),\;\;\;\;\;P_2:SO_\circ(2,1)\times SO(2)\longrightarrow SO(2).
\end{align*}
The group $G$ is a subgroup of $P_1(G)\times P_2(G)$. We show that $P_1(G)\times P_2(G)$ preserves a line in $\R^{2,3}$. Observe that $SO_\circ(2,1)\times S(2)$ preserves a timelike circle and acts on its complement $\AdS^{1,1}\times \Sn^1$ in $\Ein^{1,2}$ transitively. Hence, $G$ is a proper subgroup of $SO_\circ(2,1)\times SO(2)$. In the one hand, if $P_2(G)=\{Id\}$, then $G$ acts on $V$ trivially. On the other hand, $P_1(G)\neq \{Id\}$, since $\dim G\geq 2$. Therefore, if $P_2(G)\neq \{Id\}$, up to conjugacy, $G$ is a subgroup of $Y_E\times SO(2)$ or $\Aff\times SO(2)$. 

 The $1$-parameter subgroup $Y_E$ preserves a unique $1$-dimensional spacelike linear subspace $\ell\leq V^\perp$. Since the action of $SO(2)$-factor on $V^\perp$ is trivial, $\ell$ is invariant by $Y_E\times SO(2)$.
 
 The affine group $\Aff$ preserves a unique lightlike line $\ell$ in $V^\perp$. Since the action of $SO(2)$-factor is trivial on $V^\perp$, $\ell$ is invariant by $\Aff\times SO(2)$. 
\end{proof}

\begin{proposition}\label{prop.all}
Let $G\subset \Conf(\Ein^{1,2})$ be a connected Lie subgroup which acts on $\Ein^{1,2}$ reducibly and with cohomogeneity one. Then, either $G$ preserves a line in $\R^{2,3}$ or it is a subgroup of $SO(2)\times SO(3)$ (up to conjugacy), or it preserves a photon.
\end{proposition}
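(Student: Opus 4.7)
\medskip

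\noindent\emph{Proof plan.}\; My plan is to pick a $G$-invariant subspace $W\leq \R^{2,3}$ of smallest possible dimension (which exists since $G$ acts reducibly) and run a case analysis on $\dim W$ together with the signature of $W$ with respect to $\q_{2,3}$. Since $G$ preserves $\q_{2,3}$, the orthogonal complement $W^\perp$ is automatically $G$-invariant, and $\dim W+\dim W^\perp=5$; after possibly swapping $W$ with $W^\perp$ I may therefore assume $\dim W\in\{1,2\}$. The case $\dim W=1$ immediately gives the first alternative, so for the remainder of the argument I will assume there is no $G$-invariant line in $\R^{2,3}$ and that $W$ is a $G$-invariant $2$-plane; my goal becomes to land in either the photon alternative or in the $SO(2)\times SO(3)$ alternative.

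The next step I would take is to dispose of partial degeneracy of $W$. The radical $W\cap W^\perp$ is $G$-invariant, so under the standing no-invariant-line assumption it is either $\{0\}$ or all of $W$. In the second case $W$ is totally isotropic and $\Pn(W)\subset \Ein^{1,2}$ is a $G$-invariant photon (Definition \ref{def.1.7.3}), giving the third alternative. Otherwise $W$ is nondegenerate, and I split into signatures $(1,1)$, $(0,2)$, and $(2,0)$. For signature $(1,1)$, the plane $W$ contains exactly two lightlike lines, and connectedness of $G$ forces its image in $O(W)\cong O(1,1)$ to lie in the identity component $SO_\circ(1,1)$, which preserves each of these null lines individually, contradicting the assumption. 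For signature $(0,2)$, Lemma \ref{lemma.1.50} applies verbatim and produces a $G$-invariant line, again a contradiction. For signature $(2,0)$, $W^\perp$ is positive definite of signature $(0,3)$, so the stabilizer of the orthogonal splitting $W\oplus W^\perp$ in $SO_\circ(2,3)$ is exactly $SO(2)\times SO(3)$; hence $G$ sits inside $SO(2)\times SO(3)$ up to conjugacy, which is the second alternative.

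The main obstacle I anticipate is the signature $(0,2)$ case, which is the only place where the cohomogeneity-one hypothesis is genuinely used — via Lemma \ref{lemma.1.50}. Every other case is a purely linear-algebraic consequence of reducibility and connectedness of $G$, and exhaustiveness of the signature list is immediate from $\dim \R^{2,3}=5$, so once the lemma is invoked as a black box the proof reduces to the short radical trichotomy together with three one-line structural arguments.
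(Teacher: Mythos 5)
Your proposal is correct and follows essentially the same route as the paper: both reduce to a $G$-invariant subspace of dimension at most $2$ via the orthogonal complement, run the same signature case analysis (degenerate cases giving an invariant lightlike line or a photon, signature $(1,1)$ handled by connectedness, signature $(2,0)$ giving $SO(2)\times SO(3)$), and invoke Lemma \ref{lemma.1.50} for the signature $(0,2)$ case, which is indeed the only place the cohomogeneity-one hypothesis enters. Your organization via the radical trichotomy of the $2$-plane is just a compact repackaging of the paper's cases (III) and (V).
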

\begin{proof}
Suppose that $G$ preserves a proper linear subspace $V\leq \R^{2,3}$. Denote by $sgn(V)$ the signature of the restriction of the metric from $\R^{2,3}$ on $V$. We consider all the possible signatures for $V$. 

If $\dim V=1$, then obviously, $G$ fixes a point in the projective space $\R\Pn^4$, namely $\Pn(V)$. Assume that $V$ is $2$-dimensional.
\begin{itemize}
\item[(I)] If $sgn(V)=(1,1)$. then, $V$ contains exactly two distinct lightlike lines. Hence, the intersection of $\Pn(V)$ with $\Ein^{1,2}$ consists of two points. Since $G$ is connected, it fixes both the points.
\item[(II)] If $sgn(V)=(2,0)$, $G$ preserves the orthogonal complement $V^\perp$ ($sgn(V^\perp)=(0,3)$) as well, and so, it is a subgroup of $SO(2)\times SO(3)$ up to conjugacy.
\item[(III)] If $sgn(V)=(0,1,1)$ or $(1,0,1)$, then, $V$ contains a unique lightlike line $\ell$. Since the action of $G$ on $\R^{2,3}$ is isometric, it preserves $\ell$. Hence, $G$ fixes $\Pn(\ell)\in \Ein^{1,2}$.
\item[(IV)] If $sgn(V)=(0,2)$, by Lemma \ref{lemma.1.50}, $G$ fixes a point in $\R\Pn^4$.
\item[(V)] If $sgn(V)=(0,0,2)$, then $G$ preserves a photon.
\end{itemize}
Now, suppose that $\dim V>2$. Since $G$ preserves $V^\perp$ and $\dim V^\perp \leq 2$, the proposition follows easily.
\end{proof}

\section{Proper actions}\label{proper}
In this section, we describe the cohomogeneity one proper actions on the $3$-dimensional Einstein universe $\Ein^{1,2}$.

The following theorem is the main result of this section.
\begin{theorem}\label{thm.2.0.1}
Let $G\subset \Conf_\circ(\Ein^{1,2})=SO_\circ(2,3)$ be a connected Lie group which acts on $\Ein^{1,2}$ properly and with cohomogeneity one. Then $G$ is conjugate to either $SO(3)$ or $SO(2)\times SO(2)$. Furthermore, the action of $SO(3)$ on $\Ein^{1,2}$ admits a codimension $1$ foliation on witch each leaf is a spacelike hypersphere. Moreover, by the action of $SO(2)\times SO(2)$ every $2$-dimensional orbit is conformally equivalent to $2$-dimensional Einstein universe $\Ein^{1,1}$ and one of them is an Einstein hypersphere.
\end{theorem}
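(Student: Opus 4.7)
The plan hinges on the compactness of $\Ein^{1,2}$. Since $G$ acts properly, the action map $G \times \Ein^{1,2} \to \Ein^{1,2} \times \Ein^{1,2}$ is proper, so the preimage of the compact target is $G \times \Ein^{1,2}$, which forces $G$ itself to be compact. By the Cartan--Iwasawa--Malcev theorem, $G$ is conjugate in $\Conf_\circ(\Ein^{1,2}) = SO_\circ(2,3)$ into a maximal compact subgroup, which is $K = SO(2) \times SO(3)$, realized as the stabilizer of an orthogonal decomposition $\R^{2,3} = \R^{2,0} \oplus \R^{0,3}$.

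Next I would enumerate the connected closed subgroups of $K$ of dimension at least two; this bound is forced because a codimension-one orbit in the $3$-dimensional manifold $\Ein^{1,2}$ is itself $2$-dimensional. Working at the Lie algebra level $\R \oplus \so(3)$, and using that $\so(3)$ is simple of rank $1$ with no proper subalgebra of dimension $\geq 2$, a short case analysis yields precisely three conjugacy classes of candidates: the maximal torus $SO(2) \times SO(2)$, the factor $\{1\} \times SO(3)$, and the full $K$. To eliminate the last one I would appeal to the conformal identification of the double cover $\widehat{\Ein}^{1,2}$ with $(\Sn^1 \times \Sn^2, -dt^2 + ds^2)$ from Section \ref{sec.Ein}, on which $SO(2) \times SO(3)$ acts as the product of the standard rotations and is therefore already transitive.

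Finally, for each of the two remaining candidates I would read off the orbit structure from the action on the null cone. Parameterize null vectors as $(v, u) \in \R^{2,0} \times \R^{0,3}$ with $|u|^2 = -\q(v)$. An $SO(3)$-orbit is the projectivization of the set $\{v\} \times \{u : |u| = |v|\}$ for fixed nonzero $v$, which coincides with $\Pn(w^\perp \cap \Nu^{2,3})$ for any timelike $w \in \R^{2,0}$ orthogonal to $v$, and so is a spacelike hypersphere in the sense of Definition \ref{def.1.7.14}; as $[v]$ varies in $\Pn(\R^{2,0}) \cong \Sn^1$ one obtains the required codimension-one foliation. Splitting further $\R^{0,3} = \R^{0,2} \oplus \R e_5$ with the second $SO(2)$-factor rotating $\R^{0,2}$, the group $SO(2) \times SO(2)$ has a unique $1$-dimensional orbit (the locus where the $\R^{0,2}$-component of $u$ vanishes), the distinguished $2$-dimensional orbit $\Pn(e_5^\perp \cap \Nu^{2,3})$, which is an Einstein hypersphere $\Ein^{1,1}$ by Definition \ref{def.1.7.17}, and a one-parameter family of generic $(SO(2))^2$-invariant tori sweeping out the complementary anti de-Sitter component. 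The main obstacle I foresee is showing that these generic tori are themselves conformally $\Ein^{1,1}$: my approach would be to exhibit the two $SO(2)$-invariant null line fields on each torus, conclude that the induced conformal structure is a flat Lorentzian one, and identify it with the model $\R\Pn^1 \times \R\Pn^1$ description of $\Ein^{1,1}$ from Section \ref{subsec.1.7.3}.
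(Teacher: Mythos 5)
Up to its final step your route is essentially the paper's. Properness on the compact space forces $G$ to be compact, conjugating into the maximal compact subgroup $SO(2)\times SO(3)$, and your Lie-algebra enumeration is exactly the content of the paper's Lemma \ref{lem.2.0.5} (proved there with the two projections and the simplicity of $\so(3)$); the full group is discarded because it acts transitively on $\widehat{\Ein}^{1,2}\simeq \Sn^1\times\Sn^2$, which the paper leaves implicit. Likewise your identification of the $SO(3)$-orbits with the spacelike hyperspheres $\Pn(w^\perp\cap\Nu^{2,3})$, of the unique $1$-dimensional $SO(2)\times SO(2)$-orbit with a timelike circle, and of the orbit $\Pn(e_5^\perp\cap\Nu^{2,3})$ with an Einstein hypersphere is the same computation the paper carries out on the double cover. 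All of this part of your proposal is correct.

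The genuine gap is exactly the step you flag, and your proposed fix would not close it: knowing that the induced Lorentzian conformal structure on a generic orbit torus is flat does not identify it with $\Ein^{1,1}$, because flat Lorentzian conformal tori form a nontrivial moduli space. Concretely, the orbit through a point whose $\Sn^2$-coordinate lies at colatitude $\psi$ is, on the double cover, $\Sn^1\times C_\psi$ with $C_\psi$ the latitude circle of radius $\sin\psi$, and the induced conformal class is that of $-dt^2+\sin^2\psi\, d\phi^2$ with $t,\phi\in\R/2\pi\mathbb{Z}$. Its maximal null curves are the lines $t=\pm(\sin\psi)\phi+c$, which are dense and non-closed whenever $\sin\psi$ is irrational, whereas every photon of $\Ein^{1,1}$ (or of $\widehat{\Ein}^{1,1}\simeq\R\Pn^1\times\R\Pn^1$) is a closed circle; since a conformal diffeomorphism of Lorentzian surfaces carries maximal null curves onto maximal null curves, such an orbit cannot be conformally equivalent to $\Ein^{1,1}$ (an intersection-number count of the closed null leaves excludes the rational values $\sin\psi\neq 1$ as well). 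So the clause you were trying to establish holds only for the equatorial orbit, i.e.\ the Einstein hypersphere; be aware that the paper's own proof of this clause is just the sentence ``it is clear that\dots'', so your hesitation is well placed. What your computation genuinely yields is that every $2$-dimensional $SO(2)\times SO(2)$-orbit is a flat Lorentzian torus, exactly one of which, $\Pn(e_5^\perp\cap\Nu^{2,3})$, carries the conformal structure of $\Ein^{1,1}$; the stronger statement as printed should be weakened accordingly rather than proved.
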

 A Lie group $G$ acts on Einstein universe $\Ein^{1,2}$ properly if and only if $G$ is compact, since $\Ein^{1,2}$ itself is compact.
%\begin{lemma}\label{lem.2.0.2}
%Let $G$ be a Lie group which acts on a compact space $X$ continuously and properly. Then $G$ is compact.
%\end{lemma}
%\begin{proof}
%Since $G$ acts on $X$ continuously  and properly the following map is proper.
%$$\pi:G\times X\longrightarrow X\times X,\;\;\;\;\;(g,x)\mapsto (gx,x).$$
%The space $G\times X$ is compact, since it is the inverse image of the compact space $X\times X$ through the proper map $\pi$. This implies that $G$ is compact.
%\end{proof}
It is well-known that every maximal compact subgroup of $SO_\circ(2,3)$ is conjugate to $SO(2)\times SO(3)$ (see \cite[p.p. 275]{Hel}). 

\begin{lemma}\label{lem.2.0.5}
Let $G$ be a proper connected Lie subgroup of $SO(2)\times SO(3)$ with $\dim G\geq 2$. Then, either $G=\{Id\}\times SO(3)\simeq SO(3)$ or $G$ is conjugate to $SO(2)\times SO(2)$.
\end{lemma}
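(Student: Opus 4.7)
The plan is to work infinitesimally: since $G$ is connected, it is determined by its Lie algebra $\g\subset\so(2)\oplus\so(3)$, and the classification reduces to finding all Lie subalgebras of $\so(2)\oplus\so(3)$ of dimension $2$ and $3$ (these are the only possibilities since $\dim(SO(2)\times SO(3))=4$ and $G$ is proper with $\dim G\geq 2$).

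The main tool is the second projection $\pi_2:\so(2)\oplus\so(3)\to\so(3)$, which is a Lie algebra morphism. Two structural facts about $\so(3)$ drive the argument: it is a simple $3$-dimensional Lie algebra whose only proper subalgebras are $\{0\}$ and the $1$-dimensional ones, all mutually conjugate under $SO(3)$; and any Lie algebra morphism from $\so(3)$ to the abelian algebra $\so(2)$ is trivial. I would first record these facts, then run the case analysis on $\dim\g$.

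If $\dim\g=3$, then $\dim\pi_2(\g)\geq 2$, so $\pi_2(\g)=\so(3)$ and $\pi_2|_\g$ is an isomorphism. Consequently $\g$ is the graph of a Lie algebra morphism $\so(3)\to\so(2)$, which must be zero, giving $\g=0\oplus\so(3)$ and $G=\{Id\}\times SO(3)$. If $\dim\g=2$, then $\pi_2(\g)$ is a subalgebra of $\so(3)$ of dimension $0$ or $1$ (dimension $2$ is ruled out by the structure of $\so(3)$). Dimension $0$ forces $\g\subset\so(2)\oplus 0$, contradicting $\dim\g=2$. Dimension $1$ forces $\ker(\pi_2|_\g)=\so(2)\oplus 0$, from which a straightforward preimage argument gives $\g=\so(2)\oplus\mathfrak{l}$ for some $1$-dimensional $\mathfrak{l}\leq\so(3)$; conjugating by an element of $\{Id\}\times SO(3)$ sends $\mathfrak{l}$ to the standard $\so(2)\subset\so(3)$ and yields $G$ conjugate to $SO(2)\times SO(2)$.

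There is no real obstacle in this argument; the only point that deserves care is the dimension-$3$ case, where one must rule out any nontrivial morphism $\so(3)\to\so(2)$, and this follows immediately from simplicity of $\so(3)$ together with abelianness of $\so(2)$. The subalgebra-to-subgroup passage is automatic since $SO(2)\times SO(3)$ and $G$ are connected.
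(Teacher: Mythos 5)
Your proposal is correct and follows essentially the same route as the paper: a case analysis on the projection to the $SO(3)$-factor, using that $\so(3)$ is simple (hence has no $2$-dimensional subalgebra and admits no nontrivial morphism onto an abelian algebra) to force $G=\{Id\}\times SO(3)$ in the $3$-dimensional case, and conjugacy of the $1$-dimensional subalgebras of $\so(3)$ to get $SO(2)\times SO(2)$ in the $2$-dimensional case. The only cosmetic difference is that you argue entirely at the Lie algebra level, whereas the paper phrases the cases via the group projections $P_1,P_2$ and passes to differentials only when needed.
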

\begin{proof}
Let $P_1$ and $P_2$ denote the projection morphisms from $SO(2)\times SO(3)$ to $SO(2)$ and $SO(3)$, respectively. The group $G$ is a subgroup of $P_1(G)\times P_2(G)$.  Since $SO(3)$ has no two dimensional subgroup,  $\dim P_2(G)\in \{1,3\}$ (see \cite[Proposition 2.2]{Has0}). If $\dim P_2(G)=1$, then $P_2(G)=SO(2)$ up to conjugacy and $P_1(G)=SO(2)$. Therefore, $G=SO(2)\times SO(2)$ up to conjugacy.

Now, suppose that $\dim P_2(G)=3$. Since $G$ is a proper subgroup, $\dim G=3$. Hence, the differential map $dP_2$ at the identity element of $G$ is a Lie algebra isomorphism from $\g=Lie(G)$ to $Lie(SO(3))=\mathfrak{so}(3)$. So, $f=dP_1\circ (dP_2)^{-1}:\mathfrak{so}(3)\rightarrow dP_1(\g)$ is a surjective Lie algebra morphism. If $P_1(G)=SO(2)$, then $\ker f$ is a $2$-dimensional ideal of $\mathfrak{so}(3)$. This contradicts the simplicity of $\mathfrak{so}(3)$. Hence $P_1(G)=\{Id\}$. Therefore, $G=\{Id\}\times SO(3)\simeq SO(3)$.
\end{proof}
\textbf{Proof of Theorem \ref{thm.2.0.1}.} By the cohomogeneity one assumption $\dim G\geqslant 2$, so by Lemma \ref{lem.2.0.5}, $G$ is conjugate to either $SO(3)$ or $SO(2)\times SO(2)$. There exist a unique $SO(2)\times SO(3)$-invariant decomposition for $\R^{2,3}=V\oplus V^\perp$ where $V$ is of signature $(2,0)$ (and hence $V^\perp$ is of signature $(0,3)$). 

Observe that, by the action of $SO(3)=I\times SO(3)$ on the double cover space $\widehat{\Ein}^{1,2}$, the induced orbit at each point $(x,y)\in \Sn^1\times \Sn^2$ is $\{x\}\times \Sn^2$. This orbit is the projectivization of $x^\perp\cap \Nu^{2,3}\subset \R^{2,3}$ on $\widehat{\Ein}^{1,2}$. It is clear that, the action of $\mathbb{Z}_2=\{\pm (Id_{\Sn^1},Id_{\Sn^2})\}$ on $\Sn^1\times \Sn^2$ maps the orbits $\{x\}\times \Sn^2$ and $\{-x\}\times \Sn^2$ to $\{[x]\}\times\Sn^2$, for all $x\in \Sn^1$. Hence, each orbit induced by $SO(3)$ in $\Ein^{1,2}$ is a spacelike hypersphere.

The action of $SO(2)$ on $\Sn^2$ admits two antipodal fixed points $\{x_0,-x_0\}\in \Sn^2$ and acts on $\Sn^2\setminus \{x_0,-x_0\}$ freely. Hence, $SO(2)\times SO(2)$ preserves a timelike circle
$$\mathfrak{C}=(\Sn^1\times \{x_0,-x_0\})/\mathbb{Z}_2=\Pn(V\oplus \R x_0)\cap \Ein^{1,2},$$
and acts on it transitively. It is clear that, for arbitrary $y\in \Sn^1$ and $x\in \Sn^2\setminus\{x_0,-x_0\}$, the orbit induced by $SO(2)\times SO(2)$ at $(y,x)\in \Sn^1\times\Sn^2$ is conformally equivalent to $\widehat{\Ein}^{1,1}$. Hence, the orbit induced at $[x:y]\in \Ein^{1,2}$ is conformally equivalent to $\widehat{\Ein}^{1,1}/\mathbb{Z}_2=\Ein^{1,1}$. Since $SO(2)\times SO(2)$ preserves the spacelike line $\R x_0$, it preserves the orthogonal complement subspace $x_0^\perp \leq \R^{2,3}$. Hence the orbit induced at $p\in \Pn( x_0^\perp )\cap \Ein^{1,2}$ is an Einstein hypersphere. This completes the proof.\hfill$\square$

\begin{proposition}\label{cor.proper}
Let $G\subset SO_\circ(2,3)$ be a connected Lie subgroup which acts on $\Ein^{1,2}$ properly and with cohomogeneity one. Then $G$ admits a fixed point in the projective space $\R\Pn^4$.
\end{proposition}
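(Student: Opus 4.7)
The plan is to deduce the conclusion directly from Theorem \ref{thm.2.0.1}, which tells us that up to conjugacy $G$ is either $SO(3)$ or $SO(2)\times SO(2)$. In either case I will exhibit an explicit invariant line in $\R^{2,3}$, whose projectivization is the desired fixed point in $\R\Pn^4$.

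First, I would recall the canonical decomposition $\R^{2,3}=V\oplus V^\perp$ preserved by the maximal compact $SO(2)\times SO(3)$, where $V$ has signature $(2,0)$ and $V^\perp$ has signature $(0,3)$; this decomposition was used in the proof of Theorem \ref{thm.2.0.1}. For the case $G=\{Id\}\times SO(3)$, the $SO(3)$-factor acts trivially on $V$, so \emph{every} $1$-dimensional subspace of $V$ is $G$-invariant. Picking any such line $\ell\leq V$, the point $\Pn(\ell)\in \R\Pn^4$ is fixed by $G$.

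For the case $G=SO(2)\times SO(2)$, I would reuse the observation from the proof of Theorem \ref{thm.2.0.1}: the second $SO(2)$-factor inside $SO(3)$ fixes a pair of antipodal points $\{x_0,-x_0\}\subset \Sn^2\subset V^\perp$. The spacelike line $\R x_0\leq V^\perp$ is therefore preserved by this $SO(2)$, and it is trivially preserved by the $SO(2)$-factor acting on $V$. Hence $\R x_0$ is $G$-invariant, and $\Pn(\R x_0)\in \R\Pn^4$ is a fixed point.

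There is no serious obstacle here; the proposition is essentially a bookkeeping corollary of Theorem \ref{thm.2.0.1} together with the structure of the standard embedding $SO(2)\times SO(3)\hookrightarrow SO_\circ(2,3)$. The only subtlety worth double-checking is that the fixed point found in the $SO(2)\times SO(2)$ case lies in $\R\Pn^4$ and need not lie on $\Ein^{1,2}$ itself (the vector $x_0$ is spacelike, not null), which is consistent with the statement, since the proposition asserts a fixed point in the ambient projective space, not in the Einstein universe.
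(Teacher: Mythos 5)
Your proposal is correct and follows essentially the same route as the paper: the paper's proof simply cites the proof of Theorem \ref{thm.2.0.1}, which contains exactly the ingredients you spell out, namely the invariant splitting $\R^{2,3}=V\oplus V^\perp$ (so $\{Id\}\times SO(3)$ fixes every line of $V$) and the $SO(2)\times SO(2)$-invariant spacelike line $\R x_0\leq V^\perp$. Your remark that the fixed point need only lie in $\R\Pn^4$, not on $\Ein^{1,2}$, is also consistent with the statement.
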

\begin{proof}
It follows immediately from the proof of Theorem \ref{thm.2.0.1}.
\end{proof}

\section{Actions on Minkowski patch and lightcone}\label{fix}
In this section, we study the actions which admit a fixed point in Einstein universe $\Ein^{1,2}$. Observe that, for an arbitrary point $p\in \Ein^{1,2}$, the stabilizer $Stab_{SO_\circ(2,3)}(p)$ preserves the lightcone $L(p)$, since it preserves the linear subspace $p^\perp\leq \R^{2,3}$. Also, it preserves the Minkowski patch $Mink(p)$. Hence, a cohomogeneity one action of a subgroup of $Stab_{SO_\circ(2,3)}(p)$ admits a $2$-dimensional orbit in $L(p)$ or $Mink(p)\approx \En^{1,2}$.

Recall from Section \ref{sec.Ein} that, by the action of $\Conf_\circ(\Ein^{1,2})=SO_\circ(2,3)$ on Einstein universe $\En^{1,2}$, the identity component of the stabilizer of a point $p\in \Ein^{1,2}$ is isomorphic to the semidirect product
\begin{align}\label{fix.eq.1}
 \Conf_\circ(\En^{1,2})=(\R_+^*\times SO_\circ(1,2))\ltimes \R^{1,2}.
\end{align} 
By the action of $\Conf(\En^{1,2})$ on Minkowski space $\En^{1,2}$, the identity component of the stabilizer of a point is conjugate to $\R_+^*\times SO_\circ(1,2)$. In fact, $\R_+^*\times SO_\circ(1,2)$ fixes a unique point $o\in \En^{1,2}$ and the splitting Eq. \ref{fix.eq.1} depends strongly on $o$.

\begin{theorem}\label{thm.fix}
Every connected Lie subgroup of $\Conf(\En^{1,2})$ with $\dim \geq 2$ acts on Einstein universe $\Ein^{1,2}=Mink(p)\cup L(p)$ with cohomogeneity one, except $\R^{1,2}$ and $\R_+^*\ltimes \R^{1,2}$.
\end{theorem}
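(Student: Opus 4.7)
My strategy is to exploit the $G$-invariant decomposition $\Ein^{1,2}=Mink(p)\cup L(p)$: since every subgroup of $\Conf(\En^{1,2})=Stab_{\Conf(\Ein^{1,2})}(p)$ preserves both pieces, the action on $\Ein^{1,2}$ is of cohomogeneity one if and only if $G$ has a $2$-dimensional orbit in $Mink(p)\cong\En^{1,2}$ or an open orbit on the lightcone $L(p)$ (which is itself $2$-dimensional). The plan relies on the classification of all connected Lie subgroups of $\Conf(\En^{1,2})$ of dimension at least two provided by Theorem \ref{thm.3.2.1}, and writing each such $G$ as $\bar L\ltimes T$, with $T=G\cap\R^{1,2}$ the translation part and $\bar L\subset\R_+^*\times SO_\circ(1,2)$ the linear part, I would split the argument into the two exceptions and the remaining cases distinguished by whether $T=\R^{1,2}$ or $T\subsetneq\R^{1,2}$.

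For the two exceptions, both $\R^{1,2}$ and $\R_+^*\ltimes\R^{1,2}$ act transitively on $Mink(p)$, so neither has a $2$-dimensional orbit there. On $L(\hat p)$, Proposition \ref{prop.1.7.10} identifies each point with the parallel class of a degenerate affine hyperplane in $Mink(p)$. A direct computation shows that a translation by $v$ sends $\{\langle x,n\rangle=c\}$ to the parallel hyperplane $\{\langle x,n\rangle=c+\langle v,n\rangle\}$, while a homothety of ratio $r>0$ centered at the origin sends it to $\{\langle x,n\rangle=rc\}$; in both cases the lightlike direction $n$ is preserved, so by Proposition \ref{prop.1.7.10} every orbit stays inside a single photon through $p$ and is therefore at most one-dimensional. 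Consequently neither group can be of cohomogeneity one, establishing the two exceptions.

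For any other $G$, the case $T=\R^{1,2}$ is handled uniformly: $G$ acts transitively on $Mink(p)$ and, since $G\notin\{\R^{1,2},\R_+^*\ltimes\R^{1,2}\}$, the projection $\bar L$ contains an element with nontrivial $SO_\circ(1,2)$-component. Such an element moves the celestial sphere of photons through $p$ nontrivially, and combined with the within-photon action of $\R^{1,2}$ described above it yields a $2$-dimensional orbit on $L(p)$. In the remaining case $T\subsetneq\R^{1,2}$, the orbit through $x\in Mink(p)$ has dimension $\dim T+\dim(\bar L\cdot(x-o))$, where $o$ is the fixed point of $\bar L$; at generic $x$ this is either $\dim G$ or $\dim G-1$, the drop occurring only when $\bar L$ fixes a line in $\R^{1,2}$ parallel to $x-o$. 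One verifies case-by-case through Theorem \ref{thm.3.2.1} that the resulting orbit structure always contains a $2$-dimensional piece, either as a generic orbit in $Mink(p)$ (when $\dim G=2$ or the $\bar L$-action has a large isotropy) or, when $G$ is already open in $Mink(p)$, as the lower-dimensional lightcone orbit analyzed in the $T=\R^{1,2}$ case applied to the linear part.

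The main obstacle is the explicit bookkeeping in this last case: one must inspect every conjugacy class from Theorem \ref{thm.3.2.1} and confirm that the orbit-dimension count never collapses to codimension $\geq 2$. This reduces to computing, for each admissible linear part (built from $\R_+^*$, the elliptic/parabolic/hyperbolic one-parameter subgroups $Y_E,Y_P,Y_H$, and the affine subgroup $\Aff\subset SO_\circ(1,2)$), the generic isotropy of $\bar L$ on $\R^{1,2}$ and then choosing a base point where $T$ is transverse to that isotropy. Once this verification is carried out, combined with the routine exceptional check and the uniform lightcone argument when $T=\R^{1,2}$, the theorem follows.
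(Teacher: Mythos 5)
Your handling of the two exceptional groups and of the full-translation case is correct and is essentially the argument the paper uses inside Theorem \ref{thm.3.0.2}: translations and homotheties preserve every photon of $L(p)$, so $\R^{1,2}$ and $\R_+^*\ltimes\R^{1,2}$ have only orbits of dimension $\le 1$ on $L(\hat p)$ while being transitive on $Mink(p)$; and when $T(G)=\R^{1,2}$ a one-parameter subgroup with nontrivial linear isometry projection gives a tangent vector transverse to a non-invariant photon, which together with the translations yields a two-dimensional orbit. The genuine gap is in the case $T(G)\subsetneq\R^{1,2}$, where your verification scheme rests on assumptions that fail for many groups of Theorem \ref{thm.3.2.1}. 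You assume $G=\bar L\ltimes T$ with $\bar L\subset\R_+^*\times SO_\circ(1,2)$ fixing a point $o$, but a large part of Tables \ref{table2}--\ref{table8} consists of groups whose generators mix linear and translational components and cannot be conjugated into such a product, e.g.\ $\exp\big(\R(\Y_E+e_1)\big)\ltimes(\R e_2\oplus\R e_3)$, $\exp\big(\R(\Y_H+e_3)\big)\ltimes(\R e_1\oplus\R e_2)$, $\exp\big(\R(\Y_P+e_1)\big)\ltimes\R(e_1+e_2)$, and the two groups of Table \ref{table8} which fix no point of $\En^{1,2}$ at all; for these the quantity $\dim\big(\bar L\cdot(x-o)\big)$ is not even defined, and their orbit structure genuinely differs from that of $P_l(G)\ltimes T(G)$ (the first example acts transitively on $\En^{1,2}$, whereas $Y_E\ltimes(\R e_2\oplus\R e_3)$ preserves each spacelike plane). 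Moreover, even when the splitting exists, the formula $\dim T+\dim\big(\bar L\cdot(x-o)\big)$ overcounts whenever $T$ meets the tangent space of the $\bar L$-orbit: for $Y_H\ltimes(\R e_1\oplus\R e_2)$ every orbit in $Mink(p)$ is two-dimensional although the formula predicts generic dimension three, and the drop has nothing to do with $\bar L$ fixing a line parallel to $x-o$.

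More seriously, your concluding dichotomy (the codimension-one orbit is either a generic orbit in $Mink(p)$, or, when $G$ has open orbits there, a two-dimensional orbit in $L(p)$ produced by the linear part) is false for the groups $\R_+^*\ltimes V$ with $V$ a $2$-plane in Tables \ref{table2}--\ref{table4}: their generic orbits in $Mink(p)$ are open, they preserve every photon of $L(p)$ (so there is no two-dimensional orbit in the lightcone), and their unique codimension-one orbit is the exceptional leaf of $\mathcal{F}_V$ through the center $o$ of the homothety. This is precisely the situation the paper isolates as the second alternative of Theorem \ref{thm.3.0.2} (trivial $P_{li}(G)$, $\dim T(G)\le 2$, handled via Proposition \ref{pro.3.1.1}). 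Note also that the paper's route is different from yours: Theorem \ref{thm.fix} is deduced immediately from the if-and-only-if criterion of Theorem \ref{thm.3.0.2}, whose proof is uniform in $P_{li}(G)$ and $T(G)$ and never invokes the classification, while the table-by-table inspection you envisage is what Section \ref{orbit} carries out by direct tangent-vector computations. Your strategy is repairable—replace the splitting/dimension-formula step by such direct computations, or prove the criterion of Theorem \ref{thm.3.0.2} first—but as written the central verification step would fail on a substantial family of the subgroups it is supposed to cover.
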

\begin{proof}
It follows immediately from Theorem \ref{thm.3.0.2}.
\end{proof}
\begin{remark}
Choosing a point $o$ as the origin of Minkowski space $\En^{1,2}$, it becomes naturally a vector space. Moreover, the Lorentzian quadratic form $\q_{1,2}$ on $\R^{1,2}$ induces a Lorentzian quadratic form $\q$ on $(\En^{1,2},o)$, making it a Lorentzian scalar product space.
\end{remark}
There are three natural group morphisms
\begin{align*}
&P_{l}:(\R_+^*\times SO_\circ(1,2))\ltimes \R^{1,2}\longrightarrow \R_+^*\times SO_\circ(1,2), &(\lambda,A,v)\mapsto (\lambda,A).\\
&P_{li}:(\R_+^*\times SO_\circ(1,2))\ltimes \R^{1,2}\longrightarrow SO_\circ(1,2),&(\lambda,A,v)\mapsto A,\\
&P_{h}:(\R_+^*\times SO_\circ(1,2))\ltimes \R^{1,2}\longrightarrow \R_+^*,&(\lambda,A,v)\mapsto \lambda.
\end{align*}

\begin{definition}\label{def.3.0.1}
Given a Lie subgroup $G\subset \Conf_\circ(\En^{1,2})$, the identity component of the kernel $\ker P_l|_G$ is called the translation part of $G$ and is denoted by $T(G)$. Also, the images of $G$ under $P_l$, $P_{li}$ and $P_{h}$ are called the linear projection, the linear isometry projection and the homothety projection of $G$, respectively.
\end{definition}

\begin{theorem}\label{thm.3.0.2}
Let $G$ be a Lie subgroup of $\Conf_\circ(\En^{1,2})$. Then $G$ acts on $\Ein^{1,2}=Mink(p)\cup L(p)$ with cohomogeneity one if and only if $\dim G\geq 2$ and it satisfies one of the following conditions.
\begin{itemize}
\item[-] The linear isometry projection $P_{li}(G)$ is non-trivial.
\item[-] The linear isometry projection $P_{li}(G)$ is trivial and the translation part $T(G)$ has dimension less that or equal to $2$.
\end{itemize}
\end{theorem}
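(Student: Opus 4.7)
The proof splits naturally into a necessity part (``if $G$ is of cohomogeneity one then the two bullet conditions hold'') and a sufficiency part. The crucial geometric input is Proposition \ref{prop.1.7.10}, which lets us read off the action of any translation or homothety on $L(p)$ from what it does to parallel classes of degenerate affine hyperplanes in $Mink(p)$.

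For the necessary direction, a codimension-one orbit in the 3-manifold $\Ein^{1,2}$ is 2-dimensional, so $\dim G\geq 2$. Suppose now that $P_{li}(G)$ is trivial and $\dim T(G)=3$, so that $G$ contains $\R^{1,2}$ and is contained in $\R^*_+\ltimes \R^{1,2}$. A translation by $v$ sends the degenerate affine hyperplane $\Pi=\{\langle x,n\rangle=c\}$ to $\{\langle x,n\rangle=c+\langle v,n\rangle\}$, and a homothety centered at $o$ sends it to $\{\langle x,n\rangle=\lambda c\}$. In both cases the lightlike normal direction $[n]$ is preserved, so by Proposition \ref{prop.1.7.10} the whole group $G$ preserves each photon through $p$; in particular $G$-orbits on $L(\hat p)$ have dimension at most one. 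Since the translation subgroup is already transitive on $Mink(p)$, the $G$-orbits on $\Ein^{1,2}$ have dimensions $3$, $1$, or $0$, excluding cohomogeneity one.

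For the sufficient direction, assume $\dim G\geq 2$. If $P_{li}(G)$ is trivial and $\dim T(G)\leq 2$, I would compute orbits directly in $Mink(p)$: when $\dim T(G)=2$ the orbit of $o$ equals $T(G)$ itself, a 2-dimensional affine subspace; when $\dim T(G)\leq 1$ the hypothesis forces $\dim T(G)=1$, $P_h(G)=\R^*_+$ and $\dim G=2$, and a short computation of the exponential in the semidirect product (taking a Lie-algebra basis of the form $\{h+w_0,\,v_1\}$ with $v_1\in T(G)$) shows that the orbit of any point off the line $T(G)$ is a 2-dimensional open half-plane. When $P_{li}(G)$ is non-trivial I would appeal to the classification of $\dim\geq 2$ subalgebras of $(\R\oplus\so(1,2))\oplus_\theta \R^{1,2}$ furnished by Theorem \ref{thm.3.2.1}: for each conjugacy class, with $P_{li}(G)$ equal to $Y_E$, $Y_H$, $Y_P$, $\Aff$, or all of $SO_\circ(1,2)$, an explicit codimension-one orbit can be exhibited, either inside $Mink(p)$ (typically a hyperboloid, cone, paraboloid, or an affine plane obtained as a level set of an $(R\oplus SO_\circ(1,2))$-invariant function) or inside $L(\hat p)$, on which $P_{li}(G)$ already acts non-trivially through the action on the circle of lightlike normal directions.

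The main obstacle is the parabolic case $Y_P\subset P_{li}(G)$. The parabolic axis is a lightlike line $\ell\subset\R^{1,2}$ which is pointwise fixed by $Y_P$, and $\langle\ell,\ell\rangle=0$, so a translation along $\ell$ commutes trivially with $Y_P$ and leaves invariant every hyperplane of normal $\ell$. Consequently the orbit dimension in $Mink(p)$ can drop, and one has to identify carefully whether the codimension-one stratum sits inside $Mink(p)$ or inside $L(\hat p)$. For every subalgebra appearing in Theorem \ref{thm.3.2.1} one must therefore compare $T(G)$ with the parabolic fixed line and verify case by case that at least one 2-dimensional orbit does appear. It is this interplay between $T(G)$, the parabolic fixed direction, and the homothety factor that forces the appeal to the full classification.
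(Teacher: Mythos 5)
Your ``only if'' argument and your treatment of the case $P_{li}(G)=\{Id\}$, $\dim T(G)\leq 2$ are correct and essentially the paper's: translations and homotheties preserve the lightlike normal direction of degenerate affine planes, hence every photon of $L(p)$, so $\R^{1,2}$ and $\R_+^*\ltimes\R^{1,2}$ only have orbits of dimension $3$, $\leq 1$, or $0$; and in the trivial-$P_{li}$ case one exhibits the invariant affine plane, respectively the half-planes off the invariant line. One small caveat there: for the three-dimensional ``twisted'' groups with $\dim T(G)=2$ the orbit of $o$ itself is an open half-space, not $T(G)$; you must first normalize the homothety centre (this is exactly Proposition \ref{pro.3.1.1}), as you implicitly do in the $\dim T(G)=1$ computation.

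The genuine gap is the other half of sufficiency. For $P_{li}(G)\neq\{Id\}$ you only assert that ``an explicit codimension-one orbit can be exhibited'' for each entry of Theorem \ref{thm.3.2.1}, and you yourself leave the decisive point (the interplay of $T(G)$ with the parabolic fixed lightlike line) as something ``to verify case by case''; nothing is actually checked, and the list comprises dozens of families, several depending on a real parameter. The route is not circular (Theorem \ref{thm.3.2.1} is proved independently, and Section \ref{orbit} of the paper in effect carries out those orbit computations), but as written the hardest part of the theorem is asserted, not proved; moreover your heuristic that a non-trivial action of $P_{li}(G)$ on the circle of normal directions already yields an open orbit in $L(\hat{p})$ is insufficient by itself, since it produces only one (spacelike) tangent direction transverse to the photons. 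The paper closes this uniformly, with no appeal to the classification: if $T(G)\neq\{Id\}$, choose a $1$-parameter subgroup $H\subset G$ with $P_{li}(H)\neq\{Id\}$ and a photon $\phi\subset L(p)$ not $H$-invariant; then $\phi$ is not fixed pointwise by $T(G)$ (a pointwise-fixed photon would be $G$-invariant), so $T(G)$ acts transitively on $\hat{\phi}$, giving a lightlike tangent direction, while $H$ gives a spacelike one, hence a $2$-dimensional orbit inside $L(p)$. If $T(G)=\{Id\}$ but $\R_+^*\subset G$, the homothety factor plays the role of $T(G)$ along a non-invariant photon. The only residual cases are $T(G)=\{Id\}$, $\R_+^*\not\subset G$, where $G$ is conjugate to the Levi factor $SO_\circ(1,2)$ or is $2$-dimensional with $P_{li}(G)=\Aff$, each disposed of by a short direct argument. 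Either supply these uniform arguments or actually perform the table-by-table verification; as it stands the sufficiency direction for non-trivial $P_{li}(G)$ is a plan rather than a proof.
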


\subsection{Actions on lightcone}\label{sec.3.1}

According to Proposition \ref{prop.1.7.10}, for an arbitrary point $q\in L(\hat{p})=L(p)\setminus \{p\}$, there exists a unique affine degenerate plane $\widetilde{\Pi}$ in $\En^{1,2}\approx Mink(p)$ such that $q$ is the limit point of $\widetilde{\Pi}$. This induces a one-to-one correspondence between the set of photons in $L(p)$ and the set of degenerate linear $2$-planes in $\R^{1,2}$: Choosing a photon $\phi\subset L(p)$ there exists a unique degenerate plane $\Pi\leq \R^{1,2}$ such that the corresponding affine degenerate planes with limit points in $\phi$ are parallel to $\Pi$. On the other hand, choosing a degenerate plane $\Pi$ in $\R^{1,2}$ there exists a unique photon $\phi$ on which the limit points of the leaves of the foliation induced by $\Pi$ in $\En^{1,2}$ lie in $\phi$. From now on, for a photon $\phi$ in the lightcone $L(p)$, we denote the corresponding degenerate plane in $\R^{1,2}$ by $\Pi_\phi$.
% and the foliation induced by $\Pi_\phi$ in $\En^{1,2}$ by $\mathcal{F}_{\Pi_\phi}$. Also, we denote by $\hat{\phi}$ the vertex-less photon $\phi\setminus\{p\}$. 

The subgroup $\R_+^*\times SO_\circ(1,2)$ fixes the point $o$ in the Minkowski patch $Mink(p)\approx\En^{1,2}$. Thus, every element of $\R_+^*\times SO_\circ(1,2)$ maps each affine degenerate plane through $o$ to an affine degenerate plane through $o$. Hence, this group preserves the ideal circle $S_\infty\simeq \R\Pn^1$ which is the union of the intersections of the lightcones $L(p)$ and $L(o)$.

A homothety $\lambda\in \R_+^*$ (centered at $o$) on the Minkowski space $\En^{1,2}$ preserves every degenerate affine plane through $o$. It follows that, the homothety factor $\R_+^*$ acts on the ideal circle $S_\infty$ trivially. Let $\phi\subset L(p)$ be a photon and $\Pi_{\phi,q}\subset \En^{1,2}$ be the corresponding affine degenerate plane with limit point $q=S_\infty\cap \phi$. For an arbitrary point $u\in \hat{\phi}\setminus\{q\}$ with corresponding affine degenerate plane $\Pi_{\phi,u}$, the homothety $\lambda$ maps $\Pi_{\phi,u}$ to a parallel  plane $\Pi_{\phi,q}$. Hence, every homothety preserves each photon in the lightcone $L(p)$. Also, the homothety factor $\R_+^*$ act  on the set of parallel planes to $\Pi_{\phi,q}$ transitively. Therefore, $\R_+^*$ acts on the both connected components of $\hat{\phi}\setminus q$, transitively.

Let $g$ be a non-trivial element in $SO_\circ(1,2)$. 
\begin{itemize}
\item If $g$ is an elliptic element, then it preserves no degenerate plane in $\R^{1,2}$, and so, $g$ preserves no photon in $L(p)$. This implies that every elliptic $1$-parameter subgroup of $SO_\circ(1,2)$ acts on $L(\hat{p})$ freely.
\item If $g$ is a parabolic element, then it preserves a unique degenerate plane in $\R^{1,2}$. Consequently, $g$ preserves a unique photon $\phi$ in $L(p)$. Therefore, every parabolic $1$-parameter subgroup of $SO_\circ(1,2)$ acts on $L(p)\setminus \phi$ freely.
\item If $g$ is a hyperbolic element, then it preserves exactly two degenerate planes in $\R^{1,2}$. Hence, $g$ preserves two photons $\phi$ and $\psi$ in the lightcone $L(p)$ and admits exactly two fixed points in the ideal circle $S_\infty$. Henceforth, every hyperbolic $1$-parameter subgroup of $SO_\circ(1,2)$ acts on $L(p)\setminus (\phi\cup \psi)$ freely.
\end{itemize}

Now, let $v\in \R^{1,2}$ be a translation on Minkowski space $\En^{1,2}$. Assume that $\Pi\leq \R^{1,2}$ is a degenerate linear plane and $\widetilde{\Pi}$ is an affine degenerate plane in $\En^{1,2}$ parallel to $\Pi$. Then, $v$ maps $\widetilde{\Pi}$ to the parallel plane $v+\widetilde{\Pi}$. Hence, every translation preserves each photon in $L(p)$.

\begin{itemize}
\item If $v$ is a timelike vector, then it preserves no affine degenerate plane in $\En^{1,2}$. Hence, a timelike vector admits no fixed point in the vertex-less lightcone $L(\hat{p})$.
\item If $v$ is a lightlike vector, then it preserves all the affine degenerate planes in $\En^{1,2}$ parallel to $v^\perp\leq \R^{1,2}$. This implies that the set of points in $L(p)$ fixed by a lightlike element is a unique photon.
\item If $v$ is a spacelike vector, then it preserves the two degenerate planes $\Pi,\Pi'\leq \R^{1,2}$ directed by the two distinct lightlike directions in the Lorentzian $2$-plane $v^\perp\leq \R^{1,2}$. Obviously, $v$ preserves all the affine degenerate planes in $\En^{1,2}$ which are parallel to $\Pi$ or $\Pi'$. Hence the set of points in $L(p)$ fixed by a spacelike element is the union of two distinct photons.
\end{itemize}

Let $G$ be a connected Lie subgroup of $\R_+^*\times SO_\circ(1,2))\ltimes \R^{1,2}$. The translation part $T(G)$ is a normal subgroup of $G$, hence $G$ acts on $T(G)$ by conjugation. Therefore, the natural action of the linear isometry projection $P_{li}(G)$ on $\R^{1,2}$ preserves the translation part $T(G)\leq \R^{1,2}$. Furthermore, assume that $T(G)$ fixes a photon $\phi$ in the lightcone $L(p)$ pointwisely. Thus, $T(G)$ preserves the leaves of the foliation induced by ${\Pi_\phi}$ in $\En^{1,2}$. Henceforth, $T(G)$ is a linear subspace of $\Pi_\phi\leq \R^{1,2}$. This implies that, $T(G)$ is either a degenerate subspace or it is a spacelike line. In the first, obviously $P_{li}(G)$ preserves $\Pi_\phi$. In the later, it is easy to see that $P_{li}(G)$ is a hyperbolic $1$-parameter subgroup. Hence, $P_{li}(G)$ preserves the two degenerate planes generated by $T(G)$ and each of the null directions in the Lorentzian $2$-plane $T(G)^\perp$. We conclude that, if $T(G)$ fixes a photon $\phi\subset L(p)$ pointwisely, then $\phi$ is invariant by $P_{li}(G)$ and consequently, by $G$.

\begin{proposition}\label{pro.3.1.1}
Let $G$ be a connected Lie subgroup of $\R_+^*\ltimes \R^{1,2}$. Then $G$ is conjugate to the semidirect product $P_h(G)\ltimes T(G)$.
\end{proposition}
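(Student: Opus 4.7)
The plan is to split the argument by the two possible values of the homothety projection $P_h(G)$. Since $P_h(G)$ is a connected subgroup of the one-dimensional group $\R_+^*$, it is either trivial or all of $\R_+^*$. If $P_h(G)$ is trivial then $G \subset \R^{1,2}$ is a connected subgroup of an abelian vector group, hence $G = T(G) = P_h(G) \ltimes T(G)$ and there is nothing to do. So the substantive case is $P_h(G) = \R_+^*$.

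In that case I would work at the Lie-algebra level inside $\R \oplus \R^{1,2}$. Using the group law $(\lambda_1, v_1)(\lambda_2, v_2) = (\lambda_1 \lambda_2,\, v_1 + \lambda_1 v_2)$, the induced bracket is $[(a_1, w_1),(a_2, w_2)] = (0,\, a_1 w_2 - a_2 w_1)$. The translation part $T(G)$ is a connected subgroup of the vector group $\R^{1,2}$, hence a linear subspace $V_0 \leq \R^{1,2}$ with Lie algebra $\{0\} \oplus V_0$. The short exact sequence
\[
0 \longrightarrow \{0\}\oplus V_0 \longrightarrow \g \longrightarrow \R \longrightarrow 0
\]
splits by choosing any lift $(1, v) \in \g$ of the generator of $\R$, so I can write $\g = \R\cdot (1, v) \oplus (\{0\} \oplus V_0)$ for some $v \in \R^{1,2}$.

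The key computation I would then carry out is the adjoint action of a translation, which reduces to the simple shear $\mathrm{Ad}((1, u))(b, w) = (b,\, w - bu)$. Applying this with $u = v$ sends the chosen generator $(1, v)$ to $(1, 0)$ and leaves $\{0\} \oplus V_0$ pointwise fixed, so $\mathrm{Ad}((1, v))(\g) = \R\cdot(1, 0) \oplus \{0\} \oplus V_0$, which is precisely the Lie algebra of $P_h(G) \ltimes T(G) = \R_+^* \ltimes V_0$. Since two connected Lie subgroups of $\R_+^* \ltimes \R^{1,2}$ with the same Lie algebra coincide, I conclude that $(1, v)\, G\, (1, v)^{-1} = P_h(G) \ltimes T(G)$, which proves the proposition.

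There is no serious obstacle here: the entire content is the observation that the adjoint action of a translation on $\R \oplus \R^{1,2}$ acts on the vector coordinate as a single shear, so one translation suffices to trivialize the cross term. The two minor points to verify carefully are that $T(G)$ — defined as an identity component of a kernel — is automatically a linear subspace of $\R^{1,2}$, so that $P_h(G) \ltimes T(G)$ makes sense as a subgroup of $\R_+^* \ltimes \R^{1,2}$, and that the splitting of the Lie-algebra exact sequence is always available because the quotient $\R$ is one-dimensional.
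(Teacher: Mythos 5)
Your proof is correct and is essentially the paper's argument: the paper also reduces to the case $P_h(G)=\R_+^*$, picks a $1$-parameter subgroup transversal to $T(G)$ (explicitly $\{(e^t,(e^t-1)v)\}$ at the group level, which is the exponential of your chosen lift $(1,v)$), and conjugates by the single translation $(1,v)$, noting that $T(G)$ is preserved. Your $\mathrm{Ad}$-computation is just the infinitesimal version of that same conjugation, so there is nothing substantively different to flag.
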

\begin{proof}
First assume that $P_h(G)$ is trivial. Then, obviously $G=T(G)$. Now, suppose that $P_h(G)\neq \{1\}$.
Let $L$ be a $1$-parameter subgroup of $G$ transversal to $T(G)$. Considering the Lie algebra of $L$, one can see, there exists a unique vector $v\in \R^{1,2}$, such that $L=\left\lbrace\big(e^t,(e^t-1)v\big):t\in \R\right\rbrace\subset \R_+^*\ltimes \R^{1,2}$. Observe that $L$ is conjugate to $\R_+^*$ via $(1,v)$ and $T(G)$ is invariant by this conjugation. Therefore, $G$ is conjugate to $\R_+^*\ltimes T(G)$.
\end{proof}
\textbf{Proof of Theorem \ref{thm.3.0.2}:} Assume that $G$ admits a $2$-dimensional orbit at $q\in \Ein^{1,2}$. Obviously $\dim G\geq 2$. If $q\in L(p)$, then $P_{li}(G)$ is nontrivial, since the subgroup $\R^*_+\ltimes \R^{1,2}$ admits no open orbit in $L(p)$. If $q$ belongs to the Minkowski patch $Mink(p)\approx \En^{1,2}$, then $\dim T(G)\leq 2$, since the subgroup $\R^{1,2}$ acts on $\En^{1,2}$ transitively.

Now, we prove the reverse direction. Suppose that $\dim G\geq 2$. First, assume that the linear isometry projection $P_{li}(G)$ is non-trivial. There are some cases:

\textbf{Case I:} \textit{The translation part is non-trivial.} In this case, the photons which are fixed pointwisely by $T(G)$ (if there exists any) are exactly those which $P_{li}(G)$ preserves them. Assume that $H\subset G$ is a $1$-parameter subgroup with non-trivial linear isometry projection $P_{li}(H)$. There exist a photon $\phi\subset L(p)$ which is not $H$-invariant. For an arbitrary point $q\in \hat{\phi}$, the vector tangent to $G(q)$ at $q$ induced by $H$ is spacelike. On the other hand, $T(G)$ acts on the vertex-less photon $\hat{\phi}$ transitively. So, there is a lightlike vector tangent to $G(q)$ at $q$. This implies that, the tangent space $T_qG(q)$ is $2$-dimensional.

\textbf{ Case II:} \textit{The translation part is trivial:} There are two subcases.
\begin{itemize}
\item[$\blacktriangleright$] \textit{$G$ contains the homothety factor $\R_+^*$ as a subgroup.} The subgroup $\R_+^*$ preserves the ideal circle $S_\infty$ and acts on $L(p)\setminus (S_\infty\cup \{p\})$ freely. Let $L$ be a $1$-parameter subgroup of $G$ transversal to $\R_+^*$. Obviously, $P_{li}(L)$ is non-trivial. There exists a photon $\phi\subset L(p)$ which is not invariant by $L$. For an arbitrary point $q\in \phi\setminus (S_\infty\cup \{p\})$, the vectors tangent to the orbit $G(q)$ at $q$ induced by $\R_+^*$ and $L$ are lightlike and spacelike, respectively. Hence, $G$ admits an open orbit in $L(p)$.

\item[$\blacktriangleright$] \textit{The homothety factor $\R_+^*$ is not a subgroup of $G$.} In this case, we have $\dim G=\dim P_{li}(G)\in \{2,3\}$.
\begin{itemize}
\item If $\dim G=3$, then $P_{li}(G)=SO_\circ(1,2)$ (in fact $G$ is isomorphic to $SO_\circ(1,2)$). The group $G$ is a Levi factor of $(\R_+^*\times SO_\circ(1,2))\ltimes \R^{1,2}$. By the uniqueness of Levi factor we have $G=SO_\circ(1,2)$, up to conjugacy (see \cite[p.p. 93]{Jac}). Obviously, $SO_\circ(1,2)$ admits a $2$-dimensional orbit in $\En^{1,2}$.
\item If $\dim G=2$, then $P_{li}(G)=\Aff$, up to conjugacy. Hence, $G$ preserves a unique photon $\phi\subset L(p)$. Let $\mathcal{P}$ be a $1$-parameter subgroup of $G$ which its linear isometry projection $P_{li}(\mathcal{P})$ is parabolic. Furthermore, assume that $\mathcal{H}$ is a $1$-parameter subgroup of $G$ which is transversal to $\mathcal{P}$. Obviously, $\mathcal{H}$ has hyperbolic linear isometry projection and it also preserves a photon $\psi\subset L(p)$ different from $\phi$. Denote by $\Pi_\psi$ the degenerate plane in $\R^{1,2}$ correspond to $\psi$. Observe that $\mathcal{P}$ acts on $L(p)\setminus \phi$ freely, since it preserves only $\phi$. Assume that $\mathcal{H}$ induces an open arc (orbit) $I\subset \psi$. Then $\mathcal{P}$ maps $I$ to other photons, and therefore, $G$ admits an open orbit in $L(p)$. If $\mathcal{H}$ fixes $\psi$ pointwisely, then it preserves the leaves of the foliation $\mathcal{F}_{\Pi_\psi}$ induced by $\Pi_\psi$ in $\En^{1,2}$. 
%Note that, because of the parabolicity of $\mathcal{P}$, it does not preserve this foliation. 
Observe that $\mathcal{H}$ admits a $1$-dimensional orbit at some point $q\in \En^{1,2}$ included in the leaf $\mathcal{F}_{\Pi_\psi}(q)$, since the action of $\Conf(\En^{1,2})$ is faithful. The vector tangent to $G(q)$ at $q$ induced by $\mathcal{P}$ does not lie in $\Pi_\psi$, since $\mathcal{P}$ does not preserve $\Pi_\psi$. Hence, $G$ admits a $2$-dimensional orbit at $q$.  
\end{itemize}
\end{itemize}

Finally, assume that the linear isometry projection $P_{li}(G)$ is trivial, then $G$ is a subgroup of $\R_+^*\ltimes \R^{1,2}$. By Proposition \ref{pro.3.1.1}, $G$ is conjugate to $P_h(G)\ltimes T(G)$. Observe that $T(G)$ admits a $\dim T(G)$-dimensional foliation $\mathcal{F}_{T(G)}$ in $\En^{1,2}$. If the homothety projection $P_h(G)$ is trivial, then $G$ is a linear $2$-plane in $\R^{1,2}$ and so, all the orbits in $\En^{1,2}$ are $2$-dimensional. If $P_h(G)=\R_+^*$, then $G$ preserves a unique leaf of $\mathcal{F}_{T(G)}$, namely the leaf containing $o$. Now, the result follows easily.\hfill $\square$

\subsection{Lie subgroups of $\Conf(\En^{1,2})$}
According to Theorem \ref{thm.fix}, every connected Lie subgroup of $\Conf(\En^{1,2})$ with $\dim \geq 2$ acts on Einstein universe with cohomogeneity one, except $\R^{1,2}$ and $\R_+^*\ltimes \R^{1,2}$. Here we give a complete list of connected Lie subgroups of $\Conf(\En^{1,2})$ with $\dim \geq 2$ up to conjugacy. 

Let $\{e_1,e_2,e_3\}$ be an orthonormal basis for $\R^{1,2}$, where $e_1$ is timelike vector, and $(x,y,z)$ be the corresponding coordinate on the Minkowski space $Mink(p)\approx\En^{1,2}$ with $o=(0,0,0)\in \En^{1,2}$ as the origin (here $o$ is the unique point fixed by $\R_+^*\times SO_\circ(1,2)$). The Lie algebra  of $\Conf(\En^{1,2})$ is isomorphic to the semi-direct sum $(\R\oplus\mathfrak{so}(1,2))\oplus_\theta \R^{1,2}$, where $\theta$ is the natural representation of $\R \oplus \mathfrak{so}(1,2)$ into $\mathfrak{gl}(\R^{1,2})$. Recall form Section \ref{proj}, the simple group $SO_\circ(1,2)\simeq \PSL(2,\R)$ has exactly four distinct non-trivial proper subgroup. The set $\{\Y_E,\Y_P,\Y_H\}$ is a basis for the Lie algebra $\mathfrak{so}(1,2)$ as a vector space. For arbitrary elements $\lambda\in \R$, $X\in \mathfrak{so}(1,2)$, and $v\in \R^{1,2}$, we denote the corresponding element in $(\R\oplus\mathfrak{so}(1,2))\oplus_\theta \R^{1,2}$ simply by $\lambda+X+v$, when there is no ambiguity. Furthermore, we denote by $\R(\lambda+X+v)$ the linear subspace of $(\R\oplus\mathfrak{so}(1,2))\oplus_\theta \R^{1,2}$ generated by the vector $\lambda+X+v$. Also, for a Lie subalgebra $\g\leq (\R\oplus\mathfrak{so}(1,2))\oplus_\theta \R^{1,2}$, we denote by $\exp(\g)$ the corresponding connected Lie subgroup of $(\R_+^*\times SO_\circ(1,2))\ltimes\R^{1,2}$.
%\begin{notation}\label{not.1.5.5}
%For two linear subspaces $\h_1,\h_2\leq\g=\R\oplus\mathfrak{so}(1,2))\oplus_\theta \R^{1,2}$, 
%\begin{itemize}
%\item by $\h_1+\h_2$ we mean simply the sum of two vector subspaces in $\g$.
%\item by $\h_1\oplus \h_2$, we mean that $\h_1$ and $\h_2$ are Lie subalgebras and they are orthogonal with respect to the Lie bracket on $\g$, meaning that, for all $X\in \h_1$ and all $Y\in \h_2$, $[X,Y]=0$.
%\item by $\h_1\oplus_\theta \h_2$ we mean that $\h_1$ and $\h_2$ are Lie subalgebras, but they are not orthogonal with respect to the Lie bracket on $\g$.
%\end{itemize}
%\end{notation}
The following theorem, characterizes all the connected Lie subgroups of $\Conf(\En^{1,2})$ with $\dim \geq 2$, up to conjugacy.

\begin{theorem}\label{thm.3.2.1}
Let $G\subset \Conf_\circ(\Ein^{1,2})$ be a connected Lie subgroup with $\dim G\geq 2$. Then $G$ is conjugate to one of the subgroups indicated in Tables \ref{table1}-\ref{table8}.
\end{theorem}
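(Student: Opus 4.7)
My plan is to pass to the Lie algebra $\so(2,3)$ and classify its Lie subalgebras $\g$ of dimension $\geq 2$ up to the adjoint action of $SO_\circ(2,3)$; connected Lie subgroups of $\Conf_\circ(\Ein^{1,2})$ then correspond bijectively to such subalgebras, and conjugacy classes of subgroups to adjoint orbits of subalgebras. The organizing principle is the type of proper invariant subspace $V \leq \R^{2,3}$ preserved by $G$, or the absence of such a subspace in the irreducible case.

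The irreducible case is immediate from Theorem \ref{1}: $G$ is then conjugate to $SO_\circ(1,2)$ or to $SO_\circ(2,3)$ itself. In the reducible case, running the signature analysis of Proposition \ref{prop.all} without the cohomogeneity-one hypothesis, $G$ preserves either a line in $\R^{2,3}$, a $(2,0)$-plane, or a totally isotropic $2$-plane (a photon), and hence lies inside a known stabilizer: $\Conf_\circ(\En^{1,2})$ for a lightlike line (i.e., $G$ fixes a point in $\Ein^{1,2}$); $SO_\circ(2,2)$ for a spacelike line; $SO_\circ(1,3)$ for a timelike line; $SO(2) \times SO(3)$ for a $(2,0)$-plane; and the Heisenberg-type extension $(\R^* \times SL(2,\R)) \ltimes H$ for a photon. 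The compact case was already handled in Lemma \ref{lem.2.0.5}; the anti-de Sitter and de Sitter stabilizers are the subject of Section \ref{de}; the photon case is Section \ref{photon}. So the remaining work to prove Theorem \ref{thm.3.2.1} is precisely the parabolic case.

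For that richest case, the parabolic subgroup $\Conf_\circ(\En^{1,2}) = (\R^*_+ \times SO_\circ(1,2)) \ltimes \R^{1,2}$, I would exploit the short exact sequence
\[ 0 \longrightarrow T(\g) \longrightarrow \g \stackrel{P_l}{\longrightarrow} P_l(\g) \longrightarrow 0 \]
with $T(\g) := \g \cap \R^{1,2}$ the translation ideal and $P_l(\g) \leq \R \oplus \so(1,2)$ the linear projection of Definition \ref{def.3.0.1}. The conjugacy class of $\g$ is determined by three pieces of data: the conjugacy class of $P_l(\g)$ (which, since $\so(1,2)$ has only three conjugacy classes of one-parameter subalgebras, $\R\Y_E$, $\R\Y_P$, $\R\Y_H$, and one two-dimensional class $\aff$, is an explicit finite enumeration plus tilt families $\R(\alpha + \Y_\star)$); the $P_l(\g)$-invariant subspace $T(\g) \leq \R^{1,2}$, read off from the Jordan form of $P_{li}(\g)$; and a splitting cocycle normalized via Proposition \ref{pro.3.1.1} and Lemma \ref{lem.1.4.10}.

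The main obstacle will be tracking continuous moduli. When $P_l(\g)$ mixes the homothety factor $\R$ with a one-parameter subalgebra of $\so(1,2)$ through a tilt $\R(\alpha + \Y_\star)$, the slope $\alpha$ is a genuine modulus under the adjoint action, producing honest one-parameter families of pairwise non-conjugate subalgebras; similarly a translation cocycle $v \in \R^{1,2}$ attached to a given $P_l(\g)$ yields inequivalent extensions according to where $v$ sits in the invariant decomposition of $\R^{1,2}$. Ensuring that Tables \ref{table1}--\ref{table8} are both exhaustive and non-redundant therefore comes down to computing, for each canonical form, its centralizer and normalizer inside the ambient stabilizer, and verifying that no two entries from different tables are identified by an inner automorphism of $SO_\circ(2,3)$.
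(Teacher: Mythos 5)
For the substantive content of this theorem your proposal coincides with the paper's method: the paper likewise classifies these subgroups by the dimension and causal type of the translation part $T(G)$, the conjugacy class of the linear projection $P_l(G)$ in $\R\oplus\so(1,2)$, and a normalization of the remaining cocycle data by the adjoint action (it carries out only one representative case --- degenerate-plane translation part with parabolic $P_{li}(G)$ --- and defers the rest to \cite{Has0}), and your observation that the tilt slope is a genuine modulus matches the one-parameter families $a\in\R^*$, $a\in[-1,1]$ in the tables. One caveat on scope: despite the literal wording ``$G\subset\Conf_\circ(\Ein^{1,2})$'', the statement concerns subgroups of $\Conf_\circ(\En^{1,2})=(\R_+^*\times SO_\circ(1,2))\ltimes\R^{1,2}$ only (see the sentence preceding the theorem and the subsection title), so your opening reduction via Theorem~\ref{1} and Proposition~\ref{prop.all} is not part of this proof; under your literal reading the conclusion would in fact fail, since $SO(3)$, $SO_\circ(1,3)$, $SO_\circ(2,2)$ and the irreducible $SO_\circ(1,2)$ appear in none of Tables~\ref{table1}--\ref{table8}. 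Finally, within the point-stabilizer your three invariants are the right ones, but $T(\g)$ is genuinely extra data rather than being ``read off from the Jordan form of $P_{li}(\g)$'': for a fixed $P_{li}(\g)$ (hyperbolic, or trivial) there are several invariant subspaces of each dimension, which is precisely why the tables are indexed by $T(G)$ and must be checked for exhaustiveness over all admissible pairs $(P_l(\g),T(\g))$, not just over Jordan types.
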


\begin{table}[h!]
\centering
\begin{tabular}{|c| c| c|c| } 
 \hline
  \multicolumn{4}{|c|}{\textbf{Subgroups with full translation part}} \\
  \hline
 $\R^*_+\ltimes \R^{1,2}$ & $(\R_+^*\times SO_\circ(1,2))\ltimes \R^{1,2}$ & $SO_\circ(1,2)\ltimes \R^{1,2}$ &  $\exp\big(\R(a+\Y_E))\ltimes \R^{1,2}$\\
  \hline
 $\R^{1,2}$ & $(\R_+^*\times \Aff)\ltimes \R^{1,2}$ & $\Aff\ltimes \R^{1,2}$ & $\exp\big(\R(a+\Y_P))\ltimes \R^{1,2}$\\
  \hline
$Y_H\ltimes \R^{1,2}$  &  $(\R_+\times Y_H)\ltimes \R^{1,2}$ &  $Y_P\ltimes \R^{1,2}$  & $\exp\big(\R(a+\Y_H)+\R\Y_P\big)\ltimes \R^{1,2}$\\
  \hline
$Y_E\ltimes \R^{1,2}$ &  $(\R_+\times Y_P)\ltimes \R^{1,2}$ & $(\R_+\times Y_E)\ltimes \R^{1,2}$ & $\exp\big(\R(a+\Y_H))\ltimes \R^{1,2}$ \\
\hline
\end{tabular}
\caption{Here $a\in \R^*$ is a fixed number.}
\label{table1}
\end{table}

\begin{table}[h!]
\centering
\begin{tabular}{|c| c| } 
 \hline
 \multicolumn{2}{|c|}{\textbf{Subgroups with a Lorentzian plane as the translation part}} \\
  \hline
  $(\R_+^*\times Y_H)\ltimes (\R e_1\oplus \R e_2)$ & $Y_H\ltimes (\R e_1\oplus \R e_2)$\\
\hline  
 $\exp\big(\R(a+\Y_H)\big)\ltimes (\R e_1\oplus \R e_2)\big)$ &  $\exp\big(\R(\Y_H+e_3)\big)\ltimes (\R e_1\oplus \R e_2)$\\
  \hline
    $\R_+^*\ltimes(\R e_1\oplus\R e_2) $ & $\R e_1\oplus \R e_2$\\
  \hline
\end{tabular}
\caption{Here $a\in \R^*$ is a fixed number.}
\label{table2}
\end{table}

\begin{table}[h!]
\centering
\begin{tabular}{|c| c| } 
\hline
 \multicolumn{2}{|c|}{\textbf{Subgroups with a spacelike plane as the translation part}} \\
  \hline
    $(\R_+^*\times Y_E)\ltimes (\R e_2\oplus \R e_3)$ & $Y_E\ltimes (\R e_2\oplus \R e_3)$\\
\hline  
 $\exp\big(\R(a+\Y_E)\big)\ltimes (\R e_2\oplus \R e_3)\big)$ &  $\exp\big(\R(\Y_E+e_1)\big)\ltimes(\R e_2\oplus \R e_3)$\\
  \hline
  $\R_+^*\ltimes(\R e_2\oplus\R e_3) $ & $\R e_2\oplus \R e_3$\\
  \hline
\end{tabular}
\caption{Here $a\in \R^*$ is a fixed number.}
\label{table3}
\end{table}

\begin{table}[h!]
\centering
\begin{tabular}{|c| c| } 
   \hline
  \multicolumn{2}{|c|}{\textbf{Subgroups with a degenerate plane as the translation part}} \\
  \hline
$\Aff\ltimes \Pi_\phi$ & $\exp\big(\R(a+\Y_H)+\R\Y_P\big)\ltimes \Pi_\phi$\\
\hline
$(\R_+^*\times\Aff)\ltimes \Pi_\phi$ & $\exp\big(\R(a+\Y_P)\big)\ltimes \Pi_\phi$ \\
\hline
$Y_P\ltimes \Pi_\phi$ & $\exp\big(\R(a+\Y_H)\big)\ltimes \Pi_\phi$ \\
\hline 
$(\R_+^*\times Y_P)\ltimes \Pi_\phi$ & $\exp\big(\R(1+\Y_H +e_1)+\R\Y_P\big)\ltimes\Pi_\phi$\\
\hline 
$Y_H\ltimes \Pi_\phi$ & $\exp\big(\R(\Y_P+e_1)\big)\ltimes\Pi_\phi$\\
\hline 
$(\R_+^*\times Y_H)\ltimes \Pi_\phi$ & $\exp\big(\R(1+\Y_H +e_1)\big)\ltimes \Pi_\phi$\\
\hline 
$\Pi_\phi$ &  $\exp\big(\R(2+\Y_H)+\R(\Y_P+e_1)\big)\ltimes \Pi_\phi$ \\
 \hline
 $\R_+^*\ltimes \Pi_\phi$ & \\
 \hline
\end{tabular}
\caption{Here $\Pi_\phi$ denotes the degenerate plane $\R(e_1+e_2)\oplus \R e_3\leq \R^{1,2}$, and $a\in \R^*$ is a fixed number.}
\label{table4}
\end{table}

\begin{table}[h!]
\centering
\begin{tabular}{|c| c| c|c| } 
\hline
 \multicolumn{4}{|c|}{\textbf{Subgroups with a timelike line as the translation part}} \\
  \hline
$\R_+^*\ltimes \R e_1$ & $(\R_+^*\times Y_E)\ltimes \R e_1$ & $Y_E\times \R e_1$ & $\exp\big(\R(a+\Y_E)\big)\ltimes \R e_1$ \\
  \hline
\end{tabular}
\caption{ Here $a\in \R^*$ is a fixed number.}
\label{table5}
\end{table}

\begin{table}[h!]
\centering
\begin{tabular}{|c| c| c| } 
\hline
  \multicolumn{3}{|c|}{\textbf{Subgroups with a spacelike line as the translation part}} \\
  \hline
  $\R_+^*\ltimes \R e_3$ & $(\R_+^*\times Y_H)\ltimes \R e_3$ & $Y_H\times \R e_3$ \\
  \hline
   $\exp\big(\R(a+\Y_H))\ltimes \R e_3$ & $\exp\big(\R(1+\Y_H+e_1) \big)\ltimes \R e_3$ & $\exp\big(\R(-1+\Y_H+e_1) \big)\ltimes \R e_3$ \\
   \hline
\end{tabular}
\caption{Here $a\in \R^*$ is a fixed number.}
\label{table6}
\end{table}

\begin{table}[h!]
\centering
\begin{tabular}{|c| c| } 
 \hline
 \multicolumn{2}{|c|}{\textbf{Subgroups with a lightlike line as the translation part}} \\
  \hline
$(\R_+^*\times \Aff)\ltimes \el$ & $\exp\big(\R(a+\Y_H)+\R\Y_P\big)\ltimes \el$  \\
\hline
$Y_H\ltimes \el$ &$\exp\big(\R(a+\Y_P)\big)\ltimes \el$ \\
\hline
$\Aff\ltimes \el$  & $\exp\big(\R(\Y_H+e_3)+\R\Y_P\big)\ltimes  \el$ \\
  \hline
   $(\R_+^*\times Y_P)\ltimes \el$ &  $\exp\big(\R(\Y_P+e_1)\big)\ltimes \el$\\
  \hline
$Y_P\times \el$  &   $\exp\big(\R(2+\Y_H)+\R(\Y_P+e_1)\big)\ltimes \el$ \\
  \hline
$(\R_+^*\times Y_H)\ltimes \el$ & $\exp\big(\R(1+\Y_H+e_1)\big)\ltimes \el$ \\
\hline
 $\exp\big(\R(a+\Y_H)\big)\ltimes \el$ & $\exp\big(\R(\Y_H+e_3)\big)\ltimes\el$ \\
   \hline

\end{tabular}
\caption{ Here $\el$ denotes the lightlike line $\R(e_1+e_2)\leq \R^{1,2}$ and $a\in \R^*$ is a fixed number.}
\label{table7}
\end{table}

\begin{table}[h!]
\centering
\begin{tabular}{|c| c|  } 
  \hline
 \multicolumn{2}{|c|}{\textbf{Subgroups with trivial translation part}} \\
  \hline
 $SO_\circ(1,2)$  & $\R_+^*\times SO_\circ(1,2)$  \\
  \hline 
  $\Aff$  & $\R_+^*\times \Aff$\\
  \hline
  $\R_+^*\times Y_E$ & $\exp\big(\R(a+\Y_H)+\R\Y_P\big)$ \\
  \hline
  $\R_+^*\times Y_P$ &  $\exp\big(\R(2+\Y_H)+\R(\Y_P+e_1-e_2)\big)$\\
  \hline 
  $\R_+^*\times Y_H$ &  $\exp\big(\R(-1+\Y_H+e_1+e_2)+\R\Y_P\big)$\\
  \hline
\end{tabular}
\caption{Here $a\in [-1,1]$ is a fixed number.}
\label{table8}
\end{table}

\textbf{Sketch of proof:}
The proof is based on considering different possible cases for the dimension and causal character of the translation part of $G$.
Here, we explain the idea of proof by an example. The other subgroups can be obtained in a similar way. A complete version of proof is given in \cite{Has0}.

 Let $\dim T(G)=2$. Let the linear isometry projection $P_{li}(G)$ be a $1$-parameter parabolic subgroup of $SO_\circ(1,2)$ and $\dim G=3$. Set $H=(\R_+^*\times SO_\circ(1,2))\ltimes \R^{1,2}$ and $\h=(\R\oplus \so(1,2))\oplus_\theta\R^{1,2}$. We determine $G$ up to conjugacy by considering the adjoint action of $H$ on $\h$. The linear isometry projection $P_{li}(G)$ is conjugate to $Y_P\subset SO_\circ(1,2)$. The degenerate $2$-plane $\Pi_\phi=\R(e_1+e_2)\oplus \R e_3$ is the unique $2$-dimensional $Y_P$-invariant subspace of $\R^{1,2}$. Hence, $T(G)=\Pi_\phi$. The subgroups with a degenerate plane as the translation part have been indicated in Table \ref{table4}.

The Lie bracket rule on $\h$ is
\[[a+V+v,b+W+w]=[V,W]+V(w)+aw-W(v)-bv.\]
 The adjoint action of  $H$ on its Lie algebra $\h$ is as follows: for an arbitrary element $(r,A,v)\in H$, we have
\[Ad_{(r,A,v)}:\h\rightarrow \h,\;\;\;\; a+W+w\mapsto a+AWA^{-1}+rA(w)-av-AWA^{-1}(v).\]

Denote by $\g$ the Lie algebra of $G$. There is a constant $a\in \R$ and a vector $w\in\R^{1,2}$ such that $\{a+\Y_P+w,e_1+e_2,e_3\}$  is a basis for $\g$ as a vector space. The aim is to find a vector $x\in \R^{1,2}$ such that 
\[ Ad_{(1,Id,x)}(a+\Y_P+w)=a+\Y_P.\]
However, this is not always possible, but we try to simplify the vector $w=(w_1,w_2,w_3)$ as much as we can. Note that the translation part $T(\g)$ is always invariant by the adjoint action of $\R^*\ltimes\R^{1,2}$. 

%By the closeness under the Lie bracket, there exist $\beta,\eta\in \R$ such that 
% \[\Y_H(v)+av-\Y_P(u)= v+\beta(e_1+e_2)+\eta e_3.\]
% So,
% \begin{align*}
% (v_2+av_1-u_3, v_1+av_2-u_3,av_3-u_1+u_2)&=\Y_H(v)+av-\Y_P(u)\\
% &= v+\beta(e_1+e_2)+\eta e_3 \\
% &=(v_1+\beta,v_2+\beta,v_3+\eta).
% \end{align*}
%
%This implies that $(2-a)v_1=(2-a)v_2$. Note that for $a\neq 2$ we have $v_1=v_2$. For an element $(r,A,w)\in G$, denote by $\h'$ the Lie subalgebra $Ad_{(r,A,w)}(\h)$. Observe that, if $A=Id$, then $T(\h')=T(\h)$.
\begin{itemize}
\item If $a\neq 0$, setting
\[x=\left(\frac{(a^2+1)w_1-aw_3-w_2}{a^3},\frac{(a^2-1)w_2-aw_3+w_1}{a^3},\frac{aw_3-w_1+w_2}{a^2}\right),\]
we have
\[Ad_{(1,Id,x)}(a+\Y_P+v)=a+\Y_P,\;\;\;Ad_{(1,Id,x)}(e_1+e_2)= e_1+e_2, \;\;\;\;Ad(1,Id,x)(e_3)=e_3.\]
Thus $a+\Y_P\in \g'=Ad_{(1,Id,x)}(\h)$, and so, $\{a+\Y_P,e_1+e_2,e_3\}$ is a basis for $\g'$. Therefore $\g$ is conjugate to the semi-direct sum 
\[\R(a+\Y_P)\oplus_\theta (\R (e_1+e_2)\oplus \R e_3).\]

\item If $a=0$, setting  $x=(0,-w_3,v_2)$ we have
\[Ad_{(1,Id,x)}(\Y_P+w)=\Y_P+(w_1-w_2,0,0),\;\;\;\;\;Ad_{(1,Id,x)}(e_1+e_2)=e_1+e_2,\;\;\;\;\;Ad_{(1,Id,x)}(e_3)=e_3.\]
Therefore, if $w_1=w_2$, the Lie algebra $\g$ is conjugate to 
$$\R\Y_P\oplus_\theta(\R(e_1+ e_2)\oplus \R e_3).$$
 Otherwise, $\g$ is conjugate to the following Lie algebra via $Ad_{(1/(w_1-w_2),Id,x)}$
$$\R(\Y_P+e_1)\oplus_\theta (\R(e_1+e_2)\oplus \R e_3).$$ 
\end{itemize}
\hfill$\square$

In Section \ref{orbit}, we describe the codimension one orbits induced by the connected Lie subgroups of $\Conf(\En^{1,2})$ obtained in Theorem \ref{thm.3.2.1}.
\section{Actions on Anti de-Sitter and de-Sitter components and their boundaries}\label{de}

In this section, we study the cohomogeneity one actions on Einstein universe $\Ein^{1,2}$ preserving a  timelike or a spacelike direction in $\R^{2,3}$. The first corresponds to the actions preserving a de-Sitter component $\dS^{1,2}$ and its conformal boundary $\partial \dS^{1,2}=\Sn^2$. The second corresponds to the actions preserving an Anti de-Sitter component $\AdS^{1,2}$ and its conformal boundary $\partial \AdS^{1,2}=\Ein^{1,1}$.
\subsection{Actions on de-Sitter component and its boundary}\label{sec.4.2}
Here, we consider cohomogeneity one actions of connected subgroups of $SO_\circ(2,3)$ on Einstein universe $\Ein^{1,2}$ preserving a $1$-dimensional timelike linear subspace of $\R^{2,3}$.

Let $G$ be a connected Lie subgroup of $SO_\circ(2,3)$ admitting a $2$-dimensional orbit at $p\in \Ein^{1,2}$ and preserving a timelike line $\ell\leq \R^{2,3}$. Then $G$ preserves the orthogonal complement subspace $\ell^\perp\approx \R^{1,3}$ as well. Therefore, $G$ is a subgroup of $Stab_{SO_\circ(2,3)}(\ell)\simeq SO_\circ(1,3)$. It follows that, $G$ preserves a spacelike hypersphere $\Sn^2\subset \Ein^{1,2}$ (Definition \ref{def.1.7.14}), which is the projection by $\Pn$ of the nullcone $\Nu^{1,3}$ of $\ell^\perp\approx\R^{1,3}$. Also, $G$ preserves the complement of $\Sn^2$ in $\Ein^{1,2}$ which by Lemma \ref{lem.1.7.16} is conformally equivalent to the $3$-dimensional de-Sitter space $\dS^{1,2}$. Hence, in this case, the problem reduces to the consideration of conformal actions with an open orbit in $\partial\dS^{1,2}\approx\Sn^2$ or the isometric actions with a $2$-dimensional orbit in $\dS^{1,2}$.
\begin{theorem}\label{thm.4.2.1}
Let $G$ be a connected Lie subgroup of $\Iso_\circ(\dS^{1,2})\simeq SO_\circ(1,3)$ which acts on Einstein universe $\Ein^{1,2}=\dS^{1,2}\cup \partial \dS^{1,2}$ with cohomogeneity one. Then either $G$ fixes a point in $\Ein^{1,2}$ or it is conjugate to $SO(3)$ or it is $\Iso_\circ(\dS^{1,2})$. 
\end{theorem}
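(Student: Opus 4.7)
The strategy is to analyze the $G$-invariant subspace structure on $\ell^\perp \simeq \R^{1,3}$, where $\ell \leq \R^{2,3}$ is the preserved timelike line, so that $G \subset SO_\circ(1,3)$ acts naturally on $\ell^\perp$. The starting observation is that a point of $\Ein^{1,2}$ fixed by $G$ corresponds precisely to a $G$-invariant non-timelike line in $\R^{1,3}$: null lines project to fixed points of $\Sn^2 = \Pn(\Nu^{1,3}) \subset \Ein^{1,2}$, spacelike lines yield points of $\Ein^{1,2} \setminus \Sn^2 \simeq \dS^{1,2}$, and timelike lines in $\R^{1,3}$ yield no point of $\Ein^{1,2}$ at all.

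First I would dispose of the case where $G$ preserves some proper nontrivial subspace $V \leq \R^{1,3}$ other than a timelike line. If $\dim V = 1$ with $V$ null or spacelike, then $G$ fixes a point of $\Ein^{1,2}$ and we are done. If $\dim V = 2$, I would examine the four possible signatures $(0,2)$, $(1,1)$, $(0,1,1)$, $(1,0,1)$: in the two nondegenerate cases, either $V$ or its orthogonal complement is a Lorentzian $2$-plane on which the connected group $G$ acts through $SO_\circ(1,1)$, and hence fixes each of the two null lines in that plane individually; in the degenerate cases the one-dimensional radical of $V$ is itself null and $G$-invariant. Thus in every such sub-case $G$ fixes a point of $\Sn^2 \subset \Ein^{1,2}$.

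The remaining possibilities are either (a) every $G$-invariant line of $\R^{1,3}$ is timelike, or (b) $G$ acts irreducibly on $\R^{1,3}$. In case (a), $G$ lies in the stabilizer of the fixed timelike direction, which is $SO(3)$; among the connected subgroups $\{\mathrm{Id}\}$, $SO(2)$, $SO(3)$ of $SO(3)$, only the full $SO(3)$ has dimension large enough to induce a codimension one orbit on the $3$-manifold $\Ein^{1,2}$, so $G \simeq SO(3)$. In case (b), I would invoke the Lorentz-signature analogue of Theorem \ref{1}, which asserts that the only connected irreducible subgroup of $SO_\circ(1,3)$ on $\R^{1,3}$ is $SO_\circ(1,3)$ itself; equivalently, via the isomorphism $SO_\circ(1,3) \simeq \PSL(2,\C)$ and the classification of its maximal proper connected subgroups, every proper such subgroup preserves a nontrivial subspace of $\R^{1,3}$, contradicting irreducibility. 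Combining the three conclusions yields the theorem.

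The main obstacle I anticipate is case (b): Theorem \ref{1} is stated only in signature $(2,n)$, so for $\R^{1,3}$ one must provide either a clean citation or a short self-contained argument from the subgroup structure of $\PSL(2,\C)$. The natural approach is to list the maximal proper connected subgroups of $\PSL(2,\C)$ (the Borel/similarity subgroup, $SO_\circ(1,2)$, $SO(2) \times SO_\circ(1,1)$, and $SO(3)$) and observe that each manifestly preserves a null, spacelike, or timelike direction of $\R^{1,3}$, thereby reducing case (b) to the previously treated sub-cases.
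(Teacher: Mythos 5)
Your proposal is correct and follows essentially the same route as the paper: reduce to the irreducible/reducible dichotomy on $\ell^\perp\simeq\R^{1,3}$, settle the reducible case by a case analysis on the invariant subspace (a non-timelike invariant line or an invariant $2$-plane yields, via orthogonal complements and connectedness, a fixed point in $\Ein^{1,2}$, while an invariant timelike line forces $G\subset SO(3)$ and then $G=SO(3)$ since $SO(3)$ has no $2$-dimensional subgroup), and settle the irreducible case by the fact that $SO_\circ(1,3)$ is its only connected irreducible subgroup. The citation you anticipated needing for that last step is exactly what the paper uses (Theorem \ref{thm.4.2.2}, i.e.\ the result of \cite{Di2}), so the obstacle you flag is not an issue.
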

The actions admitting a fixed point in Einstein universe $\Ein^{1,2}$ has been studied in Section \ref{fix}. In fact, if a connected Lie subgroup $G\subset SO_\circ(1,3)$ fixes a point in the boundary $\partial\dS^{1,2}\approx\Sn^2$, then it is a subgroup of $(\R_+^*\times Y_E)\ltimes (\R e_2\oplus \R e_3)$ up to conjugacy (see Remark \ref{Space}). On the other hand, by the action of $\Iso_\circ(\dS^{1,2})$ on $\dS^{1,2}$ the stabilizer of a point is a $3$-dimensional subgroup isomorphic to $SO_\circ(1,2)$. Actually, it is conjugate to the Levi factor of $\Conf_\circ(\En^{1,2})$ which is considered in Section \ref{subsec.3.2.8}. Hence, we only discuss on the subgroups which admit no fixed point $\Ein^{1,2}$.

The following theorem will play a key rule in the proof of Theorem \ref{thm.4.2.1}.
\begin{theorem}\label{thm.4.2.2}
\cite[Theorem 1.1]{Di2}. Let $G$ be a connected (non necessarily closed) Lie subgroup of $SO(1,n)$ and assume that the action of $G$ on the Lorentzian space $\R^{1,n}$ is irreducible. Then $G = SO_\circ(1,n)$.
\end{theorem}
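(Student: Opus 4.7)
The plan is to combine Theorem \ref{thm.4.2.2} with a reduction to a $G$-invariant line inside $\ell^\perp$, where $\ell \leq \R^{2,3}$ is the preserved timelike direction (so that $\ell^\perp \cong \R^{1,3}$ carries the natural $SO_\circ(1,3)$-action), and then to read off the three alternatives from the causal type of that line. If the linear $G$-action on $\ell^\perp$ is irreducible, Theorem \ref{thm.4.2.2} immediately gives $G = SO_\circ(1,3) = \Iso_\circ(\dS^{1,2})$, which is the third alternative. So the heart of the proof is the reducible case.

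In the reducible case, $G$ preserves some nontrivial proper subspace $V \leq \ell^\perp$, and I would first produce a $G$-invariant line $\ell' \leq \ell^\perp$ by a routine signature analysis: if $V$ is degenerate, its radical is an invariant lightlike line; if $V$ is a Lorentzian $(1,1)$-plane, its two distinct null lines are each $G$-invariant by connectedness; if $V$ is a spacelike $2$-plane, then $V^\perp \cap \ell^\perp$ has signature $(1,1)$ and the previous subcase applies; the cases $\dim V = 3$ reduce to $\dim V^\perp = 1$ inside $\ell^\perp$; and $\dim V = 1$ is immediate.

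I would then split on the causal type of $\ell'$ viewed in $\R^{2,3}$. If $\ell'$ is lightlike, then $\Pn(\ell') \in \Ein^{1,2}$ is a fixed point of $G$. If $\ell'$ is timelike, the stabilizer of $\ell'$ in $SO_\circ(1,3)$ is isomorphic to $SO(3)$, and since cohomogeneity one on the $3$-dimensional manifold $\Ein^{1,2}$ requires $\dim G \geq 2$ while $SO(3)$ has no connected proper subgroup of dimension $\geq 2$, we obtain $G = SO(3)$. If $\ell'$ is spacelike, then $G \subset \mathrm{Stab}_{SO_\circ(1,3)}(\ell') \cong SO_\circ(1,2)$, and the key observation is that $G$ now preserves the plane $\ell \oplus \ell' \leq \R^{2,3}$, which has signature $(1,1)$ and hence contains exactly two lightlike lines; a connected subgroup cannot swap these, so both project to fixed points of $G$ in $\Ein^{1,2}$.

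The spacelike-line subcase is where I expect the main difficulty to lie. Inside $\ell^\perp$ alone the subgroup $G \subset SO_\circ(1,2)$ could act rather freely without any obvious fixed point in $\Ein^{1,2}$; the crucial trick is to step outside $\ell^\perp$ and exploit the signature-$(1,1)$ plane $\ell \oplus \ell'$ in the ambient $\R^{2,3}$, whose two null directions are automatically $G$-invariant by connectedness. Once this is in place, every remaining subcase folds into either the lightlike-line or the spacelike-line alternative by routine signature bookkeeping, completing the trichotomy.
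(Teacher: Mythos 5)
The statement under review is Theorem \ref{thm.4.2.2} itself, i.e., the Di Scala--Olmos result that a connected (not necessarily closed) Lie subgroup of $SO(1,n)$ acting irreducibly on $\R^{1,n}$ must equal $SO_\circ(1,n)$. Your proposal does not prove this statement: it invokes Theorem \ref{thm.4.2.2} as a black box in its very first step (``if the linear $G$-action on $\ell^\perp$ is irreducible, Theorem \ref{thm.4.2.2} immediately gives $G=SO_\circ(1,3)$''), so as a proof of the stated result it is circular. What you have actually outlined is a proof of Theorem \ref{thm.4.2.1}, the trichotomy for cohomogeneity one subgroups of $\Iso_\circ(\dS^{1,2})\simeq SO_\circ(1,3)$, which is a different assertion. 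Note also that the paper offers no proof of Theorem \ref{thm.4.2.2}; it is quoted from \cite{Di2}, where the argument rests on the geometry of homogeneous submanifolds of hyperbolic space and is of a completely different nature from the finite signature bookkeeping in your outline --- in particular, for general $n$ one cannot reduce irreducibility to a short list of invariant subspaces as you do in the reducible branch.

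For what it is worth, as a proof of Theorem \ref{thm.4.2.1} your outline essentially coincides with the paper's: split on irreducibility, produce a $G$-invariant line in $\ell^\perp\cong\R^{1,3}$ by signature analysis, and read off the alternative ($SO(3)$ for a timelike line, a fixed point in $\Ein^{1,2}$ for a lightlike line, and the spacelike case) from the causal type of that line. Your handling of the spacelike subcase, passing to the signature-$(1,1)$ plane $\ell\oplus\ell'$ in $\R^{2,3}$ and fixing each of its two null lines by connectedness, is a correct variant of the paper's observation that the stabilizer $SO_\circ(1,2)$ of a spacelike direction fixes a point of the de-Sitter component. But none of this addresses the theorem you were asked to prove; you would need to either reproduce the argument of \cite{Di2} or supply an independent proof of the irreducibility statement for $SO(1,n)$.
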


Let $G\subset \Iso(\dS^{1,3})\simeq SO_\circ(1,3)$ be a connected subgroup which preserves a proper linear subspace $V$ of $\R^{1,3}$. Observe that since $G$ preserves the orthogonal space $V^\perp$ as well, it is suffix to consider the case $\dim V\leq 2$. Furthermore, if $V$ (resp. $V^\perp$) contains a lightlike vector, then the intersection of $\Pn(V)$ (resp. $\Pn(V^\perp)$) with $\Pn(\Nu^{1,3})$ is a discrete subset $A$ consisting of one or two points. Hence, by connectedness, $G$ acts on $A$ trivially. If $V$ is a $1$-dimensional timelike subspace, then $G$ is a subgroup of $SO(3)$ -the maximal compact subgroup- up to conjugacy.

%Since $G$ preserves a hyperbolic subspace $\Hn^d\subset \Hn^n$, it preserves the $(d+1)$-dimensional Lorentzian subspace $\R^{1,d}\leq \R^{1,n}$ corresponding to $\Hn^d$. Also, $G$ preserves the orthogonal complement space $(\R^{1,d})^\perp\leq \R^{1,n}$. In particular, if $d=n-1$, then $(\R^{1,d})^\perp$ is a $1$-dimensional spacelike subspace of $\R^{1,n}$, and $G$ acts on it trivially. In this case, $G$ is conjugate to $SO_\circ(1,n-1)$.

\textbf{Proof of Theorem \ref{thm.4.2.1}.} If the action of $G$ on $\R^{1,3}$ is irreducible, then by Theorem \ref{thm.4.2.2} $G=SO_\circ(1,3)$. Suppose that $G$ preserves a proper linear subspace $V\leq \R^{1,3}$. 
\begin{itemize}
\item If $V$ or $V^\perp$ contains a lightlike vector, then $G$ fixes a point in the spacelike hypersphere $\Sn^2$. 
\item If $V$ is a timelike line, then $G\subset SO(3)$ up to conjugacy. Since $G$ admits a $2$-dimensional orbit in $\Ein^{1.2}$, $\dim G\geq 2$. By the fact that $SO(3)$ has no $2$-dimensional Lie subgroup, $G=SO(3)$ up to conjugacy.
\item If $V$ is a spacelike line, then $G$ is a subgroup of $SO_\circ(1,2)$ up to conjugacy. Obviously, $SO_\circ(1,2)$ admits a fixed point in the de-Sitter component $\dS^{1,2}$.
\end{itemize}
This completes the proof.  \hfill $\square$

The action of $SO(3)$ is described in Section \ref{proper}. Thus, Theorem \ref{thm.4.2.1}, gives only one new cohomogeneity one action, namely $\Iso_\circ(\dS^{1,2})$. Obviously, $\Iso_\circ(\dS^{1,2})\simeq SO_\circ(1,3)$ acts on the de-Sitter component and its conformal boundary $\partial \dS^{1,2}\approx \Sn^2$ transitively.

\subsection{Actions on Anti de-Sitter component and its boundary}\label{sec.4.1}
Now, we study cohomogeneity one actions of connected subgroups of $\Conf(\Ein^{1,2})$ on Einstein universe $\Ein^{1,2}$ preserving a $1$-dimensional spacelike linear subspace of $\R^{2,3}$.

Let $G$ be a connected Lie subgroup of $SO_\circ(2,3)$ admitting a $2$-dimensional orbit in $\Ein^{1,2}$ and preserving a spacelike line $\ell\leq \R^{2,3}$. Then $G$ preserves the orthogonal complement subspace $\ell^\perp\approx \R^{2,2}$ as well, and so, it is a subgroup of $Stab_{SO_\circ(2,3)}(\ell)=SO_\circ(2,2)$. Hence, $G$ preserves an Einstein hypersphere (Definition \ref{def.1.7.17}), which is a copy of $2$-dimensional Einstein universe $\Ein^{1,1}\subset \Ein^{1,2}$. Also, $G$ preserves the complement of $\Ein^{1,1}$ in $\Ein^{1,2}$ which by Lemma \ref{lem.1.7.16} is conformally equivalent to Anti de-Sitter space $\AdS^{1,2}$. Hence, in this case, the problem reduces to the consideration of the conformal actions with an open orbit in $\partial\AdS^{1,2}\approx\Ein^{1,1}$ or the isometric actions with a $2$-dimensional orbit in $\AdS^{1,2}$.

Recall from Section \ref{subsec.1.7.3} that, the $3$-dimensional Anti de-Sitter space $\AdS^{1,2}$ is isometric to $\SL(2,\R)\subset M(2,\R)$ endowed with the metric induced from $(M(2,\R),-\det)$. Also, the identity component of $\Iso(\AdS^{1,2})$ is isomorphic to $(\SL(2,\R)\times \SL(2,\R))/\mathbb{Z}_2$. Moreover, recall from Remark \ref{rem.SL}, the Lie group $\SL(2,\R)$ has three $1$-parameter subgroups $Y_E$, $Y_P$ and $Y_H$ and a unique $2$-dimensional connected subgroup $\Aff$ up to conjugacy.
\begin{notation}\label{not.4.1.2}
We denote by $\mathcal{G}_\lambda$ the following $2$-dimensional subgroup of $Y_E\times\Aff$
\begin{align*}
\left\lbrace \left(\begin{bmatrix}
\cos(\lambda t) & \sin(\lambda t)\\
-\sin(\lambda t) & \cos(\lambda t)
\end{bmatrix},\begin{bmatrix}
e^t & s\\
0 & e^{-t}
\end{bmatrix}\right):t,s\in \R\right\rbrace,
\end{align*}
where $\lambda\in \R_+^*$ is a constant number.
\end{notation}
%Observe that, for an arbitrary number $\lambda\in \R^*$, the group $\mathcal{G}_\lambda$ is conjugate to $\mathcal{G}_{-\lambda}$ via 
%\begin{align*}
%\left(\begin{bmatrix}
%-1& 0\\
%0 & 1
%\end{bmatrix},Id\right)\in \Iso(\AdS^{1,2}).
%\end{align*}
\begin{theorem}\label{thm.4.1.2}
Let $G$ be a connected Lie subgroup of $\Iso_\circ(\AdS^{1,2})\simeq (\SL(2,\R)\times \SL(2,\R))/\mathbb{Z}_2$ which acts on $\Ein^{1,2}=\AdS^{1,2}\cup\partial\AdS^{1,2}$ with cohomogeneity one. Then either $G$ fixes a point in $\Ein^{1,2}$ or it is compact or its action is orbitally-equivalent to one of the following groups
\begin{align*}
&Y_E\times Y_P,\;\;\;\;\;\;\;\;\;\;\;\;\;\;\;\;Y_E\times Y_H,\;\;\;\;\;\;\;\;\;\;\;\;\;\;\;\;\mathcal{G}_1,\;\;\;\;\;\;\;\;\;\;\;\;\;\;\;\;SO_\circ(2,1),\\
&\SL(2,\R)\times Y_H,\;\;\;\;\;\;\;\;\;\;\;\;\;\;\;\; \SL(2,\R)\times Y_P,\;\;\;\;\;\;\;\;\;\;\;\;\;\;\;\;\Iso_\circ(\AdS^{1,2}),
\end{align*}
where $SO_\circ(2,1)$ is the stabilizer of a spacelike direction in $\R^{2,2}$ by the action of $SO_\circ(2,2)$.
\end{theorem}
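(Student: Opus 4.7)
The plan is to combine the representation theory of $SO_\circ(2,2)$ on $\R^{2,2}$ with the explicit description $\Iso_\circ(\AdS^{1,2}) \simeq (\SL(2,\R) \times \SL(2,\R))/\mathbb{Z}_2$. First, if $G$ acts irreducibly on $\R^{2,2}$, then Theorem \ref{1} applied with $n = 2$ (where item (ii) does not produce new subgroups since $p = 1$) forces $G = SO_\circ(2,2) = \Iso_\circ(\AdS^{1,2})$, accounting for the last entry of the list. Henceforth assume $G$ preserves a proper subspace $V \leq \R^{2,2}$, which we may take of dimension at most $2$ by passing to the orthogonal complement when needed.

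A signature analysis of $V$ then reduces the problem to a handful of subcases. If $V$ or $V^\perp$ contains a lightlike line, $G$ fixes a point on the Einstein hypersphere $\Ein^{1,1} \subset \Ein^{1,2}$ and is covered by the first alternative. If $V$ is a timelike line or is negative definite of dimension $2$, the containing stabilizer is compact (second alternative); the $(0,2)$ case can also be handled by invoking Lemma \ref{lemma.1.50} to bring it back to a line. The remaining reducible case is $V$ a spacelike line, which confines $G$ inside $SO_\circ(2,1)$, the stabilizer listed in the statement.

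For the rest, I would lift $G$ to a connected subgroup $\widetilde{G} \subset \SL(2,\R) \times \SL(2,\R)$ and analyse it through the pair $(H_1, H_2)$ with $H_i := P_i(\widetilde{G})$ the $i$-th projection. Each $H_i$ is conjugate to one of $\{e\}, Y_E, Y_P, Y_H, \Aff, \SL(2,\R)$, and a Goursat-type analysis writes $\widetilde{G}$ inside $H_1 \times H_2$ as either a direct product or a graph subgroup corresponding to an isomorphism between quotients of the $H_i$. Lemma \ref{lem.1.4.7} is crucial here: any morphism from $\PSL(2,\R)$ into a Lie group of dimension $\leq 2$ is trivial, which forces direct-product structure whenever one $H_i = \SL(2,\R)$, and it singles out the family of graph subgroups $\mathcal{G}_\lambda \subset Y_E \times \Aff$ of Notation \ref{not.4.1.2} as the only non-product examples relevant in dimension $2$. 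For each candidate I would determine the orbits on $\Ein^{1,2}$ using the formulas $(A, B) \cdot X = A X B^{-1}$ on $\AdS^{1,2} \simeq \SL(2,\R)$ and the factor-wise M\"obius action on $\Ein^{1,1} \simeq \R\Pn^1 \times \R\Pn^1$, keeping those that admit a codim-one orbit and no global fixed point in $\Ein^{1,2}$.

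To finish, I would eliminate redundancies using Lemma \ref{lem.1.3.4}. Any direct product with both factors contained (up to conjugation) in $\Aff$ fixes $(\infty, \infty) \in \R\Pn^1 \times \R\Pn^1$ and is absorbed by the first alternative; compact ones by the second; and direct products sharing their orbit partition collapse to a single representative (for example $\SL(2,\R) \times \Aff$ has the same orbits as $\SL(2,\R) \times Y_P$, and $\Aff \times Y_E$ matches $\SL(2,\R) \times Y_P$ after the outer factor-swapping automorphism of $SO_\circ(2,2)$ coming from $O(2,2)$). The main obstacle I anticipate is the family $\mathcal{G}_\lambda$: these are mutually non-conjugate subgroups for distinct $\lambda > 0$, and I expect to need an explicit tangent-space computation as in Lemma \ref{lem.1.3.4} to show that they all yield orbitally equivalent actions on $\Ein^{1,2}$, so that the single representative $\mathcal{G}_1$ suffices to complete the classification.
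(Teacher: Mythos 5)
Your third and fourth paragraphs are essentially the paper's own route: Proposition \ref{prop.4.1.4} classifies the lifted subgroups through the two projections, the orbit subsection computes orbits on $\AdS^{1,2}\approx\SL(2,\R)$ and on $\Ein^{1,1}\approx\R\Pn^1\times\R\Pn^1$, and Lemma \ref{lem.1.3.4} collapses $\mathcal{G}_\lambda$ to $\mathcal{G}_1$ and $Y_E\times \Aff$, $\SL(2,\R)\times\Aff$ to $\SL(2,\R)\times Y_P$, exactly as you anticipate. The genuine problems are in your preliminary invariant-subspace reduction. (i) The signature case $(0,0,2)$ is missing, and your claim that ``$V$ or $V^\perp$ contains a lightlike line, hence $G$ fixes a point'' fails precisely there: containing a lightlike line is not the same as preserving one. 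A totally isotropic $2$-plane (a photon of $\Ein^{1,1}$) is the \emph{only} proper invariant subspace of $Y_E\times Y_P$, $Y_E\times Y_H$, $\mathcal{G}_\lambda$, $\SL(2,\R)\times Y_P$, $\SL(2,\R)\times Y_H$, and none of these fixes a point; so your reduction would misroute most of the groups in the statement, and this case needs the separate photon-stabilizer analysis (Remark \ref{rem.ads}, Lemma \ref{lem.4.50}). (ii) A timelike line of $(M(2,\R),-\det)$ meets $\AdS^{1,2}\approx\SL(2,\R)$ in two points, so its connected stabilizer is the diagonal copy of $\SL(2,\R)$, the stabilizer of a point of the Anti de-Sitter component: it is not compact, and the case belongs to the fixed-point alternative, not the compact one. (iii) Irreducibility does not force $G=SO_\circ(2,2)$: item (ii) of Theorem \ref{1} does contribute for $p=1$, and the image of $\SL(2,\R)\times Y_E$ (a copy of $U(1,1)$ up to center) is a proper irreducible subgroup, since the only $\SL(2,\R)\times\{Id\}$-invariant planes are isotropic ``graph'' planes which the elliptic factor permutes without fixed points. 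It escapes the final list only because it has the same orbits as $\Iso_\circ(\AdS^{1,2})$, which must be checked, not assumed.

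Relatedly, your statement that Lemma \ref{lem.1.4.7} ``forces direct-product structure whenever one $H_i=\SL(2,\R)$'' is only valid when the other projection is solvable; when both projections are $\SL(2,\R)$ and $\dim G=3$, $G$ is the graph of an automorphism $\varphi$ of $\SL(2,\R)$, which is not a product. This case cannot be skipped: inner $\varphi$ gives the stabilizer of a point of $\AdS^{1,2}$ (fixed-point alternative), while non-inner $\varphi$ is exactly how the entry $SO_\circ(2,1)$ arises --- in the paper via Lemma \ref{lem.4.50}, which shows $graph(\varphi)$ preserves a unique spacelike line of $(M(2,\R),-\det)$. Your spacelike-line case also needs the small missing argument that a proper connected subgroup of that $SO_\circ(2,1)$ either fixes a point of $\Ein^{1,2}$ or has no $2$-dimensional orbit, so that only $SO_\circ(2,1)$ itself survives. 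If you discard the flawed signature reduction and run the projection analysis on all of $G$ (treating trivial projections separately, as the paper does), the rest of your plan goes through.
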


The cohomogeneity one isometric actions on $3$-dimensional Anti de-Sitter space has been studied by Ahmadi in \cite{AhA}. We will study some of the actions directly. 

The actions which fix a point in Einstein universe $\Ein^{1,2}$ were studied in Section \ref{fix}. Indeed, a subgroup of $SO_\circ(2,2)$ which admits a fixed point in the boundary $\partial \AdS^{1,2}=\Ein^{1,1}$ is a subgroup of $(\R_+^*\times Y_H)\ltimes (\R e_1\oplus \R e_2)$ up to conjugacy (see Remark \ref{Time}). On the other hand, by the action of $SO_\circ(2,2)$ on $\AdS^{1,2}$, the stabilizer of a point is a $3$-dimensional Lie subgroup isomorphic to $SO_\circ(1,2)$. In fact, it is conjugate to the Levi factor of $\Conf_\circ(\En^{1,2})$ which has been considered in Section \ref{subsec.3.2.8}. Hence, we only discuss on the subgroups on which fix no point in $\Ein^{1,2}$.

%First, we consider the action of $SO_\circ(2,2)\simeq \Iso_\circ(\AdS^{1,2})$ on the $2$-dimensional Einstein universe.

The group of conformal transformations on $2$-dimensional Einstein universe $\Ein^{1,1}$ is $PO(2,2)$ and its identity component is isomorphic to $\PSL(2,\R)\times \PSL(2,\R)$. As we mentioned earlier in Section \ref{subsec.1.7.3}, there is a canonical $\PSL(2,\R)\times \PSL(2,\R)$-invariant identification of $\Ein^{1,1}$ with $\R\Pn^1\times \R\Pn^1$. According to that, the left factor (resp. the right factor) $\PSL(2,\R)$ acts on every photon $\{*\}\times \R\Pn^1$ (resp. $\R\Pn^1\times \{*\}$) trivially. 

Note that, the classification of connected Lie subgroups of $(\SL(2,\R)\times \SL(2,\R))/\mathbb{Z}_2$ is equivalent to the classification of connected Lie subgroups of $\SL(2,\R)\times \SL(2,\R)$. Moreover, a connected Lie subgroup of $(\SL(2,\R)\times \SL(2,\R))/\mathbb{Z}_2$ and its corresponding connected Lie subgroup in $\SL(2,\R)\times \SL(2,\R)$ admit the same orbits in Einstein universe $\Ein^{1,2}=\AdS^{1,2}\cup \Ein^{1,1}$. Therefore, we may consider the cohomogeneity one actions of subgroups of $\SL(2,\R)\times\SL(2,\R)$ instead those of $(\SL(2,\R)\times\SL(2,\R))/\mathbb{Z}_2$.

Consider the following natural group morphisms 
\begin{align*}
&P_1:\SL(2,\R)\times \SL(2,\R)\longrightarrow \SL(2,\R), \;\;\;\;\;\;\;(g,h)\mapsto g\\
&P_2:\SL(2,\R)\times \SL(2,\R)\longrightarrow \SL(2,\R), \;\;\;\;\;\;\;(g,h)\mapsto h
\end{align*}
%We call $P_1$ and $P_2$ the first and the second projection, respectively. 
By the action of $\SL(2,\R)\times \SL(2,\R)$, the identity component of the stabilizer of a point in $\Ein^{1,1}\approx\R\Pn^1\times \R\Pn^1$ is conjugate to $\Aff\times \Aff$. Therefore, a subgroup $G\subset \Conf_\circ(\Ein^{1,1})$ admits no fixed point in $\Ein^{1,1}$ if and only if the first or the second projection of $\widehat{G}$ contains an elliptic element.

\begin{remark}\label{rem.ads}
By the action of $\SL(2,\R)\times \SL(2,\R)$ on $\Ein^{1,1}$ the identity component of the stabilizer of a photon is conjugate to $\SL(2,\R)\times \Aff$ which stabilizes $\phi=\{\infty\}\times \R\Pn^1$. This group acts on $\AdS^{1,2}\approx \SL(2,\R)$ transitively, since its Levi factor $\SL(2,\R)$ does. Moreover, the Levi factor $\SL(2,\R)$ preserves every photon $\{*\}\times\R\Pn^1$.
\end{remark}
\begin{remark}\label{rem.4.1.3}
The inversion map $\mathfrak i$ on $\SL(2,\R)$ sending $A$ to $A^{-1}$ is an isometry respect to the metric induced by $-\det$. Hence, by Theorem \ref{thm.lio}, $\mathfrak{i}$ extends to a unique global conformal transformation $\tilde{\mathfrak i}$ on $\Ein^{1,2}$. The restriction of $\tilde{\mathfrak i}$ on the boundary $\partial\AdS^{1,2}=\Ein^{1,1}=\R\Pn^1\times\R\Pn^1$ sends $([x],[y])$ to $([y],[x])$. On the other hand, the map $\J$ on $\SL(2,\R)\times \SL(2,\R)$ sending an element $(A,B)$ to $(B,A)$ is an Lie group isomorphism. It follows that, the action of a connected Lie subgroup $G\subset \SL(2,\R)\times \SL(2,\R)$ on $\Ein^{1,2}=\SL(2,\R)\cup \Ein^{1,1}$ is orbitally-equivalent to the action of $\J(G)$ via $\tilde{\mathfrak i}$.
\end{remark}
By the action of $\SL(2,\R)\times \SL(2,\R)$ on Anti de-Sitter space $\AdS^{1,2}\approx\SL(2,\R)$, the stabilizer of a point is conjugate to the graph of the identity map $Id_{\SL(2,\R)}:\SL(2,\R)\rightarrow\SL(2,\R)$ which is the stabilizer of the identity element $Id\in \SL(2,\R)$
\[Stab_{\SL(2,\R)\times \SL(2,\R)}(Id)=graph(Id_{\SL(2,\R)})=diag(\SL(2,\R),\SL(2,\R))\simeq \SL(2,\R).\] 
Indeed, an automorphism $\varphi:\SL(2,\R)\rightarrow\SL(2,\R)$ is a conjugation if and only if $graph(\varphi)\subset \SL(2,\R)\times\SL(2,\R)$ fixes a point in Anti de-Sitter space $\AdS^{1,2}\approx\SL(2,\R)$.
%\begin{lemma}
%Let $f:\SL(2,\R)\rightarrow\SL(2,\R)$ be a Lie group isomorphism. Then $G=graph(f)$ acts on $\Ein^{1,2}$ with cohomogeneity one if and only it admits a fix point in $\Ein^{1,2}$.
%% $$G=diag(\SL(2,\R))$
%\end{lemma}
%\begin{proof}
%Note that if $f\neq id$, then $\mathbb{Z}_2=\{\pm (I,I)\}$ is not a subgroup of $G$. Hence, the quotient  $G/\mathbb{Z}_2$ is isomorphic to $G$. 
%
%If $G$ admits a fixed point $p\in\Ein^{1,2}$, then it preserves the lightcone $L(p)$ and the Minkowski patch $Mink(p)$. Since $G$ is semi-simple, it is (up to double cover) conjugate to the Levi factor of $\Conf_\circ(\En^{1,2})$ which is $SO_\circ(1,2)$. Hence, $G$ admits a $2$-dimensional orbit in $\Ein^{1,2}$. 
%
%Now, suppose that $G$ admits a $2$-dimensional orbit in $\Ein^{1,2}$. 
%\end{proof}

Theorem \ref{thm.4.1.2} follows from the following Proposition.

\begin{proposition}\label{prop.4.1.4}
Let $G\subset \SL(2,\R)\times \SL(2,\R)$ be a connected Lie  subgroup with $\dim G\geq 2$. Let $G$ fixes no point in $\Ein^{1,2}$ and $P_1(G),P_2(G)\neq \{Id\}$. Then $G$ or $\J(G)$ is conjugate to one of the following subgroups
\begin{align*}
&Y_E\times Y_P,\;\;\;\;\;Y_E\times Y_H,\;\;\;\;\;Y_E\times Y_E,\;\;\;\;\; \mathcal{G}_\lambda,\;\;\;\;\;Y_E\times \Aff,\;\;\;\;\;Y_E\times \SL(2,\R),\;\;\;\;\;graph(\varphi),\\
&\SL(2,\R)\times Y_H,\;\;\;\;\;\;\;\;\;\; \SL(2,\R)\times Y_P,\;\;\;\;\;\;\;\;\;\;\SL(2,\R)\times \Aff,\;\;\;\;\;\;\;\;\;\;\SL(2,\R)\times\SL(2,\R),
\end{align*}
where $\lambda\in \R^*$, and $\varphi:\SL(2,\R)\rightarrow\SL(2,\R)$ is an isomorphism which is not a conjugation.
\end{proposition}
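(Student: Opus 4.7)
The plan is to analyze $G$ via its two projections $P_1(G), P_2(G) \subset \SL(2,\R)$, using that $G$ sits inside $P_1(G) \times P_2(G)$ and is essentially determined by the identity components of the kernels $\ker(P_i|_G)$. First, the no-fixed-point condition on $\Ein^{1,1} = \R\Pn^1 \times \R\Pn^1$ forces at least one of $P_1(G), P_2(G)$ to contain an elliptic element, since a connected subgroup of $\SL(2,\R)$ fixes a point in $\R\Pn^1$ exactly when it is conjugate into $\Aff$, and $G$ fixes a point in $\Ein^{1,1}$ iff both projections do. After swapping factors via $\J$ (Remark \ref{rem.4.1.3}) if necessary, one may assume $P_1(G)$ is conjugate to either $Y_E$ or $\SL(2,\R)$, which gives the natural case split.

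In Case A ($P_1(G) = \SL(2,\R)$), let $K_2$ be the identity component of $\ker(P_2|_G) \subseteq \SL(2,\R) \times \{Id\}$. Normality of $K_2$ in $G$, together with the surjectivity of $P_1|_G$ onto $\SL(2,\R)$, shows that $P_1(K_2)$ is a connected normal subgroup of $\SL(2,\R)$; by simplicity of $\s(2,\R)$ it is either trivial or everything. In the latter subcase $G \supset \SL(2,\R) \times \{Id\}$, so $G = \SL(2,\R) \times P_2(G)$, and enumerating $P_2(G) \in \{Y_E, Y_P, Y_H, \Aff, \SL(2,\R)\}$ yields the direct-product entries on the list (the case $\SL(2,\R) \times Y_E$ is absorbed into $Y_E \times \SL(2,\R)$ via $\J$). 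In the former subcase $P_2|_G$ is injective with image of dimension $\dim G \geq 3$, forcing $P_2(G) = \SL(2,\R)$ and $G = \mathrm{graph}(\varphi)$ for some Lie group automorphism $\varphi$ of $\SL(2,\R)$; if $\varphi$ were inner then $G$ would be conjugate to the diagonal subgroup and fix the identity element of $\AdS^{1,2} \simeq \SL(2,\R)$, which is excluded by hypothesis, so $\varphi$ must be non-inner.

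In Case B ($P_1(G) = Y_E$), let $K_1 = \{Id\} \times K_1'$ denote the identity component of $\ker(P_1|_G)$, so that $\dim G = 1 + \dim K_1'$ and $K_1'$ is normalized by $P_2(G)$. I split on the conjugacy class of $P_2(G)$. If $P_2(G)$ is one-dimensional, then necessarily $K_1' = P_2(G)$ and $G = Y_E \times P_2(G)$, yielding $Y_E \times Y_E$, $Y_E \times Y_P$, or $Y_E \times Y_H$. If $P_2(G) = \SL(2,\R)$, the only connected normal options for $K_1'$ are $\{Id\}$ and $\SL(2,\R)$, the first ruled out by $\dim G \geq 2$, giving $G = Y_E \times \SL(2,\R)$. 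If $P_2(G) = \Aff$, the connected normal subgroups of $\Aff$ are $\{Id\}$, $Y_P$ and $\Aff$, producing $G = Y_E \times \Aff$ when $K_1' = \Aff$, and a genuine semidirect product when $K_1' = Y_P$. In that last situation a one-parameter complement of $K_1$ in $G$ is generated by some vector $\alpha \Y_E + \beta \Y_H + \gamma \Y_P$ with $\alpha \neq 0$; surjectivity of $P_2|_G$ onto $\Aff$ forces $\beta \neq 0$, so conjugation by $(\mathrm{Id}, \exp(c \Y_P))$ kills the $\Y_P$-component (using $[\Y_H, \Y_P] = 2 \Y_P$), and rescaling reduces the generator to $\lambda \Y_E + \Y_H$, identifying $G$ with $\mathcal{G}_\lambda$ for $\lambda = \alpha/\beta$.

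The main obstacle is the $\mathcal{G}_\lambda$ step in Case B: one has to verify carefully that the adjoint conjugation in the second factor reaches the claimed normal form, that $\beta \neq 0$ is forced, and that the resulting family is not absorbed into a direct product. The remaining arguments reduce to the classification of connected subgroups of $\SL(2,\R)$ and the simplicity of $\s(2,\R)$ recalled in Section \ref{proj}, together with the identification of point-stabilizers in $\AdS^{1,2}$ as (conjugates of) the diagonal subgroup.
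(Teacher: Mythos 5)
Your argument is correct and lands on exactly the paper's list, and its skeleton (reduce via $\J$ to $P_1(G)$ conjugate to $Y_E$ or equal to $\SL(2,\R)$, then run over $P_2(G)$, excluding inner $\varphi$ by the fixed point in $\AdS^{1,2}\approx\SL(2,\R)$) is the same as the paper's; the difference is in the internal mechanism. Where the paper sub-splits on $\dim P_2(G)\in\{1,2,3\}$ and eliminates the intermediate dimensions of $G$ one by one through the Lie-algebra morphisms $p_i\circ p_j^{-1}$ and simplicity of $\s(2,\R)$, you run a Goursat-type argument on the identity component of $\ker(P_i|_G)$: being a connected subgroup normalized by the full projection, it is trivial, $Y_P$ (when the projection is $\Aff$), or everything. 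This collapses the paper's Case II into the single dichotomy ``graph or full product'' and is, if anything, tidier. Two small remarks. First, in your $\mathcal{G}_\lambda$ step the conjugation by $(\mathrm{Id},\exp(c\Y_P))$ is superfluous: since $(0,\Y_P)$ already spans $K_1\subset\g$, you may simply subtract the appropriate multiple of it from the complement generator inside $\g$ to reach $\lambda\Y_E+\Y_H$; this is in effect what the paper does by viewing $\g$ as the graph of $f=p_1\circ p_2^{-1}:\aff\to\R\Y_E$ with $\ker f=\R\Y_P$, so no normal-form conjugation appears there at all. Second, in the $\mathrm{graph}(\varphi)$ case you (like the paper) pass from ``both kernels have trivial identity component'' to ``$P_1|_G,P_2|_G$ are isomorphisms''; strictly, one should still rule out a kernel of order two (i.e.\ $(\mathrm{Id},-\mathrm{Id})\in G$), e.g.\ by noting that a discrete normal subgroup of the connected group $G$ is central and that the identity component of the relevant fiber product $\{(g,h):\,[h]=\psi(g)\}$ is precisely a graph; this is a short verification that the paper's proof also omits, so it is not a gap relative to the paper, but worth keeping in mind.
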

\begin{proof}
Suppose that $G$ fixes no point in $\Ein^{1,1}$. Therefore, $G$ contains an element $(g,h)$, such that either $g$ or $h$ is elliptic. Without loosing generality, we may restrict ourselves to the case $g$ is elliptic. Hence, $P_1(G)$ is either conjugate to $Y_E$ or it is $\SL(2,\R)$. Denote by $\g$ the Lie algebra of $G$ and by $p_i$ the differential of $P_i$ at the identity element for $i=1,2$.

\textbf{Case I:} $P_1(G)=Y_E$. In this case $G$ is a subgroup of $Y_E\times \SL(2,\R)$ up to conjugacy. 
\begin{itemize}
\item If $\dim P_2(G)=1$, then $G$ is a $2$-dimensional subgroup of $Y_E\times P_2(G)$ up to conjugacy. Therefore, $G$ is conjugate to $Y_E\times Y_P$, $Y_E\times Y_H$, or $Y_E\times Y_E$.

\item If $\dim P_2(G)=2$, then $G$ is a subgroup of $Y_E\times \Aff$ up to conjugacy. 
\begin{itemize}
\item If $\dim G=3$, then $G=Y_E\times \Aff$ up to conjugacy.

\item If $\dim G=2$, the second projection $p_2$ from $\g$ to $\aff$ is a Lie algebra isomorphism. Hence, $f=:p_1\circ p_2^{-1}:\aff \rightarrow p_1(\h)=\R\Y_E$ is a surjective Lie algebra morphism. The kernel of $f$ is a $1$-dimensional ideal of $\aff$, hence $\ker f=\R\Y_P$. This induces an isomorphism from $\aff/\R \Y_P\simeq  \R\Y_H$ to $\R\Y_E$. Thus, there exists a real nonzero number $\lambda$, such that $f(t\Y_H+s\Y_P)=\lambda t\Y_E$, for all $t,s\in \R$. This implies that $G$ is conjugate to $\mathcal{G}_\lambda$.
\end{itemize}
\item If $\dim P_2(G)=3$, then $P_2(G)=\SL(2,\R)$.  Assume that $\dim G=3$. Then the map $p_2:\g\rightarrow \s(2,\R)$ is a Lie algebra isomorphism. Hence, $f:=p_1\circ p_2^{-1}:\s(2,\R)\rightarrow p_1(\g)$ is a surjective Lie algebra morphism. But, this contradicts the simplicity of $\s(2,\R)$, since $\ker f$ is a $2$-dimensional ideal of $\s(2,\R)$. Therefore, $\dim G=4$ and so, $G=Y_E\times \SL(2,\R)$ up to conjugacy.
\end{itemize}
\textbf{Case II: $P_1(G)=\SL(2,\R)$.}
\begin{itemize}
\item $\dim P_2(G)=1$. We claim that $\dim G\neq3$. On the contrary, assume that $\dim G=3$. Then $p_1:\g\rightarrow \s(2,\R)$ is an Lie algebra isomorphism. Thus, $f=p_2\circ p_1^{-1}:\s(2,\R)\rightarrow p_2 (\g)$ is a surjective Lie algebra morphism. But, this contradicts the simplicity of $\s(2,\R)$, since $\ker f$ is a $2$-dimensional ideal of $\s(2,\R)$. Thus $\dim G=4$ and $G$ is conjugate to $\SL(2,\R)\times Y_E$, $\SL(2,\R)\times Y_H$, or $\SL(2,\R)\times Y_P$.

\item $\dim P_2(G)=2$. In this case, $G$ is a subgroup of $\SL(2,\R)\times \Aff$ up to conjugacy. We show that $\dim G=5$. If $\dim G=3$, then using the same argument as the previous case, $f=p_2\circ p_1^{-1}:\s(2,\R)\rightarrow \aff$ is a surjective Lie algebra morphism. But, this contradicts the simplicity of $\s(2,\R)$, since $\ker f$ is a $1$-dimensional ideal of $\s(2,\R)$. If $\dim G=4$, then the kernel of $p_1:\g\rightarrow \s(2,\R)$ is a $1$-dimensional ideal of $\{0\}\oplus\aff$, hence $\ker p_1=\{0\}\oplus\R\Y_P$. In the one hand, $\g/\ker p_1\simeq \s(2,\R)$, thus it is a simple Lie algebra. On the other hand, the map
\[\g/\ker p_1\rightarrow \aff/ \R\Y_P,\;\;\;\;\;(X,a\Y_H)+\ker p_1\mapsto p_2(X,a\Y_H)+\R\Y_P=a\Y_H+\R\Y_P,\]
 is a surjective Lie algebra morphism, where $(X,aY_H)\in \g\leq \s(2,\R)\oplus\aff$. But, this contradicts the simplicity of $\g/\ker p_1$. Thus, $\dim G=5$, and so, $G=\SL(2,\R)\times \Aff$ up to conjugacy.

\item $\dim P_2(G)=3$. In this case $P_2(G)=\SL(2,\R)$, and $\dim G\in \{3,4,5,6\}$. We claim that $\dim G\notin \{4,5\}$, otherwise, $\ker P_1$ is a nontrivial proper normal Lie subgroup of $\{Id\}\times \SL(2,\R)$, which contradicts the simplicity of $\SL(2,\R)$. Thus $\dim G\in \{3,6\}$.

If $\dim G=3$, then $P_1,P_2:G\rightarrow\SL(2,\R)$ are isomorphisms and $G=graph(\varphi)\simeq \SL(2,\R)$ where $\varphi=P_1\circ P_2^{-1}$. In the one hand, $G$ fixes no point in the boundary $\partial\AdS^{1,2}=\Ein^{1,1}$, since the stabilizer of a point in $\Ein^{1,1}$ is a solvable group conjugate to $\Aff\times \Aff$. On the other hand, $\varphi$ is a conjugation if and only if $G$ is conjugate to $graph(id_{\SL(2,\R)})$ if and only if it fixes a point in $\AdS^{1,2}\approx\SL(2,\R)$ (which should be excluded by assumption).

If $\dim G=6$, then $G=\SL(2,\R)\times \SL(2,\R)$.
\end{itemize}
\end{proof}
\begin{lemma}\label{lem.4.50}
Let $\varphi:\SL(2,\R)\rightarrow \SL(2,\R)$ be an isomorphism which is not a conjugation, and let $G=graph(\varphi)$ admit a $2$-dimensional orbit in $\Ein^{1,2}$. Then the action of $graph(\varphi)$ on $(M(2,\R),-\det)$ preserves a unique spacelike line. Therefore, its action on $\R^{2,3}$ is conjugate to the action of $SO_\circ(2,1)$ which acts on a $2$-dimensional positive definite linear subspace (i.e. of signature $(0,2)$) of $\R^{2,3}$ trivially.
\end{lemma}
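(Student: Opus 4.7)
The plan is to reduce $\varphi$ to a concrete outer automorphism and then recognize the $G$-action on $M(2,\R)$ as a linearly-intertwined copy of the $\SL(2,\R)$-conjugation representation, whose invariant-subspace structure is well understood.

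First, I would normalize $\varphi$. Since $\mathrm{Aut}(\SL(2,\R)) \cong \mathrm{PGL}(2,\R)$ and the inner automorphisms (i.e.\ conjugations by elements of $\SL(2,\R)$) form the index-two subgroup $\mathrm{PSL}(2,\R)$, any non-conjugation $\varphi$ is of the form $\varphi(A) = KAK^{-1}$ for some $K \in GL(2,\R)$ with $\det K < 0$; rescaling $K$, I may assume $\det K = -1$. The remaining freedom in $K$ amounts to pre- or post-composing $\varphi$ with an inner automorphism of $\SL(2,\R)$, which merely replaces $graph(\varphi)$ by a $\SL(2,\R)\times\SL(2,\R)$-conjugate and so does not affect any statement about preserved subspaces.

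With this normalization, I would compute
\[
(A, \varphi(A)) \cdot X \;=\; A X \varphi(A)^{-1} \;=\; A X K A^{-1} K^{-1},
\]
and introduce the vector-space automorphism $\psi:M(2,\R) \to M(2,\R)$ defined by $\psi(X) = XK$. A direct check gives
\[
\psi\bigl((A, \varphi(A)) \cdot X\bigr) \;=\; A X K A^{-1} \;=\; A \, \psi(X) \, A^{-1},
\]
so $\psi$ intertwines the $G$-action on $M(2,\R)$ with the standard conjugation action of $\SL(2,\R)$ on $M(2,\R)$. The latter decomposes as $M(2,\R) = \R I \oplus \mathfrak{sl}(2,\R)$, the trivial summand plus the (irreducible) adjoint representation, so $\R I$ is its unique invariant line. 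Pulling back through $\psi^{-1}$, the unique $G$-invariant line in $M(2,\R)$ is $\R K^{-1}$, and a direct substitution $(A, \varphi(A)) \cdot K^{-1} = A K^{-1} K A^{-1} K^{-1} = K^{-1}$ shows that $G$ fixes $K^{-1}$ pointwise. Since $-\det(K^{-1}) = -1/\det K = 1 > 0$, this line is spacelike, proving the first assertion; uniqueness is inherited from the uniqueness of the trivial summand in the conjugation representation.

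For the final statement about $\R^{2,3}$, I would combine this spacelike line with $\ell$: because $G$ is a connected subgroup of $SO_\circ(2,3)$ preserving the $1$-dimensional subspace $\ell$, it fixes $\ell$ pointwise. Hence $G$ acts trivially on the $2$-dimensional positive-definite subspace $W = \ell \oplus \R K^{-1}$ of signature $(0,2)$, while on the orthogonal complement $W^\perp \leq \R^{2,3}$ of signature $(2,1)$, the action is, via $\psi$, the adjoint action of $\SL(2,\R)$ on $\mathfrak{sl}(2,\R)$, whose image is $\SL(2,\R)/\{\pm I\} \cong SO_\circ(2,1)$. The main delicacy of the argument lies in the normalization step: one must justify that every non-conjugation automorphism of $\SL(2,\R)$ is inner-equivalent to a single canonical outer one, realized by conjugation by a $\det = -1$ matrix. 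Once this is in hand, the remainder is a transparent intertwining argument coupled with the irreducibility of the adjoint representation.
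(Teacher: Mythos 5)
Your proof is correct, but it takes a genuinely different route from the paper's. The paper first invokes the classification of irreducible subgroups (Theorem \ref{1}) to produce a nontrivial $G$-invariant subspace of $(M(2,\R),-\det)$ and then eliminates every possible signature case by case: a timelike invariant line would force a fixed point in $\AdS^{1,2}$ (so $\varphi$ would be a conjugation), any invariant subspace containing a lightlike line would force a fixed point in $\Ein^{1,1}$ (impossible, the stabilizer there being solvable), definite invariant planes are excluded via Lemma \ref{lem.1.4.7}, and an invariant totally isotropic plane is excluded using the $2$-dimensional-orbit hypothesis through Remark \ref{rem.ads}. You instead normalize the outer automorphism as $\varphi(A)=KAK^{-1}$ with $\det K=-1$ and exhibit the explicit $G$-equivariant linear map $\psi(X)=XK$ onto the conjugation representation $\R I\oplus\s(2,\R)$; this identifies the unique invariant line as $\R K^{-1}$, pointwise fixed and spacelike since $-\det(K^{-1})=1$, and identifies the action on the orthogonal complement with the adjoint representation, yielding the $SO_\circ(2,1)$ statement just as in the paper's final step. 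Your route avoids both the classification theorem and the case analysis, gives uniqueness at once from irreducibility of the adjoint summand, and in fact never uses the $2$-dimensional-orbit hypothesis (which the paper only needs to exclude an invariant photon), so you prove a slightly stronger statement. The price is that everything rests on the normalization $\mathrm{Aut}(\SL(2,\R))\cong \mathrm{PGL}(2,\R)$, i.e. that every Lie group automorphism is conjugation by some $K\in GL(2,\R)$ and the non-inner ones are exactly those with $\det K<0$; this is a standard fact (via $\mathrm{Aut}(\s(2,\R))$ and lifting to the group), and continuity of $\varphi$ is available here because $\varphi=P_1\circ P_2^{-1}$ is a Lie group isomorphism, but it should be stated with a justification or reference rather than only flagged. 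Two small points to tighten: uniqueness of the invariant line uses that the projection of any invariant line to the adjoint summand is again invariant, hence zero by irreducibility; and the assertion that $G$ fixes $\ell$ pointwise by connectedness uses that $\ell$ is spacelike (a connected group can rescale an invariant lightlike line), which is part of the standing assumptions of Section \ref{sec.4.1}.
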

\begin{proof}
Consider the action of $G=graph(\varphi)$ on $(M(2,\R),-\det)$. By Theorem \ref{1}, $G$ preserves a non-trivial linear subspace $V$ of $(M(2,\R),-\det)$. Since it also preserves the orthogonal space $V^\perp$, it is suffix to focus on $\dim V\leq 2$. We show that $V$ has signature $(0,1)$. Assume the contrary:

\textit{Case I: $\dim V=1$}
\begin{itemize}
\item If $V$ is a timelike line, then obviously $G$ fixes two points on $\AdS^{1,2}\approx\SL(2,\R)$, namely, $\SL(2,\R)\cap V$, which contradicts the fact that $\varphi$ is not a conjugation. 
\item If $V$ is a lightlike line, then $G$ fixes a point in the boundary $\Ein^{1,1}$. This is a contradiction, since the stabilizer of a point in $\Ein^{1,1}$ is a solvable group isomorphic to $\Aff\times \Aff$.
%\item If $V$ is a spacelike line, then it preserves the orthogonal complement space $V^\perp$ which is of signature $(2,1)$. This induces a surjective representation from $G$ to $SO_\circ(2,1)$. If 
\end{itemize}
\textit{Case II: $\dim V=2$}
\begin{itemize}
\item Assume that the restriction of $-\det$ on $V$ is definite (positive or negative). Hence, there is a representation $\rho:\SL(2,\R)\rightarrow SO(2)\times SO(2)$, since the identity component of the groups of linear isometries on $V$ and $V^\perp$ are isomorphic to $SO(2)$. By Lemma \ref{lem.1.4.7}, $\rho$ is trivial, a contradiction.
\item Suppose that the restriction of $-\det$ on $V$ has signature $(1,1)$, $(1,0,1)$, or $(0,1,1)$. In all the cases, $G$ preserves a lightlike line. Hence, it admits a fixed point in $\Ein^{1,1}$. Once more, a contradiction.
\item If $V$ has signature $(0,0,2)$, then it preserves a photon on $\Ein^{1,1}$. Thus, $G$ is conjugate to the Levi factor of $Stab_{\SL(2,\R)\times \SL(2,\R)}(\phi)=\SL(2,\R)\times \Aff$ which is $\SL(2,\R)$. By Remark \ref{rem.ads}, the Levi factor does not admit a $2$-dimensional orbit in $\Ein^{1,2}$. Again, a contradiction.
\end{itemize}
Henceforth, $V$ is a spacelike line in $M(2,\R)$, and $G$ acts on it trivially. We have assumed in the beginning of this section that $G$ preserves a spacelike line in $\R^{2,3}$ which is orthogonal to $M(2,\R)$. Thus $G$ acts on a positive definite $2$-dimensional linear subspace $\Pi$, generated by $V$ and $\ell$, of $\R^{2,3}$ trivially. This induces a surjective faithful representation of $G$ in $SO_\circ(2,1)$, the group of linear isometries of $\Pi^\perp$. This completes the proof.
\end{proof}
%\begin{lemma}\label{lem.4.51}
%The group $SO_\circ(2,1)$ (described in Lemma \ref{lem.4.50}) acts on $\AdS^{1,2}$ with cohomogeneity one.
%\end{lemma}
%\begin{proof}
%According to the proof of Lemma \ref{lem.4.50}, $V^\perp$ has signature $(2,1)$. The intersection of $V^\perp$ with $\AdS^{1,2}$ is a copy of $2$-dimensional Anti de-Sitter space $\AdS^{1,1}$, and $SO_\circ(2,1)$ acts on it transitively. This completes the proof.
%\end{proof}
\subsubsection{\underline{Orbits}}\label{subsec.4.1.2}
Now, we consider the orbits induced in $\Ein^{1,2}$ by the subgroups obtained in Proposition \ref{prop.4.1.4}.  Note that the subgroup $Y_E\times Y_E$ is compact and its orbits has been described in Theorem \ref{thm.2.0.1}. Observe that, every subgroup containing $\SL(2,\R)$ as the left or right factor acts on the Anti de-Sitter component $\AdS^{1,2}\approx\SL(2,\R)$ transitively. In order to determine the orbits in $\AdS^{1,2}$, let 
\[p=\begin{bmatrix}
p_{11} & p_{12}\\
p_{21} & p_{22}
\end{bmatrix},\;\;\;\;\det(p)=1\]
be an arbitrary point in $\AdS^{1,2}\approx\SL(2,\R)$.

\begin{itemize}
\item $G$ is one of the groups $\SL(2,\R)\times \SL(2,\R)$ or $\SL(2,\R)\times Y_E$. It is obvious that $G$ acts on the Anti de-Sitter component $\AdS^{1,2}$ and its conformal boundary $\En^{1,1}$ transitively.

\item $G$ is one of the groups $\SL(2,\R)\times \Aff$ or $\SL(2,\R)\times Y_P$. Since $G$ contains $\SL(2,\R)$ as a subgroup, it acts on the Anti de-Sitter component transitively. In the both cases, $G$ preserves the photon $\phi=\{\infty\}\times\R\Pn^1$ and acts on $\Ein^{1,1}\setminus \phi$ transitively.

\item $G=\SL(2,\R)\times Y_H$. This group acts on Anti de-Sitter component transitively, since it contains $\SL(2,\R)$ as a subgroup. Also, it preserves the photons $\phi=\{\infty\}\times\R\Pn^1$ and $\psi=\{0\}\times \R\Pn^1$, and acts on the both connected components of $\Ein^{1,1}\setminus (\phi\cup\psi)$ transitively.

\item $G=Y_E\times \Aff$. This group preserves the photon $\phi=\{\infty\}\times\R\Pn^1$ and acts on $\Ein^{1,1}\setminus \phi$ transitively. Also, it acts on Anti de-Sitter component $\AdS^{1,2}$ freely. So, $G$ admits the same orbits as $\SL(2,\R)\times Y_P$.

\item $G=\mathcal{G}_\lambda$, $\lambda\in \R_+^*$. This group preserves the orbit induced by $Y_E\times \Aff$. It is not hard to see that $\mathcal{G}_\lambda$ acts on $\phi$ and $\Ein^{1,1}$ transitively. On the other hand, $\mathcal{G}_\lambda$ acts on $\AdS^{1,2}$ freely, since $Y_E\times \Aff$ does. Hence, all the $G$-orbits in $\AdS^{1,2}$ are $2$-dimensional. Denote by $H_\lambda$ the following $1$-parameter subgroup of $\mathcal{G}_\lambda$
\begin{align*}
\left\lbrace \left(\begin{bmatrix}
\cos(\lambda t) & \sin(\lambda t)\\
-\sin(\lambda t) & \cos(\lambda t)
\end{bmatrix},\begin{bmatrix}
e^t &0 \\
0 & e^{-t}
\end{bmatrix}\right):t\in \R\right\rbrace.
\end{align*}
 For an arbitrary point $p\in \AdS^{1,2}$, the vectors tangent to the orbit $\mathcal{G}_\lambda(p)$ at $p$ induced by $H_\lambda$ and $\{Id\}\times Y_P$ are:
\begin{align*}
v_\lambda=\lambda\begin{bmatrix}
-p_{21} & p_{22}\\
p_{11} & -p_{12}
\end{bmatrix},\;\;\;\;\;v_P=\begin{bmatrix}
0 & -p_{11}\\
0 & -p_{21}
\end{bmatrix},
\end{align*}
respectively. Observe that, the orthogonal space of the null vector $v_P$ in the tangent space $T_pG(p)$ is $\R v_P$. Hence, all the orbits induced by $\mathcal{G}_\lambda$ are Lorentzian. Note that, for all $\lambda\in \R_+^*$, the orbits induced by $\mathcal{G}_\lambda$ in $\Ein^{1,2}$ are exactly the same as the orbits induced by $\mathcal{G}_1$. In other words, $\mathcal{G}_\lambda$ is orbitally-equivalent to $\mathcal{G}_1$ via the identity map on $\Ein^{1,2}$ (Lemma \ref{lem.1.3.4}).

\item $G=Y_E\times Y_P$. It is not hard to see that $G$ acts on the photon $\phi=\{\infty\}\times \R\Pn^1\subset \Ein^{1,1}$ and on $\Ein^{1,1}\setminus \phi$ transitively. On the other hand, $G$ acts on $\AdS^{1,2}$ freely, since $Y_E\times \Aff$ does. For an arbitrary point $p\in \AdS^{1,2}$, the vector tangent to the orbit $G(p)$ at $p$ induced by the $1$-parameter subgroup $Y_E\times\{Id\}$ is
\begin{align*}
v=\begin{bmatrix}
p_{21} & p_{22}\\
-p_{11} & -p_{12}
\end{bmatrix}.
\end{align*}
Observe that $v$ is a timelike vector. Hence, $G$ admits a codimension $1$ foliation on $\AdS^{1,2}$ where every leaf is Lorentzian.

\item $G=Y_E\times Y_H$. It can be easily seen that $G$ acts on the photons $\phi=\{\infty\}\times \R\Pn^1, \psi=\{0\}\times \R\Pn^1\subset \Ein^{1,1}$, and on the two connected components of $\Ein^{1,1}\setminus (\phi\cup \psi)$ transitively. On the other hand $G$ acts on $\AdS^{1,2}$ freely, since $Y_E\times \Aff$ does. For an arbitrary point $p\in \AdS^{1,2}$, the vector tangent to the orbit $G(p)$ at $p$ induced by the $1$-parameter subgroup $Y_E\times\{Id\}$ is
\begin{align*}
v=\begin{bmatrix}
p_{21} & p_{22}\\
-p_{11} & -p_{12}
\end{bmatrix}.
\end{align*}
The tangent vector $v$ is timelike. Hence, $G$ admits a codimension $1$ foliation on $\AdS^{1,2}$ where every leaf is Lorentzian and $G$ acts freely.
 
 \item $G=SO_\circ(1,2)$. By Lemma \ref{lem.4.50}, $G$ preserves a linear subspace $V\leq \R^{2,3}$ of signature $(0,2)$ and its orthogonal complement $V^\perp$ (of signature $(2,1)$). Hence, $G$ preserves a timelike circle $\mathfrak{C}$ in Einstein universe (Definition \ref{def.1.7.22}). By Lemma \ref{lem.1.7.23} the complement of $\mathfrak{C}$ in $\Ein^{1,2}$ (up to double cover) is conformally equivalent to the product $\AdS^{1,1}\times \Sn^1$. The group $G$ preserves every fiber $\AdS^{1,1}\times\{*\}$ and acts on it transitively.
 \end{itemize}
\textbf{Proof of Theorem \ref{thm.4.1.2}.}  Assume that the first projection $P_1(G)$ is trivial. Then $G$ is either conjugate to $\Aff$ or it is $\SL(2,\R)$. Observe that, affine group admits a fixed point in $\Ein^{1,1}$. Also, $\SL(2,\R)$ acts on $\AdS^{1,2}$ transitively and preserves every photon $\{*\}\times\R\Pn^1$. Thus, it does not admit a $2$-dimensional orbit in $\Ein^{1,2}$. The same happens for the case $P_2(G)=\{Id\}$.

Now, suppose that $P_1(G),P_2(G)\neq \{Id\}$. Then, $G$ (or $\J(G)$) is conjugate to one of the subgroups mentioned in Proposition \ref{prop.4.1.4}. The theorem follows from the above consideration on orbits.\hfill$\square$

\section{Actions preserving a photon}\label{photon}
In this section, we consider the cohomogeneity one actions on Einstein universe $\Ein^{1,2}$ preserving a photon.
\begin{theorem}\label{thm.photon}
Let $\phi$ be a photon in $\Ein^{1,2}$ and $G$ a connected Lie subgroup of $\Conf(\Ein^{1,2})$. If $G$ preserves $\phi$ and admits a $2$-dimensional orbit in $\Ein^{1,2}$, then it fixes a point in the projective space $\R\Pn^4$. Therefore, $G$ preserves in Einstein universe $\En^{1,2}$ a Minkowski patch or a de-Sitter component or an Anti de-Sitter component.
\end{theorem}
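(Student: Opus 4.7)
The plan is to exploit the structure of the photon stabilizer. By Lemma~\ref{lem.Hiz} (applied with $n=2$, so that the orthogonal factor is trivial), the identity component of $S:=\mathrm{Stab}_{\Conf(\Ein^{1,2})}(\phi)$ is $(\R^{*}\times\SL(2,\R))\ltimes H_{3}$, where $H_{3}$ is the three-dimensional Heisenberg group. I would fix a Witt basis $\{e_{1},e_{2},e_{3},f_{1},f_{2}\}$ of $\R^{2,3}$ so that $P=\mathrm{span}(e_{1},e_{2})$ is the isotropic two-plane with $\Pn(P)=\phi$, $\langle e_{3},e_{3}\rangle=1$, and $\langle e_{i},f_{j}\rangle=\delta_{ij}$. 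With this choice $\SL(2,\R)$ acts on $P$ by its standard representation (inducing the M\"obius action on $\phi$), dually on $\mathrm{span}(f_{1},f_{2})$, and trivially on $e_{3}$; $\R^{*}$ scales $P$ by $\lambda$ and $\mathrm{span}(f_{1},f_{2})$ by $\lambda^{-1}$ while fixing $e_{3}$; and $H_{3}$ fixes $P$ pointwise with $U(e_{3})=e_{3}+v$ for some $v\in P$. The action on $\phi\cong\R\Pn^{1}$ yields a homomorphism $\pi:G\to\PSL(2,\R)$, and I write $\bar G:=\pi(G)$.

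If $\bar G$ has a fixed point on $\phi$, then $G$ fixes the corresponding null line of $P$, giving a point of $\Ein^{1,2}\subset\R\Pn^{4}$; this disposes of the subcases $\bar G$ trivial, parabolic, hyperbolic, or conjugate to $\Aff$. The only remaining possibilities are $\bar G=Y_{E}$ (one-parameter elliptic) and $\bar G=\PSL(2,\R)$. For these I decompose $\mathfrak{h}_{3}=V\oplus Z$ as $\SL(2,\R)$-modules, where $V\cong P$ carries the standard two-dimensional representation (the infinitesimal $v$-component of $H_{3}$) and $Z$ is the one-dimensional center (on which $\SL(2,\R)$ acts trivially). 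The subalgebra $\mathfrak{k}:=\mathrm{Lie}(G\cap\ker\pi)\subset\R\oplus V\oplus Z$ is an ideal of $\mathrm{Lie}(G)$, so its image under the projection $\R\oplus V\oplus Z\to V$ is $\bar G$-invariant; since neither $Y_{E}$ nor $\PSL(2,\R)$ admits a proper nonzero invariant subspace of $V$, this image is either $\{0\}$ or all of $V$.

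When the $V$-image is $\{0\}$, one has $\mathfrak{k}\subset\R\oplus Z$, and a direct bracket analysis (using $[\mathrm{Lie}(G),\mathfrak{k}]\subset\mathfrak{k}$ together with $[V,\R]=V$ and $[V,V]=Z$ in the semidirect structure) forces the lift of a generator of $\bar G$ into $\mathrm{Lie}(G)$ to have no $V$-component; since every element of $\mathfrak{sl}(2,\R)\oplus\R\oplus Z$ fixes $e_{3}\in\R^{2,3}$, so does $G$, yielding the fixed point $[e_{3}]\in\R\Pn^{4}$, which corresponds to a spacelike line. A small number of residual low-dimensional edge cases (with $\mathfrak{k}=Z$) admit a separate elementary verification that a line in $\R^{2,3}$ is still stabilized. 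The main obstacle is the complementary subcase, where the $V$-image of $\mathfrak{k}$ equals $V$: since $[V,V]=Z$, closure under the Lie bracket forces $\mathfrak{k}\supset V\oplus Z=\mathfrak{h}_{3}$, so $G\supset H_{3}$. Here I must show that no such $G$ can be cohomogeneity one. The key observation is that the coordinate functions $c_{1},c_{2}$ on $\R^{2,3}$ (the $f_{1},f_{2}$-components) are $H_{3}$-invariant, and hence induce a projection $\Ein^{1,2}\setminus\phi\to\Pn(\mathrm{span}(f_{1},f_{2}))=:\phi'$ (the equality $\Pn(P^{\perp})\cap\Ein^{1,2}=\phi$, checked directly from $\q=2a_{1}c_{1}+2a_{2}c_{2}+b^{2}$, identifies the correct domain) whose fibers are precisely the two-dimensional $H_{3}$-orbits. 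Since $\bar G\in\{Y_{E},\PSL(2,\R)\}$ acts transitively on $\phi'$ (the standard and dual representations of $\SL(2,\R)$ are equivalent), $G$ acts transitively on $\Ein^{1,2}\setminus\phi$; its only orbits are $\phi$ and $\Ein^{1,2}\setminus\phi$, of dimensions $1$ and $3$, so no two-dimensional orbit exists, contradicting the hypothesis. The final sentence of the theorem is then immediate: the fixed line of $G$ in $\R^{2,3}$ is classified by its causal character, producing a preserved Minkowski patch (null), de-Sitter component (timelike), or Anti de-Sitter component (spacelike).
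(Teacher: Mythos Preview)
Your strategy differs from the paper's in two interesting respects. First, to rule out the case $H_{3}\subset G$, you observe that $H_{3}$ acts with two-dimensional orbits equal to the fibers of the $H_{3}$-invariant projection $\Ein^{1,2}\setminus\phi\to\Pn(\mathrm{span}(f_{1},f_{2}))$, and that $\bar G\in\{Y_{E},\PSL(2,\R)\}$ is transitive on the base, forcing $G$ to be transitive on $\Ein^{1,2}\setminus\phi$; this is more direct than the paper's Proposition~\ref{pro.5.1.2}, which reaches the equivalent conclusion $(G\cap H(3))_{\circ}\subset\el$ through a dynamical analysis of spacelike, parabolic, and lightlike transformations in $H(3)$ (Definition~\ref{def.5.1.3} through Proposition~\ref{pro.5.1.6}). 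Second, in the complementary case you aim to settle everything by bracket computations, whereas the paper, once Proposition~\ref{pro.5.1.2} is in hand, argues via Levi factors and complete reducibility in Case~I, and via $\mathcal{H}$-transformations and a preserved Einstein hypersphere in the non-compact elliptic subcase of Case~II (Lemmas~\ref{lem.5.1.9}--\ref{lem.5.1.12}).

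There is, however, a genuine gap in your dichotomy. The assertion that the $V$-image of $\mathfrak{k}=\mathrm{Lie}(G\cap\ker\pi)$ is $\bar G$-invariant is false: the adjoint action of $X\in\mathrm{Lie}(G)$ on $\mathfrak{k}$ involves not only the $\mathfrak{sl}_{2}$-component $s_{X}$ but also the $\R$- and $V$-components of $X$, so it does not descend to the $\bar G$-module structure on $V$. Concretely, with $\bar G=Y_{E}$ one may take $\mathfrak{k}=\R(D+v_{0})$ for a nonzero $v_{0}\in V$; then $\mathfrak{k}_{V}=\R v_{0}$ is one-dimensional, and a suitable $\tilde E=E+Ev_{0}+z_{E}$ completes this to a two-dimensional abelian subalgebra of $\mathrm{Lie}(S)$ satisfying all hypotheses. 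The correct object is $\mathfrak{k}_{0}:=\mathfrak{k}\cap\mathfrak{h}_{3}$, which is also an ideal of $\mathrm{Lie}(G)$: for $k\in\mathfrak{k}_{0}$ one has $r_{k}=0$, whence $[X,k]_{V}=(s_{X}+r_{X}I)v_{k}$, and since an elliptic $s_{X}$ plus any scalar has no real eigenvector, the dichotomy $(\mathfrak{k}_{0})_{V}\in\{0,V\}$ does hold. With this correction your argument needs one further step when $(\mathfrak{k}_{0})_{V}=0$: if $\mathfrak{k}$ projects nontrivially to $\R$, say $D+v_{0}+z_{0}\in\mathfrak{k}$, first conjugate $G$ by $\exp(v_{0})\in H_{3}$ to arrange $v_{0}=0$; only then does bracketing lifts against $D+z_{0}$ force their $V$-components to vanish and yield the fixed point $[e_{3}]$. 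The acknowledged edge case $\mathfrak{k}=Z$, and the case $\mathfrak{k}=0$ with $\bar G=\PSL(2,\R)$, are handled by the same device, with Malcev's conjugacy of Levi factors supplying the element of $H_{3}$ in the latter.
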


%Let $G$ be a connected Lie subgroup of $SO_\circ(2,3)$ stabilizing a photon $\phi\subset \Ein^{1,2}$ and admitting a $2$-dimensional orbit in $\Ein^{1,2}$. 
Recall from Section \ref{sec.Ein} that, the complement of $\phi$ in $\Ein^{1,2}$ is an open homogeneous subset diffeomorphic to $\Sn^1\times \R^2$. The group of conformal transformations on $\Ein^{1,2}_\phi=\Ein^{1,2}\setminus \phi$ is $Stab_{O(2,3)}(\phi)$. Furthermore, $\Ein^{1,2}_\phi$ admits a codimension $1$ foliation $\mathcal{F}_\phi$ invariant by $\Conf(\Ein^{1,2}_\phi)$, for which each leaf is a degenerate surface diffeomorphic to $\R^2$. More precisely, choosing a point $x_0\in \phi$, one of the leaves is $L(x_0)\setminus \phi$ and the other leaves are the degenerate affine $2$-planes in the Minkowski patch $Mink(x_0)$ with limit point in $\phi$. Therefore, we may determine a leaf of $\mathcal F_\phi$ by its limit point $x\in\phi$ and denote it by $\mathcal{F}_\phi(x)$. In other words, for an arbitrary point $x\in \phi$, the leaf $\mathcal{F}_\phi(x)$ is the degenerate surface $L(x)\setminus \phi$.

By Lemma \ref{lem.Hiz}, the stabilizer of a photon in $\Ein^{1,2}$ is isomorphic to $(\R^*\times \SL(2,\R))\ltimes H(3)$, where $H(3)$ is the $3$-dimensional Heisenberg group. The action of $\Conf(\Ein^{1,2}_\phi)$ on $\phi\approx \R\Pn^1$ admits a surjective representation
$$\pi:\Conf_\circ(\Ein^{1,2}_\phi)\longrightarrow \PSL(2,\R)\simeq \Conf_\circ(\R\Pn^1).$$ 
The kernel $\mathcal{K}=\ker\pi$ is a four-dimensional Lie subgroup with two connected components. Fix an arbitrary point $x_0\in \phi$ and consider the Minkowski space $Mink(x_0)\approx\En^{1,2}$ with underlying Lorentzian vector space $(\s(2,\R),-\det)$ (described in Section \ref{proj}). Now, we have 
\begin{align}\label{ker}
\mathcal{K}=\left\lbrace\left( e^{2t}, \begin{bmatrix}
\varepsilon e^t & s \\
0 & \varepsilon e^{-t}
\end{bmatrix}
, \begin{bmatrix}
u & v\\
0& -u
\end{bmatrix}\right): \;t,s,u,v\in \R,\;\varepsilon =\pm 1\right\rbrace\subset (\R^*\times\PSL(2,\R))\ltimes \s(2,\R).
\end{align}
 The identity component $\mathcal{K}_\circ$ of the kernel is  conjugate to the Lie subgroup 
\begin{align*}
\mathsf{K}=\exp\left(\R(1+\Y_H)+\R \Y_P\right)\ltimes (\R(e_1+e_2)\oplus \R e_3),
\end{align*}
described in Remark \ref{rem.ker}.

Setting $t=0$ in Eq. \ref{ker}, we obtain a Lie subgroup of $\mathcal{K}$ which is the maximal unipotent subgroup isomorphic to the $3$-dimensional Heisenberg group $H(3)$. Indeed, $H(3)\subset \mathcal{K}$ coincides with the semi-direct product $Y_P\ltimes \Pi$ where $Y_P$ is the $1$-parameter parabolic subgroup of $\PSL(2,\R)$ and $\Pi$ is the unique degenerate $Y_P$-invariant $2$-plane in $\R^{1,2}=(\s(2,\R),-\det)$. The center of Heisenberg group $H(3)= Y_P\ltimes \Pi$ is a $1$-dimensional Lie subgroup $\el$ isomorphic to $\R$. Observe that, $\el$ is the set of lightlike (elements in $\Pi$) translations in the Minkowski patch $Mink(x_0)$. 
%More precisely, considering Eq. \ref{ker}, $\el$ is the set of elements in $\mathcal{K}$ with $t=s=u=0$.
%Choosing an arbitrary point $[x_0]\in \phi$, $Z_0$ coincides with the $1$-parameter subgroup $\Delta_0\subset A\ltimes_{[x_0]} \Pi$ consists of the lightlike translations of the Minkowski patch $Mink([x_0])$. Since the center of $Hiz(3)$ is independent of the splitting, we conclude that the elements $Z_0$ are the lightlike translations of every Minkowski patch $Mink([x])$ ($[x]\in \phi$).

The following proposition gives a powerful tool to prove Theorem \ref{thm.photon}.
\begin{proposition}\label{pro.5.1.2}
Let $G$ be a connected Lie subgeoup of $\Conf(\Ein^{1,2}_\phi)$ which acts transitively on $\phi$ and admits a $2$-dimensional orbit at $p\in \Ein^{1,2}_\phi$. Then the connected component of $G\cap H(3)$ is a subgroup of $\el$. 
\end{proposition}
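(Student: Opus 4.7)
The plan is to set $N := (G\cap H(3))_\circ$ and observe that $N$ is a connected normal Lie subgroup of $G_\circ$, since the Heisenberg group $H(3)$ is the unipotent radical of the parabolic subgroup $\Conf_\circ(\Ein^{1,2}_\phi)$ and hence normal. Because $H(3)\subset\mathcal K=\ker\pi$, the subgroup $N$ fixes $\phi$ pointwise, so $N\cdot p\subset \mathcal F_\phi(x_p)$ for every $p$. Combining the transitivity of $G$ on $\phi$ with the hypothesis $\dim G\cdot p=2$, the surjection $G\cdot p\to\phi$ has $1$-dimensional fibers, which forces $\dim N\cdot p\leq 1$.

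The heart of the argument is to pass to Lie algebras. Denote by $\mathfrak n$ the Lie algebra of $N$ and by $\overline{\mathfrak n}$ its image in the $2$-dimensional quotient $\mathrm{Lie}(H(3))/\mathrm{Lie}(\el)$. By normality, $\overline{\mathfrak n}$ is invariant under the adjoint $G$-action on $\mathrm{Lie}(H(3))/\mathrm{Lie}(\el)$, which factors through the Levi quotient $\Conf_\circ(\Ein^{1,2}_\phi)/H(3)\cong\R_+^*\times\SL(2,\R)$. The key structural input is that the Levi factor $\SL(2,\R)$ acts on $\mathrm{Lie}(H(3))/\mathrm{Lie}(\el)\cong\R^2$ by the standard $2$-dimensional representation — this is the contact-parabolic grading $\so(2,3)=\g_{-2}\oplus\g_{-1}\oplus\g_0\oplus\g_1\oplus\g_2$ attached to a photon, in which $\mathrm{Lie}(\el)=\g_{-2}$ and $\mathrm{Lie}(H(3))/\mathrm{Lie}(\el)=\g_{-1}$ is the defining representation of $\SL(2,\R)$.

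The transitivity of $G$ on $\phi\approx\R\Pn^1$ forces $\pi(G)$ to be (up to conjugacy) either $Y_E$ or the whole $\PSL(2,\R)$, and in both cases its lift in $\SL(2,\R)$ acts irreducibly on $\R^2$; hence $\overline{\mathfrak n}\in\{\{0\},\R^2\}$. If $\overline{\mathfrak n}=\{0\}$ then $\mathfrak n\subset\mathrm{Lie}(\el)$ and the result follows. If instead $\overline{\mathfrak n}=\R^2$, a direct check using the Heisenberg brackets (every non-zero bracket in $\mathrm{Lie}(H(3))$ lies in the centre $\mathrm{Lie}(\el)$) shows that every $2$-dimensional Lie subalgebra of $\mathrm{Lie}(H(3))$ necessarily contains $\mathrm{Lie}(\el)$, and so projects to a $1$-dimensional subspace of the quotient. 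This forces $\mathfrak n$ to equal the whole Heisenberg algebra, whence $N=H(3)$; but $H(3)$ acts transitively on each leaf $\mathcal F_\phi(x)$ via its translation subgroup $\Pi$, producing a $2$-dimensional orbit $N\cdot p$, which contradicts $\dim N\cdot p\leq 1$.

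The main technical hurdle is verifying that the adjoint action of the Levi $\SL(2,\R)$ on $\mathrm{Lie}(H(3))/\mathrm{Lie}(\el)$ is the standard representation. I would verify this directly from the explicit matrix realization of $\mathcal K$ given in \eqref{ker}: the hyperbolic one-parameter subgroup (parameter $t$) acts on the quotient by $\mathrm{diag}(e^t,e^{-t})$, the parabolic one-parameter subgroup (parameter $s$) by the unipotent $\left(\begin{smallmatrix}1&s\\0&1\end{smallmatrix}\right)$, and an elliptic generator (for the case $\pi(G)=Y_E$) as a non-trivial rotation; together, these show that the resulting action admits only $\{0\}$ and $\R^2$ as invariant subspaces, which is exactly the irreducibility needed to run the argument.
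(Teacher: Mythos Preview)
Your overall strategy is correct and genuinely different from the paper's. The paper argues geometrically: it classifies nontrivial elements of $H(3)$ as lightlike, spacelike, or parabolic transformations on a given Minkowski patch (Definition~\ref{def.5.1.3}, Corollary~\ref{cor.5.1.5}), shows that a spacelike transformation on $Mink(x_0)$ becomes parabolic on every other $Mink(x)$ (Lemma~\ref{lem.5.1.4}), and then, assuming some $g\in (G\cap H(3))_\circ\setminus\el$, derives a contradiction by transporting the $1$-dimensional orbit $Stab_G(x_0)\cdot p$ via an element $h\in G$ to another leaf and comparing causal characters (a parabolic $1$-parameter subgroup produces a spacelike arc, but a spacelike transformation preserves no spacelike curve in its leaf). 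Your argument instead exploits the algebraic structure of the contact parabolic: normality of $N$ in $G$, irreducibility of the induced $G$-action on $\g_{-1}=\mathrm{Lie}(H(3))/\mathrm{Lie}(\el)$, and the dichotomy $\bar{\mathfrak n}\in\{0,\R^2\}$ together with the Heisenberg constraint that any $2$-dimensional subalgebra contains the centre. Your route is cleaner and more conceptual, avoiding the case analysis on transformation types; the paper's route is more hands-on and ties directly into the geometric language used elsewhere in Section~\ref{photon}.

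There is, however, a genuine flaw in your verification plan for the ``standard representation'' claim. The parameters $t$ and $s$ in \eqref{ker} describe $\mathcal K=\ker\pi$, \emph{not} the Levi $\SL(2,\R)$; every element of $\mathcal K$ acts trivially on $\phi$. In fact a direct computation from \eqref{ker} shows that the $t$-subgroup acts on $(\bar s,\bar u)\in\g_{-1}$ by the scalar $e^{2t}$, not by $\mathrm{diag}(e^t,e^{-t})$, consistent with its lying in the central $\R^*$ of the Levi. The correct verification is structural: for the parabolic of $\so(2,3)$ stabilising the isotropic $2$-plane $P$, one has $\g_0\cong\mathfrak{gl}(P)$, $\g_{-1}\cong\mathrm{Hom}(P^\perp/P,P)\cong P$ (since $P^\perp/P$ is $1$-dimensional), and $\g_{-2}\cong\Lambda^2 P$. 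Hence $\Pn(\g_{-1})\cong\Pn(P)=\phi$ as $\PSL(2,\R)$-spaces, so a $G$-invariant line in $\g_{-1}$ is precisely a $\pi(G)$-fixed point in $\phi$, which transitivity rules out. With this correction your argument goes through.
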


\begin{definition}\label{def.5.1.3}
Let $x\in \phi$ be an arbitrary point. A non-trivial element $g\in H(3)$ is called:
\begin{itemize}
\item a lightlike transformation on $Mink(x)$, if it fixes no point in $\Ein^{1,2}_\phi$.
\item a spacelike transformation on $Mink(x)$, if the set of its fixed points in $\Ein^{1,2}_\phi$ is a unique lightlike geodesic included in the leaf $\mathcal{F}_\phi(x)\subset L(x)$.
\item a parabolic transformation on $Mink(x)$, if the set of its fixed points in $\Ein^{1,2}_\phi$ is a unique lightlike geodesic in the Minkowski patch $Mink(x)$.
\end{itemize}
\end{definition}
Observe that, if $g\in H(3)$ is a lightlike transformation on a Minkowski patch $Mink(x_0)$ (for some $x_0\in \phi$), then for all $y\in \phi$, it  is a lightlike transformation on the Minkowski patch $Mink(y)$. 
%Indeed, one can see, $g$ is a lightlike translation in $Mink(y)$. Considering the splitting $Y_P\ltimes \Pi$, it is easy to see that an element $g$ in $H(3)$ is a lightlike transformation on $Mink(x_0)$ if and only if it is a non-trivial element in $\el$. This implies that an element $g\in H(3)$ is a lightlike transformation on a (hence any) Minkowski patch $Mink(y)$ ($y\in \phi$) if and only if $g$ is a non-trivial element of the center $\el$. 
Thus, we may talk about a lightlike transformation without mentioning a Minkowski patch. Indeed, an element $g\in H(3)$ is a lightlike transformation if and only if $g$ belongs to the center $\el$.

Assume that $g\in H(3)$ is a spacelike transformation on a Minkowski patch $Mink(x_0)$ (for some $x_0\in \phi$). 
%In Eq. \ref{ker}, the spacelike transformations are the non-trivial elements with $t=s=v=0$. 
% Again, considering the splitting $Y_P\ltimes \Pi$, it can be easily seen that $g$ is an element in $\Pi\setminus \el$. 
Actually, $g$ is a spacelike translation in the Minkowski patch $Mink(x_0)$, and so, the $g$-invariant subsets of $\mathcal{F}_\phi(x_0)=L(x_0)\setminus \phi$ are included in the vertex-less photons in the lightcone $L(x_0)$. More precisely, $g$ preserve no spacelike curve in the degenerate surface $\mathcal{F}_\phi(x_0)$.

Furthermore, assume that $g\in H(3)$ is a parabolic transformation on a Minkowski patch $Mink(x)$ (for some $x\in \phi$). Denote by $\gamma$ the lightlike geodesic in $Mink(x)$ which is fixed pointwisely by $g$. By continuity, $g$ fixes the limit point of $\gamma$ in the lightcone $L(x)$. Hence, the limit point of $\gamma$ is contained in $\phi$, since $g$ fixes no point in the leaf $\mathcal{F}_\phi(x)=L(x)\setminus \phi$. This shows that $\gamma$ is contained in a leaf of $\mathcal{F}_\phi$. One can see that the $1$-parameter subgroup of $H(3)$ generated by $g$ acts on the leaf $\mathcal{F}_{\phi}(x)$ freely and every orbit is a spacelike curve (i.e. of signature $(0,1)$).
\begin{lemma}\label{lem.5.1.4}
Let $g$ be a non-trivial element in $H(3)$. Then there exists a unique point $x_0\in \phi$ such that $g$ is a spacelike transformation on $Mink(x_0)$ if and only if for all $x\in \phi\setminus \{x_0\}$, $g$ is a parabolic transformation on $Mink(x)$.
\end{lemma}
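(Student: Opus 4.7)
The plan is to describe $F_g:=\mathrm{Fix}(g)\subset\Ein^{1,2}$ explicitly and then read off the character of $g$ on each Minkowski patch $Mink(x)$ by observing how $F_g$ intersects $Mink(x)$ and $L(x)\setminus\phi$. Since $H(3)\subset\mathcal K$ acts trivially on $\phi$, one always has $\phi\subset F_g$. The structural claim I would prove first is: \emph{for every $g\in H(3)\setminus\el$, the fixed set has the form $F_g=\phi\cup\psi$, where $\psi$ is a photon meeting $\phi$ in a single point $p_g\in\phi$.} I would verify this by an explicit calculation in the matrix model $H(3)=Y_P\ltimes\Pi$ on $Mink(x_0)$, splitting into the two cases $g=V\in\Pi\setminus\el$ and $g=(g_s,V)$ with $s\ne 0$. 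In each case a direct computation shows that $\ker(g-I)\subset\R^{2,3}$ is a $3$-plane on which the ambient form has signature $(1,1,1)$, so its null cone projectivizes to the union of two photons, one of which is $\phi$ and the other the desired $\psi$; the single intersection point $p_g=\psi\cap\phi$ is read off directly from the explicit basis for $\ker(g-I)$.

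The second ingredient is the geometric fact: \emph{if $\psi$ is a photon with $\psi\cap\phi=\{p\}$, then $\psi\cap L(x)=\{p\}$ for every $x\in\phi\setminus\{p\}$}. This reduces to a pairing computation in $\R^{2,3}$. Writing $\phi=\Pn(U)$ and $\psi=\Pn(U')$ for totally isotropic $2$-planes with $U\cap U'=\R p$, and choosing null vectors $u\in U$, $u'\in U'$ complementary to $p$, any $x\in\phi$ and $y\in\psi$ admit representatives $\gamma p+\delta u$ and $\alpha p+\beta u'$, so the pairing collapses to $\langle x,y\rangle=\beta\delta\langle u,u'\rangle$. The crucial scalar $\langle u,u'\rangle$ is nonzero, because the images $\bar u,\bar u'$ are linearly independent null vectors in the quotient $p^\perp/\R p\approx\R^{1,2}$ and two such vectors in a rank-$(1,2)$ form are never orthogonal. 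For $x\ne p$ (so $\delta\ne 0$), vanishing $\langle x,y\rangle=0$ forces $\beta=0$, i.e.\ $y=p$.

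With these two ingredients, the equivalence of the lemma falls out. For the forward direction, if $g$ is spacelike on $Mink(x_0)$ then by Definition~\ref{def.5.1.3} the extra photon $\psi$ satisfies $\psi\setminus\phi\subset L(x_0)\setminus\phi$, so $\psi\cap\phi=\{x_0\}$; for $x\in\phi\setminus\{x_0\}$, the geometric fact gives $F_g\cap Mink(x)=\psi\setminus L(x)=\psi\setminus\{x_0\}$, a lightlike geodesic in $Mink(x)$, so $g$ is parabolic on $Mink(x)$. Uniqueness of $x_0$ is immediate because $\psi$, hence $\psi\cap\phi$, is determined by $g$. The converse is symmetric: the hypothesis that $g$ is parabolic on every $Mink(x)$ with $x\ne x_0$ forces the unique intersection point $\psi\cap\phi$ produced by the structural claim to lie outside $\phi\setminus\{x_0\}$, hence to equal $x_0$; then on $Mink(x_0)$ one has $\psi\subset L(x_0)$ and $F_g\setminus\phi=\psi\setminus\{x_0\}$ is a lightlike geodesic in $\mathcal F_\phi(x_0)$, so $g$ is spacelike on $Mink(x_0)$. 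The main obstacle is the structural claim itself, where one must rule out any extra components of $F_g$ and ensure that $\psi$ meets $\phi$ in exactly one point; this is handled case-by-case via the matrix model of $Y_P\ltimes\Pi$, and once it is in place the rest is bookkeeping with lightcones.
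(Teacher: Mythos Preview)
Your proposal is correct, but the paper's route is considerably shorter and more direct. The paper does not re-derive the structure of the full fixed set $F_g\subset\Ein^{1,2}$; instead it works entirely inside $\Ein^{1,2}_\phi$ and uses Definition~\ref{def.5.1.3} as a black box. If $g$ is spacelike on $Mink(x_0)$, then by definition its fixed set in $\Ein^{1,2}_\phi$ is a single lightlike geodesic $\gamma\subset\mathcal F_\phi(x_0)=L(x_0)\setminus\phi$; since for any $x\in\phi\setminus\{x_0\}$ one has $L(x_0)\cap L(x)=\phi$, the geodesic $\gamma$ lies in $Mink(x)$, so $g$ is parabolic there by definition. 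Conversely, if $g$ is parabolic on some $Mink(x)$, take $x_0\in\phi$ to be the limit point of the fixed lightlike geodesic $\gamma\subset Mink(x)$ (which lies in $\phi$ by the discussion preceding the lemma); then $\gamma\subset L(x_0)\setminus\phi$ and $g$ is spacelike on $Mink(x_0)$. That is the entire argument.

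What your approach buys is an explicit global picture: you exhibit $F_g=\phi\cup\psi$ with $\psi$ a photon hitting $\phi$ at a unique point, and your pairing computation ($\langle u,u'\rangle\ne 0$ for independent null directions in $p^\perp/\R p$) is exactly the statement $L(x_0)\cap L(x)=\phi$ that the paper uses implicitly. The paper's approach buys economy: by quoting the definitions and the preceding paragraph (on limit points of parabolic fixed geodesics landing in $\phi$), it avoids the matrix computation of $\ker(g-I)$ and the case split $V\in\Pi\setminus\el$ versus $s\ne 0$ altogether.
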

\begin{proof}
Assume that $g$ is a spacelike transformation on $Mink(x_0)$. Then the set of points fixed by $g$ in $\Ein^{1,2}_\phi$ is a unique lightlike geodesic $\gamma\subset \mathcal{F}_\phi(x_0)=L(x_0)\setminus \phi$. %The union of $\gamma$ with $x_0$ is a photon $\psi$. 
For an arbitrary point $x\in \phi\setminus \{x_0\}$, $\gamma$ is a lightlike geodesic in the Minkowski patch $Mink(x)$. Hence, $g$ is a parabolic transformation of $Mink(x)$ for all $x\in \phi\setminus \{x_0\}$.

Conversely, assume that $g$ is a parabolic transformation in a Minkowski patch $Mink(x)$  (for some $x\in \phi$). Then the set of points fixed by $g$ in $\Ein^{1,2}_\phi$ is a unique lightlike geodesic $\gamma\subset Mink(x)$. Let $x_0$ denotes the limit point of $\gamma$ in the lightcone $L(x)$. Obviously, $x_0\in \phi$. Observe that $\gamma$ is a vertex-less photon in $\mathcal{F}_\phi(x_0)=L(x_0)\setminus \phi$. Hence, $g$ is a spacelike transformation on $Mink(x_0)$. This completes the proof.
\end{proof}
\begin{corollary}\label{cor.5.1.5}
Let $g\in H(3)$ be a non-trivial element. Then, either $g\in \el$ (hence, for all $x\in \phi$, it is a lightlike transformation on the Minkowski patch $Mink(x)$), or there exists a unique point $x_0\in \phi$ such that $g$ is a spacelike transformation on $Mink(x_0)$ and for all $x\in \phi\setminus \{x_0\}$ it is a parabolic transformation on $Mink(x)$. 
\end{corollary}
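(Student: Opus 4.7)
The corollary is a clean consequence of Lemma~\ref{lem.5.1.4} combined with the characterization, recalled in the paragraph immediately preceding that lemma, that $g \in H(3)$ acts as a lightlike transformation on some (equivalently, every) Minkowski patch $Mink(x)$, $x \in \phi$, if and only if $g$ lies in the center $\el$. The case $g \in \el$ is then immediate from this equivalence, so the main work is to handle $g \in H(3) \setminus \el$.

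Assume $g \in H(3) \setminus \el$. By the characterization above, $g$ is not a lightlike transformation on any $Mink(x)$. In order to apply Lemma~\ref{lem.5.1.4} and conclude the second alternative of the corollary, I only need to exhibit one $x \in \phi$ on which $g$ is either a spacelike or a parabolic transformation in the sense of Definition~\ref{def.5.1.3}; the lemma then automatically supplies a unique $x_0$ witnessing the spacelike behaviour and asserts parabolicity on every other Minkowski patch. Using the decomposition $H(3) = Y_P \ltimes \Pi$ recalled before Proposition~\ref{pro.5.1.2}, I would write $g = (u,v)$ with $u \in Y_P$, $v \in \Pi$, and fix the unique point $x_* \in \phi$ stabilized by $Y_P$. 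If $u = \mathrm{Id}$, then $v \in \Pi \setminus \el$ is a spacelike vector of $\R^{1,2}$, and the pure translation by $v$ on $Mink(x_*)$ has empty fixed set in the patch while stabilizing pointwise the vertex-less photon of $L(x_*)\setminus \phi$ directed by the lightlike line $v^\perp \cap \Pi$, so $g$ is spacelike on $Mink(x_*)$. If $u \neq \mathrm{Id}$, then $g$ acts on $Mink(x_*)$ as the affine map $y \mapsto u(y) + v$, and a short computation using the nilpotent structure of $\Y_P$ shows that the image of $u - \mathrm{Id}$ equals $\Pi$ and its kernel is the lightlike axis $\R(e_1+e_2)$; since $v \in \Pi$, the equation $(u - \mathrm{Id})y = -v$ is solvable, and its solution set is a coset of $\ker(u - \mathrm{Id})$, i.e.\ a lightlike affine line in $Mink(x_*)$, making $g$ a parabolic transformation there.

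The main obstacle is the linear-algebraic verification, in the case $u \neq \mathrm{Id}$, that the fixed locus of $y \mapsto u(y) + v$ on $Mink(x_*)$ is exactly a lightlike affine line (rather than empty or higher-dimensional) — and correspondingly in the pure-translation case, that the projective fixed set of the spacelike translation by $v$ in the lightcone $L(x_*)$ is a single vertex-less photon lying in the leaf $\mathcal{F}_\phi(x_*)$ rather than in $\phi$ itself. Once these two explicit checks are carried out using the matrix realizations of $\Y_P$ and $\Pi$ from the paper's preliminaries, Lemma~\ref{lem.5.1.4} closes the argument.
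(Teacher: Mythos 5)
Your proposal is correct and takes essentially the same route as the paper: the paper's own proof likewise reads off from the explicit realization of $H(3)=Y_P\ltimes\Pi$ in Eq.~\ref{ker} (with $t=0$) that elements of $\el$, pure spacelike translations, and elements with nontrivial $Y_P$-part are respectively lightlike, spacelike and parabolic transformations on the fixed patch, and then concludes by Lemma \ref{lem.5.1.4}, exactly as you do (your kernel/image computation for $u-\mathrm{Id}$ just makes the paper's one-line verification explicit). Two small touch-ups: since $Y_P\subset\mathcal{K}$ fixes $\phi$ pointwise there is no ``unique point of $\phi$ stabilized by $Y_P$'' --- any basepoint $x_*\in\phi$ of the splitting works --- and in the case $u\neq\mathrm{Id}$ you should also observe that $g$ has no fixed point in the leaf $L(x_*)\setminus\phi$ (its parabolic linear part preserves only the degenerate plane $\Pi_\phi$, whose parallel affine planes have limit points on $\phi$), so that the fixed set in $\Ein^{1,2}_\phi$ is exactly the lightlike line you exhibited, as Definition \ref{def.5.1.3} requires.
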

\begin{proof}
Considering Eq. \ref{ker}, every non-trivial element in $H(3)$ with $s=u=0$ (resp. $s=v=0$, resp $s\neq 0$) is a lightlike (resp. spacelike, parabolic) transformation of Minkowski patch $Mink(x_0)$. Now, the corollary follows from Lemma \ref{lem.5.1.4}.
\end{proof}

%For an arbitrary point $[x_0]\in \phi$, consider the splitting $H\simeq A\ltimes_{[x_0]}\Pi$. It is clear that, a spacelike transformation of $Mink([x_0])$ is a spacelike element of $\Pi$. Furthermore, it is easy to see that the composition of a parabolic element in $A$ and a translation in $\Pi$ is a parabolic transformation of $Mink([x_0])$. Moreover, a lightlike transformation of $Mink([x_0])$ lies in $Z_0$.
\begin{proposition}\label{pro.5.1.6}
Let $\phi$ be a photon in $\Ein^{1,2}$ and $G$ a connected Lie subgroup of $\Conf(\Ein^{1,2}_\phi)$ acting transitively on $\phi$. Then, for $x\in \phi$ and $p\in \mathcal{F}_{\phi}(x)$ the orbit induced by $G$ at $p$ is $k$-dimensional if and only if the orbit induced by $Stab_{G}(x)$ at $p$ is $(k-1)$-dimensional. 
\end{proposition}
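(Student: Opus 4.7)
\textbf{Proof plan for Proposition \ref{pro.5.1.6}.} The strategy is to use the orbit--stabilizer formula together with the key observation that, under the transitivity assumption on $\phi$, the isotropy of a point $p\in\mathcal F_\phi(x)$ inside $G$ is automatically contained in the isotropy of the limit point $x$. Write $H=Stab_G(x)$ and $K=Stab_G(p)$. The claim will follow once I establish the two facts
\begin{equation*}
\dim G - \dim H = 1,\qquad K\subseteq H,
\end{equation*}
because then, noting that $Stab_H(p)=H\cap K=K$, the orbit--stabilizer theorem gives
\begin{equation*}
\dim H(p)=\dim H-\dim K=(\dim G-1)-\dim K=\dim G(p)-1,
\end{equation*}
which is exactly the stated equivalence.

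First I would verify $\dim G - \dim H = 1$. By hypothesis $G$ acts transitively on the $1$-dimensional photon $\phi$, so the orbit map $G\to \phi$, $g\mapsto g\cdot x$, has fibre $H$, yielding $\dim G/H=\dim\phi=1$. This is standard and needs no further comment.

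The substantive step is to prove $K\subseteq H$. Recall from Section \ref{sec.Ein} and the discussion preceding the proposition that the foliation $\mathcal F_\phi$ on $\Ein^{1,2}_\phi$ is preserved by the whole of $\Conf(\Ein^{1,2}_\phi)\supset G$, and that each leaf is uniquely labelled by its limit point in $\phi$; explicitly, the assignment ``leaf $\mapsto$ limit point'' is $\Conf(\Ein^{1,2}_\phi)$-equivariant with respect to the projection $\pi$ onto $\Conf_\circ(\phi)$. Thus for any $g\in G$ and any $y\in\phi$ one has $g\cdot\mathcal F_\phi(y)=\mathcal F_\phi(g\cdot y)$. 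Now take $g\in K$. Since $g$ fixes $p$ and the leaves partition $\Ein^{1,2}_\phi$, the element $g$ must send the unique leaf through $p$, namely $\mathcal F_\phi(x)$, to itself; by equivariance this forces $g\cdot x=x$, i.e. $g\in H$. Hence $K\subseteq H$, as required. In particular $H$ does preserve $\mathcal F_\phi(x)$ (so that $H(p)\subset \mathcal F_\phi(x)$, confirming that ``the orbit induced by $Stab_G(x)$ at $p$'' is well-defined inside the leaf).

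Combining the two facts via the computation in the first paragraph completes the proof. The only mildly delicate point is the equivariance of the leaf-to-limit-point map, but this is built into the very definition of $\mathcal F_\phi$: the leaf $\mathcal F_\phi(y)$ is $L(y)\setminus\phi$, and $\Conf(\Ein^{1,2}_\phi)$ permutes lightcones with vertices in $\phi$ according to its action on $\phi$, so no additional work is needed.
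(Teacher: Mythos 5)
Your proposal is correct and follows essentially the same route as the paper: transitivity on the one-dimensional photon gives $\dim G=\dim Stab_G(x)+1$, the $G$-invariance of the foliation $\mathcal{F}_\phi$ (equivalently, the equivariance of the leaf-to-limit-point correspondence) gives $Stab_G(p)=Stab_{Stab_G(x)}(p)$, and the orbit--stabilizer dimension count finishes the argument. Your explicit justification of the equivariance step is a slightly more detailed write-up of what the paper states in one line, but the proof is the same.
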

\begin{proof}
In the one hand, since $G$ acts on $\phi$ transitively, $\dim G=\dim Stab_{G}(x)+1$, for all $x\in \phi$. On the other hand, for $p\in \mathcal{F}_\phi(x)$, $Stab_G(p)$ is a subgroup of $Stab_G(x)$, since the action of $G$ preserves the foliation $\mathcal{F}_\phi$. More precisely, $Stab_{G}(p)=Stab_{Stab_G(x)}(p)$. Hence, 
\begin{align*}
\dim G(p)&=\dim G-\dim Stab_G(p)\\
&=\dim Stab_G(x)+1-\dim Stab_{Stab_G(x)}(p)\\
&=\dim \big(Stab_G(x)\big)(p)+1.
\end{align*}
\end{proof}

\textbf{Proof of Proposition \ref{pro.5.1.2}.} Assume the contrary, where $g$ is an element in the connected component of $G\cap H(3)$ and $g\notin \el$. There exists a point $x_0\in \phi$ such that $p\in \mathcal{F}_\phi(x_0)$. By Corollary \ref{cor.5.1.5}, $g$ is either a spacelike or parabolic transformation on the Minkowski patch $Mink(x_0)$. Denote by $O_p$ and $g^t$, the orbit induced by $Stab_G(x_0)$ at $p$ and  a $1$-parameter subgroup of $G\cap H(3)$ containing $g$, respectively. By Proposition \ref{pro.5.1.6}, the orbit $O_p$ is $1$-dimensional.
\begin{itemize}
\item If $g$ is a spacelike transformation on $Mink(x_0)$, then $\gamma\subset \mathcal{F}_\phi(x_0)=L(x_0)\setminus \phi$. 
%Also, the $g^t$-invariant subsets of $\mathcal{F}_\phi(x_0)$ are the points in $\gamma$ and the vertex-less photons photons. 
For an arbitrary point $x\in \phi\setminus \{x_0\}$, there exists $h\in G$ such that $hx_0=x$, since $G$ acts on $\phi$ transitively. By Lemma \ref{lem.5.1.4} $g$ is a parabolic transformation on $Mink(x)$. Hence, $g^t$ acts on $\mathcal{F}_\phi(x)$ freely. Thus, the orbit $C=g^t(hp)$ is an open subset of the orbit induced by $Stab_G(x)$ at $hp$ and it is spacelike (i.e. of signature $(0,1)$). Hence, the orbit induced by $Stab_G(x)$ at $hp$ is spacelike. Obviously, $h^{-1}(C)\subset \mathcal{F}_\phi(x_0)$ is an open subset of $O_{p}$. This is a contradiction, since $g$ preserves no spacelike curve in $\mathcal{F}_\phi(x_0)$.
\item If $g$ is a parabolic transformation of $Mink(x_0)$, then the orbit $C=g^t(p)$ is a spacelike (i.e. of signature $(0,1)$) curve in $\mathcal{F}_\phi(x_0)$. By Lemma \ref{lem.5.1.4}, there exists a unique point $x\in \phi\setminus \{x_0\}$ such that $g$ is a spacelike transformation on the Minkowski patch $Mink(x)$. There exists $h\in G$ such that  $hx_0=x$. The same argument as the previous case shows that $h(C)$ is an open subset of the orbit induced by $Stab_G(x)$ at $hp$. This contradicts the fact that $g$ preserves no spacelike curve in $\mathcal{F}_\phi(x)$.
\end{itemize}
This completes the proof.\hfill $\square$

\begin{definition}\label{def.5.1.7}
A non-trivial element $g\in \mathcal{K}$ is called a hyperbolic-homothety (abbreviation $\mathcal{H}$) -transformation if the set of its fixed points in $\Ein^{1,2}_\phi$ is a photon.
\end{definition}
Considering Eq. \ref{ker}, elements with $t\neq 0$ are $\mathcal{H}$-transformations.
%\begin{align*}
%\mathcal{H}=\left\lbrace\left( e^{2t}, \begin{bmatrix}
%\varepsilon e^t & 0 \\
%0 & \varepsilon e^{-t}
%\end{bmatrix}
%, 0_{2\times 2}\right): \;t\in \R,\;\varepsilon =\pm 1\right\rbrace.
%\end{align*}
Let $g\in \mathcal{K}$ be an $\mathcal{H}$-transformation and $\psi\subset \Ein^{1,2}_\phi$ be the unique photon fixed pointwisely by $g$. The photon $\psi$ intersects every leaf of $\mathcal{F}_\phi$ in a unique point. More precisely, for an arbitrary point $x\in \phi$, the intersection of $\psi$ with the Minkowski patch $Mink(x)$ is a lightlike geodesic $\gamma$. The limit point of $\gamma$ is contained in $\mathcal{F}_{\phi}(x)= L(x)\setminus \phi$. Hence, $\gamma$ intersects every affine degenerate plane in $Mink(x)$ with limit point in $\phi$. If $g\in \mathcal{K}_\circ$, denote by $g^t$ an arbitrary $1$-parameter subgroup of $\mathcal{K}_\circ$ containing $g$. Observe that $g^t$ preserves $\psi$, since it is abelian. On the other hand $g^t$ preserves the leaves of the foliation $\mathcal{F}_\phi$. Hence, $g^t$ acts on $\gamma$ trivially. Consequently, $\gamma$ is the unique photon fixed pointwisely by every element in $g^t$.

\begin{corollary}\label{cor.5.1.8}
Let $g$ be a non-trivial element in $\mathcal{K}$. Then either $g\in H(3)$ or it is an $\mathcal{H}$-transformation.
\end{corollary}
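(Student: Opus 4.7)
The plan is to dichotomize the parametrization of $\mathcal{K}$ given in Eq. \ref{ker} according to whether the homothety exponent $t$ vanishes. Every non-trivial $g\in\mathcal{K}$ is determined by a tuple $(t,s,u,v,\varepsilon)$ with $t,s,u,v\in\R$ and $\varepsilon=\pm 1$, and the result will follow by treating the cases $t=0$ and $t\neq 0$ separately.

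First I would dispose of the case $t=0$. In this situation the $\R^*$-factor $e^{2t}$ equals $1$ and the $\PSL(2,\R)$-factor becomes $\begin{bmatrix}\varepsilon & s\\ 0 & \varepsilon\end{bmatrix}$. Since elements of $\PSL(2,\R)$ are taken modulo $\pm\mathrm{Id}$, this factor coincides with the unipotent matrix $\begin{bmatrix}1 & \varepsilon s\\ 0 & 1\end{bmatrix}$, independently of the sign of $\varepsilon$. Hence $g$ lies in the subgroup of $\mathcal{K}$ obtained by setting $t=0$ in Eq. \ref{ker}, which is precisely $H(3)$ by the description given immediately after that equation.

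For the second case $t\neq 0$, I would invoke the observation following Definition \ref{def.5.1.7} asserting that such elements are $\mathcal{H}$-transformations, so $g$ fixes pointwise a photon in $\Ein^{1,2}_\phi$ and the conclusion is immediate. Should a direct verification be preferred, one works in the Minkowski patch $Mink(x_0)\approx(\s(2,\R),-\det)$ and solves the fixed-point equation $e^{2t}AXA^{-1}+V=X$ using the action formula $(r,[A],V)\cdot X=rAXA^{-1}+V$ recalled in Section \ref{proj}. Since $e^{2t}\neq 1$, writing $X=\begin{bmatrix}a & b\\ c & -a\end{bmatrix}$ in coordinates reduces the equation to an affine linear system whose solution set is a single lightlike geodesic in $Mink(x_0)$ (the direction being forced by the nilpotency of $V$ and the upper-triangular form of $A$).

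The only mild subtlety is checking that the full photon, rather than just this vertex-less affine piece, is fixed: one must verify that the limit endpoint of the affine lightlike line in $L(x_0)\setminus\phi$ is itself a fixed point of $g$. This follows by continuity from the fact that $g$ is a conformal diffeomorphism of $\Ein^{1,2}$ and hence its fixed locus is closed, so the unique limit point of the fixed geodesic must also be fixed, completing the photon $\psi\subset\Ein^{1,2}_\phi$.
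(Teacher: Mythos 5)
Your proof is correct and takes essentially the same route as the paper, which presents the corollary as an immediate consequence of the parametrization in Eq.~\ref{ker}: the elements with $t=0$ form the Heisenberg subgroup $H(3)$, while those with $t\neq 0$ are $\mathcal{H}$-transformations. Your optional fixed-point computation and the closedness argument for the limit point simply supply detail for the observation the paper states without proof after Definition~\ref{def.5.1.7}.
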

%\begin{proof}
%Fix a point $x_0\in \phi$ and consider the splitting (\ref{kernel}). Every element in $\mathcal{K}$ with $t=0$ belongs to $H(3)$. On the other hand, if $t\neq 0$, it is not hard to see that the element is an $\mathcal{H}$-transformation.
% $\mathcal H$ is an $\mathcal{H}$-transformation. By some computation, it is not hard to see that the set of points in $\Ein^{1,2}_\phi$ fixed by the composition of a non-trivial element in $\mathcal H$ with an arbitrary element in $H(3)\simeq Y_P\ltimes \Pi$ is a unique photon. The Lemma follows easily.
%\end{proof}
\begin{lemma}\label{lem.5.1.9}
Let $x_0\in \phi$ be an arbitrary point. Then every $\mathcal{H}$-transformation in $\mathcal{K}$ preserves a unique $\el$-invariant affine Lorentzian $2$-plane in $Mink(x_0)$.
\end{lemma}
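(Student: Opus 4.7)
The plan is to work in the Minkowski patch $Mink(x_0) \simeq (\s(2,\R), -\det)$ using the explicit parametrization of $\mathcal{K}$ from Eq.~\ref{ker}. First I would observe that the center $\el$ of the Heisenberg subgroup $H(3) \subset \mathcal{K}$ consists of the lightlike translations by $\R\Y_P$, so an affine $2$-plane $X_0 + \Pi$ in $\s(2,\R)$ is $\el$-invariant if and only if the linear direction $\Pi$ contains $\R\Y_P$. Thus the search reduces to finding $g$-invariant affine planes of this particular form.

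Next I would analyze the linear part of an $\mathcal{H}$-transformation $g = \big(e^{2t}, [A], V\big) \in \mathcal{K}$ with $t \neq 0$. The element $g$ acts on $\s(2,\R)$ as $X \mapsto T(X) + V$ where $T = e^{2t}\,\mathrm{Ad}_A$ and $A = \left[\begin{smallmatrix}\varepsilon e^t & s \\ 0 & \varepsilon e^{-t}\end{smallmatrix}\right]$. Because $A \in \Aff$ has distinct real eigenvalues $\pm e^{\pm t}$, the operator $\mathrm{Ad}_A$ is diagonalizable on $\s(2,\R)$ with three one-dimensional eigenspaces: the always-fixed lightlike line $\R\Y_P$ (Ad-eigenvalue $e^{2t}$); the line $\R\Y_H'$ spanned by the traceless part of $A$ (Ad-eigenvalue $1$), which is spacelike since its squared norm $-\det$ equals $\big((e^t - e^{-t})/2\big)^2 > 0$; and a second lightlike line $\R\Y_Q'$ (Ad-eigenvalue $e^{-2t}$). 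A short matrix computation with $B(X,Y) = -\tfrac{1}{2}\mathrm{tr}(XY)$ shows $\Y_H' \in \Y_P^{\perp}$ while $B(\Y_P, \Y_Q') \neq 0$.

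From this I would deduce the two uniqueness statements. The only $T$-invariant $2$-planes containing $\R\Y_P$ are $\R\Y_P \oplus \R\Y_H'$, which is degenerate because $\Y_P$ lies in its radical, and $\Pi := \R\Y_P \oplus \R\Y_Q'$, which is Lorentzian by the non-vanishing pairing; hence $\Pi$ is the unique admissible linear direction. For the affine offset, the condition $(T - I)(X_0) + V \in \Pi$ projects to the $1$-dimensional quotient $\s(2,\R)/\Pi$ as $(\bar T - 1)\,\overline{X_0} = -\overline{V}$, where $\bar T$ acts by the eigenvalue of $T$ on the missing eigendirection $\R\Y_H'$, namely $e^{2t}\cdot 1 = e^{2t}$. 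Since $e^{2t} - 1 \neq 0$, the class $\overline{X_0}$ is determined uniquely, so the coset $X_0 + \Pi$ is the unique $\el$-invariant affine Lorentzian $2$-plane preserved by $g$.

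The main obstacle I anticipate is bookkeeping the eigendirections $\R\Y_H'$ and $\R\Y_Q'$ when $s \neq 0$, since both depend non-trivially on $s$. Concretely, writing $\Y_Q' = v_{-} u_{+}^{T}$ in terms of the eigenvectors of $A$ and $A^T$ and expanding, one must check that $\Y_Q'$ is null ($\det = 0$) but not in $\Y_P^\perp$, and that $\Y_H'$ remains spacelike, independently of $s$ and $\varepsilon$. Once these causal characters are confirmed, the two uniqueness assertions reduce to the purely linear-algebraic facts that $T$ has exactly two invariant $2$-planes through $\R\Y_P$ and that $\bar T - 1$ is invertible on the quotient $\s(2,\R)/\Pi$; both conclusions use only the hypothesis $t \neq 0$ defining an $\mathcal{H}$-transformation.
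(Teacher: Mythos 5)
Your proposal is correct, but it takes a genuinely different route from the paper. The paper's proof is synthetic: it invokes the fact (established just before the lemma) that an $\mathcal{H}$-transformation $g$ fixes a unique photon $\psi\subset \Ein^{1,2}_\phi$ pointwise, intersects $\psi$ with $Mink(x_0)$ to get a lightlike geodesic $\gamma$, and spans the desired plane by $\gamma$ together with the $\el$-orbit of a point of $\gamma$; this is short and coordinate-free, and it makes transparent the later identification of the plane with $\Ein^{1,1}\cap Mink(x_0)$ for the Einstein hypersphere determined by $\phi\cup\psi$, but it leaves the $g$-invariance of the plane and, above all, its \emph{uniqueness} essentially implicit. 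You instead work in the model $(\s(2,\R),-\det)$ with the explicit form of $\mathcal{K}$ from Eq.~\ref{ker}: you reduce $\el$-invariance to the condition $\Y_P\in\Pi$, diagonalize the linear part $e^{2t}\mathrm{Ad}_A$ (eigenvalues $e^{4t},e^{2t},1$ on the eigenlines $\R\Y_P$, $\R\Y_H'$, $\R\Y_Q'$), observe that the only invariant $2$-directions through $\R\Y_P$ are $\R\Y_P\oplus\R\Y_H'$ (degenerate) and $\R\Y_P\oplus\R\Y_Q'$ (Lorentzian), and pin down the affine offset uniquely via invertibility of $\bar T-1$ on the one-dimensional quotient, using only $t\neq 0$. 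This buys a complete and explicit proof of both existence and uniqueness, valid on both components of $\mathcal{K}$ (the paper's proof is phrased for $\mathcal{K}_\circ$), at the cost of coordinates; the causal-character checks you flag can even be done without computation, since $\mathrm{Ad}_A$ is an isometry of $-\det$, so eigenlines with eigenvalue $\lambda\neq\pm1$ are isotropic, eigenlines pair nontrivially only when the eigenvalue product is $1$, and nondegeneracy then forces $B(\Y_P,\Y_Q')\neq 0$ and $\Y_H'$ spacelike. Two trivial remarks: the polarization of $-\det$ on traceless matrices is $B(X,Y)=\tfrac12\mathrm{tr}(XY)$ (your sign is off, harmlessly), and your starting identification of $\mathcal{H}$-transformations with the elements of Eq.~\ref{ker} having $t\neq 0$ uses, besides the paper's stated implication, the converse, which follows from Corollary \ref{cor.5.1.5}: elements with $t=0$ are lightlike, spacelike, or parabolic, and their fixed-point sets in $\Ein^{1,2}_\phi$ are never an entire photon.
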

\begin{proof}
Let $g\in \mathcal{K}_\circ$ be an $\mathcal{H}$-transformation. Denote by $\psi$ the unique photon in $\Ein^{1,2}_\phi$ fixed pointwisely by $g$. Assume that $p$ is an arbitrary point of the lightlike geodesic $\gamma=Mink(x_0)\cap \psi$. The orbit induced by the $1$-parameter subgroup $\el$ at $p$ is a lightlike geodesic $\eta$ with limit point in $\phi$. The lightlike geodesics $\gamma$ and $\eta$ generate a unique affine Lorentzian $2$-plan $T\subset Mink(x_0)$. Obviously, $T$ is invariant by $\el$.
\end{proof}
Observe that, a $1$-parameter subgroup $g^t$ generated by an $\mathcal{H}$-transformation $g\in \mathcal{K_\circ}$ preserves the unique $g$-invariant affine Lorentzian $2$-plane in $Mink(x_0)$, since $g^t$ is abelian.
%Let $g\in \mathcal{K}_\circ$ be an $\mathcal{H}$-transformation and $x_0\in \phi$. The $1$-parameter subgroup $g^t$ generated by $g$ preserves the unique $g$-invariant affine Lorentzian $2$-plane $T$ in $Mink(x_0)$, since it is abelain. Therefore, the $2$-dimensional connected Lie subgroup of $\mathcal{K}_\circ$ generated by $g^t.\el$ preserves $T$.
\begin{notation}\label{not.5.1.10}
%We denote by $P$ the totally isotropic plane in $\R^{2,3}$ correspond to the photon $\phi$. Also, 
For a Lie subgroup $G\subset \Conf(\Ein^{1,2}_\phi)$, we denote by $K^G$ and $K^G_\circ$ the kernel $\ker\pi|_G$ and its identity component, respectively.
\end{notation}
\begin{lemma}\label{lem.5.1.11}
Let $x_0\in \phi$ and $G$ be a $1$-parameter subgroup of $\Conf(\Ein^{1,2}_\phi)$ with $\dim \pi(G)=1$ which acts on $\phi$ transitively. 
% and admits a $2$-dimensional orbit at $p\in\mathcal{F}_\phi(x_0)\subset\Ein^{1,2}_\phi$ ($x_0\in \phi$). 
Then the kernel $K^G=\ker\pi|_G$ contains neither a parabolic nor a spacelike transformation of $Mink(x_0)$.
\end{lemma}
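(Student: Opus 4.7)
The plan is to argue by contradiction, exploiting the fact that a $1$-parameter subgroup is abelian so every element of $G$ commutes with the hypothetical $g \in K^G$. This commutation will produce a common fixed point of all of $G$ inside $\phi$, which then contradicts the transitivity of $G$ on $\phi$ via the classification of $1$-parameter subgroups of $\PSL(2,\R)$.

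The first step is to pin down $\pi(G)$. Since $G$ acts transitively on $\phi \approx \R\Pn^1$, so does $\pi(G)$; and among the three conjugacy classes of $1$-parameter subgroups of $\PSL(2,\R)$ only the elliptic one acts transitively on $\R\Pn^1$ (the parabolic and hyperbolic ones have one and two fixed points respectively). Hence $\pi(G)$ is elliptic and every non-identity element of $\pi(G)$ acts freely on $\phi$. Because $\dim \pi(G) = 1$, there exists at least one $h \in G$ with $\pi(h) \neq Id$.

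Next, suppose for contradiction that some $g \in K^G$ is either a parabolic or a spacelike transformation on $Mink(x_0)$. In either case, by Definition \ref{def.5.1.3}, the fixed point set of $g$ in $\Ein^{1,2}_\phi$ is a single lightlike geodesic $\gamma$. The key geometric observation is that the closure $\overline{\gamma}$ in $\Ein^{1,2}$ meets $\phi$ in exactly one point $z_0$: in the parabolic case $\gamma \subset Mink(x_0)$ and $z_0$ is its limit point in $L(x_0)$, which must lie in $\phi$ by the discussion following Definition \ref{def.5.1.3}; in the spacelike case $\gamma$ is a vertex-less photon in $L(x_0) \setminus \phi$ and extends by adding the vertex $x_0$ to a full photon of $L(x_0)$ distinct from $\phi$, forcing $z_0 = x_0$ (two distinct photons of a lightcone meet only at the vertex).

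To finish, because $G$ is a $1$-parameter subgroup it is abelian, so every $h \in G$ commutes with $g$ and therefore preserves $Fix(g) = \gamma$ setwise; continuity of $h$ on $\Ein^{1,2}$ extends this to $\overline{\gamma}$, so $h$ fixes the unique point $z_0 \in \overline{\gamma} \cap \phi$. Applied to an $h$ with $\pi(h) \neq Id$, this contradicts the freeness conclusion of the first step. I expect the main obstacle to be the geometric identification of $z_0$ in the spacelike case, where one must verify that the photon extending $\gamma$ in $L(x_0)$ is genuinely distinct from $\phi$; once this is settled, everything reduces to formal bookkeeping with the abelian property of $G$ and the classification of elliptic $1$-parameter subgroups.
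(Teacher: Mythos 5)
Your proof is correct and follows essentially the same route as the paper: both arguments use commutativity of the $1$-parameter group to conclude that $G$ preserves the fixed lightlike geodesic $\gamma$ of the hypothetical $g$, and then contradict the transitivity of $G$ on $\phi$. The only (cosmetic) difference is how the contradiction is extracted: the paper notes that $\gamma$ lies in a single leaf of the $G$-invariant foliation $\mathcal{F}_\phi$ while every $G$-orbit must meet every leaf, whereas you pass to the single point $\overline{\gamma}\cap\phi$ and contradict transitivity (via freeness of the elliptic action) on $\phi$ directly.
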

\begin{proof}
%Let $L\subset G$ be an arbitrary $1$-parameter subgroup of $G$ transversal to $K^G$. Observe that $L$ acts on $\phi$ transitively. Therefore, every $L$-orbit in $\Ein^{1,2}_\phi$ intersects every leaf of $\mathcal{F}_\phi$. If $K^L=\{Id\}$, then Lemma follows easily. Hence, assume that $g\in K^L$ is an arbitrary non-trivial element. 

% If $K^L=\{Id\}$, then the lemma follows obviously. Assume that $g\in K^L$ is an arbitrary non-trivial element. We show that $g\notin H(3)$. Then by Lemma (\ref{lem.5.1.8})...

Assume the contrary that $g\in K^G$ is a parabolic or spacelike transformation on $Mink(x_0)$. The element $g$ fixes pointwisely a unique lightlike geodesic $\gamma\subset \Ein^{1,2}_\phi$ contained in a leaf of $\mathcal{F}_\phi$. Note that, $G$ preserves $\gamma$, since it is abelain. But, this contradicts the fact that every $G$-orbit in $\Ein^{1,2}_\phi$ intersects every leaf of $\mathcal{F}_\phi$. This completes the proof.
\end{proof}
\begin{lemma}\label{lem.5.1.12}
Let $G$ be a connected Lie subgroup of $\Conf(\Ein^{1,2}_\phi)$ with $\dim \pi(G)=1$ which acts on $\phi$ transitively and admits a $2$-dimensional orbit in $\Ein^{1,2}$. Then, there exists a $1$-parameter subgroup $L\subset G$ transversal to $K^G_\circ$ such that either the kernel $K^L=\ker\pi|_L$ is trivial (hence $L\simeq SO(2)$), or every non-trivial element of $K^L$ is an $\mathcal{H}$-transformation.
% on which satisfies one of the following statements:
%\begin{itemize}
%\item $L$ is isomorphic to $SO(2)$.
%\item the kernel $K^L=\ker\pi|_L$ consists of the identity element and $\mathcal{H}$-transformations.
%\end{itemize}
%satisfies the conditions in Lemma (\ref{lem.5.1.11}). Then there exists a $1$-parameter subgroup $L\subset G$ transversal to $K^G$ on which the kernel $K^L=\ker\pi|_L$ is either trivial or it consists of the identity element and $\mathcal{H}$-transformations.
\end{lemma}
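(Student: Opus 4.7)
The plan is to first identify $\pi(G)$ as an elliptic $SO(2)$, then exhibit a $1$-parameter subgroup $L\subset G$ by lifting the elliptic generator, and finally eliminate lightlike elements from the kernel $K^L$ by a careful choice of the lift.

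I begin with $\pi(G)$. It is a connected $1$-parameter subgroup of $\PSL(2,\R)=\Conf_\circ(\R\Pn^1)$ acting transitively on $\phi\simeq\R\Pn^1$ (since $G$ does, through $\pi$). Among the three conjugacy classes of $1$-parameter subgroups of $\PSL(2,\R)$, only the elliptic one acts transitively on $\R\Pn^1$: parabolic subgroups fix a unique boundary point, hyperbolic ones fix two. Hence $\pi(G)\simeq SO(2)$ and $d\pi(\g)=\R\Y_E$. I pick any $X\in \g$ with $d\pi(X)=\Y_E$; such $X$ exists and is unique modulo $\mathfrak{k}^G:=\mathrm{Lie}(K^G_\circ)$. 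Set $L:=\exp(\R X)$. Since $X\notin \mathfrak{k}^G$, $L$ is transversal to $K^G_\circ$ in the sense of the definition preceding Lemma~\ref{lemma.1.50}, and $\pi(L)=\pi(G)\simeq SO(2)$.

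Next I analyze $K^L=L\cap \mathcal{K}$. It is a closed subgroup of the $1$-dimensional Lie group $L$, hence trivial or discrete. If it is trivial, $L\simeq SO(2)$ and the first alternative of the lemma holds. Otherwise, suppose a non-trivial element $g\in K^L$ is not an $\mathcal{H}$-transformation. By Corollary~\ref{cor.5.1.8}, $g\in H(3)\setminus\{e\}$. Corollary~\ref{cor.5.1.5} classifies $g$ as lightlike, parabolic, or spacelike. Now $L$ is itself a $1$-parameter subgroup with $\dim\pi(L)=1$ acting transitively on $\phi$, so Lemma~\ref{lem.5.1.11} applies to $L$ and rules out both the parabolic and the spacelike cases. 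Hence $g\in \el$ is a lightlike translation.

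The main obstacle, then, is to arrange that $L\cap \el=\{e\}$ by a judicious choice of $X$ within its coset $X+\mathfrak{k}^G$. I would exploit Proposition~\ref{pro.5.1.2}, which forces $\g\cap \h(3)\subset \mathrm{Lie}(\el)=\R V$ where $V$ denotes the generator of $\mathrm{Lie}(\el)$, together with the explicit commutation $[1+\Y_H,\,V]=2V$ obtained from the semidirect description $\mathcal{K}_\circ=\exp(\R(1+\Y_H)+\R \Y_P)\ltimes (\R V+\R U)$ given in Remark~\ref{rem.ker}. A Baker--Campbell--Hausdorff expansion shows that the first-return element $\exp(T_0 X)\in \mathcal{K}$---where $T_0>0$ is the minimal time with $\pi(\exp(T_0 X))=e$---depends non-trivially on the shift parameter $c\in\R$ when one replaces $X$ by $X+cV$ (in the case $V\in \g$), and more generally on the choice of representative in $X+\mathfrak{k}^G$, through the induced $\mathrm{ad}_{\Y_E}$-action on the transverse Heisenberg generators $S,U$. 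By tuning this parameter, $\exp(T_0 X)$ can be steered off $\el$, unless it is already the identity---in which case $L\simeq SO(2)$ and $K^L$ is trivial, again the first alternative. Either way, the resulting $L$ has $K^L$ either trivial or consisting entirely of $\mathcal{H}$-transformations, which completes the proof.
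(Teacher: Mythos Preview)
Your first four steps are correct and match the paper's approach: identifying $\pi(G)$ as elliptic, producing a transversal $L$, and using Lemma~\ref{lem.5.1.11} to reduce the problematic case to $g\in\el$.

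The gap is in your treatment of the lightlike case. You assert that a Baker--Campbell--Hausdorff computation, with the shift $X\mapsto X+cV$ (or more generally $X\mapsto X+\mathfrak{k}^G$), will ``steer'' the first-return element $\exp(T_0X)$ off $\el$, but you do not carry this out. It is not clear that such tuning works. In particular, when $\el\not\subset K^G_\circ$ the available shifts lie in $\mathfrak{k}^G$, which in that case is a one-dimensional subalgebra of $\mathrm{Lie}(\mathcal{K})$ \emph{disjoint} from $\R V$ at the Lie algebra level; there is no evident reason why perturbing $X$ in that direction should move the return element out of $\el$. Your parenthetical ``through the induced $\mathrm{ad}_{\Y_E}$-action on the transverse Heisenberg generators $S,U$'' is not an argument.

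The paper handles this by splitting into two cases. If $\el\subset K^G_\circ$, then $\el\subset G$, and one can choose $L'$ transversal to $K^G_\circ$ with $L'\cap\el=\{e\}$. If $\el\not\subset K^G_\circ$, then (using Proposition~\ref{pro.5.1.2}) $K^G_\circ$ is a one-parameter group consisting of the identity and $\mathcal{H}$-transformations, and $\dim G=2$. Take any non-trivial $h\in K^G_\circ$; by Corollary~\ref{cor.5.1.8} the product $hg$ is an $\mathcal{H}$-transformation. Since every connected $2$-dimensional Lie group has surjective exponential map, there is a one-parameter subgroup $L'\subset G$ through $hg$; then $hg\in K^{L'}$, so $K^{L'}$ consists of $\mathcal{H}$-transformations, and $L'$ is transversal to $K^G_\circ$ because $hg\notin K^G_\circ$. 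This group-level multiplication trick, rather than a Lie-algebra-level BCH perturbation, is the missing idea in your proposal.
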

\begin{proof}
Let $L\subset G$ be an arbitrary $1$-parameter subgroup transversal to $K^G_\circ$. Obviously, $L$ acts on $\phi$ transitively. If $K^L$ is trivial, then lemma follows evidently. Otherwise, $L$ is isomorphic to $\R$ and $K^L\simeq \mathbb{Z}$. Assume that $g\in K^L$ is a generator. By Lemma \ref{lem.5.1.11}, $g$ is either an $\mathcal{H}$-transformation or a lightlike transformation. In the first, lemma follows easily. If $g$ is a lightlike transformation, by Proposition \ref{pro.5.1.2}, there are two possibilities:
\begin{itemize}
\item \textit{$\el$ is a subgroup of $K^G_\circ$:} There exists a $1$-parameter subgroup $L'\subset G$ transversal to $K^G_\circ$ such that it intersects $\el$ only in the identity element, since $\el\subset G$.

\item \textit{$\el$ is not a subgroup of $K^G_\circ$}: Observe that in this case, $K^G_\circ$ is a $1$-parameter subgroup consisting of the identity element and $\mathcal{H}$-transformations. Also, $G$ is a $2$-dimensional connected Lie group and so, by \cite[p.p. 212]{Onish} it is isomorphic to the $2$-torus $\mathbb{T}^2$, $\R\times SO(2)$, $\R^2$, or $\Aff$.  Let $h\in K^G_\circ$ be an arbitrary non-trivial element. Then $hg$ is an $\mathcal{H}$-transformation by Corollary \ref{cor.5.1.8}. Since the exponential map $\exp:Lie(G)\rightarrow G$ is surjective, there exists a $1$-parameter subgroup $L'$ through $hg$. In the one hand, $hg\in K^{L'}$, so all the non-trivial elements in $K^{L'}$ are $\mathcal{H}$-transformations. On the other hand,  $L'$ is transversal to $K^G_\circ$, since $hg\notin K^G_\circ$. 
\end{itemize}
\end{proof}

Let $P$ be the totally isotropic $2$-plane in $\R^{2,3}$ corresponding to the photon $\phi\subset \Ein^{1,2}$, and $Q$ be a subspace of $\R^{2,3}$ supplementary to $P^\perp$. There is a canonical identification between $Q$ and the dual space $P^*$. Let $\langle.,.\rangle$ denote the bilinear form on $\R^{2,3}$. The map $\vartheta$ sending a vector $v\in Q$ to the functional $\langle v,.\rangle:P\rightarrow\R$ is linear. Also, $\vartheta$ is injective: $\vartheta(v)=\vartheta(w)$ implies that $\langle v-w,.\rangle\equiv 0$, since $\langle.,.\rangle$ is non-degenerate, hence we get $v-w=0$. Thus, $\vartheta$ is an isomorphism.

 Furthermore, let $H\subset SO_\circ(2,3)$ preserves $P$, and $Q$ be an $H$-invariant complement for $P^\perp$ in $\R^{2,3}$. In the one hand, the action of $H$ on $P$ induces a representation (not unique) from $H$ to $GL(P^*)$ by duality. On the other hand, the action of $H$ on $Q$ induces a unique representation from $H$ to $GL(P^*)$ on which the isomorphism $\vartheta$ is $H$-equivariant. It is not hard to see that the representation induced via $P$ is conjugate to the one induced via $Q$.
 
 Let $\psi\subset \Ein^{1,2}_\phi$ be a photon and denote by $P_\psi$ its corresponding totally isotropic $2$-plane in $\R^{2,3}$. The linear subspace $P\cup P_\psi\leq \R^{2,3}$ is of signature $(2,2)$. Hence, the union of $\phi$ and $\psi$ determines a unique Einstein hypersphere $\Ein^{1,1}\subset \Ein^{1,2}$. Let $x_0\in \phi$ be an arbitrary point. Then by Remark \ref{rem.ein}, the intersection of $\Ein^{1,1}$ with the Minkowski patch $Mink(x_0)$ is a Lorentzian affine $2$-plane $T$. The intersection of $\Ein^{1,1}$ with the lightcone $L(x_0)$ is the lightcone of $x_0$ in $\Ein^{1,1}$ consisting of $\phi$ and another photon $\xi$ which contain the limit points of lightlike geodesics in $T$. Indeed, $\phi\cup \xi$ is the set of the limit points of lightlike geodesics of every Lorentzian affine $2$-plane in $Mink(x_0)$ parallel to $T$. But, $T$ is the unique such affine plane which contains the lightlike geodesic $\psi\cap Mink(x_0)$.
 
 %  Denote by $Mink^{1,1}(x_0)$ and $L^{1,1}(x_0)$ the $2$-dimensional Minkowski patch and the lightcone in $\Ein^{1,1}\subset \Ein^{1,2}$ correspond to $x_0$, respectively. Obviously, $Mink^{1,1}(x_0)\subset Mink(x_0)$ and $L^{1,1}(x_0)\subset L(x_0)$. The lightcone $L^{1,1}(x_0)$ contains $\phi$ and another photon $\zeta$ which is the unique photon in $L(x_0)$ which has non-trivial intersection with $\psi$. The Minkowski patch $Mink^{1,1}(x_0)$ is a $2$-dimensional Lorentzian subspace of $Mink(x_0)$. Indeed, there is an easy way to see how $Mink^{1,1}(x_0)$ looks like in $Mink(x_0)$: Denote by $\gamma$ the lightlike geodesic $\psi\cap Mink(x_0)$. Let $\eta$ be a lightlike geodesic in $Mink(x_0)$ which intersects $\gamma$ and its limit point is in $\phi$. Then $\gamma\cup \eta$ determines a unique affine Lorentzian $2$-plane $T$ in $Mink(x_0)$. In the one hand, the limit point of every lightlike geodesic in $T$ is contained in $L^{1,1}(x_0)=\phi\cup \zeta$. However, every affine $2$-plane in $Mink(x_0)$ parallel to $T$ shares the property of limit points, but $T$ is the unique such plane which contains $\gamma$. On the other hand, $Mink^{1,1}(x_0)$ is the set of all lightlike geodesics in $Mink(x_0)$ with limit points in $L^{1,1}(x_0)$. Therefore, $Mink(x_o)$ coincides with $T$.

\textbf{Proof of Theorem \ref{thm.photon}.}  First, consider the action of $G$ on $\phi$. If $G$ admits a fixed point $x\in \phi$, then $x$ is the desired fixed point. 

Now, assume that $G$ acts on $\phi$ transitively. Obviously, $\pi(G)$ is either $\PSL(2,\R)$ or it is conjugate to $SO(2)\simeq Y_E$. We show that in the both cases, $G$ preserves a line in $\R^{2,3}$. 
%Denote by $P_0$ and $K_\circ^G$, the totally isotropic plane corresponding to $\phi$ and the identity component of the kernel $K^G=\ker\pi|_G$, respectively. 

\textbf{Case I:} $\boldsymbol{\pi(G)=\PSL(2,\R)}$. Denote by $\g$ and $\ki$ the Lie algebras correspond to $G$ and $K^G$, respectively. The following short sequence of Lie algebras and Lie algebra morphisms is exact.
\begin{align*}
1\longrightarrow \ki\hookrightarrow \g\overset{d\pi}{\longrightarrow} \s(2,\R)\longrightarrow 1.
\end{align*}
One can see $ \s(2,\R)$ as the Levi factor of $\g$, since $\ki$ is the radical solvable ideal of $\g$. Henceforth, $\PSL(2,\R)$ is a subgroup of $G$, up to finite cover, and $G=K^G.\PSL(2,\R)$. It is clear that $G$ acts on the totally isotropic plane $P$ irreducibly and preserves the orthogonal space $P^\perp$. By Proposition \ref{pro.5.1.2}, the connected component of the  intersection of $G$ with Heisenberg group $H(3)$ is either trivial or it is $\el\simeq \R$. Therefore, $K^G_\circ$ is either trivial or it is isomorphic to $\R$ or $\Aff$. The subgroup $\PSL(2,\R)\subset G$ acts on $K^G$ by conjugacy, since $K^G$ is a normal subgroup of $G$. The simplicity of $\PSL(2,\R)$, and Lemma \ref{lem.1.4.10} imply that this action is trivial. Moreover, using  the simplicity of $\PSL(2,\R)$ again, its action on $\R^{2,3}$ splits as the sum of irreducible actions (see \cite[p.p. 28]{Hu}).

Suppose that $\R^{2,3}=P\oplus \ell\oplus Q$ is a $\PSL(2,\R)$-invariant splitting, where $\ell\leq P^\perp$ is a line supplementary to $P$ in $P^\perp$ and $Q$ is a $2$-plane supplementary to $P^\perp$ in $\R^{2,3}$. The canonical identification between $Q$ and $P^*$ shows that $\PSL(2,\R)$ acts on $Q$ irreducibly. It is not hard to see that $\ell$ is the only $\PSL(2,\R)$-invariant line in $\R^{2,3}$. Since the conjugacy action of $\PSL(2,\R)$ on $K^G$ is trivial, every element of $\PSL(2,\R)$ commutes with all the elements of $K^G$. This implies that $K^G$ preserves $\ell$ as well. Thus, any element of $G$ preserves $\ell$. Henceforth $\Pn(\ell)\in \R\Pn^4$ is the desired fixed point.

\textbf{Case II: $\boldsymbol{\pi(G)=SO(2)}$ up to conjugacy.} In this case $\dim K^G\geq 1$, since $\dim G\geq 2$. Proposition \ref{pro.5.1.2} implies that $K^G_\circ$ is isomorphic to either $\R$ or $\Aff$. We split this case to two subcases: $G$ contains a $1$-dimensional compact subgrgoup (a copy of $SO(2)$), and there is no $1$-dimensional compact subgroup in $G$:
\begin{itemize}
\item \textit{$G$ contains a $1$-dimensional compact subgroup.} In this case we have $G=K^G_\circ.SO(2)$. The group $SO(2)\subset G$ acts on $K^G_\circ$ by conjugacy, since $K^G_\circ$ is a normal subgroup of $G$. This action is trivial, since both $\R\simeq Aut_\circ(\R)$ and $\Aff\simeq Aut_\circ(\Aff)$ contain no $1$-dimensional compact Lie subgroup (see Lemma {\ref{lem.1.4.10}}). Furthermore, the action of $SO(2)$ on $\R^{2,3}$ splits as the sum of irreducible actions, since it is compact (see \cite[Proposition 4.36]{Hall}). Using the same symbols as we used for the previous case, suppose that $\R^{2,3}=P\oplus \ell\oplus Q$ is a $SO(2)$-invariant splitting. It is easy to see that $SO(2)$ acts irreducibly on $Q$. Also, $\ell$ is the only $SO(2)$-invariant line in $\R^{2,3}$. Since the conjugacy action of $SO(2)$ on $K^G_\circ$ is trivial, every element of $SO(2)$ commutes with all the elements of $K^G_\circ$. This implies that $K^G_\circ$ preserves $\ell$ as well. Consequently, all the elements in $G$ preserve $\ell$. Henceforth $\Pn(\ell)\in \R\Pn^4$ is the desired fixed point.

\item \textit{$G$ contains no $1$-dimensional compact subgroup.} In this case, every $1$-parameter subgroup  transversal to $K^G_\circ$ is isomorphic to $\R$. By Lemma \ref{lem.5.1.12}, there exists a $1$-parameter subgroup $L\subset G$ transversal to $K^G_\circ$ such that the kernel $K^L=\ker\pi|_L$ consists of the identity element and $\mathcal{H}$-transformations.
%The kernel $K^L=\ker\pi|_L$ is a discrete subgroup isomorphic to $\mathbb{Z}$. Let $g$ be a generator of $K^L$. Observe that if $g$ is a lightlike transformation, then $K^L\subset \el$.
%In particular, if $K^G_\circ=\el$, then $G$ is a $2$-dimensional connected Lie group and by \cite[p.p. 212]{Onish} it is isomorphic either to $\R^2$ or $\Aff_\circ(1,\R)$. Hence, by Lemma (\ref{lem.5.1.11}) the kernel $K^L=\ker\pi|_L$ consists of the identity element and $\mathcal{H}$-transformations.
Let $g\in K^L$ be a non-trivial element. Then $g$ fixes a unique photon $\psi\subset \Ein^{1,2}_\phi$ pointwisely. Observe that $L$ preserves $\psi$ since it is abelian. Therefore, $L$ preserves the Einstein hypersphere $\Ein^{1,1}\subset \Ein^{1,2}$ containing $\phi$ and $\psi$. Also, by Lemma \ref{lem.5.1.9}, $g$ preserves a unique $\el$-invariant affine Lorentzian $2$-plane $T_g$ in the Minkowski patch $Mink(x_0)$. In fact, $T_g$ coincides with the intersection of $\Ein^{1,1}$ with $Mink(x_0)$. It is suffix to show that $K_\circ^G$ preserves $T_g$. If $K_\circ^G=\el$, then obviously $G=K^G_\circ.L$ preserves the Einstein hypersphere $\Ein^{1,1}$, since $T_g$ is $\el$-invariant. If $K_\circ^G$ contains an $\mathcal{H}$-transformation $h$, then by Lemma \ref{lem.5.1.9}, $h$
%the $2$-dimensional subgroup of $\mathcal{K}_\circ$ generated by $h^t.\el$ 
preserves a unique $\el$-invariant affine Lorentzian $2$-plane $T_h$. In fact, $T_h=T_g$, since $K_\circ^G$ is normal in $K^G$. Now, it is easy to see that $T_g$ is $K^G_\circ$-invariant. Hence, once more, $G=K^G_\circ.L$ preserves $\Ein^{1,1}$. Therefore, $G$ preserves the spacelike direction in $\R^{2,3}$ corresponding to $\Ein^{1,1}$, and so, it admits a fixed point in the projective space $\R\Pn^4$.
\end{itemize}
\hfill$\square$
\section{Two-dimensional orbits induced in Minkowski patch and lightcone }\label{orbit}
According to Theorem \ref{thm.fix}, every connected Lie subgroup of $\Conf(\En^{1,2})$ with $\dim \geq 2$ acts on Einstein universe $\Ein^{1,2}$ with cohomogeneity one, except $\R^{1,2}$ and $\R_+^*\ltimes \R^{1,2}$. The connected Lie subgroups of $\Conf(\En^{1,2})$ with dimension greater that or equal to $2$ are classified in Theorem \ref{thm.3.2.1} up to conjugacy. In this section, we describe the topology and causal character of the codimension one orbits in $\Ein^{1,2}=Mink(p)\cup L(p)$ induced by the cohomogeneity one action of the Lie subgroups indicated in Theorem \ref{thm.3.2.1}. For more details about the other orbits see \cite{Has0}.

For an arbitrary point $q\in \En^{1,2}\approx Mink(p)$, there is a natural identification between the tangent space $T_q\En^{1,2}$ and the underlying scalar product space $\R^{1,2}$. Therefore, by the action of a Lie subgroup $G\subset \Conf(\En^{1,2})$, we may always consider the translation part $T(G)$ as a linear subspace of $T_qG(q)$, since $\R^{1,2}$ acts on $\En^{1,2}$ freely.
 
We fix some notations here. Denote by $\Pi_\phi$ and $\phi$ the unique degenerate plane $\R(e_1+e_2)\oplus\R e_3\leq \R^{1,2}$ invariant by the $1$-parameter parabolic subgroup $Y_P\subset SO_\circ(1,2)$ and its corresponding photon in $L(p)$, respectively. Furthermore, denote by $\Pi_\psi$ and $\psi$ the degenerate plane $\R(e_1-e_2)\oplus \R e_3\leq \R^{1,2}$ and its corresponding photon in $L(p)$, respectively, which both are invariant by the $1$-parameter hyperbolic subgroup $Y_H\subset SO_\circ(1,2)$. Also, for a linear subspace $V\leq \R^{1,2}$, denote by $\mathcal{F}_V$ the foliation induced by $V$ in Minkowski space $\En^{1,2}$.

\begin{remark}
Let $\lambda +X+v$ be an arbitrary element in $(\R\oplus\so(1,2))\oplus_\theta\R^{1,2}$ and $q\in\En^{1,2}$ be an arbitrary point. There is an easy way to determine the tangent vector in $T_q\En^{1,2}$ induced by the action of $\exp(\R(\lambda+X+v))$ on $\En^{1,2}$: the vector $\frac{d}{dt}|_{t=0}\big(\exp(t(\lambda+X+v))(q)\big)$ coincides with $\lambda X(q)+v$ where $\lambda$ and $X$ act on $\En^{1,2}$ (with origin $o$) as linear maps.
\end{remark}

%In Appendix (\ref{appB}) we have compute some of the $1$-parameter subgroups of $\Conf(\En^{1,2})$ which will be used in squeal.
\
Now, we are ready to describe the $2$-dimensional orbits induced by the subgroups indicated in Tables \ref{table1}-\ref{table8}.
\subsubsection*{\underline{Orbits induced by subgroups with full translation part}}\label{subsec.3.2.1}
Here, we consider the codimension one orbits of a Lie subgroup $G\subset \Conf_\circ(\En^{1,2})$ with $T(G)=\R^{1,2}$. These groups have been listed in Table \ref{table1}. Obviously, $G$ acts on Minkowski space $\En^{1,2}$ transitively, since it  contains $\R^{1,2}$ as a subgroup. Therefore, the $2$-dimensional $G$-orbit is contained in the lightcone $L(p)$. The translation part $T(G)=\R^{1,2}$ acts on each vertex-less photon in $L(p)$ transitively. 

Note that, the groups $\R^{1,2}$ and $\R_+^*\times \R^{1,2}$ admit $2$-dimensional orbit neither in $\En^{1,2}$ nor in the lightcone $L(p)$.

\begin{itemize}
\item If the linear isometry projection $P_{li}(G)$ contains an elliptic element, then $G$ acts on the vertex-less lightcone $L(\hat{p})$ transitively. Hence, the following Lie subgroups admit a unique $2$-dimensional orbit in $\Ein^{1,2}$ i.e., the vertex-less lightcone $L(p)$ which is a degenerate surface (i.e., of signature $(0,1,1)$).
\begin{align*}
&(\R_+^*\times SO_\circ(1,2))\ltimes \R^{1,2}, \;\;\;\;\;\;\;\;SO_\circ(1,2)\ltimes \R^{1,2},\;\;\;\;\;\;\;\;(\R_+\times Y_E)\ltimes \R^{1,2},\\
&Y_E\ltimes \R^{1,2},  \;\;\;\;\;\;\;\;\;\;\;\;\;\;\;\;\;\;\;\;\;\;\;\;\;\;\;\;\;\exp\big(\R(a+\Y_E))\ltimes \R^{1,2}, \; where\; a\in \R^*.
\end{align*}

\item If the linear isometry projection $P_{li}(G)$ is a proper subgroup of $SO_\circ(1,2)$ and it contains a parabolic element, then $G$ preserves a unique photon $\phi$, and acts on its complement in the lightcone $L(p)$ transitively. Thus all the following groups admit a unique $2$-dimensional orbit in $\Ein^{1,2}$ which is the degenerate surface $L(p)\setminus \phi$.
\begin{align*}
&(\R_+^*\times \Aff)\ltimes \R^{1,2}, \;\;\;\;\;\Aff\ltimes \R^{1,2}, \;\;\;\;\;\;\exp\big(\R(a+\Y_P))\ltimes \R^{1,2},\\
 &Y_P\ltimes \R^{1,2}\;\;\;\;\;\;\;\;\;\;\;(\R_+\times Y_P)\ltimes \R^{1,2},\;\;\;\;\;\;\;  \exp\big(\R(a+\Y_H)+\R\Y_P\big)\ltimes \R^{1,2},\;where \; a\in \R^*
\end{align*}

\item If the linear isometry projection $P_{li}(G)$ is a $1$-parameter hyperbolic subgroup of $SO_\circ(1,2)$, then $G$ preserves two distinct photons $\phi$ and $\psi$, and acts on the two connected components of $L(p)\setminus (\phi\cup \psi)$ transitively. This implies that the following groups admit in $\Ein^{1,2}$ two $2$-dimensional orbits which are the connected components of $L(p)\setminus\{\phi\cup \psi\}$.
\begin{align*}
(\R_+\times Y_H)\ltimes \R^{1,2},\;\;\;\;\;\;\;\; Y_H\ltimes \R^{1,2},\;\;\;\;\;\;\;\;\exp\big(\R(a+\Y_H))\ltimes \R^{1,2}, \;where \;a\in \R^*.
\end{align*}
\end{itemize}
\subsubsection*{\underline{Orbits induced by subgroups with a Lorentzian plane as the translation part}}\label{subsec.3.2.2}
Now, we describe the $2$-dimensional orbits induced by a connected Lie subgroup $G\subset \Conf_\circ(\En^{1,2})$ which its translation part is a Lorentzian plane. These groups have been listed in Table \ref{table2}.

 Observe that, the translation part $T(G)$ acts on each vertex-less photon in the lightcone $L(p)$ transitively, since the action of a timelike plane does not preserve any degenerate affine plane in $\En^{1,2}$. Also, $T(G)$ induces a codimension $1$ foliation $\mathcal{F}_{T(G)}$ in $\En^{1,2}$ on which the leaves are Lorentzian affine planes. Also, the linear isometry projection $P_{li}(G)$ is either trivial or it is a $1$-parameter hyperbolic subgroup of $SO_\circ(1,2)$. In the first case, $G$ preserves every photon in the lightcone $L(p)$, and so, it admits no $2$-dimensional orbit in $L(p)$. In the later, $G$ preserves two distinct photons $\phi,\psi\subset L(p)$ and acts on the both connected components of $L(p)\setminus (\phi\cup \psi)$ transitively. In the following we discuss on the $2$-dimensional $G$-orbits in the Minkowski patch $Mink(p)\approx\En^{1,2}$.
 
\begin{itemize}
 \item $G=T(G)=\R e_1\oplus \R e_2$. The $2$-dimensional orbits induced by $G$ in $\Ein^{1,2}$ are the leaves of the leaves of the foliation $\mathcal{F}_{T(G)}$.

\item $G=\R^*_+\ltimes (\R e_1\oplus \R e_2)$. It can be easily seen that the leaf $\mathcal{F}_{T(G)}(o)\subset \En^{1,2}$ is the only $2$-dimensional $G$-orbit in $\Ein^{1,2}$.
\item $G=Y_H\ltimes (\R e_1\oplus \R e_2)$. This group preserves the leaves of the foliation $\mathcal{F}_{T(G)}$. Henceforth, the $2$-dimensional orbits induced by $G$ in $\En^{1,2}$ are the leaves of $\mathcal{F}_{T(G)}$.

\item $G=(\R_+^*\times Y_H)\ltimes (\R e_1\oplus \R e_2)$ or $G=\exp\big(\R(a+\Y_H)\big)\ltimes (\R e_1\oplus \R e_2)$, $a\in \R^*$. Simple computations show that the leaf $\mathcal{F}_{T(G)}(o)$ is the only $2$-dimensional $G$-orbit in the Minkowski patch $ \En^{1,2}$.
\item $G=\exp\big(\R(\Y_H+e_3)\big)\ltimes (\R e_1\oplus \R e_2\big)$. For an arbitrary point $q=(x,y,z)\in \En^{1,2}$, the vector tangent to the orbit $G(q)$ at $q$ induced by the $1$-parameter subgroup $\exp\big(\R(\Y_H+e_3)\big)$ is $v=(y,x,1)$. The set $\{e_1,e_2,v\}$ is a basis for the tangent space $T_qG(q)$. This implies that $G$ acts on $\En^{1,2}$ transitively. Thus, the connected components of $L(p)\setminus (\phi\cup \psi)$ are the only $2$-dimensional orbits induced by $G$ in Einstein universe $\Ein^{1,2}$.
 \end{itemize}
 \begin{remark}\label{Time}
Observe that by the action of $G=(\R_+^*\times Y_H)\ltimes (\R e_1\oplus \R e_2)$ the union of the leaf $\mathcal{F}_{T(G)}(o)$ (which is a Lorentzian affine $2$-plane) and the photons $\phi$ and $\psi$ is $G$-invariant. Actually, $\mathcal{F}_{T(G)}(o)\cup \phi \cup \psi$ is an Einstein hypersphere (see Definition \ref{def.1.7.17} and Remark \ref{rem.ein}). Moreover, $G$ is the unique (up to conjugacy) maximal connected Lie subgroup in $SO_\circ(2,3)$ which preserves an Einstein hypersphere and admits a fixed point on it.
 \end{remark}
 \subsubsection*{\underline{Orbits induced by the subgroups with a spacelike plane as the translation part}}\label{subsec.3.2.3}
Suppose that $G\subset \Conf_\circ(\En^{1,2})$ is a connected Lie subgroup which its translation part $T(G)$ is a spacelike plane. These groups have been listed in Table \ref{table3}.

The translation part $T(G)$ acts on each vertex-less photon in the lightcone $L(p)$ transitively, since the action of a spacelike plane preserves no degenerate affine plane in $\En^{1,2}$. Also, $T(G)$ induces a codimension $1$ foliation $\mathcal{F}_{T(G)}$ in $\En^{1,2}$ on which the leaves are affine spacelike planes. Moreover, the linear isometry projection $P_{li}(G)$ is either trivial or it is a $1$-parameter elliptic subgroup of $SO_\circ(1,2)$. In the first case, $G$ preserves every vertex-less photon in $L(p)$. In the later, $G$ acts on the vertex-less lightcone $L(\hat{p})$ transitively. Now, we describe the $2$-dimensional $G$-orbits in the Minkowski patch $Mink(p)\approx\En^{1,2}$.
 \begin{itemize}
\item $G=T(G)=\R e_2\oplus \R e_3$. The $2$-dimensional orbits induced by $G$ in $\Ein^{1,2}$ are the leaves of the foliation $\mathcal{F}_{T(G)}$ in the Minkowski patch $\En^{1,2}$.
 
\item $G=\R_+^*\ltimes (\R e_2\oplus \R e_3)$. One can see, $G$ admits a unique $2$-dimensional orbit in $\Ein^{1,2}$, namely, the leaf $\mathcal{F}_{T(G)}(o)$.

\item $G=Y_E\ltimes (\R e_2\oplus \R e_3)$. This group preserves the leaves of the foliation $\mathcal{F}_{T(G)}$. Therefore, the leaves of $\mathcal{F}_{T(G)}$ are the $2$-dimensional $G$-orbits in $\En^{1,2}$.

\item $G=(\R_+^*\times Y_E)\ltimes (\R e_2\oplus \R e_3)$ or $G=\exp\big(\R(a+\Y_E)\big)\ltimes (\R e_2\oplus \R e_3)$, $a\in \R^*$. It is not hard to see that the leaf $\mathcal{F}_{T(G)}(o)$ is the unique $2$-dimensional $G$-orbit in the Minkowski space $\En^{1,2}$.

\item $G=\exp\big(\R(\Y_E+e_1)\big)\ltimes (\R e_2\oplus \R e_3)$. For an arbitrary point $q=(x,y,z)\in \En^{1,2}$, the vector tangent to the orbit $G(q)$ at $q$ induced by the $1$-parameter subgroup $\exp\big(\R(\Y_E+e_1)\big)$ is $v=(1,z,-y)$. The set $\{e_2,e_3,v\}$ is a basis for the tangent space $T_qG(q)$. This implies that $G$ acts on $\En^{1,2}$, transitively. Hence, the connected components of $L(p)\setminus (\phi\cup \psi)$ are the only $2$-dimensional orbits induced by $G$ in Einstein universe $\Ein^{1,2}$.
 \end{itemize}
  \begin{remark}\label{Space}
 Observe that by the action of $G=(\R_+^*\times Y_E)\ltimes (\R e_2\oplus \R e_3)$ the union of the leaf $\mathcal{F}_{T(G)}(o)$ (which is a spacelike affine $2$-plane) and the vertex $\{p\}$ is $G$-invariant. Actually, $\mathcal{F}_{T(G)}(o)\cup \{p\}$ is a spacelike hypersphere (Definition \ref{def.1.7.14}). Furthermore, $G$ is the unique (up to conjugacy) maximal connected Lie subgroup in $SO_\circ(2,3)$ which preserves a spacelike hypersphere and admits a fixed point on it.
 \end{remark}
\subsubsection*{\underline{Orbits induced by the subgroups with a degenerate plane as the translation part}}\label{subsec.3.2.4}
Assume that $G\subset \Conf_\circ(\En^{1,2})$ is a connected Lie subgroup which its translation part $T(G)$ is a degenerate plane, i.e., $G$ belongs to Table \ref{table4}.

The translation part $T(G)=\Pi_\phi$ preserves the leaves of the foliation $\mathcal{F}_{\Pi_\phi}$. Hence, it acts on the corresponding photon $\phi$ in $L(p)$ trivially. Also, it acts on all the vertex-less photons in $L(p)$ different from $\hat{\phi}$ transitively. Furthermore, the linear isometry projection $P_{li}(G)$ is either trivial or it is a subgroup of $\Aff\subset SO_\circ(1,2)$, up to conjugacy.

\textbf{Case I:} Assume that the linear isometry projection $P_{li}(G)$ is trivial. Then $G$ admits no $2$-dimensional orbit in the lightcone.
\begin{itemize}
\item $G=\Pi_\phi=\R(e_1+e_2)\oplus \R e_3$. The codimension one orbits induced by $G$ in $\Ein^{1,2}$ are the leaves of the foliation $\mathcal{F}_{\Pi_\phi}$
\item $G=\R_+^*\ltimes \Pi_\phi$. This group admits a unique $2$-dimensional orbit in Einstein universe $\Ein^{1,2}$, namely, the leaf $\mathcal{F}_{\Pi_\phi}(o)$.
\end{itemize}

\textbf{Case II:} Assume that the linear isometry projection $P_{li}(G)$ contains a parabolic element. Then $G$ preserves a unique photon $\phi\subset L(p)$ and acts on the degenerate surface $L(p)\setminus \phi$ transitively. So, in the following, we only describe the $2$-dimensional orbits in the Minkowski patch $Mink(p)\approx\En^{1,2}$.
\begin{itemize}
\item $G=\Aff\ltimes \Pi_\phi$ or $G=(\R_+^*\times \Aff)\ltimes \Pi_\phi$ or $G=\exp\big((\R(a+\Y_H)+\R\Y_P)\big)\ltimes \Pi_\phi$, $a\in \R^*\setminus \{1\}$ or $G=(\R_+^*\times Y_P)\ltimes \Pi_\phi$ or $G=\exp\big(\R(a+\Y_P)\big)\ltimes \Pi_\phi$, $a\in \R^*$. It is easily seen that $G$ admits a unique $2$-dimensional orbit in $\En^{1,2}$, namely, the degenerate leaf $\mathcal{F}_{\Pi_\phi}(o)$.

\item $G=\exp\big((\R(1+\Y_H)+\R\Y_P)\big)\ltimes \Pi_\phi$. For an arbitrary point $q=(x,y,z)\in \En^{1,2}$, the vectors tangent to the orbit $G(q)$ at $q$ induced by the $1$-parameter subgroups $\exp\big(\R(1+\Y_H)\big)$ and $Y_P$ are $v=(x+y,x+y,z)$ and $w=(z,z,x-y)$, respectively. Obviously, $v,w\in \Pi_\phi\leq T_qG(q)$. Therefore, $G$ preserves the leaves of the foliation $\mathcal{F}_{\Pi_\phi}$. Henceforth, the $2$-dimensional orbits induced by $G$ in $\En^{1,2}$ are the leaves of $\mathcal{F}_{\Pi_\phi}$.
\end{itemize}
\begin{remark}\label{rem.ker}
The subgroup $\exp\big((\R(1+\Y_H)+\R\Y_P)\big)\ltimes \Pi_\phi$ is the identity component of the intersection of the stabilizers of the points in the photon $\phi$. In other words, up to conjugacy, it is the maximal connected Lie subgroup of $SO_\circ(2,3)$ which acts on a photon in Einstein universe $\Ein^{1,2}$ trivially. We denote this group by the special symbol $\mathsf{K}$.
\end{remark}
\begin{itemize}
\item $G=\exp\big(\R(2+\Y_H)+\R(\Y_P+e_1)\big)\ltimes \Pi_\phi$ or $G=\exp\big(\R(1+\Y_H +e_1)+\R\Y_P\big)\ltimes \Pi_\phi$ or $G=\exp\big(\R(\Y_P+e_1)\big)\ltimes  \Pi_\phi$. By some computations, one can see that $G$ acts on the Minkowski patch $\En^{1,2}$ transitively. Hence, the degenerate surface $L(p)\setminus \phi$ is the only $2$-dimensional $G$-orbit in $\Ein^{1,2}$.
\item $G=Y_P\ltimes\Pi_\phi$. This group preserves the leaves of the foliation $\mathcal{F}_{\Pi_\phi}$, since it is a subgroup of $\mathsf{K}$. Therefore, the leaves of the of $\mathcal{F}_{\Pi_\phi}$ are the $2$-dimensional $G$-orbits in $\En^{1,2}$.
\end{itemize}

\textbf{Case III:} Assume that the linear isometry projection is a $1$-parameter hyperbolic subgroup of $SO_\circ(1,2)$. Then $G$ preserves two distinct photons $\phi$ and $\psi$ in $L(p)$, and acts on the both connected components of $L(p)\setminus (\phi\cup \psi)$ transitively. In the following we discuss on the $2$-dimensional $G$-orbits in the Minkowski patch $Mink(p)\setminus \En^{1,2}$.
\begin{itemize}
\item $G=Y_H\ltimes \Pi_\phi$. The $1$-parameter subgroup $Y_H$ preserves the leaf $\mathcal{F}_{\Pi_\phi}(o)$, hence, it is a $G$-orbit. For an arbitrary point $q=(x,y,z)\in \En^{1,2}$, the vector tangent to the orbit $G(q)$ at $q$ induced by $Y_H$ is $v=(y,x,z)$. The set $\{(e_1+e_2),e_3,v\}\subset T_qG(q)$ is a basis if and only if $x\neq y$ if and only if $q\notin \mathcal{F}_{\Pi_\phi}(o)$. Therefore, the leaf $\mathcal{F}_{\Pi_\phi}(o)$ is the only $2$-dimensional $G$-orbit in $\En^{1,2}$.

\item $G=(\R_+^*\times Y_H)\ltimes \Pi_\phi$ or $G=\exp\big(\R(a+\Y_H)\big)\ltimes \Pi_\phi$, $a\in \R^*\setminus \{1\}$. It can be easily seen that $G$ admits a unique $2$-dimensional orbit in $\En^{1,2}$, namely, the degenerate leaf $\mathcal{F}_{\Pi_\phi}(o)$.

\item $G=\exp\big(\R(1+\Y_H)\big)\ltimes \Pi_\phi$. This group preserves the leaves of the foliation $\mathcal{F}_{\Pi_\phi}$, since it is a subgroup of $\mathsf{K}$ (Remark \ref{rem.ker}). Hence, the leaves of $\mathcal{F}_{\Pi_\phi}$ are the $2$-dimensional $G$-orbits in $\En^{1,2}$.

\item $G=\exp\big(\R(1+\Y_H+e_1)\big)\ltimes \Pi_\phi$. For an arbitrary point $q=(x,y,z)\in \En^{1,2}$, the vector tangent to the orbit $G(q)$ at $q$ induced by $\exp\big(\R(1+\Y_H+e_1)\big)$ is $v=(x+y+1,x+y,z)$. The set  $\{(e_1+e_2),e_3,v\}\subset T_qG(q)$ is a basis, and so, $G$ acts on $\En^{1,2}$ transitively. Hence the connected components of $L(p)\setminus (\phi\cup \psi)$ are the only $2$-dimensional $G$-orbits in $\Ein^{1,2}$.
\end{itemize}

\subsubsection*{\underline{Orbits induced by the subgroups with a timelike line as the translation part}}\label{subsec.3.2.5} 
Let $G\subset \Conf_\circ(\En^{1,2})$ be a connected Lie subgroup on which its translation part $T(G)$ is a  timelike line, i.e., $G$ belongs to Table \ref{table5}.

The translation part $T(G)=\R e_1$ admits a $1$-dimensional foliation $\mathcal{F}_{\R e_1}$ in $\En^{1,2}$ on which the leaves are affine timelike lines. On the other hand, $\R e_1$ preserves no degenerate affine plane in $\En^{1,2}$. Hence, $\R e_1$ acts on each vertex-less photon in the lightcone $L(p)$ transitively. In this case, the linear isometry projection $P_{li}(G)$ is either trivial or it is a $1$-parameter elliptic subgroup of $SO_\circ(1,2)$. In the first case, the orbits induced by $G$ in the lightcone $L(p)$ are the vertex-less photons. In the later, the vertex-less lightcone $L(\hat{p})$ is a $G$-orbit. Observe that every $2$-dimensional $G$-orbit in the Minkowski patch $Mink(p)\approx\En^{1,2}$ is Lorentzian, since the timelike vector $e_1$ is tangent to it.

\begin{itemize}
\item $G=\R_+^*\ltimes \R e_1$. The homothety factor $\R_+^*$ preserves the leaf $\mathcal{F}_{\R e_1}(o)$. Hence, $\mathcal{F}_{\R e_1}(o)$ is a $G$-orbit. Also, $G$ preserves every affine Lorentzian plane in $\En^{1,2}$ containing $\mathcal{F}_{\R e_1}(o)$. On the other hand, $G$ acts on $\En^{1,2}\setminus \mathcal{F}_{\R e_1}(o)$ freely. It follows that, the $2$-dimensional orbits induced by $G$ in $\Ein^{1,2}$ are the affine Lorentzian half-plane in $\En^{1,2}$.

\item $G=Y_E\times \R e_1$. The timelike affine line $\mathcal{F}_{\R e_1}(o)$ is a $G$-orbit. Furthermore, $G$ acts on $\En^{1,2}\setminus \mathcal{F}_{\R e_1}(o)$ freely. Hence, the $2$-dimensional orbits induced by $G$ in Minkowski patch $\Ein^{1,2}$ are Lorentzian cylinders.

\item $G=(\R_+^*\times Y_E)\ltimes \R e_1$. For an arbitrary point $q=(x,y,z)\in \En^{1,2}$, the vectors tangent to the orbit $G(q)$ at $q$ induced by the $1$-parameter subgroups $\R_+^*$ and $Y_E$ are $v=(x,y,z)$ and $w=(0,z,-y)$, respectively. The set $\{e_1,v,w\}\subset T_qG(q)$ is a basis if and only if $y,z\neq 0$ if and only if $q\notin \mathcal{F}_{\R e_1}(o)$. This implies that $G$ admits a unique $2$-dimensional orbit in $\Ein^{1,2}$ which is the vertex-less lightcone $L(\hat{p})$.

\item $G=\exp\big(\R(a+ \Y_E)\big)\ltimes\R e_1$, $a\in \R^*$. This group preserves the leaf $\mathcal{F}_{\R e_1}(o)$, and acts on $\En^{1,2}\setminus \mathcal{F}_{\R e_1}(o)$ freely. Hence, $G$ admits a codimension $1$ foliation on $\En^{1,2}\setminus \mathcal{F}_{\R e_1}(o)$ on which every leaf is a Lorentzian surface.
\end{itemize}
\subsubsection*{\underline{Orbits induced by the subgroups with a spacelike line as the translation part}}\label{subsec.3.2.6} 
Suppose that $G\subset \Conf_\circ(\En^{1,2})$ is a connected Lie subgroup with a spacelike line as the translation part. These groups have been listed in Table \ref{table6}.

The translation part $T(G)=\R e_3$ admits a $1$-dimensional foliation $\mathcal{F}_{\R e_3}$ in $\En^{1,2}$ on which the leaves are affine spacelike lines parallel to $\R e_3$. In the one hand, the translation part fixes two photons $\phi,\psi\subset L(p)$, pointwisely, since $\R e_3$ is contained in the both lightlike planes $\Pi_\phi=\R(e_1+e_2)\oplus \R e_3$ and $\Pi_\psi=\R(e_1-e_2)\oplus \R e_3$. On the other hand, the linear projection $P_l(G)$ preserves both foliations $\mathcal{F}_{\Pi_\phi}$ and $\mathcal{F}_{\Pi_\psi}$. Hence, the photons $\phi$ and $\psi$ are invariant by $G$.  Also, $\R e_3$ acts on all the vertex-less  photons in the lightcone $L(p)$ different form $\hat{\phi}$ and $\hat{\psi}$ transitively. In addition, the linear isometry projection is either trivial or it is a $1$-parameter hyperbolic subgroup of $SO_\circ(1,2)$. In the first case, every photon in $L(p)$ is  $G$-invariant. In the later, $G$ acts on the both connected components of $L(p)\setminus (\phi\cup \psi)$ transitively. Therefore, in the following, we only consider the $2$-dimensional $G$-orbits in the Minkowski patch $Mink(p)\approx\En^{1,2}$.

\begin{itemize}
\item $G=\R_+^*\ltimes \R e_3$. The homothety factor $\R_+^*$ preserves the leaf $\mathcal{F}_{\R e_3}(o)$. Hence, $\mathcal{F}_{\R e_3}(o)$ is a $G$-orbit. 
%Also, $G$ preserves every affine plane in $\En^{1,2}$ containing $\mathcal{F}_{\R e_3}(o)$. 
For an arbitrary point $q=(x,y,z)\in \Ein^{1,2}$, the vector tangent to the orbit $G(q)$ at $q$ induced by homothety factor $\R_+^*$ is $v=(x,y,z)$. Observe that the set $\{v,e_3\}\subset T_qG(q)$ is a basis if and only if $x\neq 0$ or $y\neq 0$ if and only if $q\notin \mathcal{F}_{\R e_3}(o)$. The group $G$ induces spacelike, Lorentzian, and degenerate $2$-dimensional orbits in $\En^{1,2}$, since the orthogonal space of the spacelike tangent vector $e_3$ in the tangent space $T_qG(q)$ can be a spacelike, timelike, or lightlike line.
\item $G=Y_H\times \R e_3$. For an arbitrary point $q=(x,y,z)\in \En^{1,2}$, the vector tangent to the orbit $G(q)$ at $q$ induced by the $1$-parameter subgroup $Y_H$ is $v=(y,x,0)$. The set $\{e_3,v\}\subset T_qG(q)$ is a basis if and only if $x\neq 0$ or $y\neq 0$ if and only if $q\notin \mathcal{F}_{\R e_3}(o)$. The tangent vector $v$ is orthogonal to the spacelike vector $e_3$. Hence, $G$ admits spacelike, Lorentzian, and degenerate $2$-dimensional orbits in $\En^{1,2}$, since the vector $v$ can be spacelike, timelike, or lightlike.

\item $G=(\R_+^*\times Y_H)\ltimes \R e_3$. This group preserves the leaves $\mathcal{F}_{\R e_3}(o)$, $\mathcal{F}_{\Pi_\phi}(o)$, and $\mathcal{F}_{\Pi_\psi}(o)$. Also, $G$ acts on the four connected components of $(\mathcal{F}_{\Pi_\phi}(o)\cup\mathcal{F}_{\Pi_\psi}(o))\setminus \mathcal{F}_{\R e_3}(o)$ transitively, since its subgroup $Y_H\times \R e_3$ does. For an arbitrary point $q=(x,y,z)\in \En^{1,2}$, the vectors tangent to the orbit $G(q)$ at $q$ induced by the $1$-parameter subgroups $Y_H$ and $\R_+^*$ are $v=(y,x,0)$ and $w=(x,y,z)$. The set $\{e_3,v,w\}\subset T_qG(q)$ is a basis if and only if $x\neq \pm y$ if and only if $q\notin (\mathcal{F}_{\Pi_\phi}(o)\cup\mathcal{F}_{\Pi_\psi}(o))$. This implies that $G$ acts on the four connected components of $\En^{1,2}\setminus (\mathcal{F}_{\Pi_\phi}(o)\cup\mathcal{F}_{\Pi_\psi}(o))$ transitively. It follows that the only $2$-dimensional $G$-orbits in the Minkowski patch $Mink(p)\approx\En^{1,2}$ are the connected components of $(\mathcal{F}_{\Pi_\phi}(o)\cup\mathcal{F}_{\Pi_\psi}(o))\setminus \mathcal{F}_{\R e_3}(o)$ which are affine degenerate half-planes. 

\item $G=\exp\big(\R(a+\Y_H)\big)\ltimes\R e_3$, $a\in \R\setminus \{-1,0,1\}$. For an arbitrary point $q=(x,y,z)\in \En^{1,2}$, the vector tangent to the orbit $G(q)$ at $q$ induced by the $1$-parameter subgroup $\exp\big(\R(a+\Y_H)\big)$ is $v=(ax+y,x+ay,az)$. The set $\{e_3,v\}\subset T_qG_a(q)$ is a basis if and only if $x\neq 0$ or $y\neq 0$ if and only if $q\notin \mathcal{F}_{\R e_3}(o)$. The tangent vector $v-aze_3$ is orthogonal to the spacelike vector $e_3$. It follows that $G$ admits spacelike, Lorentzian, and degenerate $2$-dimensional orbits in $\En^{1,2}$, since the vector $v-aze_3$ can be spacelike, timelike, or lightlike.

\item $G=\exp\big(\R(1+\Y_H)\big)\ltimes \R e_3$. This group preserves the leaves of the foliation $\mathcal{F}_{\Pi_\phi}$, since it is a subgroup of $\mathsf{K}$ (Remark \ref{rem.ker}). Also, the spacelike affine line $\mathcal{F}_{\R e_3}(o)$ is a $G$-orbit. 
% The $1$-parameter subgroup $H_1=\exp\big(\R(1+\Y_H)\big)$ preserves the leaf $\mathcal{F}_{\R e_3}(o)$. Hence, $\mathcal{F}_{\R e_3}(o)$ is a $G_1$-orbit. 
 For an arbitrary point $q=(x,y,z)\in \En^{1,2}$, the vector tangent to the orbit $G(q)$ at $q$ induced by the $1$-parameter subgroup $\exp\big(\R(1+\Y_H)\big)$ is $v=(x+y,x+y,z)$. The set $\{e_3,v\}\subset T_qG(q)$ is a basis if and only if $x\neq -y$ if and only if $q\notin \mathcal{F}_{\Pi_\psi}(o)$. It follows that, every $2$-dimensional $G$-orbit in $\En^{1,2}$ is a degenerate surface included in a leaf of $\mathcal{F}_{\Pi_\phi}$.
 
\item $G=\exp\big(\R(-1+\Y_H)\big)\ltimes \R e_3$. For an arbitrary point $q=(x,y,z)\in \En^{1,2}$, the vectors tangent to the orbit $G(q)$ at $q$ induced by the $1$-parameter subgroups $\exp\big(\R(-1+\Y_H)\big)$ is $v=(-x+y,x-y,z)$. The group $G$ preserves the leaves of the foliation $\mathcal{F}_{\Pi_\psi}$, since $e_3,v\in \Pi_\psi$. This implies that $G$ acts on the photon $\psi$ trivially. Therefore $G$ is a subgroup of $\mathsf{K}$ (Remark \ref{rem.ker}) up to conjugacy. 

 \item $G=\exp\big(\R(1+\Y_H+e_1)\big)\ltimes\R e_3$. For an arbitrary point $q=(x,y,z)\in \En^{1,2}$, the vector tangent to the orbit $G(q)$ induced by the $1$-parameter subgroup $\exp\big(\R(1+\Y_H+e_1)\big)$ is $v=(x+y+1,x+y,z)$. The set $\{e_3,v\}\subset T_qG(q)$ is a basis, hence, all the orbits induced by $G$ in $\En^{1,2}$ are $2$-dimensional. On the other hand, $G$ admits spacelike, Lorentzian, and degenerate orbits in $\En^{1,2}$, since the tangent vector $v-ze_3$, which is orthogonal to the spacelike vector $e_3$, can be spacelike, timelike, or lightlike.
 
   \item $G=\exp\big(\R(-1+\Y_H+e_1)\big)\ltimes\R e_3$. For an arbitrary point $q=(x,y,z)\in \En^{1,2}$, the vector tangent to the orbit $G(q)$ at $q$ induced by the $1$-parameter subgroup $\exp\big(\R(-1+\Y_H+e_1)\big)$ is $v=(-x+y+1,x-y,z)$. By Lemma \ref{lem.1.3.4}, the action of $G$ on $\Ein^{1,2}$ is orbitally-equivalent to the action of $\exp\big(\R(1+\Y_H+e_1)\big)\ltimes\R e_3$ via the element in $O(1.2)$ which maps $(x,y,z)\in \En^{1,2}$ to $(-x,y,z)$.
\end{itemize}
\subsubsection*{\underline{Orbits induced by the subgroups with a lightlike line as the translation part}}\label{subsec.3.2.7} 
Assume that $G\subset \Conf_\circ(\En^{1,2})$ is a connected Lie subgroup with a lightlike line as the translation part, i.e., $G$ is an element of Table \ref{table7}. 
%
%Throughout, this section we denote by $\el$ the lightlike line $\R(e_1+e_2)\leq \R^{1,2}$. Also, recall that, the unique lightlike plane in $\R^{1,2}$ invariant by the $1$-parameter parabolic subgroup $Y_P$ is $\R(e_1+e_2)\oplus \R e_3$ and is denoted by $\Pi_\phi$.

The translation part $T(G)=\el$ induces a $1$-dimensional foliation $\mathcal{F}_\el$ in $\En^{1,2}$ on which the leaves are affine lightlike lines parallel to $\el$. Observe that, the translation part preserves the leaves of the foliation $\mathcal{F}_{\Pi_\phi}$, where $\Pi_\phi=\el^\perp=\R(e_1+e_2)\oplus \R e_3$. Hence, $T(G)$ acts on the photon $\phi$ trivially, and acts on all the vertex-less photons in $L(p)$ different form $\hat{\phi}$ transitively. On the other hand, the photon $\phi$ is invariant by $G$. In this case, the linear isometry projection $P_{li}(G)$ is either trivial or it is a subgroup of the affine group $\Aff\subset SO_\circ(1,2)$.

\textbf{Case I:} \textit{The linear isometry projection is trivial}. In this case $G=\R_+^*\ltimes \el$. Observe that $G$ admits no $2$-dimensional orbit in the lightcone $L(p)$. The homothety factor $\R_+^*$ preserves the leaf $\mathcal{F}_{\el}(o)$. Hence, $\mathcal{F}_{\el}(o)$ is a $G$-orbit.  For an arbitrary point $q=(x,y,z)\in \En^{1,2}$ the vector tangent to the orbit $G(q)$ at $q$ induced by $\R_+^*$ is $v=(x,y,z)$. The set $\{(e_1+e_2).v\}\subset T_qG(q)$ is a basis if and only if $z\neq 0$ or $x\neq y$ if and only if $q\notin \mathcal{F}_\el(o)$. It can be easily seen that a $2$-dimensional $G$-orbit in $\En^{1,2}$ is either Lorentzian or degenerate, since the orthogonal space of the lightlike vector $e_1+e_2$ in the tangent space is either $\R(e_1+e_2)$ or entire $T_qG(q)$.
 
\textbf{Case II:} \textit{The linear isometry projection $P_{li}(G)$ contains a parabolic element.} In this case, $G$ preserves a unique photon $\phi\subset L(p)$, and acts on the degenerate surface $L(p)\setminus \phi$ transitively. In the following we discuss on the $2$-dimensional orbits in the Minkowski patch $Mink(p)\approx\En^{1,2}$.
\begin{itemize}
\item $G=(\R^*_+\times \Aff)\ltimes \el$ or $G= \exp\big((\R(a+\Y_H)+\R\Y_P)\big)\ltimes \el$, $a\in \R^*\setminus \{1\}$ or $G=(\R_+^*\times Y_P)\ltimes \el$. This group preserves the leaves $\mathcal{F}_\el(o)$ and $\mathcal{F}_{\Pi_\phi}(o)$. For an arbitrary point $q=(x,y,z)\in \En^{1,2}$ the vectors tangent to the orbit $G(q)$ at $q$ induced by the $1$-parameter subgroups $\R^*_+$ and $Y_P$ are $v=(x,y,z)$ and $w=(z,z,x-y)$, respectively. Observe that $G$ acts on the both connected components of $\En^{1,2}\setminus \mathcal{F}_{\Pi_\phi}(o)$ transitively, since the set $\{e_1+e_2,v,w\}\subset T_qG(q)$ is a basis. Also, $G$ acts on the both connected components of $\mathcal{F}_{\Pi_\phi}(o)\setminus \mathcal{F}_\el(o)$ transitively, since $\{e_1+e_2,v\}$ is a basis for the tangent space $TqG(q)$.
.
\item $G=\Aff\ltimes \el$. For an arbitrary point $q=(x,y,z)\in \En^{1,2}$ the vectors tangent to the orbit $G(q)$ at $q$ induced by the $1$-parameter subgroups $Y_P$ and $Y_H$ are $v=(z,z,x-y)$ and $w=(y,x,0)$, respectively. Observe that for all $q\in \mathcal{F}_{\Pi_\phi}(o)$, the leaf $\mathcal{F}_{\el}(q)$ is invariant by $G$. On the other hand, the set $\{e_1+e_2,v,w\}\subset T_qG(q)$ is a basis for all $q\in \En^{1,2}\setminus \mathcal{F}_{\Pi_\phi}(o)$. This implies that $G$ admits no $2$-dimensional orbit in $\En^{1,2}$, and so, the connected components of $L(p)\setminus (\phi\cup \psi)$ are the only $2$-dimensional orbits in Einstein universe $\Ein^{1,2}$.

\item $G= \exp\big((\R(1+\Y_H)+\R\Y_P)\big)\ltimes \el$. Since $G$ is a subgroup of $\mathsf{K}$ (Remark \ref{rem.ker}), it preserves the leaves of the foliation $\mathcal{F}_{\Pi_\phi}$. 
It is clear that $G$ preserves the leaf $\mathcal{F}_\el(o)$. For an arbitrary point $q=(x,y,z)\in \En^{1,2}$ the vectors tangent to the orbit $G(q)$ at $q$ induced by the $1$-parameter subgroups $\exp\big(\R(1+Y_H)\big)$ and $Y_P=\exp\big(\R\Y_P\big)$ are $v=(x+y,x+y,z)$ and $w=(z,z,x-y)$, respectively. The set $\{e_1+e_2,v,w\}\subset T_qG(q)$ generates a $2$-dimensional vector space if and only if $z\neq 0$ or $x\neq y$ if and only if $q\notin\mathcal{F}_\el(o)$. Hence, $G$ admits $2$-dimensional orbits in $\En^{1,2}$ which are degenerate surfaces included in a leaf of $\mathcal{F}_{\Pi_\phi}$.

\item $G=\exp\big(\R(2+\Y_H)+\R(\Y_P+e_1)\big)\ltimes \el$. For an arbitrary point $q=(x,y,z)\in \En^{1,2}$ the vectors tangent to orbit $G(q)$ at $q$ induced by subgroups $\exp\big(\R(2+\Y_H)\big)$ and $\exp\big(\R(\Y_P+e_1)\big)$ are $v=(2x+y,x+2y,2z)$ and $w=(z+1,z,x-y)$, respectively. The set $\{e_1+e_2,v,w\}\subset T_qG(q)$ is a basis if and only if $z\neq (x-y)^2/2$. The set of points in $\En^{1,2}$ with $z=(x-y)^2/2$ is a connected Lorentzian surface $S$, and one can see that $G$ acts on it transitively (in fact it is the orbit induced by the subgroup $\exp\big(\R(\Y_P+e_1)\big)\ltimes\el$ at $o$). Hence, the surface $S$ is the only $2$-dimensional $G$-orbit in $\En^{1,2}$.

\item $G=\exp\big((\R(\Y_H+e_3)+\R\Y_P)\big)\ltimes\el$. Since $G$ is a subgroup of $\Aff\ltimes \Pi_\phi$, it preserves the leaf $\mathcal{F}_{\Pi_\phi}(o)$. 
%Hence, it fixes a point $d\in \hat{\phi}$ which is the limit point of $\mathcal{F}_{\Pi_\phi}(o)$. The $1$-parameter subgroup $H=\exp\big((\R(\Y_H+e_3)\big)$ acts on the both connected components of $\hat{\phi}\setminus \{d\}$ transitively. 
For an arbitrary point $q=(x,y,z)\in \En^{1,2}$ the vectors tangent to orbit $G(q)$ at $q$ induced by subgroups $H$ and $Y_P$ are $v=(y,x,1)$ and $w=(z,z,x-y)$, respectively. The set $\{e_1+e_2,v,w\}\subset T_qG(q)$ is a basis if and only if $x\neq y$ if and only if $q\notin \mathcal{F}_{\Pi_\phi}(o)$.  On the other hand, for a point $q\in \mathcal{F}_{\Pi_\phi}(o)$ the set $\{e_1+e_2,v\}\subset T_qG(q)$ is a basis, and so, the leaf $\mathcal{F}_{\Pi_\phi}(o)$ is the only $2$-dimensional $G$-orbit in $\En^{1,2}$.

\item $G=Y_P\times \el$. Since $G$ is a subgroup of $\mathsf{K}$ (Remark \ref{rem.ker}), it  preserves the leaves of the foliation $\mathcal{F}_{\Pi_\phi}$. For an arbitrary point $q=(x,y,z)\in \En^{1,2}$, the vector tangent to the orbit $G(q)$ at $q$ induced by $Y_P$ is $v=(z,z,x-y)$. The set $\{e_1+e_2,v\}$ is a basis if and only if $x\neq y$, which describes the leaf $\mathcal{F}_{\Pi_\phi}(o)$. Therefore, $G$ acts on all the leaves of $\mathcal{F}_{\Pi_\phi}$ different form $\mathcal{F}_{\Pi_\phi}(o)$ transitively. 

%\item $G=(\R_+^*\times Y_P)\ltimes \el$. The group preserves the leaves $\mathcal{F}_\el(o)$ and $\mathcal{F}_{\Pi_\phi}(o)$. On the other hand, for an arbitrary point $q=(x,y,z)\in \En^{1,2}$, the vectors tangent to the orbit $G(q)$ at $q$ induced by $Y_P$ and $\R_+^*$ are $v=(z,z,x-y)$ and $w=(x,y,z)$, respectively. The set $\{(e_1+e_2),v,w\}\subset T_qG(q)$ is a basis if and only if $x\neq y$ if and only if $q\notin \mathcal{F}_{\Pi_\phi}(o)$. Thus, $G$ acts on the both connected components of $\En^{1,2}\setminus \mathcal{F}_{\Pi_\phi}(o)$ transitively. Observe that for a point $q\in \mathcal{F}_{\Pi_\phi}(o)$ the set $\{(e_1+e_2),w\}\subset T_qG(q)$ is a basis if and only if $q\notin \mathcal{F}_\el(0)$. Hence, $G$ the two connected components of $\mathcal{F}_{\Pi_\phi}(o)\setminus \mathcal{F}_\el(o)$ are the only $2$-dimensional $G$-orbits in $\En^{1,2}$.
\item $G=\exp\big(\R(a+\Y_P)\big)\ltimes \el$, $a\in \R^*$. The leaf $\mathcal{F}_{\el}(o)$ is a $G$-orbit. 
For an arbitrary point $q=(x,y,z)\in \En^{1,2}$, the vector tangent to the orbit $G(q)$ at $q$ induced by the $\exp\big(\R(a+\Y_P)\big)$ is $v=(ax+z,ay+z,x-y+az)$. The set $\{e_1+e_2,v\}\subset T_qG(q)$ is a basis if and only if $z\neq 0$ or $x\neq y$ if and only if $q\notin \mathcal{F}_{\el}(o)$. The $2$-dimensional $G$-orbits in $\En^{1,2}$ are either Lorentzian or degenerate, since the orthogonal space of the lightlike vector $e_1+e_2$ in the tangent space $T_qG(q)$ is either $\R(e_1+e_2)$ or entire $T_qG(q)$. 

  \item If $G=\exp\big(\R(\Y_P+e_1)\big)\ltimes \el$. For an arbitrary point $q=(x,y,z)\in \En^{1,2}$, the vector tangent to the orbit $G(p)$ at $q$ induced by the $1$-parameter subgroup $\exp\big(\R(\Y_P+e_1)\big)$ is $v=(z+1,z,x-y)$. It is clear that the set $\{e_1+e_2,v\}\subset T_qG(q)$ is a basis. Thus all the orbits induced by $G$ in $\En^{1,2}$ are $2$-dimensional. On the other hand, all the $G$-orbits in $\En^{1,2}$ are Lorentzian, since the orthogonal space of the lightlike vector $e_1+e_2$ in the tangent space $T_qG(q)$ is $\R(e_1+e_2)$.
\end{itemize}

\textbf{Case III:} \textit{The linear isometry projection $P_{li}(G)$ is a $1$-parameter hyperbolic subgroup of $SO_\circ(1,2)$.} In this case, $G$ preserves both the foliations $\mathcal{F}_{\Pi_\phi}$ and $\mathcal{F}_{\Pi_\psi}$, where $\Pi_\psi=\R(e_1-e_2)\oplus \R e_3$, and so, it preserves both the photons $\phi$ and $\psi$ in the lightcone $L(p)$. Also, $G$ acts on the both connected components of $L(p)\setminus (\phi\cup \psi)$ transitively. Now, we describe the $2$-dimensional orbits induced by $G$ in the Minkowski patch $Mink(p)\approx\En^{1,2}$.
\begin{itemize}
\item If $G=(\R_+^*\times Y_H)\ltimes \el$. The leaf $\mathcal{F}_{\el}(o)$ is a $G$-orbit. Also, $G$-preserves the leaf $\mathcal{F}_{\Pi_\phi}(o)$ and the Lorentzian affine plane $L_0=\{z=0\}\subset \En^{1,2}$. For an arbitrary point $q=(x,y,z)\in \En^{1,2}$, the vectors tangent to the orbit $G(q)$ at $q$ induced by the $1$-parameter subgroups $\R_+^*$ and $Y_H$ are $v=(x,y,z)$ and $w=(y,x,0)$, respectively. The set $\{e_1+e_2,v,w\}\subset T_qG(q)$ is a basis if and only if $z\neq 0$ and $x\neq y$ if and only if $q\notin (L_0\cup \mathcal{F}_{\Pi_\phi}(o))$. Thus, $G$ acts on the four connected components of $\En^{1,2}\setminus (L_0\cup \mathcal{F}_{\Pi_\phi}(o))$ transitively. Furthermore,  $G$ acts on each component of the four connected components of $(L_0\cup \mathcal{F}_{\Pi_\phi}(o))\setminus \mathcal{F}_\el(o)$ transitively, which are the only $2$-dimensional $G$-orbits in $\En^{1,2}$. 
 
\item $G=Y_H\ltimes \el$. This group preserves the leaf $\mathcal{F}_{\Pi_\phi}(o)$. For an arbitrary point $q=(x,y,z)\in \En^{1,2}$, the vector tangent to the orbit $G(q)$ at $q$ induced by $Y_H$ is $v=(y,x,0)$. Observe that, the set $\{(e_1+e_2,v\})\subset T_qG(q)$ is a basis if and only if $x\neq y$ if and only if $q\notin \mathcal{F}_{\Pi_\phi}(o)$. On the other hand, every $2$-dimensional $G$-orbit in $\En^{1,2}$ is Lorentzian, since the orthogonal space of the lightlike vector $e_1+e_2$ is $\R(e_1+e_2)$. 

\item $G=\exp\big(\R(a+\Y_H)\big)\ltimes \el$,  $a\in \R^*\setminus \{1\}$. This group preserves the leaf $\mathcal{F}_{\el}(o)$. For an arbitrary point $q=(x,y,z)\in \En^{1,2}$, the vector tangent to the orbit $G(q)$ at $q$ induced by the $1$-parameter subgroup $\exp\big(\R(a+\Y_H)\big)$ is $v=(ax+y,x+ay,az)$. The set $\{e_1+e_2,v\}\subset T_qG(q)$ is a basis if and only if $z\neq 0$ or $x\neq y$ if and only if $p\notin \mathcal{F}_{\el}(o)$. On the other hand, a $2$-dimensional $G$-orbit in $\En^{1,2}$ is either Lorentzian or degenerate, since the orthogonal space of the lightlike tangent vector $e_1+e_2$ is either $\R(e_1+e_2)$ or entire $T_qG(q)$.

\item If $G=\exp\big(\R(1+\Y_H)\big)\ltimes \el$. Since, $G$ is a subgroup of $\mathsf{K}$ (Remark \ref{rem.ker}), it preserves the leaves of the foliation $\mathcal{F}_{\Pi_\phi}$. Also, $G$ preserves the Lorentzian affine plane $L_0=\{z=0\}\subset \En^{1,2}$. For an arbitrary point $q=(x,y,z)\in \En^{1,2}$ the vector tangent to the orbit $G(q)$ at $q$ induced by the $1$-parameter subgroup $\exp\big(\R(1+\Y_H)\big)$ is $v=(x+y,x+y,z)$. The set $\{e_1+e_2,v\}\subset T_qG(q)$ is a basis if and only if $z\neq 0$ if and only if $q\notin L_0$. Thus every $2$-dimensional $G$-orbit in $\En^{1,2}$ is a degenerate surface included in a leaf of $\mathcal{F}_{\Pi_\phi}$.

  \item $G=\exp\big(\R(\Y_H+e_3)\big)\ltimes \el$. For an arbitrary point $q=(x,y,z)\in \En^{1,2}$, the vector tangent to the orbit $G(p)$ induced by the $1$-parameter subgroup $\exp\big(\R(\Y_H+e_3)\big)$ is $v=(y,x,1)$. The set $\{e_1+e_2,v\}\subset T_qG(q)$ is a basis. Thus, all the orbits induced by $G$ in $\En^{1,2}$ are $2$-dimensional. The group $G$ admits Lorentzian and degenerate orbits in $\En^{1,2}$, since the orthogonal space of the lightlike vector $e_1+e_2$ in the tangent space $T_qG(q)$ is either $\R(e_1+e_2)$ or entire $T_qG(q)$.
  
    \item $G=\exp\big(\R(1+\Y_H+e_1)\big)\ltimes \el$. For an arbitrary point $q=(x,y,z)\in \En^{1,2}$, the vector tangent to the orbit $G(q)$ at $q$ induced by the $1$-parameter subgroup $\exp\big(\R(1+\Y_H+e_1)\big)$ is $v=(x+y+1,x+y,z)$. The set $\{e_1+e_2,v\}\subset T_qG(q)$ is a basis. Thus, all the orbits induced by $G$ in $\En^{1,2}$ are $2$-dimensional. Also, all the orbits induced by $G$ in $\En^{1,2}$ are Lorentzian, since the orthogonal space of the lightlike vector $e_1+e_2$ in $T_qG(q)$ is $\el=\R(e_1+e_2)$. 
    
   \item $G=\exp\big(\R(-1+\Y_H+e_1)\oplus_\theta\el\big)$. For an arbitrary point $q=(x,y,z)\in \R^{1,2}$, the vector tangent to the orbit $G(q)$ at $q$ induced by the $1$-parameter subgroup $H=\exp\big(\R(-1+\Y_H+e_1)\big)$ is $v=(-x+y+1,x-y,-z)$. The set $\{e_1+e_2,v\}\subset T_qG(q)$ is a basis if and only if $z\neq 0$ and $x-y\neq 1/2$ if and only if $q\notin \mathcal{F}_\el(1/2,0,0)$. Obviously, the leaf $\mathcal{F}_\el(1/2,0,0)$ is a $G$-orbit. Furthermore, $G$ admits in $\En^{1,2}$ Lorentzian and degenerate orbits, since the orthogonal space of the lightlike tangent vector $e_1+e_2$ in the tangent space $T_qG(q)$ is either $\R(e_1+e_2)$ or entire $T_qG(q)$.
\end{itemize}
\subsubsection*{\underline{Orbits induced by the subgroups with trivial translation part}}\label{subsec.3.2.8}
Finally, assume that $G\subset \Conf_\circ(\En^{1,2})$ is a connected Lie subgroup with trivial translation part $T(G)=\{0\}$. These groups have been listed in Table \ref{table8}.

First, assume that $G$ is a subgroup of $\R_+^*\times SO_\circ(1,2)$. This group fixes the origin $o=(0,0,0)\in \En^{1,2}$ and  preserves the nullcone centered at $o$
$$\mathfrak{N}(o)=\{q=(x,y,z)\in \En^{1,2}\setminus \{o\}:\mathfrak{q}(q)=-x^2+y^2+z^2=0\}.$$
Also, it preserves the three connected components of the complement of the nullcone $\Nu(o)$ in $\En^{1,2}$ which are: the domain $\{\q>0\}$ and the two connected components of the domain $\{\q<0\}$. Also, this group preserves the ideal circle $S_\infty=L(p)\cap L(o)$. Observe that, if $G$ is a subgroup of $SO_\circ(1,2)$, it also preserves the de-Sitter spaces $\dS^{1,1}(r)=\mathfrak{q}^{-1}(r)$, and the hyperbolic planes $\Hn^2(r)$ (the connected components of $\mathfrak{q}^{-1}(-r)$) in $\En^{1,2}$ centered at $o$ with radius $r\in \R^*_+$. Moreover, the group $\R^*_+\times \Aff$ preserves the foliation $\mathcal{F}_{\Pi_\phi}$, and also, preserves the leaves $\mathcal{F}_{\Pi_\phi}(o)$ and $\mathcal{F}_\el(o)$.
\begin{itemize}
\item $G=\R_+^*\times SO_\circ(1,2)$. This group acts on the both components of the nullcone $\Nu(o)$ transitively. One can see, $G$ acts on the connected complements of the complement of $\Nu(o)\cup \{o\}$ in $\En^{1,2}$ (which are open subsets) transitively. Also, $G$ acts on the both connected components $L(\hat{p})\setminus S_\infty$ transitively. Therefore, $G$ admits exactly four $2$-dimensional orbits in $\Ein^{1,2}$ which all of them are degenerate surfaces.
\item $G=SO_\circ(1,2)$. This group acts on the de-Sitter spaces $\dS^{1,1}(r)$, the hyperbolic planes $\Hn^{2}(r)$, and the connected components of the nullcone $\Nu(o)$ transitively. Furthermore, it acts on the both connected components of $L(\hat{p})\setminus S_\infty$ transitively. Hence, $G$ admits spacelike, Lorentzian, and degenerate $2$-dimensional orbits in $\Ein^{1,2}$.

\item $G=\Aff$. The leaves $\mathcal{F}_\el(o)$ and $\mathcal{F}_{\Pi_\phi}(o)$ are invariant by $G$. Also, $G$ preserves the photon $\phi$ in the lightcone $L(p)$, and acts on the connected components of $L(p)\setminus (\phi\cup S_\infty)$ transitively. Furthermore, the hyperbolic planes and the both connected components of $\Nu(o)\setminus \mathcal{F}_\el(o)$ are $G$-orbits. Moreover, for all $r\in \R_+^*$, $G$ acts on the both connected components of $\dS^{1,2}(r)\setminus \mathcal{F}_{\Pi_\phi}(o)$ transitively. Therefore, $G$ admits spacelike, Lorentzian, and degenerate $2$-dimensional orbits in $\Ein^{1,2}$.
\item $G=\R_+^*\times \Aff$. Observe that the connected components of $L(p)\setminus (\phi\cup S_\infty)$ are degenerate $2$-dimensional $G$-orbits. On the other hand, $G$ acts on each of the four connected components of $(\Nu(o)\cup \mathcal{F}_{\Pi_\phi}(o))\setminus \mathcal{F}_\el(o)$ transitively. In fact, $G$ admits exactly six $2$-dimensional orbits in $\Ein^{1,2}$, since it acts on the connected components of $\En^{1,2}\setminus (\Nu(o)\cup \mathcal{F}_{\Pi_\phi}(o))$ transitively. 

\item $G_a=\exp\big(\R(a+\Y_H)+\R\Y_P\big)$, $0<|a|<1$. One can see, $G$ acts on the both connected components of $L(p)\setminus (\phi\cup S_\infty)$ transitively. For an arbitrary point $q=(x,y,z)\in \En^{1,2}$, the vectors tangent to the orbit $G_a(q)$ at $q$ induced by the $1$-parameter subgroups $\exp\big(\R(a+Y_H)\big)$ and $Y_P$ are $v=(ax+y,x+ay,az)$ and $w=(z,z,x-y)$, respectively. The set $\{v,w\}\subset T_qG(q)$ is a basis if and only if $z\neq 0$ and $x\neq y$ if and only if $q\notin \mathcal{F}_{\el}(o)$. Thus, $G$ acts on the both connected components of $\mathcal{F}_{\Pi_\phi}(o)\setminus \mathcal{F}_\el(o)$, and on the both connected components of $\mathfrak{N}(o)\setminus \mathcal{F}_\el(o)$ transitively. Also, $G$ admits spacelike and Lorentzian $2$-dimensional orbits in $\En^{1,2}$, since for a point $q\in \En^{1,2}\setminus (\Nu(o)\cup \mathcal{F}_{\Pi_\phi(o)})$, the orthogonal space of the spacelike vector $w$ in the tangent space $T_qG(q)$ can be a spacelike or timelike line. 

\item $G=\exp\big(\R(1+\Y_H)+\R\Y_P\big)$. This group preserves the leaves of the foliation $\mathcal{F}_{\Pi_\phi}$, since it is a subgroup of $\mathsf{K}$ (Remark \ref{rem.ker}). It is not hard to see that $G$ acts on the both connected components of $L(p)\setminus (\phi\cup S_\infty)$ transitively. For an arbitrary point $q=(x,y,z)\in \En^{1,2}$, the vectors tangent to the orbit $G(q)$ at $q$ induced by the $1$-parameter subgroups $\exp\big(\R(1+Y_H)\big)$ and $Y_P$ are $v=(x+y,x+y,z)$ and $w=(z,z,x-y)$ respectively. The set $\{u,v\}\subset T_qG(q)$ is a basis if and only if $q\notin \mathfrak{N}(o)$. Hence, every $2$-dimensional $G$-orbit in $\En^{1,2}$ is a degenerate surface contained in a leaf of $\mathcal{F}_{\Pi_\phi}$.
\item $G=\exp\big(\R(-1+\Y_H)+\R\Y_P\big)$. In the one hand, $G$ acts on the lightlike affine line $\mathcal{F}_\el(o)\subset \mathcal{F}_{\Pi_\phi}(o)$ trivially. On the other hand, $G$ fixes the corresponding limit point of $\mathcal{F}_{\Pi_\phi}(o)$, $d\in \hat{\phi}$. It follows that, $G$ acts on the photon $\mathcal{F}_\el(o)\cup \{d\}$ trivially, and so, $G$ is a subgroup of $\mathsf{K}$ (Remark \ref{rem.ker}), up to conjugacy.

\item $G=\R_+^*\times Y_P$. One can see that $G$ admits the same orbits in the lightcone $L(p)$ as $\Aff$. For an arbitrary point $q=(x,y,z)\in \En^{1,2}$, the vectors tangent to the orbit $G(q)$ at $q$ induced by the $1$-parameter subgroups $\R_+^*$ and $Y_P$ are $w=(x,y,z)$ and $v=(z,z,x-y)$ respectively. The set $\{w,v\}\subset T_qG(q)$ is a basis if and only if $q\notin \mathcal{F}_\el(o)$. A $2$-dimensional orbit $G(q)$ in $\En^{1,2}$ is degenerate (resp. Lorentzian, spacelike) if and only if it lies in the nullcone $\Nu(o)$ or the degenerate affine plane $\mathcal{F}_{\Pi_\phi}(o)$ (resp. $\{\q<0\}$, $\{\q>0\}\setminus \mathcal{F}_{\Pi_\phi}(o)$).
\item $G=\R_+^*\times Y_H$. This group preserves both the foliations $\mathcal{F}_{\Pi_\phi}$ and $\mathcal{F}_{\Pi_\psi}$ in the Minkowski space $\En^{1,2}$, and so, it preserves the corresponding photons $\phi,\psi\subset L(p)$. One can see that, $G$ acts on the four connected components of $L(p)\setminus(\phi\cup \psi\cup S_\infty)$ transitively. Furthermore, $G$ preserves three affine lines in the Minkowski space $\En^{1,2}$, namely, $\mathcal{F}_{\el}(o)$, $\ell_0=\{(t,-t,0):t\in \R\}$, and $\ell_1=\{(0,0,t):t\in \R\}$. For an arbitrary point $q=(x,y,z)\in \En^{1,2}$, the vectors tangent to the orbit $G(q)$ at $q$ induced by the $1$-parameter subgroups $\R_+^*$ and $Y_H$ are $v=(x,y,z)$ and $w=(y,x,0)$, respectively. The set $\{w,v\}\subset T_qG(q)$ is a basis if and only if $q\notin (\mathcal{F}_\el(o)\cup\ell_0\cup \ell_1)$. Actually, $\{v,w\}\subset T_qG(q)$ is an orthogonal basis. It follows that, $G$ admits spacelike, Lorentzian, and degenerate orbits in $\En^{1,2}$, precisely, if the $2$-dimensional orbit lies in the region $\{q>0\}$ (resp. $\{q<0\}$, $\Nu(o)$), then it is spacelike (resp. Lorentzian, degenerate).

\item $G=\R^*_+\times Y_E$. It is clear that $G$ acts on the both connected components of $L(\hat{p})\setminus S_\infty$ transitively. For an arbitrary point $q=(x,y,z)\in \En^{1,2}$, the vectors tangent to the orbit $G(q)$ at $q$ induced by the $1$-parameter subgroups $\R_+^*$ and $Y_E$ are $w=(x,y,z)$ and $v=(0,z,-y)$ respectively. The set $\{w,v\}$ is a basis if and only if $x\neq 0$ or $z\neq 0$. Furthermore, $G$ admits spacelike, Lorentzian, and degenerate $2$-dimensional orbits in $\En^{1,2}$, since the orthogonal space of the spacelike vector $v$ in the tangent space $T_qG(q)$ can be a spacelike, timelike, or lightlike line
\end{itemize}

\textit{Special subgroups with trivial translation part}
Here, we describe the orbits of the two subgroups in Table \ref{table8} which have trivial translation part, but they are not subgroups of the linear group $\R^*_+\times SO_\circ(1,2)$, i.e., these groups fix no point in the Minkowski space $\En^{1,2}$.

 $\blacktriangleright\;G= \exp\big(\R(-1+\Y_H+e_1+e_2)+\R\Y_P\big)$. This group preserves the foliation $\mathcal{F}_{\Pi_\phi}$, and its corresponding photon $\phi\subset L(p)$, since it is a subgroup of $(\R^*_+\times\Aff)\ltimes \R(e_1+e_2)$. 
% Also, it preserves the leaf $\mathcal{F}_{\Pi_\phi}(o)$ and the affine lightlike line $\mathcal{F}_\el(o)$. 
 For an arbitrary point $q=(x,y,z)\in \En^{1,2}$, the vectors tangent to the orbit $G(q)$ at $q$ induced by the $1$-parameter subgroups $H= \exp\big(\R(-1+\Y_H+e_1+e_2)\big)$ and $Y_P$ are $v=(-x+y+1,x-y+1,-z)$ and $w=(z,z,x-y)$, respectively. The subgroup $H$ also preserves the foliation $\mathcal{F}_{\Pi_\psi}$ and its corresponding photon $\psi\subset L(p)$. Observe that, for all $q\in \En^{1,2}$, the tangent vector $\in T_qG(q)$ does not belong to the degenerate plane $\Pi_\psi$. This implies that $H$ acts on the vertex-less photon $\hat{\psi}$ transitively, and consequently, the degenerate surface $L(p)\setminus \phi$ is a $G$-orbit. On the other hand, the set $\{u,v\}\subset T_qG(q)$  is a basis if and only if $x\neq y$ and $z\neq 0$ if and only if $q\notin \mathcal{F}_\el(o)$. It follows that a $2$-dimensional orbit $G(q)$ (where $q=(x,y,z)$) in the Minkowski space $\En^{1,2}$ is spacelike (resp. Lorentzian, degenerate) if and only if $x<y$ (resp. $x>y$, $x=y$).

 $\blacktriangleright\;G=\exp\big(\R(2+\Y_H)+\R(\Y_P+e_1-e_2)\big)$. Observe that $G(o)$ is a $1$-dimensional lightlike orbit, but it is not a geodesic. Actually, $G$ is the only (up to conjugacy) connected Lie subgroup of $\Conf(\En^{1,2})$ inducing a $1$-dimensional lightlike orbit in $\Ein^{1,2}$ which is not a lightlike geodesic. We conclude that, the action of $G$ on $\Ein^{1,2}$ is conjugate to the action of affine group $\Aff\subset \PSL(2,\R)$ within the irreducible action of $\PSL(2,\R)$ on Einstein universe described in \cite{Has}. Indeed, up to conjugacy, this is the action of the stabilizer (by the irreducible action of $\PSL(2,\R)$ on Einstein universe $\En^{1,2}$) of a projective homogeneous polynomial of degree $4$ in two variables with a real root of multiplicity $4$. So, we study the orbits induced by this action in the setting of \cite{Has}.
 
Consider $(XY^3,X^3Y,X^2Y^2)$ as a coordinate for the Minkowski patch $Min(Y^4)\subset \Ein^{1,2}$. The restriction of the quadratic form $\q$ on $Mink(Y^4)\approx\En^{1,2}$ with origin $o=(0,0,0)$ is 
\begin{align*}
\q(aXY^3+bX^3Y+cX^2Y^2)=-\frac{1}{2}ab+\frac{1}{6}c^2.
\end{align*}
 For a point $q=(x,y,z)\in Mink(Y^4)$, the orbit induced by $\Aff\subset \PSL(2,\R)$ at $q$ is

 \begin{align*}
 \Aff(q)=\left\lbrace(xe^{6t}+3ye^{4t}s^2-2ze^{5t}s-4e^{3t}s^3,ye^{2t}-4e^{t}s,ze^{4t}-3ye^{3t}s+6e^{2t}s^2):t,s\in \R\right\rbrace.
 \end{align*}
 The affine group $\Aff$ preserves the foliation $\mathcal{F}_{\Pi_\phi}$ on $Mink(Y^4)$ induced by the degenerate plane $\Pi_\phi=(XY^3)^\perp\leq \R^{1,2}$. Hence, it preserves the corresponding photon $\phi\subset L(Y^4)$. 
 The $1$-parameter hyperbolic subgroup $Y_H\subset \Aff$ preserves the foliation $\mathcal{F}_{\Pi_\psi}$ where $\Pi_\psi$ is the degenerate plane $(X^3Y)^\perp\leq \R^{1,2}$. Also, $Y_H$ preserves the leaf $\mathcal{F}_{\Pi_\psi}(o)$, and so, it fixes the corresponding limit point $d\in \hat{\psi}\subset L(Y^4)$. Therefore, $\Aff$ induces a $1$-dimensional spacelike orbit in $L(Y^4)$ at $d$. It is not hard to see that $\Aff$ acts on the both connected components of $L(Y^4)\setminus (\phi\cup \Aff(d))$ transitively. 
 
 Now, we describe the orbits induced by $\Aff$ in the Minkowski patch $Mink(Y^4)$. For an arbitrary point $q=(x,y,z)\in Mink(Y^4)$, the vectors tangent to the orbit $G(q)$ at $q$ induced by the $1$-parameter subgroups $Y_P$ and $Y_H$ are $v=(-2z,-4,-3y)$ and $w=(6x,2y,4z)$, respectively. Observe that $v$ is a non-vanishing vector. Hence, $\Aff$ fixes no point in $Mink(Y^4)$. One can see, $\Aff(o)$ is the only $1$-dimensional $\Aff$-orbit in Minkowski patch $Mink(Y^4)$, indeed $\Aff(o)=\mathcal{N}\setminus \{Y^4\}$ where $\mathcal{N}$ is the $1$-dimensional orbit induced by the irreducible action of $\PSL(2,\R)$ on $\Ein^{1,2}$ (see \cite[Theorem 1]{Has}). The affine group $\Aff$ admits spacelike, Lorentzian and degenerate $2$-dimensional orbits in $Mink(Y^4)$, for instance consider the orbits with a representative in the affine line $\ell=\{(t,-t,0):t\in \R\}$.

\begin{remark}
It is well-known that, by the proper action of a Lie group on a manifold, the quotient space is Hausdorff. However, according to Section \ref{orbit}, Section \ref{subsec.4.1.2}, and \cite[Theorem 1]{Has}, if $G$ is a connected Lie group which acts on Einstein universe $\Ein^{1,2}$ non-properly and with cohomogeneity one, then there are two distinct orbits $G(p)$ and $G(q)$ such that $G(p)$ accumulates to $G(q)$, i.e., $G(q)$ is in the closure of $G(p)$ in $\Ein^{1,2}$. Hence, every $G$-invariant open neighborhood in $\Ein^{1,2}$ around $G(q)$ contains $G(p)$ as well. This implies that the orbit space $\Ein^{1,2}/G$ is not Hausdorff. Therefore, the properness of a cohomogeneity one action on $\Ein^{1,2}$ is equivalent to the Hausdorff condition on the orbit space.
\end{remark}
\end{document}